\theoremstyle{plain}
\newtheorem{proposition}{Proposition}[section]
\newtheorem{theorem}[proposition]{Theorem}
\newtheorem{lemma}[proposition]{Lemma}
\newtheorem{corollary}[proposition]{Corollary}
\theoremstyle{remark}
\theoremstyle{definition}
\newtheorem{example}[proposition]{Example}
\newtheorem{definition}[proposition]{Definition}
\declaretheoremstyle[qed=$\triangle$]{myremark}
\declaretheorem[style=myremark, numberlike=proposition]{remark}
\numberwithin{equation}{section}
\newcommand{\ipic}[3]{\raisebox{#1\height}{\scalebox{#3}{\includegraphics{./#2}}}}
\newcommand{\inv}{^{-1}}
\newcommand{\tensor}{\otimes}
\NewDocumentCommand{\id}{O{}}{
	\ifthenelse{\equal{#1}{}}{\operatorname{id}}{\operatorname{id}_{#1}}
}
\newcommand{\placeholder}{-}
\NewDocumentCommand{\field}{ O{} }{
	\ifthenelse{ \equal{#1}{} }{\ensuremath{k}}{\ensuremath{\mathbb{#1}}}
}
\newcommand{\oversetEq}[1][ ]{
	\stackrel{\mathmakebox[\widthof{=}]{#1}}{=}
}
\newcommand{\cat}[1][C]{{\mathcal{#1}}}
\newcommand{\pivotalStruct}{\delta}
\newcommand{\tensUnit}{\mathbf{1}}
\newcommand{\To}{\Rightarrow}
\DeclareMathOperator{\End}{End}
\newcommand{\dualsymbol}{\vee}
\newcommand{\dualsymbolR}{\vee}
\newcommand{\dualL}[1]{{#1^{\dualsymbol}}}
\newcommand{\ddualL}[1]{{#1^{\dualsymbol\dualsymbol}}}
\NewDocumentCommand{\dualR}{ m O{-0.1} }{
	{\prescript{\dualsymbolR}{}{\hspace*{#2em}#1}}
}
\DeclareMathOperator{\ev}{ev}
\DeclareMathOperator{\coev}{coev}
\DeclareMathOperator{\evL}{\ev}
\DeclareMathOperator{\coevL}{\coev}
\DeclareMathOperator{\evR}{\widetilde{\ev}}
\DeclareMathOperator{\coevR}{\widetilde{\coev}}
\newcommand{\Vect}{\mathsf{Vect}}
\NewDocumentCommand{\hmodM}{O{H}O{}O{}O{}}
{ {\prescript{#4}{#1}{\mathcal{M}}_{#2}^{#3}} }
\newcommand{\hmod}[1][H]{#1\text{-mod}}
\newcommand{\sweedler}[1]{_{(#1)}}
\newcommand{\op}{^{\textup{op}}}
\newcommand{\cop}{^{\textup{cop}}}
\newcommand{\elQbold}[1]{\boldsymbol{#1}}
\newcommand{\Dt}{\elQbold{f}}
\newcommand{\modulus}{\boldsymbol{\gamma}}
\newcommand{\counit}{\varepsilon}
\newcommand{\coint}{\boldsymbol{\lambda}}
\newcommand{\oneQ}{\elQbold{1}}
\newcommand{\pivotQ}{\elQbold{g}}
\newcommand{\alphaQ}{\elQbold{\alpha}}
\newcommand{\betaQ}{\elQbold{\beta}}
\newcommand{\pR}{p^R}
\newcommand{\qR}{q^R}
\newcommand{\pL}{p^L}
\newcommand{\qL}{q^L}
\newcommand\coassQ{\Phi}
\newcommand\invCoassQ{\Psi}
\newcommand{\braiding}{c}
\newcommand{\coend}{\mathcal{L}}
\newcommand{\cocoend}{\mathcal{E}}
\newcommand{\intLyu}{{\Lambda}}
\newcommand{\rMatrix}{\textup{\textsf{R}}}
\newcommand{\monodromy}{\textup{\textsf{M}}}
\newcommand{\slTwoZ}{SL(2,\mathbb{Z})}
\newcommand{\elRibbon}{\boldsymbol{v}}
\newcommand{\modS}{\mathcal{S}}
\newcommand{\modT}{\mathcal{T}}
\newcommand{\hopfPair}{\omega_{\coend}}
\newcommand{\ribTwist}{\vartheta}
\newcommand{\dinatLyu}{j}
\newcommand{\symRightCoint}{\widehat\coint{}^r}
\newcommand{\symLeftCoint}{\widehat\coint{}^l}
\newcommand{\symRightCointA}{\widehat\coint{}^{A,r}}
\newcommand{\symLeftCointA}{\widehat\coint{}^{A,l}}
\newcommand{\eQ}{\elQbold{e}}
\newcommand{\sympFerm}{\mathsf{Q}}
\newcommand{\tr}{\operatorname{tr}}
\newcommand{\trCat}[1][{\cat}]{\tr^{#1}}
\newcommand{\modTr}{\textup{\textsf{t}}}
\newcommand{\RT}[1][\cat]{\mathcal{Z}^{RT}_{#1}}
\newcommand{\dimredRT}[1][\cat]{\RT[\cat]_2}
\newcommand{\grothring}[1][\cat]{\operatorname{Gr}(#1)}
\newcommand{\DD}{\mathscr{D}}
\newcommand{\fusioncoeff}[2]{N_{#1}^{#2}}
\newcommand{\vvec}[1]{\underline{#1}}
\newcommand{\Irr}[1][\cat]{\operatorname{Irr}(#1)}
\newcommand{\deltaEven}[1]{\delta_{#1,\textup{even}}}
\newcommand{\deltaOdd}[1]{\delta_{#1,\textup{odd}}}
\newcommand{\qandq}{\quad\text{and}\quad}
\newcommand{\ProjIdeal}[1][{\cat[C]}]{\operatorname{Proj}({#1})}
\newcommand{\genK}{\mathsf{K}}
\newcommand{\genF}{\mathsf{f}}
\newcommand{\ribbon}{\boldsymbol{v}}
\newcommand{\drinfeldElement}{\boldsymbol{u}}
\newcommand{\grMultiplicity}[2]{[#1:#2]}
\newcommand{\Dinat}{\operatorname{Dinat}}
\crefname{equation}{equation}{equations}
\crefname{enumi}{point}{points}
\Crefname{enumi}{Point}{Points}
\newcommand{\injHullMorphism}[1]{\nu_{#1}}
\newcommand{\generatedIdeal}[1]{(#1)}
\newcommand{\projCoverMorphism}[1]{p_{#1}}
\newcommand{\projCover}[1]{P_{#1}}
\newcommand{\tensIdeal}[1][I]{\cat[#1]}
\newcommand{\RTfunctor}[1][C]{F_{\cat[#1]}}
\newcommand{\renRTinvariant}[1][{\modTr}]{ F^\prime_{\cat,\, #1} }
\newcommand{\CategoryColoredRibGraph}{\operatorname{Rib}_{\cat}}
\newcommand{\qtextq}[1]{\quad\text{#1}\quad}
\newcommand{\pullbackTrace}[1][\modTr]{ \restrictionFunctor^\ast (#1) }
\newcommand{\projCoverTensorUnitSF}[1][A]{P_0^+(#1)}
\newcommand{\relativeProjectives}[2]{#1\operatorname{-Proj} (\hmodM[{#2}]) }
\newcommand{\restrictionFunctor}{\operatorname{Res}}
\newcommand{\restrictionFunctorExplicit}[2]{\restrictionFunctor^{#1}_{#2}}
\newcommand{\grothringclass}[1]{[#1]}
\newcommand{\dimCat}[1][{\cat}]{\dim_{\cat}}
\newcommand{\framedUnknot}[2]{O(#1, #2)}
\newcommand{\torusknot}[2]{T(#1, #2)}
\newcommand{\hopflink}[2]{S(#1; #2)}
\newcommand{\anomaly}{\delta}
\newcommand{\stabCoeff}{\Delta_\pm}
\newcommand{\stabCoeffP}{\Delta_+}
\newcommand{\stabCoeffM}{\Delta_-}
\newcommand{\modularityParameter}{\zeta}
\newcommand{\multL}{m_\coend}
\newcommand{\comultL}{\Delta_\coend}
\newcommand{\unitL}{\eta_{\coend}}
\newcommand{\counitL}{\counit_{\coend}}
\newcommand{\cointLyu}{ {\intLyu^{\textup{co}}} }
\newcommand{\LRTfunctor}{F_{\intLyu}}
\newcommand{\renLRTinvariant}[1][{\modTr}]{F'_{\intLyu,\, #1}}
\newcommand{\invLyu}{\operatorname{Lyu}_{\intLyu}}
\newcommand{\invDGGPRtrace}[1]{L'_{\intLyu,\, #1}}
\newcommand{\invDGGPR}{\invDGGPRtrace{\modTr}}
\newcommand{\CategoryBichromeGraph}{\operatorname{BiRib}_{\cat}}
\newcommand{\rightact}{\, \resizebox{2.8mm}{!}{$\bigcirc\hspace*{-5mm}<$}~}
\newcommand{\phiQ}[1]{\elQbold{\phi}_{#1}}
\newcommand{\TOP}{{\resizebox{7mm}{!}{$\operatorname{TOP}$}}}
\newcommand{\parMat}{\mu}
\newcommand*{\updatelabelname}[1]{%
	\xdef\@currentlabelname{#1}%
}
\title[Non-semisimple link and manifold invariants]{Non-semisimple link and manifold invariants\\
for symplectic fermions}
\date{}
\begin{document}
\tikzstyle{my line} = [smooth, line width=0.25mm]

  \setcounter{tocdepth}{2}

  \maketitle
  \thispagestyle{empty}

\begin{center} 
    Johannes Berger\,$^{a}$, Azat M.\ Gainutdinov\,$^{b}$~~and~~Ingo
    Runkel\,$^c$~~\footnotemark
    \\[1.5em]
	{\sl\small $^a$ D\'epartement de Math\'ematique, Universit\'e Libre de Bruxelles,\\
	Campus de la Plaine, Boulevard du Triomphe, 1050 Bruxelles, Belgium}
\\[0.5em]
    {\sl\small $^b$ Institut Denis Poisson, CNRS, Universit\'e de Tours,\\ Parc de Grandmont, 37200 Tours, France}
\\[0.5em]
	{\sl\small $^c$ Fachbereich Mathematik, Universit\"at Hamburg\\
	Bundesstra\ss e 55, 20146 Hamburg, Germany}
\end{center}
\footnotetext{\raggedright Emails: 
			{\tt johannes.berger@ulb.be},
			{\tt azat.gainutdinov@cnrs.fr}, 
			{\tt ingo.runkel@uni-hamburg.de}
        }

\begin{abstract}
We consider the link and three-manifold invariants 
in~\cite{DGGPR}, which are defined
in terms of certain non-semisimple 
    ribbon categories~$\cat$ (locally-finite for link invariants, finite for manifold invariants)
together with a choice of tensor ideal and modified trace.
If the ideal is all of~$\cat$, these invariants agree with those defined by Lyubashenko in the 90's. 
We show that in that case the invariants
depend on the objects labelling the link only through their simple composition factors.
In order to detect non-trivial extensions one needs to pass to proper ideals. 

We compute examples of link and three-manifold invariants for $\cat$ being the category of $N$ pairs of symplectic fermions.
Using a quasi-Hopf algebra realisation of~$\cat$,
we find that the Lyubashenko-invariant of a lens space is equal to the order of its first homology group to the power $N$, a relation we conjecture to hold for all three-manifolds with finite first homology group.
For $N \ge 2$, $\cat$ allows for tensor ideals~$\tensIdeal$ with a modified trace which are different from all of $\cat$ and from the projective ideal.
Using the theory of pull-back traces and symmetrised cointegrals,
we show that the link invariant obtained from $\tensIdeal$ can distinguish a 
continuum of indecomposable but reducible objects which all have the same composition series.
\end{abstract}

\newpage
\tableofcontents

\newpage 

\section{Introduction}

While a large body of research on semisimple Reshetikhin-Turaev type invariants 
of three-manifolds
has been accumulated since their inception in the 90ies, much less is known about quantum invariants obtained from non-semisimple input data. After initial successes of obtaining three-manifold invariants from non-semisimple Hopf algebras by Hennings \cite{H96}, and from modular tensor categories which need not be semisimple by Lyubashenko \cite{Lyu-inv-MCG}, the next significant step forward came only with the introduction of modified traces on tensor ideals and using these to define link invariants by Geer, Kujawa, Patureau-Mirand, and Turaev
\cite{GPT,GKP_generalized-trace-and-mod-dim-rib-cat}. 
This led to the definition of three-manifold invariants 
by Costantino, Geer, and Patureau-Mirand \cite{CGP},
and three-dimensional topological field theories via non-semisimple modular tensor categories and variants thereof \cite{Blanchet:2014ova,DGP-renormalized-Hennings,DGGPR}. The non-semisimple topological field theories of \cite{DGGPR} recover the original Reshetikhin-Turaev theory in the semisimple case, as well as the mapping class group actions found by Lyubashenko in the non-semisimple case \cite{DGGPR2}.

\medskip

Our focus here is on non-semisimple link and three-manifold invariants, 
and not on topological field theory. 
The purpose of the present paper is two-fold:
\begin{enumerate}
	\item[(a)] We exhibit a number of general properties of the non-semisimple invariants defined in \cite{DGGPR} which set them apart from the semisimple case, and from Lyubashenko's invariant. 
	
	\item[(b)] We take one example of a modular tensor category -- 
 that of symplectic fermions \cite{Runkel:2012cf,FGR2} -- as input datum and provide explicit computations of some basic invariants. We hope this example based exposition can be useful as a guide to the rather technical definition of the invariants in \cite{DGGPR}.
\end{enumerate}	
If we should single out one property as particularly interesting then it would be that 
\textit{non-semisimple invariants are able to distinguish a continuum of indecomposable but reducible objects which all have the same composition series.}
In other words, these invariants are able to detect extensions and not just the composition series, in contrast to the Lyubashenko invariant.

\medskip

This paper splits into two parts of roughly equal length.
The first part consists of Sections~\ref{sec:link_invariants_from_ideals} and \ref{sec:invariants_of_manifolds} and contains the result for point (a) and summarises the findings of the explicit computations in (b). The second part, Sections \ref{sec:qHopf}--\ref{sec:lens_space_symp_ferm}, provides all the technical and lengthy details that went into the results in (b).

We will now give a quick overview of the contents of the first part of the paper. 
Throughout this introduction we fix 
\begin{align*}
	\cat ~:~ \text{a locally finite ribbon tensor category over an alg.\ closed field $k$} \ .
\end{align*}
In particular, $\cat$ is rigid and abelian, each object has finite length, Hom-spaces are finite-dimensional, and the tensor unit is simple. The ribbon structure endows $\cat$ with a pivotal structure and a braiding, which may be degenerate.

\subsection{Invariants of ribbon graphs}

In Sections~\ref{sec:link_invariants_from_ideals} we focus on invariants of ribbon graphs in $\field[R]^3$ (or, equivalently, in $S^3$). These depend on the choice of a tensor ideal $\tensIdeal$ in $\cat$ and on a modified trace on $\tensIdeal$. Let us describe some properties of these ingredients and the resulting invariants in turn.

\subsubsection*{Tensor ideals}

A tensor ideal $\tensIdeal \subset \cat$ is a full subcategory that is closed
under tensoring with arbitrary objects in $\cat$ and under taking direct summands in the sense that if $X \oplus Y \in \tensIdeal$, then also $X,Y \in \tensIdeal$. 
The two canonical examples are $\tensIdeal = \cat$ and $\tensIdeal = \ProjIdeal$, the full subcategory of projective objects in $\cat$. In fact,
any non-zero tensor ideal $\tensIdeal$ satisfies
\begin{align*}
	\ProjIdeal \subset \tensIdeal \subset \cat \ .
\end{align*}
We note that $\ProjIdeal$ can be zero for $\cat$ non-semi\-simple (Example~\ref{ex:no-ideals}). But if $\ProjIdeal \neq 0$, then $\cat$ admits a non-zero proper ideal if and only if $\cat$ is not semisimple (\Cref{prop:proj_is_smallest}).
Thus the freedom to choose a tensor ideal is present only for $\cat$ non-semisimple.

In this work, the main sources of tensor ideals $\tensIdeal$ that are not $\cat$ or $\ProjIdeal$, and that we call \textit{intermediate}, are given by subcategories  of all objects in $\cat$ with the property that their image under a monoidal functor $\cat \to \mathcal{D}$ is in a tensor ideal of~$\mathcal{D}$, e.g.\ in $\mathrm{Proj}(\mathcal{D})$.
We refer to such ideals as \textit{pullback ideals}.

\subsubsection*{Modified traces}

A modified trace on a tensor ideal $\tensIdeal$ is a family of
 linear maps
\begin{align*}
	\modTr =
    \left\{ \modTr_X \colon \End_{\cat}(X) \to \field \right\}_{X \in \tensIdeal}
\end{align*}
which induce symmetric pairings $\cat(X,Y) \times \cat(Y,X) \to k$ for $X,Y \in \tensIdeal$, and which satisfy a partial trace compatibility condition with the categorical trace on~$\cat$, 
see~\cite{GKP_generalized-trace-and-mod-dim-rib-cat}.
In fact, the categorical trace $\trCat_X(f)$, $f \in \End_{\cat}(X)$ of $\cat$ -- defined in terms of the duality maps of~$\cat$ -- 
    is the unique-up-to-scalar modified trace on all of~$\cat$. 
The usefulness of modified traces derives from the observation that while the categorical trace is identically zero on any proper tensor ideal (\Cref{prop:categorical_trace_vanishes_on_all_proper_ideals}), it may be possible to find non-zero modified traces on a given tensor ideal $\tensIdeal$. 

An important way to obtain modified traces on intermediate tensor ideals in $\cat$ 
is via pullback of the tensor ideal $\ProjIdeal[\mathcal{D}]\subset \mathcal{D}$ along a pivotal monoidal functor $\cat \to \mathcal{D}$, see \cite{FOG} and \Cref{prop:pullback_ideal}.
Indeed, suppose $\mathcal{D}$ has enough projective objects and is unimodular (i.e.\ the projective cover $P_\tensUnit \to \tensUnit$ of the tensor unit also has socle $\tensUnit$). 
Then there is a unique-up-to-scalar modified trace on $\ProjIdeal[\mathcal{D}]$ and for each non-zero such trace the above symmetric pairing is non-degenerate, see
    \cite[Thm.\,1]{BBGa} for the case of Hopf algebras
    and \cite[Cor.\,5.6]{GKP_m-traces} for the general case.

\subsubsection*{Invariants of $\tensIdeal$-admissible ribbon graphs}

Let $\tensIdeal \subset \cat$ be a tensor ideal and $\modTr_X$ a modified trace on $\tensIdeal$. 
A $\cat$-coloured ribbon graph $T$ in $\field[R]^3$ is called 
$\tensIdeal$-admissible if at least one edge is coloured by an object 
$X \in \tensIdeal$. For such a $T$ one obtains an invariant $\renRTinvariant(T)$ of $T$ by 
cutting an $\tensIdeal$-labelled edge and taking the modified trace
(see \cite{GPT,DGGPR} and \Cref{prop:renormalized_link_invariant}). In particular, this prescription is independent of which $\tensIdeal$-labelled edge is chosen in the cutting procedure. 
If $T$ is a framed link, the invariant $\renRTinvariant(T)$ depends on objects colouring components not used in the cutting procedure only through their class in the Grothendieck ring of~$\cat$ (\Cref{prop:renormalized_link_invariant_factors_through_Gr}). If $\tensIdeal = \cat$, this also holds for the component used in the cutting procedure. Thus, in order to detect any structure of the objects colouring the link other than their composition series, one needs to work with a proper ideal $\tensIdeal \subsetneq \cat$. This illustrates once more that tensor ideals and modified traces are crucial for non-semisimple invariants.

\subsubsection*{Symplectic fermion example}

To see how the non-semisimple invariants 
$\renRTinvariant(T)$
behave in a concrete example, we consider the category $\hmodM[\sympFerm]$ of finite-dimen\-sion\-al left 
modules over $\sympFerm = \sympFerm(N, \beta)$, the
\emph{symplectic fermion} quasi-Hopf algebra.
As an algebra it is the semidirect product of $\field[Z]_2$ with the direct sum of a Gra\ss mann algebra and a Clifford algebra in $2N$ generators each. 
The parameter $\beta  \in \field[C]$ satisfies $\beta^4 = (-1)^N$. 

    Symplectic fermions were introduced in the context of two-dimensional conformal field theory \cite{Kausch:1995py,Gaberdiel:1996np} and the vertex operator algebra was studied in \cite{Abe:2005}. A corresponding 
    modular 
tensor category was proposed in \cite{Davydov:2012xg,Runkel:2012cf} and was expressed as representations of a 
	factorisable
ribbon quasi-Hopf algebra in \cite{Gainutdinov:2015lja,FGR2}. In \cite{GN,Creutzig:2021cpu} it was shown that for $N=1$,  $\hmodM[\sympFerm]$ is indeed ribbon-equivalent to the representation category of the symplectic fermion vertex operator algebra, see also \cite{McRae} for recent progress on general $N$.
    
The categories $\hmodM[\sympFerm]$ 
are comparatively easy to handle. For example, they always have four simple objects, but the maximal 
composition length of indecomposable projective modules
increases with $N$ and is given by $2^{2N}$. The category $\hmodM[\sympFerm]$ is a finite ribbon tensor category and is modular, i.e.\ the braiding satisfies a non-degeneracy condition.

The symplectic fermion quasi-Hopf algebra $\sympFerm$ can be understood as a version of the quantum group
    of type $B_N$ 
at $q=i$ \cite{Flandoli:2017xwu}. Note that the parameter is the rank while the root of unity is kept fixed. In the other class of standard examples, namely small quantum groups for $sl(2)$, as in~\cite{Lyu-inv-MCG, FGST, Creutzig:2017khq},
the parameter is the order of the root of unity. These two classes of examples probe different aspects of non-semisimple invariants: For $B_N$ the number of simple objects is always $4$ and the maximal composition length 
   of their projective covers
increases, while for $sl(2)$ the maximal composition length
of projective covers
is~$4$, but the number of simple objects increases.

\medskip 

For $\hmodM[\sympFerm]$ we compute invariants of the unknot 
    and the Hopf link with different framings,
and of torus knots for the tensor ideals $\hmodM[\sympFerm]$, $\ProjIdeal[{\hmodM[\sympFerm]}]$ and, importantly, also for a choice of intermediate ideal $\tensIdeal$ (in the case $N=2$). 
The ideal $\tensIdeal$ and a modified trace $\modTr^{\tensIdeal}$ on it are obtained via the pullback construction
    specialised to quasi-Hopf algebras, see Section~\ref{sec:pullback-qHopf}.

The results are listed in \Cref{table:invariants} from Section~\ref{sec:examples_link_invariants}. 
Most interestingly, the intermediate ideal we consider contains a continuum of mutually non-isomorphic indecomposable objects which all have the same composition series made out of 4 simple objects. 
The family we consider can be conveniently parameterised by a complex $2 \times 2$-matrix
$\parMat \in \operatorname{Mat}_2(\field[C])$,
and we denote the corresponding representation of $\sympFerm$ by $P_\parMat \in \tensIdeal$.
For a ribbon graph $T$ coloured by such $P_\parMat$, it turns out that the corresponding invariants  $\renRTinvariant[{\modTr^{\tensIdeal}}](T)$
can depend continuously on~$\parMat$. For example the invariant of $O(n,P_\parMat)$, the $n$-framed unknot  coloured by~$P_\parMat$, is
\begin{align*}
\renRTinvariant[{\modTr^{\tensIdeal}}]\bigl(O(n,P_\parMat)\bigr) = 2n(1+\det(\parMat))~.
\end{align*}
Such a dependence on continuous parameters can never happen  
for the extremal choices $\tensIdeal=\cat$ and $\tensIdeal=\ProjIdeal$ for the tensor ideal in a finite tensor category, 
see \Cref{rem:no-contiua-for-C-and-ProjC} for more details.

Let us also mention that in some cases, the invariants based on the intermediate ideal $\tensIdeal$ are topologically stronger (i.e.\ able to distinguish more knots) than those based on the ideal $\ProjIdeal$, see a discussion in the last item (3) in  Section~\ref{sec:examples_link_invariants}.

\subsection{Invariants of three-manifolds}
In Section~\ref{sec:invariants_of_manifolds} we work with stronger assumptions:
\begin{align*}
	\cat \text{ is in addition finite, unimodular, and twist-nondegenrate} \ .
\end{align*}
Finiteness means that there are only finitely many isomorphism classes of simple objects and that $\cat$ has enough projective objects. Unimodularity was already mentioned above, and twist-nondegeneracy will be explained below. All of these conditions hold for modular tensor categories, see~\cite[Prop.\,2.6]{DGGPR}, and in particular for $\hmodM[\sympFerm]$.

Fix a tensor ideal $\tensIdeal \subset\cat$ and a modified trace $\modTr$ on $\tensIdeal$.
We consider invariants of (closed, oriented) three-manifolds with embedded $\tensIdeal$-admissible
ribbon graphs.
We equally consider Lyubashenko invariants of three-manifolds.
For the latter there are no restrictions on the embedded ribbon graph, in particular it is allowed to be empty.

\subsubsection*{Surgery invariants}
The invariants we study are obtained as surgery invariants, and we need to extend our definition of the invariants
$\renRTinvariant$
of	$\tensIdeal$-admissible
ribbon graphs to so-called bichrome graphs. These consist of an ordinary ribbon graph, called ``blue'' and the surgery link called ``red''. The red component does not carry further labels.
The invariant of bichrome graphs needs another ingredient
to evaluate the red part,
namely the universal Hopf algebra $\coend$ in $\cat$, defined as a coend, and the integral $\intLyu \colon \tensUnit \to \coend$, unique up to a scalar. We review the slightly intricate construction of the corresponding invariant $\renLRTinvariant(T)$ from \cite{DGP-renormalized-Hennings,DGGPR} in \Cref{sec:invariants_of_bichrome_graphs}.

Let $M$ be an oriented closed three-manifold and $T$ a (blue) $\tensIdeal$-admissible ribbon graph embedded in $M$. One can show that 
\begin{align*}
	\invDGGPR (M,T)
	= \DD^{-1-\ell(L)} \anomaly^{-\sigma(L)} \renLRTinvariant( L \cup T )
	\notag
\end{align*}
is a topological invariant of the pair $(M,T)$. Here, $L$ is a surgery link representing~$M$, and $\ell(L)$ and $\sigma(L)$ are the number of components and the signature of the surgery link $L$, respectively.
At this point one needs $\cat$ to be twist-nondegenerate, which means that $\renLRTinvariant(O_\pm) \neq 0$ for $O_\pm$ the red $\pm1$-framed unknot. This is necessary to achieve invariance under the Kirby 1-move (stabilisation).
See \cite{DGGPR} and \Cref{def:invariant_general} for details and for the definition of the constants $\DD$ and $\anomaly$.

The original non-semisimple invariant defined by Lyubashenko \cite{Lyu-inv-MCG} is recovered by taking $\tensIdeal = \cat$ and $\modTr_X$ to be the categorical trace $\trCat_X$: 
\begin{align*}
\invLyu(M,T) = \invDGGPRtrace{\trCat}(M,T)\ .
\end{align*}
The strong point of Lyubashenko's invariant is that it can be defined for just the three-manifold $M$ without an embedded ribbon graph (since one can always insert a loop labelled by $\tensUnit$). On the downside, $\invLyu(M,T)$ is often zero. For example, if $\cat$ is modular, then 
$$\invLyu(S^2 \times S^1,\emptyset) = 0$$ if and only if $\cat$ is non-semisimple (\Cref{prop:Lyubashenko_problems}), and $\invLyu(M,T) = 0$
for all $M$
if $T$ contains an edge labelled by an object in a proper tensor ideal (\Cref{cor:Lyu-zero-on-ideal}).

The price to pay to do better than $\invLyu(M,T)$ is that one needs to include an $\tensIdeal$-admissible ribbon graph in $M$. Furthermore, the ribbon graph must 
wrap non-trivially around the surgery link of $M$,
 or else the invariant reduces to $\invLyu(M,\emptyset)$ times the link invariant $\renRTinvariant(T)$ discussed above (\Cref{prop:relation_between_Lyu_and_rLyu}).

\subsubsection*{Lens space invariants in the symplectic fermion example}

Fix two positive coprime integers $p,q$.
The lens spaces $\mathfrak{L}(p, q)$ are by definition a quotient of $S^3 = SU(2)$ by a $q$-dependent action of $\field[Z]/p\field[Z]$. A surgery presentation can be obtained from a continued fraction expansion of $\frac{p}{q}$. We give a general expression for 
$\invDGGPR (\mathfrak{L}(p, q),T)$, where $T$ is a loop bounding a disc (thus reducing to Lyubashenko's invariant 
as given 
for $\mathfrak{L}(p, q)$ already in \cite{Lyu-inv-MCG}), and where $T$ wraps a non-trivial cycle, i.e.\ is linked non-trivially with $L$ (\Cref{prop:invariants_lens_spaces_general}).
In the symplectic fermion example $\cat = \hmodM[\sympFerm]$, $\sympFerm = \sympFerm(N, \beta)$ we find
\begin{align*}
	\invLyu(\mathfrak{L}(p, q)) = p^N \ .
\end{align*}
Since $|H_1(\mathfrak{L}(p, q))| = p$, this leads us to
\begin{align*}
\text{conjecture:} \quad
\invLyu(M) = \begin{cases} 0 &; H_1(M) \text{ infinite}
\\
|H_1(M)|^N &; \text{else}
\end{cases}
\end{align*}
for all closed 3-manifolds $M$.
This conjecture for symplectic fermions fits into a series of observations relating non-semisimple and semisimple invariants which we briefly recall in \Cref{sec:Examples:lens_spaces}, where we also discuss a possible generalisation to other modular tensor categories. 

Next we choose the projective ideal and $T$ the loop wrapping a (specific) non-trivial cycle coloured by the projective cover $P_\tensUnit$ of the tensor unit.  We get  
\begin{align*}
	\invDGGPR\bigl(\mathfrak{L}(p, q),T(P_\tensUnit)\bigr) = \tfrac12 c + \tfrac12 \beta^2 q^{N} ~,
\end{align*}
where $c \in \{0\} \cup \{ \beta^m | m\in \field[Z] \}$ is determined recursively from the continued fraction expansion (\Cref{prop:invariants_lens_spaces_symp_ferm}).
Note that this is no longer just an invariant of the lens space $\mathfrak{L}(p, q)$, but it depends on our choice of cycle wrapped by $T$. Indeed, $\mathfrak{L}(p, q) \cong \mathfrak{L}(p, q+p)$ as manifolds, but the above expression is not invariant under this substitution.

The most interesting case is again that of the intermediate ideal (again $N=2$). In that case we find for $T$ the non-trivial loop as above and labelled by $P_\parMat$, $\parMat \in \mathrm{Mat}_2(\field[C])$ (\Cref{thm:lens-for-intermediate}):
\begin{align*}
	\invDGGPRtrace{\modTr^{\tensIdeal}}\bigl(\mathfrak{L}(p, q),T(P_\parMat)\bigr) = -2pq(1+\det(\parMat))~.
\end{align*}

To the best of our knowledge, our results provide the first explicit computation of the behaviour of quantum invariants under variation of the choice of tensor ideal, including an intermediate tensor ideal.
It would also be interesting to calculate invariants associated to
ideals with modified traces 
    in pivotal tensor categories as
constructed in~\cite{GKP_m-traces}, but this is outside the scope of our paper.

\subsection{Detailed computations}

In \Cref{sec:qHopf}, we review in detail our conventions for
(pivotal,
quasi-triangular, ribbon)
quasi-Hopf algebras $H$.
Then we recall the 
construction of
modified traces on the projective ideal of the category $\hmodM[H]$ of finite-dimensional left $H$-modules.
This uses the theory~\cite{BGR1} of symmetrised (co)integrals of $H$
which is a quasi-Hopf generalisation of the construction in~\cite{BBGa}. 
Using the right integral of $H$, in Section~\ref{sec:Lyu-coend-int-for-qHopf} we calculate the integral $\intLyu \colon \tensUnit \to \coend$ of the universal Hopf algebra $\coend\in\cat$, following~\cite{BGR2}, and characterise the twist-nondegeneracy condition in quasi-Hopf terms.
We then describe in Section~\ref{sec:pullback-qHopf}  how to get
intermediate
tensor ideals and modified traces from suitable quasi-Hopf subalgebras, 
see the precise statement in \Cref{prop:pullback_modTr_along_restriction_qHopf}.

In \Cref{sec:qHopf_applications} we review the definition of the family of non-semisimple symplectic fermion quasi-Hopf
algebras from \cite{FGR2}.
We show that $\hmodM[{\sympFerm}]$ is prime in the sense of
\cite{Laugwitz-Walton}.
Then we return to the pullback construction, and give an explicit example of a
sub-quasi Hopf algebra $A$ in $H = \sympFerm(2, \beta)$.
We introduce a family of $H$-modules $P_\parMat$, bijectively parameterised by complex $(2
\times 2)$-matrices $\parMat$, which are lifts of the projective cover of the tensor unit in
$\hmodM[A]$.
After having established some properties of these modules, we compute the pullback
modified trace on the pullback ideal of $\ProjIdeal[{\hmodM[A]}]$
in $\hmodM[H]$.

\Cref{sec:explicit_computation_link_invariants} consists of the computations of the
various
(framed)
link invariants first presented at the end of
\Cref{sec:link_invariants_from_ideals}.

\Cref{sec:lens_space_symp_ferm} contains expressions for lens space invariants (both for empty manifolds and with $\tensIdeal$-admissible ribbon graphs) in terms of quasi-Hopf data, see \Cref{prop:lens-quasi-Hopf-general}, where all tensor ideals are treated on equal footing via certain (possibly degenerate) bilinear form on the centre of the quasi-Hopf algebra. Then, we finally give the explicit computation 
for $\hmodM[{\sympFerm}]$ using the $SL(2,\mathbb{Z})$ action on the centre of~$\sympFerm$ computed in~\cite{FGR2}.

\medskip

\noindent
\textbf{Acknowledgements.}
  The authors would like to thank Christian Blanchet, Marco de Renzi, Ehud Meir, Vincentas Mulevi\v{c}ius,
Kenichi Shimizu,
  and Joost Vercruysse for answers, comments, remarks, and helpful discussions.
We also thank the referees for many helpful suggestions and improvements to the paper.  
  JB is supported  by the F\'ed\'eration Wallonie-Bruxelles through a postdoctoral fellowship in the framework of the \emph{actions de recherche concert\'ees}-grant
  ``From algebra to combinatorics, and back''.
The work of AMG was supported by the CNRS, and partially by the ANR grant JCJC ANR-18-CE40-0001 and the RSF Grant No.\ 20-61-46005. 
AMG is also grateful to Hamburg University for its kind hospitality in 2022.
  IR is partially supported by the Deutsche Forschungsgemeinschaft via the Cluster of
  Excellence EXC 2121 ``Quantum Universe'' - 390833306.

\medskip

\noindent
\textbf{Conventions.}
  Throughout an algebraically closed field $\field$ of
arbitrary characteristic 
  is fixed, and all linear structures
  will be considered over it.
  All functors between linear categories are assumed to be linear.

\section{Link invariants from tensor ideals and modified traces}
\label{sec:link_invariants_from_ideals}

In this section we start by reviewing some parts of the theory of tensor ideals and modified traces and how to use these to define link invariants, as developed in 
\cite{GPT,GKP_generalized-trace-and-mod-dim-rib-cat,GKP_ambi-objects-and-trace-functions-for-nonssi-cats,GPV_traces-on-ideals-in-pivotal-categories}. 
We prove some general properties of the resulting invariants. The main result in this section is the observation that -- in the example we consider -- working with an intermediate tensor ideal
allows one to detect internal structure of indecomposable
(but reducible) 
objects not visible to invariants build from either of the two canonical ideals, i.e.\ the whole category or the subcategory of projective objects. 
Invariants for the intermediate ideal may also distinguish knots which the two canonical ideals cannot, in our example certain knots from their mirrors.

\subsection{Tensor categories}\label{sec:finitetenscat}

Following \cite{EGNO} by a \emph{tensor category} we mean  a linear abelian
category which
\begin{itemize}
	\item
	is locally finite (finite-dimensional Hom-spaces and
	every object has finite length)
	\item 
	is monoidal with bilinear tensor product $\tensor$ and simple tensor unit
	$\tensUnit$,\footnote{In \cite[Def.\,4.1.1]{EGNO} it is instead assumed that $\mathrm{End}(\tensUnit)=\field$, however the tensor unit is necessarily simple due to~\cite[Thm.\,4.3.8]{EGNO}.}
	
	\item
	is rigid, i.e.\ each object has a left and a right dual with corresponding evaluation
	and coevaluation morphisms, see below.
\end{itemize}

Recall that an object $P \in \cat$ is projective iff the associated  Hom-functor $\cat(P, \placeholder)$ is exact. A \textit{projective cover} of an object $X \in \cat$ is an epimorphism 
$$
    p_X \colon P_X \to X
$$ 
with $P_X$ projective, such that every epimorphism $P \to X$ with $P$ projective factors epimorphically through $p_X$.
We note that we do not require the existence of projective covers in $\cat$. However, if $\tensUnit$ does have a projective cover $P_\tensUnit$, and if the socle (the largest semisimple subobject) of $P_\tensUnit$ is also $\tensUnit$, then $\cat$ is called \textit{unimodular}.

A tensor category with finitely many isomorphism classes of simple objects and where every object has a projective cover
is called a \emph{finite tensor category}.   
A \emph{fusion category} is a semisimple finite tensor category.
Without loss of generality, we will assume monoidal categories to be strictly
unital and associative.\footnote{
	The strictness assumption is made only in Sections~\ref{sec:link_invariants_from_ideals} and \ref{sec:invariants_of_manifolds} for simplicity of exposition.
	In the technical example computations in Sections \ref{sec:qHopf}--\ref{sec:lens_space_symp_ferm} we work with quasi-Hopf algebras, and their representation categories are non-strict.}

\smallskip

For the rest of this section $\cat$ is a  tensor category in the above sense.
We denote by $\Irr$ a set of representatives of isomorphism classes of simple
objects, and agree that $\tensUnit \in \Irr$.
A choice of projective cover (if it exists) of a simple object $U \in \Irr$ shall be denoted by
$\projCoverMorphism{U} \colon \projCover{U} \to U$.
Our notation for left and right duals of an object $X$ is $\dualL{X}$ and $\dualR{X}$,
and the evaluation and coevaluation morphisms are
\begin{align*}
	\evL_X \colon \dualL{X} \tensor X \to \tensUnit
	, \quad
	\evR_X \colon X \tensor \dualR{X} \to \tensUnit
\end{align*}
and
\begin{align*}
	\coevL_X \colon \tensUnit \to X \tensor \dualL{X} 
	, \quad
	\coevR_X \colon \tensUnit \to \dualR{X} \tensor X 
	\ ,
\end{align*}
respectively.
We will employ graphical calculus in the form of string diagrams, which in our convention are read
from bottom to top.
Thus e.g.\ the left evaluation and coevaluation morphisms for $X$ are drawn as
\begin{align*}
	\evL_X =
	\raisebox{.5em}{
		\ipic{-0.5}{./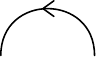}{1.3}
		\put (-65,-30) {$\dualL{X}$}
		\put (-08,-30) {$X$}
	}
	\quad \qandq \quad
	\coevL_X =
	\raisebox{.5em}{
		\ipic{-0.8}{./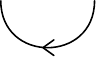}{1.3}
		\put (-65,10) {$X$}
		\put (-05,10) {$\dualL{X}$}
	}
	\ .
\end{align*}

\smallskip
To $\cat$ is associated its \emph{Grothendieck ring} $\grothring$.
This is, as an abelian group, generated by objects of $\cat$ modulo the relations $X +
Z = Y$ in $\grothring$ iff there is a short exact sequence $0 \to X \to Y
\to Z \to 0$ in $\cat$;
we denote the class of $X \in \cat$ in $\grothring$ by $\grothringclass{X}$.
The multiplication determined by $\grothringclass{X} \cdot \grothringclass{Y} =
\grothringclass{X \tensor Y}$ is well-defined because $\tensor$ is exact by
\cite[Prop.\,4.2.1]{EGNO}.
The Grothendieck ring is in fact freely generated over $\field[Z]$ by $\Irr$:
every object $X \in \cat$ possesses a finite composition series, in which the simple
object $U$ occurs  $\grMultiplicity{X}{U}$ times; if $U$ has a projective cover then $\grMultiplicity{X}{U} = \dim_{\field} \cat(\projCover{U}, X)$,
see \cite[Sec.\,1.8]{EGNO}.
Thus the class of any $X \in \cat$ can be written as a sum over classes of simple
objects $U$ with multiplicity $\grMultiplicity{X}{U}$.
In particular, there are \emph{structure constants} $\fusioncoeff{UV}{W} =
\grMultiplicity{U \tensor V}{W} \in \field[N]$ such that
\begin{align}
	\grothringclass{X} \cdot \grothringclass{Y}
	~ = \hspace{-1em}
	\sum_{U, V, W \in \Irr} 
	\hspace{-1em}
	\grMultiplicity{X}{U} \, \grMultiplicity{Y}{V} \,
	\fusioncoeff{UV}{W} \ \grothringclass{W}
	\label{eq:fusion_coeffs_definition}
\end{align}
for all $X, Y \in \cat$.

\subsection{Tensor ideals}
A \emph{right tensor ideal} $\tensIdeal$ of $\cat$ (or \emph{right ideal} for short) 
is a full subcategory such that
\begin{itemize}
	\item
	for all $X \in \tensIdeal$ and $V\in \cat$, also $X \tensor V \in \tensIdeal$, and
	
	\item
	it is closed under retracts, i.e.\ if $Z \in \tensIdeal$ and $Z \cong X \oplus Y$, then also $X, Y \in
	\tensIdeal$.
\end{itemize}
Note that by the second condition, ideals are replete, i.e.\ if $X \cong Y$ with $X \in \tensIdeal$, then $Y \in \tensIdeal$.
The definition of \emph{left} and \emph{two-sided ideals} in $\cat$ is analogous.
We will refer to two-sided ideals just as \emph{ideals}.

\begin{example}
	Two trivial examples of ideals are the subcategory consisting of only the zero object,
	as well as the category $\cat$ itself.
	An important example of an ideal is the subcategory of projective objects, which we
	denote by $\ProjIdeal$. Indeed, closure under tensor products from both sides follows from \cite[Prop.\,4.2.12]{EGNO}, and closure under retracts is the statement that direct summands of projective objects are again projective.
\end{example}

\begin{example}\label{ex:no-ideals}
It can happen that $\cat$ has no nonzero projective objects, i.e.\ that $\ProjIdeal = 0$, and in fact that there are no nonzero proper right tensor ideals at all. As a simple example consider the Hopf algebra $H = \mathbb{C}[X]$, the polynomial ring in one variable, with coproduct $\Delta(X) = X \otimes 1 + 1 \otimes X$. Let $\cat$ be the tensor category of finite-dimensional $H$-modules. The nonzero indecomposable objects are $J_m(q)$, given by an $m$-dimensional vector space on which $X$ acts via a single Jordan block with $q \in \mathbb{C}$ along the diagonal.
The projective cover of each $J_m(q)$ would be the infinite Jordan block $\mathbb{C}[X] \otimes J_1(q)$, which is not contained in $\cat$. To see that there are no proper non-zero right ideals, note that the tensor product decomposes as\footnote{To see this, first check that $J_1(q) \otimes J_m(0) \cong J_m(q)$, and then note that $J_m(0)$ is just the $m$-dimensional representation of a nilpotent part of $sl_2$.}
\begin{equation*}
    J_m(p) \otimes J_n(q) = \bigoplus_{k = 1}^{\min(m,n)} J_{m+n+1-2k}(p+q) 
    \quad \text{for all $m,n \in \mathbb{Z}_{>0}$ and $p,q \in \mathbb{C}$ .}
\end{equation*}
Hence, if a right ideal $\tensIdeal$ contains the indecomposable $J_m(p)$, it also contains the tensor unit $J_1(0)$ as it occurs as a direct summand in $J_m(p) \otimes J_m(-p)$, so that $\tensIdeal = \cat$.
\end{example}

\begin{remark}\label{rem:C-braided}
$\cat$ is \emph{braided} if it is equipped 
	with a natural isomorphism with components 
	\begin{align*}
		\braiding_{X,Y} = \ipic{-0.4}{./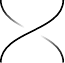}{1.5}
		\put (-52,-29) {$X$}
		\put (-05,-29) {$Y$}
		\put (-48,032) {$Y$}
		\put (-05,032) {$X$}
		~~\colon X \tensor Y \xrightarrow{\sim} Y \tensor X
	\end{align*}
	called the \emph{braiding}, satisfying the hexagon axioms, see
	e.g.~\cite[Ch.\,8]{EGNO}. 
	The inverse is drawn with the opposite crossing.
	Since ideals are replete, there is no distinction between left, right and two-sided ideals in a braided category.
\end{remark}

Let $\tensIdeal$ be a (two-sided) ideal in $\cat$. 
Then $\tensIdeal$ is closed under taking duals, i.e.\ $X \in \tensIdeal$ implies $\dualL{X}, \dualR{X} \in \tensIdeal$, see \cite[Lem.\,3.1.2]{GKP_generalized-trace-and-mod-dim-rib-cat}. For example, the zig-zag identities for the left duality maps exhibit $\dualL{X}$ as a retract of $\dualL{X} \otimes X \otimes \dualL{X}$. 

The next proposition shows that the projective ideal $\ProjIdeal$ is the smallest
non-zero ideal.
    It is a slight adaptation of \cite[Lem.\,4.4.1]{GKP_generalized-trace-and-mod-dim-rib-cat} as we do not assume $\cat$ to be braided.
To give the precise statement, we need some notation.
We say that a right
tensor ideal $\tensIdeal$ is \emph{generated by $P \in \cat$}, written
$\tensIdeal = \generatedIdeal{P}$, if every object $Q \in \tensIdeal$ can be
obtained as a retract of $P \tensor X$ for some $X \in \cat$. 
For example, the ideal
generated by $\tensUnit$ is the entire category.
The ideals of $\cat$ ordered by inclusion of full subcategories form a poset, which immediately yields the notion of a \emph{sub-ideal}.
Note that the intersection $\tensIdeal \cap \tensIdeal[J]$ of two ideals (which is a well-defined operation on
replete subcategories) is a sub-ideal in $\tensIdeal$ and in $\tensIdeal[J]$.
\begin{samepage}
	\begin{proposition}\label{prop:proj_is_smallest}
		Let $\cat$ be a  tensor category. Then:
		\begin{enumerate}[font=\upshape]
        	\item
			If $\tensIdeal$ is a  non-zero right or left 
			ideal in $\cat$, then $\ProjIdeal$
			is a sub-ideal of $\tensIdeal$.
In particular, $\ProjIdeal$ contains no right or left 
			sub-ideal other than zero and itself.	
\end{enumerate}
Suppose now that $\ProjIdeal \neq 0$. Then:
\begin{enumerate}[font=\upshape]
\setcounter{enumi}{1}
\item
$\ProjIdeal = \generatedIdeal{P}$ for any non-zero projective object $P$.	
            
\item
			$\cat$ admits a non-zero proper ideal if and only if $\cat$ is not semisimple.
		\end{enumerate}
	\end{proposition}
\end{samepage}

\begin{proof} 
To see $(1)$, assume first that $\tensIdeal$ is a right ideal in $\cat$. Let  $X \in \tensIdeal$ and $P \in \ProjIdeal$. Since $\tensIdeal$ is a right ideal and $\ProjIdeal$ is an ideal, we have $X \tensor \dualR{X} \tensor
	P \in \tensIdeal \cap \ProjIdeal$. To show that this intersection is $\ProjIdeal$, consider the epimorphism:
$$
X \tensor \dualR{X} \tensor	P \xrightarrow{\evR_X \otimes \id_P} P \ ,
$$
where we used that the right evaluation morphism $\evR_X$ is epi due to simplicity of the tensor unit and that the tensor product is exact, i.e.\ it preserves epimorphisms.
This epimorphism has a section due to projectivity of $P$, and thus $P$ is a retract of an object in $\tensIdeal$. The statement for left ideals in $\cat$ is proven similarly using  left duals and left evaluation maps.

The show $(2)$ observe that $\generatedIdeal{P}$ is a non-zero right
subideal in $\ProjIdeal$ because it contains $P$, and therefore by $(1)$ this subideal consists of all projective objects.
	
	Lastly, for $(3)$ note that
	\begin{align*}
		\cat \text{ is semisimple} 
		&\overset{(*)}{\iff} \tensUnit \in \ProjIdeal
		\\
		&\overset{\text{by }(2)}{\iff} \ProjIdeal = (\tensUnit) = \cat
		\\
		&\overset{\text{by }(1)}{\iff} 
		\text{ the smallest non-trivial ideal is $\cat$ itself}
		\ ,
	\end{align*}
	where the implication $\impliedby$ of $(*)$ follows from $X \cong X \tensor
	\tensUnit$ for every $X \in \cat$ together with the fact that the ideal of projectives is replete.
 \end{proof}

We note that the assumption $\ProjIdeal \neq 0$ in~\Cref{prop:proj_is_smallest}\,(3) is necessary as we saw in \Cref{ex:no-ideals}.

\begin{remark}
	\label{rem:ideals_in_graded_categories}
	Suppose that $\cat$ is a
tensor category which is graded as an abelian
	category by a set $S$, i.e.\ $\cat = \oplus_{s \in S}\,\cat_s$
	such that there are no non-zero morphisms between objects in $\cat_s$ and $\cat_{s'}$ for $s \neq s'$.
	While an ideal $\tensIdeal$ is in general not an abelian subcategory, it is closed
	under retracts, and thus \emph{inherits the grading} in the sense that every object $X
	\in \tensIdeal$ can be written as $\oplus_{s\in S} X_s$ with $X_s \in \tensIdeal_s$.
Let now $\tensIdeal \subset \cat$ be a non-zero right (or left, or two-sided) ideal. By \Cref{prop:proj_is_smallest}\,(1)
we have $\ProjIdeal \subset \tensIdeal \subset \cat$ and hence also $\ProjIdeal_s \subset
	\tensIdeal_s \subset \cat_s$ for each $s \in S$. If $\cat_s$ is semisimple for some $s \in S$, then $\ProjIdeal_s = \cat_s$ and hence also $\tensIdeal_s = \cat_s$.
\end{remark}

\subsection{Modified traces on tensor ideals}
Recall that a \emph{pivotal structure} on a
tensor category is a monoidal
natural isomorphism $\pivotalStruct\colon \id_{\cat} \To \ddualL{(\placeholder)}$.
Equivalently, it is a monoidal natural isomorphism between the left and the right dual
functor.

Let from now on $\cat$ be pivotal.
For simplicity of exposition, we fix a left duality functor, and take as right dual of an object $X$ the object $\dualL{X}$ with (co)evaluations induced by the pivotal
structure, meaning that e.g.\ $\evR_{X} = \evL_{\dualL{X}} \circ (\pivotalStruct_{X}
\tensor \id_{\dualL{X}})$.
Then one can define the \emph{right (quantum \emph{or} categorical) trace} of an
endomorphism $f \in \End_{\cat}(X)$ as the number
\begin{align}
	\trCat[r, \cat]_X(f) = 
	\evR_{X}
	\circ 
	(f \tensor \id_{\dualL{X}})
	\circ \coevL_X
	\in \End_{\cat}(\tensUnit)
	\ .
	\notag
\end{align}
One can similarly define the left categorical trace $\trCat[l, \cat]$ using the other pair of duality morphisms. 
Later we will only deal with ribbon categories, see \Cref{rem:modified_traces_in_ribbon_cats}, and both traces then agree. 
We will thus just write $\trCat_X(f)$. 
The \emph{(quantum \emph{or} categorical) dimension} 
of $X \in \cat$ is
$\dimCat(X) =  \trCat _X(\id_X)$.

\begin{lemma}
	\label{prop:categorical_trace_vanishes_on_all_proper_ideals}
	Let $\tensIdeal$ be a proper right ideal in the pivotal 
tensor category
	$\cat$.
	Then the right categorical trace of $\cat$ vanishes identically on $\tensIdeal$, that
	is $\trCat|_{\tensIdeal} \equiv 0$.
\end{lemma}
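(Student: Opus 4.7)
The plan is to argue by contradiction via the retract-closure property of ideals. Suppose for contradiction that there exist $X \in \tensIdeal$ and $f \in \End_\cat(X)$ with $\lambda \defined \trCat_X(f) \neq 0$. Since $\End_\cat(\tensUnit) \cong \field$, the scalar $\lambda$ is invertible, and the defining formula for the trace reads
\begin{equation*}
\evR_X \circ (f \tensor \id_{\dualL{X}}) \circ \coevL_X = \lambda \cdot \id_\tensUnit.
\end{equation*}
This exhibits $\tensUnit$ as a retract of $X \tensor \dualL{X}$: the morphism $\coevL_X \colon \tensUnit \to X \tensor \dualL{X}$ is a section, with retraction $\lambda^{-1} \evR_X \circ (f \tensor \id_{\dualL{X}})$.

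Next I would use that $\tensIdeal$ is a right ideal to conclude $X \tensor \dualL{X} \in \tensIdeal$, and then apply retract-closure to deduce $\tensUnit \in \tensIdeal$. Once the tensor unit sits in $\tensIdeal$, every object $Y \in \cat$ lies in $\tensIdeal$ since $Y \cong \tensUnit \tensor Y \in \tensIdeal$ by the right ideal property (and repleteness). Hence $\tensIdeal = \cat$, contradicting the assumption that $\tensIdeal$ is proper. Therefore $\lambda = 0$, and since $X$ and $f$ were arbitrary, $\trCat|_{\tensIdeal} \equiv 0$.

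There is no real obstacle: the proof is essentially the observation that a non-zero categorical trace of an endomorphism of $X$ forces $\tensUnit$ to be a direct summand of $X \tensor \dualL{X}$, and this is incompatible with properness once combined with the two defining closure properties of a right ideal. One small point of care is the use of left duality for the coevaluation and the pivotally-induced right evaluation; this is consistent with the fixed duality conventions set just before the statement, so the composite is indeed the right categorical trace as defined.
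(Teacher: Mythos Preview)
Your proof is correct and follows essentially the same argument as the paper's: a non-zero trace exhibits $\tensUnit$ as a retract of $X \tensor \dualL{X}$, which lies in the right ideal, forcing $\tensUnit \in \tensIdeal$ and hence $\tensIdeal = \cat$. The only cosmetic difference is that the paper phrases the last step as ``$\tensIdeal$ contains the ideal generated by $\tensUnit$, which is $\cat$'', while you spell it out via $Y \cong \tensUnit \tensor Y$.
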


\begin{proof}
	Let $X \in \tensIdeal$.
	The right trace of an endomorphism $f$ of $X$ is a morphism
	\begin{align*}
		\trCat_X(f) = \tensUnit \to X \tensor \dualL{X} \to \tensUnit
		\ .
	\end{align*}
	If this is non-zero, then $\tensUnit$ is a retract of $X \tensor \dualL{X}$.
	Since $X \in \tensIdeal$ and $\tensIdeal$ is a right ideal, also $X \tensor\dualL{X} \in \tensIdeal$. But
	ideals are closed under retracts, and so $\tensUnit \in \tensIdeal$.
	Thus $\tensIdeal$ contains the ideal generated by $\tensUnit$, which is $\cat$.
\end{proof}

If the category is not semisimple, or equivalently if it admits a non-trivial proper ideal, then to obtain a version of \emph{trace} and \emph{dimension} which does not vanish on the ideal, the notion of a modified trace was introduced and studied in
\cite{GPT,GKP_generalized-trace-and-mod-dim-rib-cat,GKP_ambi-objects-and-trace-functions-for-nonssi-cats,GPV_traces-on-ideals-in-pivotal-categories}.
A \emph{right modified trace} on a right ideal $\tensIdeal$ of $\cat$ is a family of linear maps
\begin{align*}
	\left\{\modTr_X \colon \End_{\cat}(X) \to \field \right\}_{X \in \tensIdeal}
\end{align*}
satisfying the following two axioms:
\begin{enumerate}
	\item
	\emph{Cyclicity:}
	For every pair of morphisms $f,g$ between objects of $\tensIdeal$
	\begin{align*}
		\modTr_X ( X \xrightarrow{f} Y \xrightarrow{g} X )
		= \modTr_Y ( Y \xrightarrow{g} X \xrightarrow{f} Y )~.
	\end{align*}
	\item
	\emph{right partial trace property:}
	for $X \in \tensIdeal$,  $V\in \cat$, and $f \in \End_{\cat}(X\tensor V)$
	\begin{align*}
		\modTr_{XV} (f)
		= 
		\modTr_X
		\bigg(
    X \xrightarrow{\id_X \coevL_V} 
    X V \dualL{V}
    \xrightarrow{f \id_{\dualL{V}}}
    X V \dualL{V}
    \xrightarrow{\id_X \evR_V}
    X
		\bigg)~.
	\end{align*}
\end{enumerate}
Above, and also in many places below, we have omitted the $\tensor$-symbol for better readability.

The right partial trace property is simply the natural compatibility condition between
the right modified and categorical traces, and it establishes the `multiplicative'
property $\modTr_{XV} (f \tensor g) = \trCat_V(g) \ \modTr_X(f)$.

Analogously one defines \emph{left modified traces} on left ideals, where in (2) one uses the left partial trace.
On two-sided ideals, one can have both left and right modified traces;
a left modified trace on an ideal $\tensIdeal$ does not have to be right, but if it is, we will simply speak of a \emph{two-sided modified trace}, or \emph{modified trace} for
short.

\begin{remark}
	\label{rem:modified_traces_in_ribbon_cats}
	Adding to Remark~\ref{rem:C-braided}, 
    we call $\cat$ \textit{ribbon}, if it is braided and there is a natural family of isomorphism $\theta_X\colon X \to X$ such that $c_{Y,X} \circ c_{X,Y} \circ (\theta_X \otimes \theta_Y) = \theta_{X \otimes Y}$ and $\theta_{X^*} = (\theta_X)^*$. 
    For a ribbon category $\cat$, 
    similarly to the left/right categorical traces, 
    a left modified trace on a (necessarily two-sided) ideal is automatically right, and vice versa.
	The proof can easily be adapted from
	e.g.~\cite[Thm.\,3.3.1]{GKP_generalized-trace-and-mod-dim-rib-cat}.
\end{remark}

A (left, or right, or two-sided) modified trace $\modTr$ on an ideal $\tensIdeal$
induces canonical pairings
\begin{align}
	\cat(X, V) \times \cat(V, X) \to \field,
	\quad
	(f, g) \mapsto \modTr_X (g \circ f)
	\qquad
	\text{for }
	X \in \tensIdeal, V \in \cat \ ,
	\notag
\end{align}
and one calls $\modTr$ \emph{non-degenerate} if all of these pairings are
non-degenerate.

\begin{example}
	\label{example:modified_traces}
	\begin{enumerate}
		\item
		The categorical trace $ \trCat $ of $\cat$ serves as a modified trace on the ideal
		$\tensIdeal = \cat$.
		
		\item
		\label{example:modified_traces:categorical_trace_unique}
		Any modified trace $\modTr$ on $\cat$ is a scalar multiple of the categorical
		trace $ \trCat $:
		from $\tensUnit \tensor X = X$, 
        together with cyclicity and the partial trace
		property, 
  one finds $\modTr_X(f) =
		\modTr_{\tensUnit}(\id_{\tensUnit})  \trCat _X(f)$.
		
		\item
		If $\cat$ is unimodular, i.e.\ 
        $\cat( \tensUnit, \projCover{\tensUnit} ) \cong
		\field$, then $\ProjIdeal$ admits a unique (up to scalar) non-degenerate modified
		trace~\cite[Cor.\,5.6]{GKP_m-traces}.
Fix a non-zero $\injHullMorphism{\tensUnit} \in \cat( \tensUnit, \projCover{\tensUnit} )$. By non-degeneracy, we must have 
$\modTr_{\projCover{\tensUnit}}
(\injHullMorphism{\tensUnit} \circ \projCoverMorphism{\tensUnit}) \neq 0$, and we can normalise the modified trace so that $\modTr_{\projCover{\tensUnit}}
(\injHullMorphism{\tensUnit} \circ \projCoverMorphism{\tensUnit}) =1$.
 \end{enumerate}
\end{example}

\begin{corollary}\label{cor:cat-tr-nondeg}
Suppose $\ProjIdeal \neq 0$. Then
	the categorical trace $ \trCat $ is non-degenerate iff $\cat$ is semisimple.
\end{corollary}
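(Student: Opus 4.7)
The plan is to prove the two implications separately, using only results already established in the excerpt (Lemma~\ref{prop:categorical_trace_vanishes_on_all_proper_ideals}, Proposition~\ref{prop:proj_is_smallest}, and Example~\ref{example:modified_traces}).

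For the direction \emph{semisimple implies non-degenerate}, I would first observe that if $\cat$ is semisimple then every object is projective, so $\ProjIdeal = \cat$. Moreover, a semisimple $\cat$ is automatically unimodular: the tensor unit is its own projective cover, so $\cat(\tensUnit, \projCover{\tensUnit}) = \cat(\tensUnit,\tensUnit) = \field$ by simplicity of $\tensUnit$. Example~\ref{example:modified_traces}(3) then applies and guarantees a non-degenerate modified trace on $\ProjIdeal = \cat$, unique up to scalar. On the other hand, Example~\ref{example:modified_traces}(\ref{example:modified_traces:categorical_trace_unique}) shows that every modified trace on all of $\cat$ is a scalar multiple of $\trCat$. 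Since $\trCat_{\tensUnit}(\id_{\tensUnit}) = \id_{\tensUnit} \neq 0$, the categorical trace is itself a non-zero scalar multiple of the (unique up to scalar) non-degenerate modified trace, so it is non-degenerate.

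For the converse I would argue by contrapositive: assume $\cat$ is not semisimple and exhibit an explicit degeneracy. By Proposition~\ref{prop:proj_is_smallest}(4), $\ProjIdeal$ is then a proper non-zero ideal of $\cat$, and by Lemma~\ref{prop:categorical_trace_vanishes_on_all_proper_ideals} we have $\trCat_Y(h) = 0$ for every $Y \in \ProjIdeal$ and every $h \in \End_{\cat}(Y)$. Pick $X = \projCover{\tensUnit}$, which lies in $\ProjIdeal$ and is non-zero, and set $V = X$, $f = \id_X \neq 0$. For every $g \in \cat(V,X) = \End_{\cat}(X)$ the composition $g \circ f = g$ is an endomorphism of an object in $\ProjIdeal$, so $\trCat_X(g \circ f) = 0$. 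Thus the pairing $\cat(X,V) \times \cat(V,X) \to \field$ is degenerate in its first argument, and $\trCat$ fails to be non-degenerate.

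The argument is essentially a direct assembly of earlier statements, so there is no serious obstacle; the only point worth flagging is the observation that semisimplicity forces unimodularity, which is what unlocks the uniqueness-plus-non-degeneracy statement of Example~\ref{example:modified_traces}(3) and lets us transport non-degeneracy from the abstract modified trace to $\trCat$ itself via Example~\ref{example:modified_traces}(\ref{example:modified_traces:categorical_trace_unique}).
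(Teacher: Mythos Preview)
Your proof is correct and follows essentially the same approach as the paper: both directions invoke Example~\ref{example:modified_traces}(2),(3) together with unimodularity for the semisimple case, and Lemma~\ref{prop:categorical_trace_vanishes_on_all_proper_ideals} with Proposition~\ref{prop:proj_is_smallest}(4) for the converse. Your argument is in fact slightly more explicit than the paper's, which leaves implicit the step where vanishing of $\trCat$ on a non-zero proper ideal actually produces a degenerate pairing.
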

\begin{proof}
	By \Cref{prop:categorical_trace_vanishes_on_all_proper_ideals}, $ \trCat $
	vanishes on any proper ideal, so if $ \trCat $ is non-degenerate, the only possible proper
	ideal of $\cat$ is $(0)$, and we can conclude from \Cref{prop:proj_is_smallest}\,(3) that
	$\cat$ is semisimple.
	Conversely, if $\cat$ is semisimple, then it is unimodular and $\cat = \ProjIdeal$.
	By points (2) and (3) in \Cref{example:modified_traces}, there exists a
	non-degenerate modified trace on $\cat$ which is a scalar multiple of $ \trCat $.
\end{proof}

\begin{remark}
In addition to \Cref{cor:cat-tr-nondeg}, we observe that in the non-semisimple case $\trCat$ is maximally degenerate in the following sense.
	Let $\tensIdeal$ be a right ideal in $\cat$ and $\modTr$ a right modified trace on $\tensIdeal$. Define
	\begin{align*}
		\mathrm{Ann}(\tensIdeal,\modTr)
		:= \{ X \in \tensIdeal \,|\, \modTr_X(f) = 0 \text{ for all } f \in \End_{\cat}(X) \} ~.
	\end{align*}
	Then $\mathrm{Ann}(\tensIdeal,\modTr)$ is again a right ideal. Indeed, it is closed under retracts by cyclicity of $\modTr$ and the partial trace property shows that for $X \in \mathrm{Ann}(\tensIdeal,\modTr)$ and $V \in \cat$ also $X \otimes V\in \mathrm{Ann}(\tensIdeal,\modTr)$. Since the categorical trace is non-zero on the tensor unit, $\mathrm{Ann}(\cat,\trCat)$ is a proper
    right ideal, and by \Cref{prop:categorical_trace_vanishes_on_all_proper_ideals} every other right ideal is contained in it. 
	If $\cat$ is semisimple, we have $\mathrm{Ann}(\cat,\trCat) = (0)$ by \Cref{cor:cat-tr-nondeg}. 
	If $\cat$ is not semisimple, we have the inclusions
	\begin{align*}
		(0)
        \,\subset\,
        \ProjIdeal
		\,\subset\,
		\tensIdeal
		\,\subset\,
		\mathrm{Ann}(\cat,\trCat) 
		\,\subsetneq\,
		\cat
	\end{align*}
	of right ideals, 
	for any proper and non-zero right ideal $\tensIdeal$.
\end{remark}

\medskip
In \cite{FOG}, the following `pullback construction' of tensor ideals and modified traces is introduced.
Recall that for a
monoidal functor $F \colon \cat[C] \to \cat[D]$ between rigid
categories, there is a canonical natural isomorphism $\xi_X \colon F(\dualL{X}) \to
\dualL{(FX)}$, 
for an explicit expression see
\eqref{eq:canonical_iso_duals_strong_monoidal_functor}.
If $\cat[C]$ and $\cat[D]$ are pivotal, then we say that $F$ is \emph{pivotal} if we
have commuting diagrams for all $X \in \cat[C]$:
\begin{equation}
\label{eq:comm_diag:preserve_pivotal_structures}
\begin{tikzcd}
FX \ar[r, "F \pivotalStruct^{\cat[C]}_X"]
\ar[d, swap, "\pivotalStruct^{\cat[D]}_{FX}"]
& F(\ddualL{X})
\ar[d, "\xi_{\dualL{X}}"]
\\
\ddualL{(FX)}
\ar[r, swap, "\dualL{(\xi_X)}"]
&
\dualL{(F(\dualL{X}))}
\end{tikzcd}
\end{equation}

The next statement is a special case of the construction in \cite[Prop.~2.6\,\&\,2.7]{FOG} for submodule categories and module traces.

\begin{proposition}
	\label{prop:pullback_ideal}
	Let $F \colon \cat \to \cat[D]$ be a monoidal functor between two
	tensor categories, and let $\tensIdeal$ be a right ideal of $\cat[D]$.
	\begin{enumerate}
		\item
		The pullback $F^\ast \tensIdeal$ of $\tensIdeal$ along $F$ --- i.e.\ the full
		subcategory consisting of $X \in \cat$ such that $F(X) \in \tensIdeal$ --- is a
		right ideal of $\cat$.
		
		\item
		Let the categories in addition be pivotal, and suppose that $F$ is
		pivotal.
		If $\modTr$ is a right modified trace on $\tensIdeal$, then the pullback $F^\ast
		\modTr$ of $\modTr$ along $F$ --- i.e.\ the family of linear maps defined by 
		\begin{align}
			\big( F^\ast \modTr \big)_X (f)
			= \modTr_{FX} \big( Ff \big)
			\notag
		\end{align}
		for $X \in F^\ast \tensIdeal$ --- is a right modified trace on $F^\ast \tensIdeal$.
	\end{enumerate}
\end{proposition}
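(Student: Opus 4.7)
For part (1), I would check the two closure axioms directly. Closure under right tensoring follows from strong monoidality: for $X \in F^\ast\tensIdeal$ and $V \in \cat$ one has $F(X \tensor V) \cong FX \tensor FV$, which lies in $\tensIdeal$ because $FX \in \tensIdeal$ and $\tensIdeal$ is a right ideal; repleteness of $\tensIdeal$ then places $F(X \tensor V)$ itself in $\tensIdeal$. Closure under retracts uses linearity of $F$: if $X\oplus Y \in F^\ast\tensIdeal$, then $F(X\oplus Y) \cong FX \oplus FY$ belongs to $\tensIdeal$, and the retract-closure of $\tensIdeal$ forces $FX, FY \in \tensIdeal$.

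For part (2), cyclicity of $F^\ast\modTr$ is immediate from functoriality of $F$ combined with cyclicity of $\modTr$. The substantive part is the right partial trace property. Given $X \in F^\ast\tensIdeal$, $V \in \cat$, and $f \in \End_{\cat}(X \tensor V)$, the plan is to use the strong monoidal isomorphisms $\phi_{X,V}\colon F(X\tensor V) \xrightarrow{\sim} FX \tensor FV$ together with cyclicity of $\modTr$ to rewrite
\[
(F^\ast\modTr)_{X\tensor V}(f) \,=\, \modTr_{F(X\tensor V)}(Ff) \,=\, \modTr_{FX \tensor FV}\bigl(\phi_{X,V} \circ Ff \circ \phi_{X,V}^{-1}\bigr).
\]
Since $FX \in \tensIdeal$, one may now apply the right partial trace property of $\modTr$ on $\tensIdeal$ to produce an endomorphism $h \in \End_{\cat[D]}(FX)$ whose $\modTr_{FX}$-value equals the left-hand side. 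The remaining task is to identify $h$ with $Fg$, where $g \in \End_{\cat}(X)$ is the right partial trace of $f$ taken in $\cat$.

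The main obstacle is precisely this identification: one must show that $F$ intertwines $\evR_V$ and $\coevL_V$ in $\cat$ with $\evR_{FV}$ and $\coevL_{FV}$ in $\cat[D]$, up to the canonical isomorphisms $\phi_{X,V}$ and $\xi_V\colon F(\dualL V)\to\dualL{(FV)}$. Compatibility with $\coevL_V$ is automatic for any strong monoidal functor between rigid categories (it is essentially the characterisation of $\xi$). Compatibility with $\evR_V$ is the step that genuinely requires pivotality of $F$: writing $\evR_V = \evL_{\dualL V} \circ (\pivotalStruct_V \tensor \id_{\dualL V})$ as in the excerpt and invoking \eqref{eq:comm_diag:preserve_pivotal_structures}, one can trade $F(\pivotalStruct_V)$ for $\pivotalStruct_{FV}$ composed with appropriate $\xi$'s. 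A routine diagram chase combining this with naturality of $\phi$ then shows that all instances of $\phi$ and $\xi$ cancel in the expression for $h$, yielding $h = Fg$ and establishing the partial trace axiom.
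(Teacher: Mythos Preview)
Your argument is correct. Part~(1) is a direct verification, and your treatment of part~(2) correctly isolates the only nontrivial point: that $F$ preserves right partial traces, which hinges on the pivotality condition \eqref{eq:comm_diag:preserve_pivotal_structures} precisely to handle $\evR$.

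The paper takes a different, less elementary route. Rather than verifying the modified-trace axioms by hand, it recognises the statement as a special case of the module-category framework of \cite{FOG}: the strong monoidal functor $F$ makes $\cat[D]$ into a right $\cat$-module category via $A \rightact X = A \otimes FX$, a right ideal $\tensIdeal \subset \cat[D]$ becomes a $\cat$-submodule category closed under retracts, and a right modified trace on $\tensIdeal$ is shown to be a \emph{$\cat$-module trace} in the sense of \cite[Def.~2.4]{FOG}. The conclusion then follows by invoking \cite[Prop.~2.6\,\&\,2.7]{FOG}. The verification that $\modTr$ is a module trace still requires inserting $\xi_X \circ \xi_X^{-1}$ and using \eqref{eq:comm_diag:preserve_pivotal_structures}, which is essentially the same diagram chase you outline. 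What the paper's approach buys is a conceptual placement of the result within the general theory of module traces; your approach is self-contained and avoids importing that machinery.
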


\begin{proof}
Since $F \colon \cat[C] \to \cat[D]$ is a monoidal functor, $A
\rightact X = A \tensor FX$ for $X \in \cat[C]$, $A \in \cat[D]$, defines a right
$\cat[C]$-module structure on $\cat[D]$, and $F$ is a right $\cat[C]$-module functor
with respect to $\rightact$. 
Note that the associators and unitors of $\rightact$ are induced by the multiplication
and unit isomorphisms of $F$.
A right ideal $\tensIdeal \leq \cat[D]$, being closed under tensor products from the
right, is a right submodule category which is also closed under retracts, and so
\cite[Prop.~2.6]{FOG} implies that $F^* \tensIdeal$ is a right ideal.

Now let $\modTr$ be a right modified trace on $\tensIdeal \leq \cat[D]$, and let $F$ preserve pivotal structures.
We claim that these conditions guarantee that $\modTr$ is a $\cat[C]$-module trace on
the $\cat[C]$-module endocategory $(\tensIdeal, \id)$ in the sense
of~\cite[Def.~2.4]{FOG}.
To see this, note first that the canonical natural isomorphism $\xi_X \colon
F(\dualL{X}) \to \dualL{(FX)}$ is given by the composition
\begin{align}
	F(\dualL{X})
	&
	\xrightarrow{\id \tensor \coev_{FX}}
	F(\dualL{X}) \tensor FX \tensor \dualL{(FX)}
	\xrightarrow{F_2(\dualL{X}, X) \tensor \id}
	F(\dualL{X} \tensor X) \tensor \dualL{(FX)}
	\notag \\
	&
	\xrightarrow{F(\ev_X) \tensor \id}
	F\tensUnit \tensor \dualL{(FX)}
	\xrightarrow{F_0\inv \tensor \id}
	\dualL{(FX)}
	\ ,
	\label{eq:canonical_iso_duals_strong_monoidal_functor}
\end{align}
where $F_2$ and $F_0$ are, respectively, the multiplication and the unit isomorphisms of the monoidal functor $F$.
Then one inserts $\xi_X \circ \xi_X\inv$ into the right partial trace condition
satisfied by $\modTr$ for any endomorphism of $A \rightact X$, and uses the
commutativity of the diagram \eqref{eq:comm_diag:preserve_pivotal_structures} to
conclude that $\modTr$ is a module trace.
Thus the claim about pullback modified traces follows from the more general
construction in~\cite[Prop.~2.7]{FOG}.
\end{proof}

Similar statements hold of course for all variations, e.g.\ the pullback of a left
modified trace on a left ideal.
Note also that for $\cat$ ribbon, the pullback of \emph{any} modified trace is
automatically two-sided.
\Cref{prop:pullback_ideal} will be our source of non-trivial intermediate ideals with modified trace. Namely, we consider $F^\ast \tensIdeal \subset \cat$ for  $F : \cat \to \cat[D]$ pivotal, $\cat[D]$ an appropriate unimodular tensor category, and $\tensIdeal$ the projective ideal in~$\cat[D]$, see \Cref{sec:Invariants_of_ribbon_graphs} for the application to link invariants and \Cref{sec:pullback-qHopf} for details in the case of quasi-Hopf algebras.

\begin{remark}\label{rem:tens-ideal-module}
    First paragraph of the proof of \Cref{prop:pullback_ideal} suggests a  more general source of tensor ideals in $\cat$. Consider a $\cat$-module functor $F\colon \cat \to \mathcal{M}$, for $\mathcal{M}$ a right $\cat$-module category with the action functor $\rightact$ exact in the second variable,
    and $\cat$ is considered as the right regular $\cat$-module. 
    Such  $\cat$-module functors are parametrised by objects $M\in\mathcal{M}$, i.e.\ $F$ can be written in the form $F_M: X \mapsto  M \rightact X$. Assume furthermore, $\mathcal{I}$ is a $\cat$-submodule category
   of~$\mathcal{M}$  closed under retracts. One example of such a $\cat$-submodule is $\ProjIdeal[\mathcal{M}]$, due to rigidity of $\cat$ and  the assumption on~$\rightact$. Then, $F_M^*\mathcal{I}$ is a right $\cat$-submodule of the regular module category $\cat$, by \cite[Prop.~2.6]{FOG}, which is the same as
   right tensor ideal in $\cat$, and this is a two-sided ideal for braided $\cat$. 
By \cite[Prop.~2.7]{FOG}, a modified trace on $F_M^*\mathcal{I}$ can again be obtained by pulling back a module trace on $\mathcal{I} \subset \mathcal{M}$.
\end{remark}

\subsection{Invariants of \texorpdfstring{$\cat$}{C}-coloured ribbon graphs and links}
\updatelabelname{Invariants of C-coloured ribbon graphs and links}
\label{sec:Invariants_of_ribbon_graphs}
Let now $\cat$ in addition be ribbon, and let us denote by $\CategoryColoredRibGraph$
the rigid ribbon category 
of ribbon graphs embedded in $\field[R]^2 \times [0,1]$. Recall that a ribbon graph consists of oriented framed strands coloured by objects of $\cat$ and of rectangles called coupons where strands can end, coloured by morphisms in $\cat$. More precisely, $\CategoryColoredRibGraph$ has (see \cite[Ch.\,I.2]{Turaev-book} for full details)
\begin{itemize}
	\item
	objects: 
	Finite tuples 
	$(\vvec{V}, \vvec{\varepsilon}) = ( (V_1, \varepsilon_1), \ldots,
	(V_m, \varepsilon_m))$ with $V_k \in \cat$ and
	$\varepsilon_k\in\{+,-\}$.
	We denote by $\lvert \vvec{V} \rvert = m$ the length of the tuple.
	
	\item
	morphisms: 
	Each $(\vvec{V}, \vvec{\varepsilon}) \in \CategoryColoredRibGraph$ determines in a
	natural way a set of $\cat$-coloured framed oriented points in $\field[R]^2$, say,
	along the $x$-axis.
	A morphism 
	$T\colon (\vvec{V}, \vvec{\varepsilon}) \to (\vvec{V'}, \vvec{\varepsilon'})$ 
	is then an isotopy class\footnote{
The isotopy can move the ribbon graph freely in the interior $\field[R]^2 \times (0,1)$, but keeps boundary points and framings fixed. E.g.\ coupons can be rotated and translated arbitrarily, dragging the attached ribbons along accordingly.
    } of $\cat$-coloured ribbon graphs in $\field[R]^2 \times
	[0,1]$ from $(\vvec{V}, \vvec{\varepsilon}) \times \{0\}$ to
	$(\vvec{V'}, \vvec{\varepsilon'}) \times \{1\}$ such that framings,
	orientations, and labels match --- e.g.\ an incoming boundary vertex $(V,+)$ means
	that the corresponding strand is coloured with $V$ and oriented upward.
\end{itemize}
The tensor product acts on objects by concatenation of lists, and the tensor unit of
$\CategoryColoredRibGraph$ is the empty tuple $\emptyset$.

We will draw morphisms in $\CategoryColoredRibGraph$ as blue $\cat$-coloured ribbon
graphs so as not to confuse them with string diagrams in $\cat$, for which we use
black.

The \emph{Reshetikhin-Turaev functor} $\RTfunctor \colon
\CategoryColoredRibGraph \to \cat$ is a ribbon functor which acts on objects as
\begin{align}
	\RTfunctor( \vvec{V}, \vvec{\varepsilon} )
	= \bigotimes_{i = 1}^{ \lvert \vvec{V} \rvert}
	V_i^{\varepsilon_i}
	\quad \text{where }
	V^+ = V
	\text{ and }
	V^- = \dualL{V}
	\label{eq:FC-functor-on-obj}
\end{align}
and on morphisms via evaluation of the corresponding string diagram, see \cite{RT90} and~\cite[Ch.\,I]{Turaev-book}.

Fix now an ideal $\tensIdeal$ in $\cat$ and a modified trace $\modTr$ on
$\tensIdeal$.
A morphism in $\CategoryColoredRibGraph$ is \emph{closed} 
if it is an endomorphism of the tensor unit $\emptyset$ of $\CategoryColoredRibGraph$, 
and a  closed  $\cat$-coloured ribbon
graph $T$ is \emph{$\tensIdeal$-admissible} if it has at least one edge coloured by an
object in $\tensIdeal$.
If the ideal $\tensIdeal$ is clear from the context, we will sometimes just say
\emph{admissible}.
Given an $\tensIdeal$-admissible graph $T$ with an edge coloured by $X \in \tensIdeal$,
a \emph{cutting presentation} of $T$ is a $\cat$-coloured ribbon graph $T_X \in
\CategoryColoredRibGraph( (X,+), (X,+) )$ such that
\begin{align*}
	T = \trCat[{ \CategoryColoredRibGraph }]_{(X,+)} (T_X)
	\ .
\end{align*}
Note that cutting presentations are not necessarily unique.
Nevertheless, the properties of modified traces allow for the following:

\begin{theorem}
	\label{prop:renormalized_link_invariant}
	Let $T$ be an $\tensIdeal$-admissible $\cat$-coloured ribbon graph.
	Then the number
	\begin{align*}
		\renRTinvariant(T) := \modTr_X \big( \RTfunctor (T_X) \big)
		\quad
		\text{ for some cutting presentation $T_X$ of $T$ }
	\end{align*}
	is well-defined, i.e.\ an invariant of the isotopy class of $T$.
\end{theorem}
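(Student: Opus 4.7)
The plan is to establish two invariance statements: (i) independence of the cutting presentation for a fixed $\tensIdeal$-labelled edge, and (ii) independence of the choice of which $\tensIdeal$-labelled edge is cut. Together these imply well-definedness, since any two cutting presentations of $T$ can be connected by first migrating the cut to a common edge via (ii), then relating the two presentations on that edge via (i).

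For (i), I would exploit the cyclicity of $\modTr$. Suppose $T_X$ and $T'_X$ are two cutting presentations of $T$ obtained by cutting the same $X$-labelled edge at different points $p, p'$. By a small isotopy of $T$ in $S^3$, I can arrange that $T_X$ factors as $T_X = \beta \circ \alpha$, where $\alpha \in \CategoryColoredRibGraph((X,+),(W, \vvec{\varepsilon}))$ corresponds to the piece of the graph lying ``above'' $p$ up to a secondary cross-section through $p'$, and $\beta \in \CategoryColoredRibGraph((W, \vvec{\varepsilon}),(X,+))$ corresponds to the complementary piece. By construction, cutting at $p'$ instead and tracking orientations yields $\RTfunctor(T'_X) = \RTfunctor(\alpha) \circ \RTfunctor(\beta)$, viewed as an endomorphism of $\RTfunctor((W,\vvec{\varepsilon}))$. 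Since $X \in \tensIdeal$, and since the retract/ideal property ensures that every object appearing as intermediate $W$ can be taken within $\tensIdeal$ (or else one uses the partial trace property first to reduce to this case), the cyclicity axiom gives $\modTr_X(\RTfunctor(\beta \circ \alpha)) = \modTr_{W}(\RTfunctor(\alpha \circ \beta))$, yielding the desired equality.

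For (ii), suppose $T$ has two admissible edges with labels $X, Y \in \tensIdeal$. I would cut both edges simultaneously to produce a morphism $\widetilde{T}$ in $\CategoryColoredRibGraph$ whose source and target are an ordered tensor product involving $(X,+)$ and $(Y,+)$. Closing the $Y$-edge recovers a cutting presentation $T_X$ of the single edge, which at the level of $\cat$ amounts to applying a right categorical partial trace: $\RTfunctor(T_X) = \trCat_Y\bigl(\RTfunctor(\widetilde{T})\bigr)$. The right partial trace axiom of $\modTr$ then gives $\modTr_X(\RTfunctor(T_X)) = \modTr_{X\tensor Y}(\RTfunctor(\widetilde{T}))$. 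Applying the same argument after swapping the roles of $X$ and $Y$, together with cyclicity (and the fact that in a ribbon category modified traces are automatically two-sided, see \Cref{rem:modified_traces_in_ribbon_cats}, which lets me use either left or right partial traces as needed to reorder factors), yields $\modTr_Y(\RTfunctor(T_Y)) = \modTr_{X\tensor Y}(\RTfunctor(\widetilde{T}))$, so the two single-edge cuts give the same number.

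The main obstacle I anticipate is the careful bookkeeping of orientations and the relative order of tensor factors when rewriting a single-edge cut as a partial trace of a double-edge cut; in particular, the two admissible edges may be interleaved with other strands and coupons, so reducing the general situation to a clean partial-trace identity requires first using isotopy in $\CategoryColoredRibGraph$ to bring the two cut strands into adjacent parallel positions. Once this is achieved, the partial trace and cyclicity axioms of $\modTr$ do the work. The availability of two-sided modified traces in the ribbon setting is what ultimately frees me from having to distinguish left- and right-handed versions of every manipulation.
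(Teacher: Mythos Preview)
The paper does not give its own proof of this theorem; it cites \cite{GPT,GPV_traces-on-ideals-in-pivotal-categories} and \cite[Thm.\,3.3]{DGGPR}. Your two-step strategy --- cyclicity for moving the cut along a fixed $\tensIdeal$-edge, and the partial trace axiom combined with cyclicity for switching between different $\tensIdeal$-edges --- is exactly the standard argument in those references, so the approach is correct.

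One point in step (i) needs tightening. As written, $T'_X$ is by definition an endomorphism of $(X,+)$, not of $(W,\vvec{\varepsilon})$, so the equation $\RTfunctor(T'_X) = \RTfunctor(\alpha)\circ\RTfunctor(\beta)$ does not type-check when $W\neq X$. The clean fix is to first isotope $T_X$ so that the horizontal slice through $p'$ meets only the $X$-strand; this is always possible since $p'$ is a generic interior point on a ribbon edge and the rest of the graph can be pushed off that level. Then $W=(X,+)$, the equality $T'_X=\alpha\circ\beta$ holds literally in $\CategoryColoredRibGraph$, and cyclicity applies directly --- no auxiliary discussion of whether intermediate objects lie in $\tensIdeal$ is needed. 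Your step (ii) and the remark on two-sidedness via the ribbon structure are exactly right.
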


This is shown in \cite{GPT,GPV_traces-on-ideals-in-pivotal-categories}, see also \cite[Thm.\,3.3]{DGGPR} for a proof in the present setting (in \cite{DGGPR} finiteness is assumed, but the proof does not use it).
As in \cite{DGGPR} we call $\renRTinvariant$ the \emph{renormalised invariant of
	$\tensIdeal$-admissible $\cat$-coloured ribbon graphs}.
If $\tensIdeal = \cat$ and $\modTr =  \trCat $, then $\renRTinvariant[ \trCat ] =
\RTfunctor$, i.e.\ the renormalised invariant based on the categorical trace is
just the standard Reshetikhin-Turaev invariant of closed $\cat$-coloured ribbon
graphs.

Let $L^M_X$ denote a closed $\cat$-coloured ribbon graph containing a component which is a knot with colour $M \in \cat$. By this we mean that this component is an embedded ribbon loop which does not touch any coupons, but which is allowed to link with the rest of the ribbon graph. Furthermore, $L^M_X$ contains an edge (distinct from the knot) coloured by $X \in \tensIdeal$. 

\begin{proposition}
	\label{prop:ren_link_inv_factors_INITIAL}
    For any exact sequence $0 \to A \to B \to C \to 0$ in $\cat$ we have
	\begin{align}
		\renRTinvariant ( L^B_X )
		=
		\renRTinvariant ( L^A_X ) + \renRTinvariant ( L^C_X )
		\ .
		\notag
	\end{align}
\end{proposition}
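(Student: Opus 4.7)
The strategy is to isolate the dependence of the invariant on the knot's colour $M$ by opening the $M$-knot and writing the result as a partial modified trace, then use naturality in $M$ to deduce additivity on the short exact sequence.

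First I would fix a single cutting presentation of $L^M_X$ at the $X$-coloured edge, chosen uniformly in $M$, so that the three cut-open diagrams $(L^M_X)_X$ for $M \in \{A,B,C\}$ differ only in the label of the knot component; then $\renRTinvariant(L^M_X) = \modTr_X\bigl(\RTfunctor((L^M_X)_X)\bigr)$. Since the $M$-knot meets no coupon, I next isotope it to contain a short vertical segment running parallel to the $X$-strand and cut the knot there. Evaluating via $\RTfunctor$, this presents $\RTfunctor((L^M_X)_X)$ as a right categorical partial trace: there exists $\gamma_M \in \End_{\cat}(X \tensor M)$, built from the remaining crossings of the $M$-strand with the rest of the graph, the twists on it, and the duality morphisms for $M$, such that
\[ \RTfunctor((L^M_X)_X) = (\id_X \tensor \evR_M) \circ (\gamma_M \tensor \id_{\dualL M}) \circ (\id_X \tensor \coevL_M). \]
Since $\tensIdeal$ is a right ideal we have $X \tensor M \in \tensIdeal$, and the right partial trace property of $\modTr$ yields $\renRTinvariant(L^M_X) = \modTr_{X \tensor M}(\gamma_M)$.

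All the operations entering $\gamma_M$ are natural in $M$, so $\{\gamma_M\}_{M \in \cat}$ forms a natural endomorphism of the functor $X \tensor (-) \colon \cat \to \cat$. Applying exactness of $\tensor$ to $0 \to A \xrightarrow{i} B \xrightarrow{p} C \to 0$ gives an exact sequence $0 \to X \tensor A \to X \tensor B \to X \tensor C \to 0$, and naturality forces $\gamma_B$ to preserve the subobject $X \tensor A$, to restrict to $\gamma_A$ on it, and to descend to $\gamma_C$ on the quotient.

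The proposition then reduces to additivity of the modified trace on such compatible families, $\modTr_{X \tensor B}(\gamma_B) = \modTr_{X \tensor A}(\gamma_A) + \modTr_{X \tensor C}(\gamma_C)$, which I expect to be the main technical point. Via the partial trace property this is equivalent to additivity on the short exact sequence of the categorical map $M \mapsto \operatorname{ptr}^R_M(\gamma_M) \in \End_{\cat}(X)$ --- a categorical analogue of the elementary fact that the trace of a block-triangular endomorphism equals the sum of the traces of its diagonal blocks. One clean way to argue it is to factor the natural transformation $\gamma$ through the universal coend $\coend$ of $\cat$, after which the required additivity on the Grothendieck ring is built into the dinaturality defining $\coend$.
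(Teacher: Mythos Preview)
Your strategy is correct and lands on the same mechanism the paper uses, but your final step is phrased imprecisely and the paper's execution is cleaner. You cut the $M$-loop to obtain a \emph{natural} endomorphism $\gamma_M$ of $X\tensor M$ and then propose to ``factor $\gamma$ through the coend''; but the universal property of $\coend$ is for \emph{dinatural} families $\dualL{M}\tensor M \to (-)$, not for natural endomorphisms of $X\tensor M$, so it is not $\gamma$ itself that factors. What does factor is the closed $M$-loop: one must first bend the partial trace into the form $\eta_M \circ \coevR_M$ for a dinatural $\eta_M$, and only then invoke the coend (equivalently, \cite[Lem.\,2.1]{GR-proj}). This extra translation step is exactly what you gloss over.

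The paper sidesteps this by cutting the diagram differently from the outset: instead of opening only the $M$-loop, it also opens the $X$-cap and the $M$-cup to obtain directly a dinatural transformation $\eta(X)_M \colon \dualL{M}\tensor M \to X\tensor\dualL{X}$. Dinaturality is then immediate from the absence of coupons on the $M$-component, and \cite[Lem.\,2.1]{GR-proj} gives $\eta(X)_B\circ\coevR_B = \eta(X)_A\circ\coevR_A + \eta(X)_C\circ\coevR_C$ in one stroke; closing the $X$-strand and applying $\modTr_X$ finishes. So both arguments rest on the same lemma, but the paper's choice of cut delivers the dinatural object required by that lemma without any intermediate massaging.
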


Before proving this, let us state an obvious consequence:
intuitively, it tells us that given an admissible graph, any component that `looks like
a loop' need only be coloured by simple objects, as long as we keep one edge coloured
by something from the ideal on which our modified trace acts.
More precisely, we have the following corollary.

\begin{corollary}
	\label{prop:renormalized_link_invariant_factors_through_Gr}
	The renormalised Reshetikhin-Turaev invariant of $L^B_X$
	depends only on the class of $B$ in the Grothendieck ring $\grothring$ of $\cat$,
	that is
	\begin{align}
		\renRTinvariant (L^B_X)
		=
		\sum_{U \in \Irr} \grMultiplicity{B}{U} \cdot \renRTinvariant (L^U_X)
		\ .
		\notag
	\end{align}
\end{corollary}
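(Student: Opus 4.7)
The plan is to deduce this corollary from \Cref{prop:ren_link_inv_factors_INITIAL} by induction on the composition length of $B$, using the fact that composition multiplicities are additive on short exact sequences.

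For the base case, suppose $B$ has composition length one, i.e.\ $B$ is simple. Then there is a unique $U_0 \in \Irr$ with $B \cong U_0$ and $\grMultiplicity{B}{U} = \delta_{U, U_0}$, so the claimed identity reduces to $\renRTinvariant(L^B_X) = \renRTinvariant(L^{U_0}_X)$, which holds because the cutting presentation and the Reshetikhin-Turaev functor depend on $B$ only up to isomorphism of the colouring object.

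For the inductive step, assume the formula holds for all colouring objects of composition length strictly less than $n$, and let $B$ have length $n \ge 2$. Since $\cat$ is a finite tensor category, $B$ admits a simple subobject $A \hookrightarrow B$; set $C = B/A$ so that $\ell(C) = n-1$ and there is a short exact sequence $0 \to A \to B \to C \to 0$. Applying \Cref{prop:ren_link_inv_factors_INITIAL} to this sequence gives
\[
\renRTinvariant(L^B_X) = \renRTinvariant(L^A_X) + \renRTinvariant(L^C_X).
\]
The base case applies to $A$ and the inductive hypothesis to $C$, so the right-hand side equals
\[
\sum_{U \in \Irr} \bigl( \grMultiplicity{A}{U} + \grMultiplicity{C}{U} \bigr)\, \renRTinvariant(L^U_X) = \sum_{U \in \Irr} \grMultiplicity{B}{U} \, \renRTinvariant(L^U_X),
\]
where the final equality uses the additivity of multiplicities $\grMultiplicity{\placeholder}{U}$ along short exact sequences (which is just the statement that $[B] = [A] + [C]$ in $\grothring$). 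This completes the induction.

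There is no real obstacle: \Cref{prop:ren_link_inv_factors_INITIAL} carries all the substantive content, and the corollary is obtained by iterating it along a composition series. The only point worth mentioning is that at no stage do we use a full composition series at once — each inductive step peels off a single simple constituent — which keeps the argument insensitive to the (non-uniqueness of) choices involved in such a series.
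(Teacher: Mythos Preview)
Your proof is correct and matches the paper's approach: the paper presents the corollary as an ``obvious consequence'' of \Cref{prop:ren_link_inv_factors_INITIAL} without spelling out the argument, and your induction on the composition length of $B$ is precisely the standard way to make that obvious deduction rigorous.
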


\begin{proof}[Proof of \Cref{prop:ren_link_inv_factors_INITIAL}]
	In $\CategoryColoredRibGraph$ we can without loss of generality represent $L_X^B$ as
	\begin{align}
		L^B_X
		=
		\ipic{-0.4}{./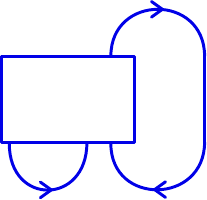}{0.6}
		\put (-50,000) {\textcolor{blue}{$\widetilde{L}^B_X$}}
		\put (-70,-20) {\textcolor{blue}{$B$}}
		\put (003,000) {\textcolor{blue}{$X$}}
		\put (015,000) {,}
		\notag
	\end{align}
	and cutting off the top cap and the bottom left cup, we obtain a
	morphism
	\begin{align}
		\eta(X)_B
		=
		\RTfunctor 
		\left(
		~
		\ipic{-0.4}{./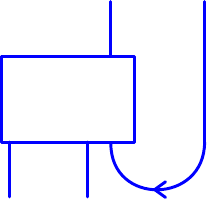}{0.6}
		\put (-50,000) {\textcolor{blue}{$\widetilde{L}^B_X$}}
		~
		\right)
		\in \cat( \dualL{B} \tensor B, X \tensor \dualL{X} )
		\notag
	\end{align}
	after evaluating with the Reshetikhin-Turaev functor.
	By our requirement on the $B$-coloured edge to be part of a knot, this actually defines a dinatural transformation
	$ \eta(X) \in \Dinat( 
	\dualL{\placeholder} 
	\tensor \placeholder, X \tensor
	\dualL{X} )$.
	If $0 \to A \to B \to C \to 0$ is any short exact sequence, by
	\cite[Lem.\,2.1]{GR-proj} we have
	\begin{align}
		\eta(X)_B \circ \coevR_B
		=
		\eta(X)_A \circ \coevR_A + \eta(X)_C \circ \coevR_C
		~.
		\notag
	\end{align}
	From monoidality of $\RTfunctor$ and linearity of the modified trace we therefore
	get
	\begin{align}
		\modTr_X
		\bigg(
		\RTfunctor
		\bigg(
		~
		\ipic{-0.4}{./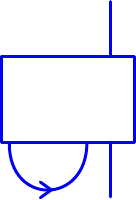}{0.6}
		\put (-30,000) {\textcolor{blue}{$\widetilde{L}^B_X$}}
		~
		\bigg)
		\bigg)
		&= 
		\modTr_X
		\bigg(
		\RTfunctor
		\bigg(
		~
		\ipic{-0.4}{./img/proof_factor_through_GR_3.pdf}{0.6}
		\put (-30,000) {\textcolor{blue}{$\widetilde{L}^A_X$}}
		~
		\bigg)
		\bigg)
		+
		\modTr_X
		\bigg(
		\RTfunctor
		\bigg(
		~
		\ipic{-0.4}{./img/proof_factor_through_GR_3.pdf}{0.6}
		\put (-30,000) {\textcolor{blue}{$\widetilde{L}^C_X$}}
		~
		\bigg)
		\bigg)
		\ ,
		\notag
	\end{align}
	as claimed.
\end{proof}

\begin{remark}
	\label{rem:some_label_name_dont_know}
	\begin{enumerate}
		\item
		\Cref{prop:ren_link_inv_factors_INITIAL} and its
		\Cref{prop:renormalized_link_invariant_factors_through_Gr} also hold for more general
		types of $\cat$-coloured graphs.
		Indeed, looking at the proof, we see that it is enough to require the $M$-coloured
		edge of $L_X^M$ to be part of a natural transformation, i.e.\ natural in $M$.

		\item
		\label{rem:RT_invariant_factors_through_GR}
The invariant $\renRTinvariant(T)$ for an $\tensIdeal$-admissible ribbon graph $T$ does
not change if we add a $\tensUnit$-coloured loop $O_\tensUnit$ to $T$:
$\renRTinvariant(T) = \renRTinvariant(T \sqcup O_\tensUnit)$.
For the ideal $\tensIdeal = \cat$ every object is admissible, in particular the tensor
unit $\tensUnit$. For a $\cat$-coloured link $L$ we can write
\begin{align*}
	\RTfunctor(L) =
	\renRTinvariant[{ \trCat }](L) = 
	\renRTinvariant[{ \trCat }](L \sqcup O_\tensUnit)
\end{align*}
and apply \Cref{prop:renormalized_link_invariant_factors_through_Gr} with $X=\tensUnit$.
This shows that the Reshetikhin-Turaev invariant $\RTfunctor$ depends on the objects
colouring $L$ only up to their class in the Grothendieck ring.
	\end{enumerate}
\end{remark}

\subsection{Examples of link invariants for symplectic fermions}
\label{sec:examples_link_invariants}

We will now give some examples of renormalised Reshetikhin-Turaev
invariants.

\begin{figure}[tb]
	\centering
	\begin{subfigure}{0.3\textwidth}
		\begin{align*}
			\ipic{-0.5}{./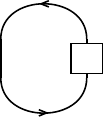}{1.5}
			\put (-19,-04) {$\ribTwist_{\!X}^{\,n}$}
		\end{align*}
		\caption{$\framedUnknot{n}{X}$}
	\end{subfigure}
	\begin{subfigure}{0.3\textwidth}
		\begin{align*}
			\ipic{-0.5}{./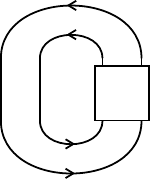}{1.0}
			\put (-24,-05) {$\braiding_{X, X}^{m}$}
		\end{align*}
		\caption{$\torusknot{m}{X}$}
	\end{subfigure}
	\begin{subfigure}{0.3\textwidth}
		\begin{align*}
			\ipic{-0.5}{./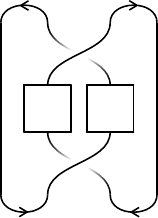}{1.0}
\put (-60,-05) {$\ribTwist^b_U$}
\put (-30,-05) {$\ribTwist^a_X$}
            \put (-53,-45) {$X$}
    		\put (-37,-45) {$U$}
		\end{align*}
		\caption{$\hopflink{X,a}{U,b}$}
	\end{subfigure}
	\caption{The three types of links used in the computation of example invariants.
		Here, $X \in \tensIdeal$, $U \in \cat$, 
  $n,a,b \in \field[Z]$, 
  $m \in 2 \field[Z]+1$.  
	}
	\label{fig:example-links}
\end{figure}

We consider the following families of $\cat$-coloured links, where $X, U \in \cat$ (see
Figure~\ref{fig:example-links}):
\begin{itemize}
	
	\item
	$\framedUnknot{n}{X}$, $n \in \field[Z]$:
	the unknot with framing number $n$.
	
	\item
	$\torusknot{m}{X}$, $m \in 2\field[Z]+1$: the $(2, m)$-torus knot.
	This is the braid closure, i.e.\ the categorical trace in $\CategoryColoredRibGraph$, of $\braiding_{X, X}^{m}$.
	
	\item
 $\hopflink{X,a}{U,b}$, $a,b \in \field[Z]$:
	the Hopf link of $X$ with $U$ with framing $a$ and $b$, respectively, i.e.\ the braid closure of $(\ribTwist_{X}^{\,a} \otimes \ribTwist_{U}^{\,b}) \circ \braiding_{U, X} \circ
	\braiding_{X, U}$ in $\CategoryColoredRibGraph$.
	
\end{itemize}
Note that $\framedUnknot{\pm 1}{X}$ is isotopic to $\torusknot{\pm 1}{X}$, and that
$\torusknot{3}{X}$ is the $X$-coloured trefoil.  
Replacing $+$ with $-$ in $\framedUnknot{\pm n}{X}$ or $\torusknot{\pm m}{X}$ turns
each knot into its mirror image.
By an appropriate rotation one sees that $\hopflink{X,a}{U,b}$ and  $\hopflink{U,b}{X,a}$ are isotopic.

For each link $L$ in the list above, we compute the invariant $\renRTinvariant(L)$,
where we vary the input category $\cat$, the tensor ideal $\tensIdeal$ with modified
trace $\modTr$, and the colours $X,U$ of the link components.
The object $X$ will always be in $\tensIdeal$, while on $U$ no such restriction is
imposed.
However, by \Cref{prop:renormalized_link_invariant_factors_through_Gr} it is sufficient
to restrict oneself to simple $U$. We have the relations
\begin{align}
    \renRTinvariant(\framedUnknot{\pm 1}{X}) &= \renRTinvariant(\torusknot{\pm 1}{X}) \ ,
\nonumber\\
  \renRTinvariant(\hopflink{X,a}{U,b}) &= \renRTinvariant(\hopflink{U,b}{X,a}) \ ,
\nonumber\\
    \renRTinvariant(\framedUnknot{1}{X \otimes U}) &= \renRTinvariant(\hopflink{X,1}{U,1}) \ ,
\label{eq:knot-relations-invs}    
 \end{align}
where the last line follows from $(\ribTwist_{X} \otimes \ribTwist_{U}) \circ \braiding_{U, X} \circ \braiding_{X, U} = \ribTwist_{X \otimes U}$.

\medskip

Let us consider the ingredients $\cat$, $\tensIdeal$, $\modTr$ and $X$ in turn.

\subsubsection*{The ribbon category $\cat$:}

We take $\cat$ to be the category $\hmodM[\sympFerm]$ of finite-dimen\-sion\-al left 
modules over $\sympFerm = \sympFerm(N, \beta)$, the (finite-dimensional)
\emph{symplectic fermion} quasi-Hopf algebras from \cite{FGR2}.
In particular, $\hmodM[\sympFerm]$ is a finite tensor category.
The parameters $N \in \field[Z]_{>0}$ and $\beta  \in \field[C]$ are subject to 
$\beta^4 = (-1)^N$.
We review the definition of $\sympFerm(N, \beta)$ in detail in \Cref{sec:qHopf_applications}. 

The value of $\beta$ affects the ribbon quasi-Hopf structure of $\sympFerm$, but not the
algebra structure.
Independently of $N$, the algebra $\sympFerm(N, \beta)$ has precisely four simple modules, denoted
$X_0^\pm$ and $X_1^\pm$.
The simple objects $X_1^\pm$ are projective, but $X_0^\pm$ are not.
The length of the composition series of the projective covers
$P_0^\pm$ of $X_0^\pm$ is $2^{2N}$, so in particular it grows with $N$.
The category $\hmodM[\sympFerm]$ is unimodular, i.e.\ the tensor unit $X_0^+$ is also the socle of $P_0^+$.

\subsubsection*{The tensor ideal $\tensIdeal$ and the modified trace $\modTr$:}

We consider three different choices of tensor ideal in $\cat=\hmodM[\sympFerm]$:
\begin{enumerate}
	\item $\hmodM[\sympFerm]$ with the categorical trace $ \trCat $.
	\item $\ProjIdeal[{\hmodM[\sympFerm]}]$ with the unique-up-to-scalar non-degenerate modified trace $\modTr$ from \Cref{example:modified_traces}\,(3).
 \item $\relativeProjectives{A}{H}$ with pullback modified trace $\pullbackTrace[{\modTr^A}]$, where  we take 
 $$H:= \sympFerm(2, \beta)$$
 and $A \subset H$ is a pivotal unimodular quasi-Hopf subalgebra.
\end{enumerate}
In point (3),
the intermediate ideal
$\tensIdeal =
	\relativeProjectives{A}{H}$ is the pullback ideal (as in \Cref{prop:pullback_ideal}) of
	$\ProjIdeal[{\hmodM[A]}]$ along the restriction functor $\restrictionFunctor$
 and $\pullbackTrace[{\modTr^A}]$ is the pullback of the unique-up-to-scalar modified trace on $\ProjIdeal[{\hmodM[A]}]$.
 The full details can be found in \Cref{sec:symp_ferm_pullback}.

For $N=2$ the three ideals are properly contained in each other:
\begin{equation}	
\ProjIdeal[{\hmodM[H]}] ~\subsetneq~ 
\relativeProjectives{A}{H} ~\subsetneq~ \hmodM[H]~.
\end{equation}
Corresponding examples exist for any $N \ge 2$, see \Cref{sec:other-A}.

\subsubsection*{The colouring objects:}

Depending on the tensor ideal in question, the colouring objects are as follows:
\begin{enumerate}
	\item $\hmodM[\sympFerm]$:
	By \Cref{rem:some_label_name_dont_know}\,(\ref{rem:RT_invariant_factors_through_GR}), colouring link components by
	$\Irr[{\hmodM[\sympFerm]}]$ is sufficient for knowing the invariants for arbitrary colours in $\hmodM[\sympFerm]$. Hence in this case we restrict ourselves to the four simple objects $X_0^\pm$, $X_1^\pm$ of $\hmodM[\sympFerm]$.
	
	\item $\ProjIdeal[{\hmodM[\sympFerm]}]$:
	We know the invariants for $P \in \ProjIdeal[{\hmodM[\sympFerm]}]$ once we know the
	invariants for all projective indecomposables, i.e.\ for the projective covers of 
	simple objects.
	
	\item $\relativeProjectives{A}{H}$:
	We will consider a family
	of objects $P_\parMat \in \relativeProjectives{A}{H}$ indexed by $\parMat \in
	\operatorname{Mat}_2(\field[C])$.
	The modules $P_\parMat$ and $P_{\parMat'}$ always have the same class in the Grothendieck ring of $\hmodM[\sympFerm]$,
	but are isomorphic only if $\parMat = \parMat'$, see \Cref{sec:symp_ferm_pullback} for their
	construction and structure.
\end{enumerate}

\newcommand{\myLinewidth}{0.9pt}
\newcommand{\myLinewidthThicker}{1.5pt}
\renewcommand{\arraystretch}{1.5}
\newcolumntype{?}{!{\vrule width \myLinewidth}}
\newcolumntype{^}{!{\vrule width \myLinewidthThicker}}

\captionsetup{format=hang}

\afterpage{
	\begin{landscape}
		\begin{table}[t]
			\caption{Some link invariants based on the categories of modules over the symplectic fermion quasi-Hopf algebras.
				The blue expressions have been interpolated from \texttt{Mathematica}-output for several values of $N$ and $m$.
				We require $a,b,n \in \field[Z]$ and $m \in 2\field[Z]+1$.
			}
			\updatelabelname{%
				Renormalised RT invariants based on SF.
			}
			
			\label{table:invariants}
			\begin{tabular}{c^c?c?c}
				\makecell{%
					ideal $\tensIdeal$ in $\cat$
					, \\ modified trace $\modTr$
				}
				& ~$\hmodM[\sympFerm]$, $ \trCat $
				& $\ProjIdeal[{\hmodM[\sympFerm]}]$, $\modTr$
				&  
				\makecell{%
					$\relativeProjectives{A}{H}$, $\pullbackTrace[{\modTr^A}]$
					\\ 
					(for $N = 2$)
				}
				\\ \Xhline{\myLinewidthThicker}
				\multirow{2}{*}{
					$X$: $\renRTinvariant( \framedUnknot{n}{X} )$
				}
				& $X_0^\varepsilon$: $\varepsilon$
				& $P_0^\varepsilon$: $\tfrac{\varepsilon}{2} n^N \beta^{-2}$
				& \multirow{2}{*}{
					$P_\parMat$: $2 n (1 + \det\parMat)$
				} 
				\\ \cline{2-3}
				& $X_1^\varepsilon$: $0$ 
				& $X_1^\varepsilon$: $2^{-N-1} \varepsilon^{n+1} \beta^{-n}$
				&  
				\\ \hline
				\multirow{2}{*}{
					$X:$ $\renRTinvariant ( \torusknot{m}{X} )$
				}
				& $X_0^\varepsilon$: $\varepsilon$
				& \textcolor{blue}{$P_0^\varepsilon$: 
					$\tfrac{\varepsilon}{2} m^N \beta^{-2}$}
				& \multirow{2}{*}{
					$P_\parMat$: $2 m (1 + \det\parMat)$
				} \\ \cline{2-3}
				& $X_1^\varepsilon$: $0$
				& \textcolor{blue}{$X_1^\varepsilon$: $2^{-N-1} m^N \beta^{m-2}$}
				&
				\\ \hline
				\multirow{2}{*}{
     $(X, U):$  $\renRTinvariant ( \hopflink{X,a}{U,b} )$
				}
				& \multirow{4}{*}{%
					$(X^\nu_i, X^\rho_j)$: $\delta_{i, 0} \delta_{j, 0} \, \nu \rho$
				}
				& 
    $(P^\nu_0, X_0^\rho)$: $\tfrac{1}{2} \nu \rho \, a^N \beta^{-2}$
				& 
    \multirow{2}{*}{$(P_\parMat, X_0^\pm)$: $\pm 2 a (1 + \det\parMat)$}
				\\ \cline{3-3}
				& 
				& 
$(X^\nu_1, X_0^\rho)$: $\nu 2^{-N-1} (\nu \beta^{-1})^a$
				&
				\\ \cline{3-4}
				& 
    & 
    $(P^\nu_0, X_1^\rho)$: $\rho 2^{N-1} (\rho \beta^{-1})^b$
				& \multirow{2}{*}{$(P_\parMat, X_1^\pm)$: $0$}
				\\ \cline{3-3}
				& 
    & $(X^\nu_1, X_1^\rho)$: $\frac12 (\nu \beta^{-1})^a (\rho \beta^{-1})^b$
				&
			\end{tabular}
		\end{table}
	\end{landscape}
}

The link invariants obtained from these input data are listed in Table~\ref{table:invariants}.  The explicit computations of these invariants can be found in
\Cref{sec:explicit_computation_link_invariants}. 
Let us collect some observations about the results in that table. We order the discussion by column, i.e.\ by the choice of tensor ideal.
\begin{enumerate}
	\item $\hmodM[\sympFerm]$: 
	The simple objects $X_0^\pm$ generate a copy of the category of super-vector spaces in $\hmodM[\sympFerm]$. Since that category is symmetric, the invariant can only detect the parity of $X_0^\pm$, but none of the topology of the link. By \Cref{prop:categorical_trace_vanishes_on_all_proper_ideals}, any invariant involving a ribbon coloured by the simple objects $X_1^\pm$ is zero, as these are projective.

	\item $\ProjIdeal[{\hmodM[\sympFerm]}]$:
	Consider the results for a fixed choice of the ribbon category, i.e.\ for fixed $N$ and
	$\beta$.
	If $N$ is odd, we see that for fixed choice of colour $P_0^\pm$, the invariant
	$\renRTinvariant$ can distinguish all the framed unknots $\framedUnknot{n}{X}$ and all the torus knots $\torusknot{m}{X}$.
	By contrast, for a semisimple ribbon category, these  invariants would always be periodic\footnote{ 
This follows from the fact that in a (finitely) semisimple ribbon category, for each object~$X$, the twist $\theta_X$ has finite order (and hence also the double braiding has finite order), see \cite[Thm.\,3.1.19]{BK} -- that theorem assumes modularity, but the part of the proof concerning the order of the twist does not use that assumption. Alternatively, one can specialise the statement for finite braided tensor categories in \cite{Etingof:2002} to the semisimple case.
}
 in $n$ and $m$.
	
	For even $N$ and $\beta = \pm 1$, $\renRTinvariant$ can not distinguish the framed
	unknot from its mirror knot, and ditto for the torus knots. 
	\item $\relativeProjectives{A}{H}$:
	This is the most interesting case.
	First note that here $N=2$ and we just saw that for the choice $\beta = \pm 1$, the
	projective ideal could not tell apart a knot and its mirror for unknots
	and torus knots.
	For the intermediate ideal, these can now be distinguished even for $\beta = \pm 1$.
	
	More remarkably, for the intermediate ideal the modified trace can distinguish a
	continuum of representations in $\relativeProjectives{A}{H}$.
	For example, $\renRTinvariant[{\pullbackTrace[{\modTr^A}]}]$ can detect the value
	$\lambda \in \field[C]$ for the choice
	$\parMat = \begin{psmallmatrix} \lambda & 0 \\ 0 & 1\end{psmallmatrix}$.

On the other hand, this example is somewhat special in the sense that link invariants reduce to the knot invariant of the component labelled by $P_\parMat$ times the quantum dimensions of the other labels due to a degeneracy explained in \Cref{rem:link-only-knot-matters}.

The ideal $\relativeProjectives{A}{H}$ also contains modules which are $H$-pro\-jective, but it turns out that if we label any component of a link with such a module, the invariant is zero by \Cref{cor:proj-inv-zero}, in contrast to when we start from the ideal $\ProjIdeal[{\hmodM[\sympFerm]}]$ as in point (2).
\end{enumerate}

There are a number of simple consistency checks for the expressions given in Table~\ref{table:invariants}:
\begin{itemize}
    \item The identities listed in \eqref{eq:knot-relations-invs} are satisfied. For the last identity in \eqref{eq:knot-relations-invs} on needs to know the tensor products. For example $X_1^\nu \otimes X_1^\rho \cong P_0^{\nu\rho}$, see \Cref{sec:simple-projective}.
    \item In the middle column for the framed Hopf link, when both labels are projective, one can choose which one to cut and evaluate the modified trace on. Combining this with  \Cref{prop:renormalized_link_invariant_factors_through_Gr} and the identity $\grothringclass{P_0^\nu} = 2^{2N - 1} ( \grothringclass{X_0^+} + \grothringclass{X_0^-})$ in the Grothendieck ring (see \Cref{sec:simple-projective}) gives
    \begin{align*}
          &\renRTinvariant(\hopflink{P_0^\nu,a}{X_1^\rho,b}) 
          \\
          &= 2^{2N-1}  \renRTinvariant(\hopflink{X_1^\rho,b}{X_0^+,a}) 
          + 2^{2N-1}\renRTinvariant(\hopflink{X_1^\rho,b}{X_0^-,a})  \ ,
    \end{align*}
which is indeed the case in Table~\ref{table:invariants}.
\end{itemize}

\begin{remark}\label{rem:no-contiua-for-C-and-ProjC}
	Assume that we have a family $X_{\lambda}$ of mutually non-isomorphic objects in $\cat$ parametrised by $\lambda \in \field[C]$, such that the Grothendieck classes $\grothringclass{X_\lambda}$ are independent of $\lambda$.
	Let $L_\lambda$ be a link with one component coloured by $X_\lambda$ and all other components (if there are any) coloured in an arbitrary but fixed way. Then:
	\begin{itemize}
		\item $\tensIdeal = \cat$: By \Cref{rem:some_label_name_dont_know}\,(\ref{rem:RT_invariant_factors_through_GR}) the invariant $\renRTinvariant[{ \trCat }](L_\lambda)$ depends on the colouring
		only through the class in the Grothendieck ring, and is therefore independent of $\lambda$.
		\item $\tensIdeal = \ProjIdeal$: 
		We first note that the $X_\lambda$ cannot all be projective. 
Indeed, there are only finitely many distinct indecomposable 
projective objects with composition factors determined by
summands in the fixed Grothendieck class $\grothringclass{X_\lambda}$ (by local finiteness of $\cat$
and Fitting's Lemma, an indecomposable projective is uniquely determined by its head). Thus, for $\renRTinvariant[{ \modTr }](L_\lambda)$ to be $\tensIdeal$-admissible,
    $L_\lambda$ must have at least two components, one of which is coloured by $X_\lambda$, and one by a projective object. But then by \Cref{prop:renormalized_link_invariant_factors_through_Gr} again only the Grothendieck class of $X_\lambda$ is relevant and so $\renRTinvariant[{ \modTr }](L_\lambda)$ is independent of~$\lambda$.
	\end{itemize}
	Thus in order to have a chance to see the parameter $\lambda$, we need the $X_\lambda$ to all be in an ideal $\tensIdeal$ with $\ProjIdeal \subsetneq \tensIdeal \subsetneq \cat$. Case (3) above illustrates that then it can indeed be possible to distinguish such a continuum of objects with fixed Grothendieck class.
\end{remark}

\section{Invariants of closed 3-manifolds}
\label{sec:invariants_of_manifolds}
In this section we review the construction of invariants of closed connected  oriented
$3$-manifolds with so-called bichrome graphs inside of them \cite{DGP-renormalized-Hennings,DGGPR}, and provide
the results of an example computation for lens spaces.
In contrast to the previous section, here we need to work with \emph{finite} tensor categories. More specifically, 
the construction takes as input datum a twist non-degenerate and unimodular finite ribbon
category.

\subsection{Twist non-degeneracy, unimodularity, and factorisability}
\label{sec:twist-non-deg_etc}
Recall the definition of ends and coends from e.g.\ \cite[Ch.\,IX]{catWorkMath}.
In a finite tensor category $\cat$, the coend
\begin{align}
	\coend = \int^{X \in \cat} \dualL{X} \tensor X
	\qtextq{with dinatural transformation}
	\dinatLyu_X \colon \dualL{X} \tensor X \to \coend
	\notag
\end{align}
exists (see \cite[Cor.\,5.1.8]{KerlerLyubashenko} and \cite[Thm.\,3.6]{Shimizu:2014}), and is a coalgebra in $\cat$ \cite[Sec.\,5.2]{KerlerLyubashenko}.
Similarly, the end $\cocoend = \int_{X \in \cat} X \tensor \dualL{X}$ exists and is an algebra;
denote its universal dinatural transformation by 
$\dinatLyu^{\cocoend} : \cocoend \to X \tensor \dualL{X}$.
If $\cat$ is braided, then both $\coend$ and $\cocoend$ can be given the structure of a
Hopf algebra,
but we will only use the structure maps of $\coend$:
\begin{alignat}{6}
	&\text{mult.:} &\multL &\colon \coend \tensor \coend \to \coend
	& \hspace{4em} &\text{unit:} &\unitL &\colon \tensUnit \to \coend
	\nonumber
	\\
	&\text{comult.:} \quad &\comultL &\colon \coend \to \coend \tensor \coend 
	&&\text{counit:} \quad &\counitL &\colon \coend
	\to \tensUnit
	\ .
	\nonumber 
\end{alignat}
The dual $\dualL{\coend}$ carries the natural structure of the dual Hopf algebra, and
is in fact isomorphic to $\cocoend$ as a Hopf algebra.
There is also a Hopf pairing 
\begin{equation}
\label{eq:Hopf-pairing}
\hopfPair \colon \coend \tensor \coend \to
\tensUnit~, 
\end{equation}
which means that the morphism $\coend \to \dualL{\coend}$ built using $\hopfPair$ is a morphism of Hopf algebras.
See \cite[Sec.\,3]{FGR1} for a detailed review using the present conventions.

\smallskip

A finite tensor category is called \emph{unimodular} if the projective cover $\projCover{\tensUnit}$ of the tensor unit is self-dual, or equivalently if the Hom-space $\tensUnit \to \projCover{\tensUnit}$ is one-dimensional (rather than zero-dimensional).

Let $\cat$ be a unimodular braided finite tensor category. Then $\coend$ admits a 
\emph{two-sided integral} and a \emph{two-sided cointegral} 
with trivial object of integrals,
see e.g.\ \cite[Sec.\,4.2.3]{KerlerLyubashenko} for Hopf algebra (co)integrals in general braided categories.
This means there is a unique (up to scalar) non-zero morphism $\intLyu \in \cat( \tensUnit, \coend )$ satisfying 
\begin{align*}
	\multL \circ (\intLyu \tensor \id_\coend)
    = \intLyu \circ \counitL
    = \multL \circ (\id_\coend \tensor \intLyu)
	\ ,
\end{align*}
see \cite[Rem.\,6.2]{BGR2} for a proof using monadic cointegrals.
Dually, the cointegral $\cointLyu \in \cat( \coend, \tensUnit )$ is determined up to scalar by
\begin{align}\label{eq:Lambda-co}
	(\cointLyu \tensor \id_\coend) \circ \comultL
	= \unitL \circ \cointLyu
	= (\id_\coend \tensor \cointLyu) \circ \comultL
	\ ,
\end{align}
see \cite[Cor.\,4.10]{Sh-integrals}.\footnote{We note that this reference uses the notion dual to $\coend$, namely the end instead of the coend, and the defining equations on categorical integrals \cite[Eqs.\,(4.4)\,\&\,(4.5)]{Sh-integrals} in the unimodular case are equivalent to~\eqref{eq:Lambda-co}.}
We can and will normalise $\cointLyu$ such that \cite[Thm.\,3.3]{BKLT}
\begin{align} \label{eq:Lam-co-norm}
	\cointLyu \circ \intLyu =
	\id_{\tensUnit}\ .
\end{align}

Define the morphism $\mathcal{Q} \colon \coend \tensor \coend \to \coend \tensor \coend$ via the dinatural transformation
\begin{align}
	\dualL{X} X \, \dualL{Y} Y \xrightarrow{
		\id_{\dualL{X}} \tensor (\braiding_{\dualL{Y}, X} \circ \braiding_{X,
			\dualL{Y}}) \tensor \id_Y
	}
	\dualL{X} X \, \dualL{Y} Y 
	\xrightarrow{\dinatLyu_X \tensor \dinatLyu_Y}
	\coend \tensor \coend
	\ .
	\label{eq:curly-Q-def}
\end{align}
In terms of $\mathcal{Q}$, the Hopf pairing is given by $\hopfPair = (\counitL \otimes \counitL) \circ \mathcal{Q}$. We define the morphism $\modS_\coend : \coend \to \coend$ as
\begin{align}
	\modS_\coend = (\counitL \tensor \id_\coend) \circ \mathcal{Q} \circ (\id_\coend
	\tensor \intLyu)
	\ .
	\label{eq:S_L-def}
\end{align}

\begin{remark}\label{rem:factorisable-cat}
If $\modS_\coend$ is invertible, the unimodular braided finite tensor category $\cat$ is called \emph{factorisable}. Equivalently, $\cat$ is factorisable iff $\hopfPair$ in \eqref{eq:Hopf-pairing} is non-degenerate, i.e.\ if the induced morphism of Hopf algebras is an
isomorphism $\coend \cong \cocoend$.
To see that these two conditions are indeed equivalent, one can use the identity $\modS_\coend = (\hopfPair \otimes \id) \circ (\id \otimes \comultL) \circ (\id \otimes \intLyu)$, which is immediate from the definitions, together with non-degeneracy of the copairing $\comultL \circ \intLyu$
(see e.g. \cite[Cor.\,4.2.13]{KerlerLyubashenko} for the latter).
\end{remark}

Now let $\cat$ in addition be ribbon with ribbon twist $\ribTwist$.
Let  $\modT_\coend$ be the unique endomorphism of $\coend$ such that 
\begin{align}
	\modT_\coend \circ \dinatLyu_X = \dinatLyu_X \circ (
	\id_{\dualL{X}} \tensor \ribTwist_X )
	\label{eq:T_L-def}
\end{align}
for all $X \in \cat$. $\modT_\coend$ is invertible, and
it determines the \emph{stabilisation coefficients} $\stabCoeff \in \field$ of $\cat$
via
\begin{align}\label{eq:ribboncat-twistnondeg}
	\counitL \circ \modT^{\pm 1}_\coend \circ \intLyu 
	=
	\stabCoeff \cdot \id_{\tensUnit}
	\ .
\end{align}
We call $\cat$ \emph{twist non-degenerate} if the numbers $\stabCoeff$ are non-zero.

By postcomposition with $\modS_\coend$, $\modT_\coend$ we obtain linear endomorphisms of $\cat(\tensUnit, \coend)$, which will be useful to express the lens space invariants in \Cref{sec:Examples:lens_spaces}. For the explicit quasi-Hopf algebra computations it will be easier to have linear endomorphisms of $\End(\id_{\cat})$ instead. To relate the two, 
recall from e.g.~\cite[Sec.\,2]{GR-proj} that for a unimodular finite tensor
category $\cat$, the linear maps
\begin{align*}
	\End(\id_{\cat}) 
	\xrightarrow{\psi} \cat(\coend, \tensUnit)
	\xrightarrow{\rho} \cat(\tensUnit, \coend)
	\ ,
\end{align*}
given by
\begin{align}
	\label{eq:isomorphism_EndidC_Hom1L}
	\psi(\kappa) \circ \dinatLyu_X = \ev_X \circ (\id_{\dualL{X}} \tensor \kappa_X)
	\qandq
	\rho(g) = (g \tensor \id_{\coend}) \circ \Delta_\coend \circ \intLyu
\end{align}
are isomorphisms. 
By pullback of $\modS_\coend \circ (-)$, $\modT_\coend \circ (-)$ along $\rho \circ \psi$ we get linear endomorphisms of $\End(\id_{\cat})$, which we denote by $\modS_{\cat}$, $\modT_{\cat}$. Explicitly, for $\alpha \in \End(\id_{\cat})$,
\begin{align}\label{eq:SC-TC-explict}
\rho(\psi(\modS_{\cat}(\alpha))) = \modS_\coend \circ \rho(\psi(\alpha)) ~,
\end{align}
and similar for $\modT_{\cat}$.

\medskip

A \emph{modular} tensor category is a factorisable ribbon finite tensor category.
According to \cite[Lem.\,2.7]{DGGPR}, 
a unimodular ribbon finite tensor category
$\cat$ is modular if and only if there
exists a non-zero number $\modularityParameter \in \field^\times$ such that the
integral and cointegral of $\coend$ are related via the Hopf pairing $\hopfPair$ as
\begin{align}
	\label{eq:definition_modularity_parameter}
	\hopfPair \circ ( \intLyu \tensor \id_\coend ) 
	= \modularityParameter \cdot \cointLyu \ .
\end{align}
The number $\modularityParameter$ is called the \emph{modularity parameter}.
It is further shown in \cite[Cor.\,4.6]{DGGPR} that $\modularityParameter = \stabCoeffM \stabCoeffP$
holds in a modular tensor category.
Conversely, by \cite[Prop.\,2.6]{DGGPR} a modular tensor category is automatically
unimodular and twist non-degenerate.

\begin{remark}\label{rem:modular-remark}
	Let $\cat$ be a modular tensor category.
	\begin{enumerate}
		\label{rem:modular_actions}
		\item
		One can constrain the normalisation freedom for
		$\intLyu$ by imposing 
\begin{equation}\label{eq:norm-omega}
\hopfPair \circ (\intLyu \tensor \intLyu) = \id_{\tensUnit}\ .
\end{equation}
		This fixes $\intLyu$ up to a sign, and together with $\cointLyu \circ \intLyu =
		\id_{\tensUnit}$ implies $\modularityParameter = 1$.
		Note, however, that this is not the usual normalisation used for modular fusion
		categories as in \cite{BK,Turaev-book}, c.f.\ \cite[Rem.\,3.10]{GR-nonssi-Verlinde}.
		\item 
		Recall from e.g.~\cite{Lyu-inv-MCG,FGR1} that the modular
		group $\slTwoZ$ acts projectively on $\cat(\tensUnit, \coend)$, the $S$- and $T$-generators
    acting by composition with $\modS_{\coend}$ and $\modT_{\coend}$ as given in
    \eqref{eq:S_L-def} and \eqref{eq:T_L-def}.
		Correspondingly, $\modS_{\cat}$ and $\modT_{\cat}$ provide a projective 
		$\slTwoZ$-action on $\End(\id_{\cat})$.
		
		\item
		For any $V \in \cat$, the \emph{internal character} is the morphism $\chi_V =
		\dinatLyu_V \circ \coevR_V$ in $\cat(\tensUnit, \coend)$
		\cite{Fuchs:2010mw,Shimizu:2015}, and we denote by $\phi_V = (\rho \circ
		\psi)\inv(\chi_V)$ the natural transformation corresponding to $\chi_V$ under the
		isomorphism \eqref{eq:isomorphism_EndidC_Hom1L}.
		For example, $\chi_\tensUnit = \unitL$ is by definition the unit of the Hopf algebra
		$\coend$, and using the normalisation \eqref{eq:Lam-co-norm} one finds
		$\rho(\cointLyu)=\unitL$. Thus $\phi_{\tensUnit} = \psi\inv(\cointLyu)$.
		
		As shown in e.g.~\cite{GR-proj,FGR1}, the $S$-transformation $\modS_{\cat}$ acting on
		$\phi_V$ yields the natural transformation
		\begin{align}
			\modS_{\cat}(\phi_V)_X
			=
			\ipic{-0.5}{./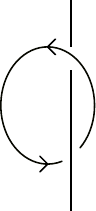}{1.0}
			\put (005,000) {$V$}
			\put (015,000) {$,$}
			\put (-10,-50) {$X$}
			\label{eq:openHopfLink_definition}
		\end{align}
		which is also called the \emph{open Hopf link operator} \cite{Creutzig:2016fms}.
		
		\item
		By \cite[Thm.\,6.1]{GR-proj}, the non-degenerate modified trace on $\ProjIdeal$
		can be chosen such that
		$\modTr_{\projCover{\tensUnit}}((\phi_{\tensUnit})_{\projCover{\tensUnit}}) = 1$.
		We remark that $(\phi_{\tensUnit})_{\projCover{\tensUnit}}$ factors through~$\tensUnit$, see \cite[Prop.\,6.5]{GR-proj}.
	\end{enumerate}
\end{remark}

\subsection{Bichrome graphs and their invariants}
\label{sec:invariants_of_bichrome_graphs}
We now recall the category of \emph{partially $\cat$-coloured bichrome ribbon graphs} from \cite{DGP-renormalized-Hennings,DGGPR}.
This is the category $\CategoryBichromeGraph$ whose objects are the same as those of
$\CategoryColoredRibGraph$, see \Cref{sec:Invariants_of_ribbon_graphs}, but morphisms
are now isotopy classes of \emph{bichrome graphs}.
These are ribbon graphs which have
\begin{itemize}
	\item
	two types of edges:
	blue edges labelled by objects in $\cat$;
	and red edges which carry no label
	
	\item
	two types of coupons:
	blue coupons coloured by morphisms in $\cat$, such that all incident edges
	are blue and the labels of coupons and edges are compatible;
	bichrome coupons, which are unlabelled and only come in the two types
	\begin{align*}
		\ipic{-0.5}{./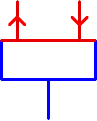}{0.9}
		\put (-18,-22) {\textcolor{blue}{$\cocoend$}}
		\quad \qandq \quad
		\ipic{-0.5}{./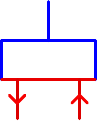}{0.9}
		\put (-18,016) {\textcolor{blue}{$\coend$}}
		\ .
	\end{align*}
\end{itemize}

It is clear that $\CategoryColoredRibGraph$ is a subcategory of
$\CategoryBichromeGraph$:
it has the same objects, and its morphisms are precisely the bichrome graphs which 
are purely blue.
Moreover, there exists an extension $\LRTfunctor \colon \CategoryBichromeGraph \to
\cat$ of the Reshetikhin-Turaev functor $\RTfunctor$, called the
\emph{Lyubashenko-Reshetikhin-Turaev functor} in \cite{DGP-renormalized-Hennings,DGGPR}.
The subscript $\intLyu$ signifies that the integral is used in the definition of
$\LRTfunctor$:
what the functor does is, essentially, to colour the red parts of the graph by
$\intLyu$.
We will now describe this functor algorithmically and in an example, more
details and proofs of the properties stated can all be found in \cite[Sec.\,3.1]{DGGPR}.

On objects, $\LRTfunctor$ agrees with $\RTfunctor$ as given in \eqref{eq:FC-functor-on-obj}.
For its action on morphisms, consider as an example the bichrome graph
\begin{align}
	T = 
	\ipic{-0.5}{./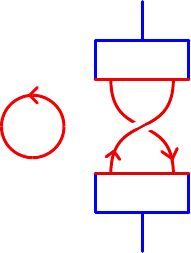}{0.8}
	\put (-16,-48) {\textcolor{blue}{$\cocoend$}}
	\put (-16,040) {\textcolor{blue}{$\coend$}}
	\notag
\end{align}
A \emph{cycle} of $T$ is a (maximal) subset of red edges in $T$ that belong to the same
component in the red graph obtained by replacing the bichrome coupons by cups and caps,
as appropriate, and throwing away the blue parts.
Let $n$ be the number of cycles in $T$, and let $s$ and $t$ be the source and target
object of $\LRTfunctor(T)$, respectively.
In our running example: $n = 2$, $s = \cocoend$, $t = \coend$.
The morphism $\LRTfunctor(T) \in \cat( s, t )$ is obtained using the following recipe.

\begin{enumerate}[{label=\textit{Step \arabic*.}}]
	\item
	For each cycle $c$ of $T$, pick an edge, then `bend' it down and `cut' it
	open so that the result is a
	so-called $n$-bottom graph $\widetilde{T}$.
	In our example one obtains a 2-bottom graph:
	\begin{align}
		T \leadsto
		\widetilde{T} = 
		\ipic{-0.3}{./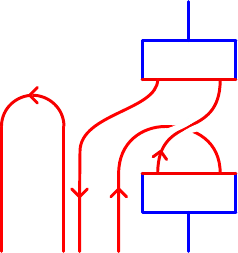}{0.8}
		\put (-16,-28) {\textcolor{blue}{$\cocoend$}}
		\put (-16,060) {\textcolor{blue}{$\coend$}}
		\notag
	\end{align}
	The new graph $\widetilde T$ has the property that when closing the cut red edges again with cups,
	one recovers the original graph $T$.
	
	\item
	Change the colour of the red part of $\widetilde{T}$ to blue,
	replacing a bichrome coupon by the appropriate universal dinatural transformation $j$ or $j^{\cocoend}$.
	The formerly red part then gives rise to a transformation $\eta_{\widetilde{T}}$
	which is dinatural in each of the $n$ pairs of consecutive red strands.
Namely $(\eta_{\widetilde{T}})_{X,Y} = \RTfunctor(\widetilde{T}_{X,Y})$, where 
	\begin{align}
\widetilde{T}_{X,Y} ~=~ 
		\ipic{-0.3}{./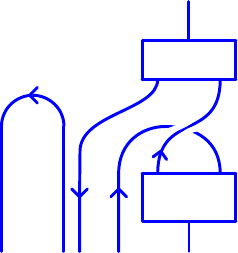}{0.8}
		\put (-22,070) {\textcolor{blue}{$\coend$}}
		\put (-22,043) {\textcolor{blue}{$\dinatLyu_Y$}}
		\put (-25,-11) {\textcolor{blue}{$\dinatLyu^{\cocoend}_Y$}}
		\put (-97,-40) {\textcolor{blue}{$\dualL{X}$}}
		\put (-77,-40) {\textcolor{blue}{$X$}}
		\put (-65,-40) {\textcolor{blue}{$\dualL{Y}$}}
		\put (-50,-40) {\textcolor{blue}{$Y$}}
		\put (-23,-40) {\textcolor{blue}{$\cocoend$}}
		\quad \text{for all } X, Y \in \cat.
		\notag
	\end{align}

	\item
	Use the universal property and the Fubini theorem for coends to obtain from
	$\eta_{\widetilde{T}}$ 
 a morphism $f_{\widetilde{T}} \in \cat( \coend^{\tensor n}
	\tensor s, t)$:
	\begin{align}
		\ipic{-0.3}{./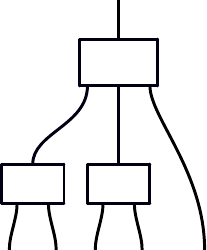}{0.8}
		\put (-37,070) {$\coend$}
		\put (-40,041) {$f_{\widetilde{T}}$}
		\put (-73,-07) {$\dinatLyu_X$}
		\put (-40,-07) {$\dinatLyu_Y$}
		\put (-05,-40) {$\cocoend$}
		:=
		\ipic{-0.3}{./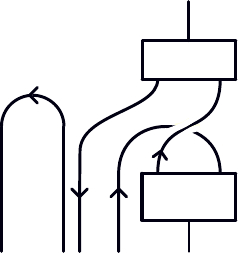}{0.8}
		\put (-22,070) {$\coend$}
		\put (-22,043) {$\dinatLyu_Y$}
		\put (-25,-11) {$\dinatLyu^{\cocoend}_Y$}
		\put (-23,-40) {$\cocoend$}
		\notag
	\end{align}
	
	\item
	Finally, define $\LRTfunctor(T)$ by inserting $n$ copies of the integral,
	\begin{align}
		\LRTfunctor(T) \vcentcolon = 
		f_{\widetilde{T}}
		\circ (\intLyu^{\tensor n} \tensor \id_s)
		\colon s \to t 
		\ .
		\notag
	\end{align}
\end{enumerate}
In \cite[Prop.\,3.1]{DGGPR} it is shown that this is a well-defined construction
which yields a ribbon functor.

\smallskip

Fix now an ideal $\tensIdeal$ in $\cat$ and a modified trace $\modTr$ on
$\tensIdeal$.
It is straightforward to generalise the notions of \emph{closed} and \emph{admissible
	$\cat$-coloured ribbon graphs} to bichrome graphs, and the concept of a (non-unique)
\emph{cutting presentation} of an admissible bichrome graph also easily translates.
This leads to what is called the \emph{renormalised invariant of admissible closed
	bichrome graphs} in \cite{DGGPR}:

\begin{theorem}[{\cite[Thm.\,3.3]{DGGPR}}]
	\label{prop:renormalized_bichrome_invariant}
	Let $T$ be an $\tensIdeal$-admissible 
	closed bichrome graph.
	Then the number
	\begin{align*}
		\renLRTinvariant(T) := \modTr_X \big( \LRTfunctor (T_X) \big)
		\quad
		\text{ for some cutting presentation $T_X$ of $T$ }
	\end{align*}
	is well-defined, and in particular an invariant of the isotopy class of $T$.
\end{theorem}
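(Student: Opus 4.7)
The strategy is to reduce the bichrome statement to its purely blue counterpart, Theorem \ref{prop:renormalized_link_invariant}. The key observation is that $\LRTfunctor$ is built out of $\RTfunctor$: after performing Steps 1--3 of the LRT recipe, the red part of any bichrome graph $T$ is replaced by a blue diagram involving the universal dinatural transformations $\dinatLyu_{X_i}$, $\dinatLyu^{\cocoend}_{X_i}$ attached to auxiliary blue strands $X_1,\dots,X_n$; inserting $n$ copies of $\intLyu$ at the bottom (Step 4) produces a purely blue $\cat$-coloured ribbon graph, say $T^{\mathrm{bl}}$, satisfying $\RTfunctor(T^{\mathrm{bl}}) = \LRTfunctor(T)$. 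Any bichrome cutting presentation $T_X$ of $T$ (which cuts a distinguished blue $X$-edge with $X\in\tensIdeal$) transports unchanged through this procedure into a purely blue cutting presentation $T_X^{\mathrm{bl}}$ of $T^{\mathrm{bl}}$, because the bluing acts only on the red part while the cutting acts on an entirely disjoint blue edge. Consequently $\RTfunctor(T_X^{\mathrm{bl}}) = \LRTfunctor(T_X)$.

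I would then argue as follows. Since $T^{\mathrm{bl}}$ still contains the $X$-edge, it is $\tensIdeal$-admissible (even though the new blue strands $X_i$, $\coend$-labelled or $\cocoend$-labelled edges need not lie in $\tensIdeal$). Theorem \ref{prop:renormalized_link_invariant} therefore applies to $T^{\mathrm{bl}}$ and shows that
\[
\modTr_X\!\bigl(\RTfunctor(T_X^{\mathrm{bl}})\bigr) \;=\; \renRTinvariant(T^{\mathrm{bl}})
\]
is independent of the choice of blue cutting presentation of $T^{\mathrm{bl}}$. Combining with $\RTfunctor(T_X^{\mathrm{bl}}) = \LRTfunctor(T_X)$ gives the desired independence of $\modTr_X(\LRTfunctor(T_X))$ of the choice of $T_X$.

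There is one point that must be verified before this reduction is legitimate, and it is the main obstacle: the blue graph $T^{\mathrm{bl}}$ depends on auxiliary choices made in Steps 1--3 (which red edge of each cycle to cut, where to cut it, and how to bend the cut edges into a bottom graph). One must check that different such choices produce blue graphs whose $\RTfunctor$-evaluations agree, so that $\LRTfunctor(T)$ really is an invariant of $T$ alone. This is precisely what is proven in \cite[Prop.\,3.1]{DGGPR} using the universal property of the coend, the Fubini theorem for coends, and the defining properties of the two-sided integral $\intLyu$; I would simply invoke that statement rather than redo it. Once this is granted, the cyclicity and partial-trace axioms of $\modTr$ (which are applied only to the blue $X$-edge) do all the remaining work, and well-definedness of $\renLRTinvariant(T)$ follows at once.
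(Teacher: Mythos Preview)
Your reduction to a purely blue graph $T^{\mathrm{bl}}$ has a gap. In the LRT recipe, Step~3 invokes the universal property of the coend to produce a single morphism $f_{\widetilde T}\colon \coend^{\otimes n}\otimes s\to t$ in~$\cat$; if you represent this as a ribbon graph (a coupon) and attach the $\intLyu$-coupons, the resulting $T^{\mathrm{bl}}$ no longer contains the original blue $X$-edge as a separate strand---it has been absorbed into $f_{\widetilde T}$. So there is nothing in $T^{\mathrm{bl}}$ to cut, and the assertion that $T_X^{\mathrm{bl}}$ is a cutting presentation of $T^{\mathrm{bl}}$ fails. Your description of $T^{\mathrm{bl}}$ as still carrying ``auxiliary blue strands $X_1,\dots,X_n$'' after Steps~1--3 conflates Step~2 with Step~3: after Step~2 one has a \emph{family} of blue graphs with bottom strands $X_i^*\otimes X_i$, into which $\intLyu\colon\tensUnit\to\coend$ cannot be inserted; only after Step~3 do $\coend$-strands appear, but at the cost of collapsing the whole diagram---blue part included---into $f_{\widetilde T}$. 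One could try to keep every blue $\tensIdeal$-edge external by a more delicate construction, but this is far more work than you indicate.

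The paper itself gives no proof here; both \Cref{prop:renormalized_link_invariant} and \Cref{prop:renormalized_bichrome_invariant} cite the same result \cite[Thm.\,3.3]{DGGPR}, which is proved there directly in the bichrome setting without any reduction to a blue graph. The point is that once \cite[Prop.\,3.1]{DGGPR} shows $\LRTfunctor$ is a well-defined \emph{ribbon} functor, the standard argument for independence from the cutting presentation (cyclicity plus the partial-trace property of $\modTr$) applies verbatim with $\LRTfunctor$ in place of $\RTfunctor$, since that argument uses nothing about $\RTfunctor$ beyond its being ribbon. You have in fact identified both necessary ingredients; the fix is to apply them directly to $\LRTfunctor$ rather than detour through an explicit blue graph.
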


\begin{remark}\label{rem:need-not-be-integral}
	The proofs of \cite[Prop~3.1,~Thm~3.3]{DGGPR} do not rely on $\intLyu$ being an
	integral for $\coend$, and so one could have likewise defined invariants (of
	bichrome graphs) using any morphism in $\cat(\tensUnit, \coend)$.
	However, morphisms that lead to well-defined manifold invariants have to satisfy extra conditions arising from the Kirby moves.
	The integral $\intLyu$ satisfies these conditions, and more general such morphisms were investigated in~\cite{Virelizier-Kirby} under the name \emph{Kirby elements}.
\end{remark}

The invariants $\LRTfunctor$ and $\renLRTinvariant$ are both defined on purely blue closed admissible bichrome
graphs, but they are in general different.
To see this, let $\tensIdeal = \ProjIdeal[{\cat}]$ 
and let $\modTr$ be the unique non-degenerate modified trace on $\ProjIdeal$ from
\Cref{example:modified_traces}\,(3), with normalisation $\modTr_{\projCover{\tensUnit}}
(\injHullMorphism{\tensUnit} \circ \projCoverMorphism{\tensUnit}) = 1$.\footnote{
In case $\cat$ is in addition modular, one can choose $\injHullMorphism{\tensUnit}$ to satisfy $(\phi_{\tensUnit})_{\projCover{\tensUnit}} = \injHullMorphism{\tensUnit} \circ \projCoverMorphism{\tensUnit}$ \cite[Prop.\,6.5]{GR-proj}, and the normalisation of the modified trace then agrees with the one given in \Cref{rem:modular-remark}\,(4).
}
Then, unless $\cat$ is semisimple, $\renLRTinvariant(T)$ is different from
$\LRTfunctor(T)$: 
the bichrome graph $T$ given by the trace (in 
$\CategoryBichromeGraph$)  of $\injHullMorphism{\tensUnit} 
\circ \projCoverMorphism{\tensUnit}$ satisfies
$\renLRTinvariant(T) = \modTr_{\projCover{\tensUnit}}(\injHullMorphism{\tensUnit} \circ
\projCoverMorphism{\tensUnit}) = 1$
and $\LRTfunctor(T) = \projCoverMorphism{\tensUnit} \circ \injHullMorphism{\tensUnit} =
0$.

More generally, we record the following
\namecref{prop:LRTfunctor_on_admissibles_is_zero} for later use,
which is a direct consequence of \Cref{prop:categorical_trace_vanishes_on_all_proper_ideals}:

\begin{lemma}
	\label{prop:LRTfunctor_on_admissibles_is_zero}
	Let $T$ be a closed bichrome graph containing an edge coloured by an object in a proper tensor ideal. Then $\LRTfunctor(T) = 0$.
\end{lemma}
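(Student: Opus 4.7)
The plan is to reduce the claim to \Cref{prop:categorical_trace_vanishes_on_all_proper_ideals} by cutting the bichrome graph along the $\tensIdeal$-labelled edge and using that $\LRTfunctor$ is a ribbon functor.

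First, let $X \in \tensIdeal$ be the object colouring one of the edges of the closed bichrome graph $T$. Exactly as in the case of purely blue admissible ribbon graphs, we can present $T$ as the closure of a bichrome graph
$T_X \in \CategoryBichromeGraph((X,+),(X,+))$,
i.e.\
\begin{align*}
T \;=\; \trCat[\CategoryBichromeGraph]_{(X,+)}(T_X)\ ,
\end{align*}
obtained by isotoping the chosen $X$-labelled edge to the rightmost position and cutting it open. The only difference with the purely blue situation is that $T_X$ may contain red strands and bichrome coupons, but since the cut is performed on a blue edge these are left untouched.

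Next, since $\LRTfunctor \colon \CategoryBichromeGraph \to \cat$ is a ribbon functor (by \cite[Prop.\,3.1]{DGGPR}), it preserves categorical traces and in particular
\begin{align*}
\LRTfunctor(T) \;=\; \trCat_X\!\bigl(\LRTfunctor(T_X)\bigr)\ ,
\end{align*}
with $\LRTfunctor(T_X) \in \End_{\cat}(X)$. As $X \in \tensIdeal$ and $\tensIdeal$ is a proper tensor ideal, \Cref{prop:categorical_trace_vanishes_on_all_proper_ideals} gives $\trCat_X(f) = 0$ for every $f \in \End_{\cat}(X)$. Applying this to $f = \LRTfunctor(T_X)$ yields $\LRTfunctor(T) = 0$, as claimed.

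There is essentially no obstacle here; the only point requiring care is that the cutting procedure takes place in $\CategoryBichromeGraph$ rather than in $\CategoryColoredRibGraph$, but because the cut edge is blue this is the same isotopy argument used to define cutting presentations of admissible bichrome graphs in Section~\ref{sec:invariants_of_bichrome_graphs}.
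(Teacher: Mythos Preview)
Your argument is correct and is exactly the approach the paper has in mind: the paper simply states that the lemma is ``a direct consequence of \Cref{prop:categorical_trace_vanishes_on_all_proper_ideals}'', and your proof spells out precisely this reduction via a cutting presentation and the fact that $\LRTfunctor$ is a ribbon functor.
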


\subsection{Definition and properties of the manifold invariant}

Throughout, all manifolds will be oriented.
Let $\cat$ be twist non-degenerate and unimodular.
In order to define invariants of $3$-manifolds with embedded $\cat$-coloured bichrome
graphs, we first recall some notation following~\cite{DGGPR}.
For a framed link $L$ in the $3$-sphere $S^3$ we denote the manifold resulting from
surgery on $L$ by $M_L$.
The number of components of the link is $\ell(L)$, and its signature is $\sigma(L)$.
Let $M$ be a closed connected 3-manifold, containing a closed bichrome graph $T$.
If $M \cong M_L$, then we say that $L$ is a surgery link for $M$;
we denote by $T \subset M_L$ also the pullback of $T$ along the homeomorphism, and by
$L \cup T \subset S^3$ the corresponding bichrome graph in $S^3$, regarding $L$ as red.

Since $\cat$ is twist non-degenerate, we can choose invertible numbers $\DD,
\anomaly \in \field$ with, recall the definition of $\stabCoeff$ in~\eqref{eq:ribboncat-twistnondeg},
\begin{align}
	\DD^2 = \stabCoeffM \stabCoeffP
	\qandq
	\anomaly = \frac{\DD}{\stabCoeffM} = \frac{\stabCoeffP}{\DD}
	\notag
	\ .
\end{align}

\begin{theorem}[{\cite[Thm.\,3.8]{DGGPR}}]
	\label{def:invariant_general}
	Let $\cat$ be a unimodular and twist non-degenerate finite ribbon
	tensor category, and let
	$\tensIdeal$ be an ideal of $\cat$ with modified trace $\modTr$.
	If $M \cong M_L$ is a closed connected 3-manifold containing an $\tensIdeal$-admissible
	bichrome
	ribbon graph $T$, then
	\begin{align}\label{eq:3mfd-inv}
		\invDGGPR (M,T)
		= \DD^{-1-\ell(L)} \anomaly^{-\sigma(L)} \renLRTinvariant( L \cup T )
	\end{align}
	is a topological invariant of the pair $(M,T)$.
\end{theorem}

Following \cite{DGGPR} call $\invDGGPR$ the \emph{renormalised Lyubashenko invariant}
of admissible closed $3$-manifolds.

\begin{remark}
	\begin{enumerate}
		\item 
		For $H$ a ribbon Hopf algebra, $\cat = \hmodM$ the category of finite-dimensional
		left $H$-modules, and $\tensIdeal$ the projective ideal in $\hmodM$, the invariant
		$\invDGGPR(M,T)$ was first considered in \cite{DGP-renormalized-Hennings}, see \cite[Sec.\,2.4]{DGGPR2} for the equivalence between \cite{DGGPR} and \cite{DGP-renormalized-Hennings} in the Hopf algebra case.

		\item
		The notation $\invDGGPR$ is chosen because of the dependence of the invariant on both
		$\intLyu$ and $\modTr$.
		In fact, it also implicitly depends on a choice of sign for $\DD$.
		For if we change $\intLyu \to \intLyu' = x \intLyu$ for some $x \in \field^\times$, 
		then $\stabCoeff' = x \stabCoeff$, and consequently $\DD' = \pm x \DD$.
		If we impose the condition $\DD' = x \DD$, i.e.\ $\intLyu'/\DD' = \intLyu/\DD$, then $\anomaly' = \anomaly$ and we have
		\begin{align*}
			L'_{x \intLyu, y \modTr + z \modTr'}
			= \frac{1}{x} (y \ L'_{\intLyu, \modTr} + z \ L'_{\intLyu, \modTr'})
		\end{align*}
		for all $x,y,z \in \field[C]$ and modified traces $\modTr$ and $\modTr'$.
	\end{enumerate}  
\end{remark}

The statement of \Cref{prop:renormalized_link_invariant_factors_through_Gr} generalises to the renormalised Lyubashenko invariant, and for later reference let us restate it here in full: 

\begin{corollary}
	\label{cor:manifold_invariants_factor_through_Gr}
	Suppose the bichrome graph $T^B_X$ in $M$ has an edge labelled by $X \in \tensIdeal$ and a disjointly embedded knot coloured with $B \in \cat$ (which may link non-trivially with the remaining part of the graph $T^B_X$).
 Then 
	\begin{align}
		\invDGGPR (M, T^B_X)
		= \sum_{U \in \Irr} \grMultiplicity{B}{U} \cdot \invDGGPR (M, T^U_X)
		\ .
		\notag
	\end{align}
\end{corollary}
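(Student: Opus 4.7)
The plan is to adapt the proof of \Cref{prop:ren_link_inv_factors_INITIAL} to the bichrome setting and then unpack the definition of $\invDGGPR$. First, fix a surgery presentation $M \cong M_L$. By \Cref{def:invariant_general},
\begin{align*}
\invDGGPR(M, T^B_X) = \DD^{-1-\ell(L)} \anomaly^{-\sigma(L)} \renLRTinvariant(L \cup T^B_X) \ ,
\end{align*}
and the prefactor is independent of $B$. It therefore suffices to establish additivity of $B \mapsto \renLRTinvariant(L \cup T^B_X)$ on short exact sequences $0 \to A \to B \to C \to 0$ in $\cat$, since iterating over a composition series of $B$ then produces the stated sum over simples.

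Since $X \in \tensIdeal$, choose a cutting presentation at the $X$-labelled edge, so that $\renLRTinvariant(L \cup T^B_X) = \modTr_X\bigl( \LRTfunctor((L \cup T^B_X)_X) \bigr)$. Mimicking the proof of \Cref{prop:ren_link_inv_factors_INITIAL}, additionally cut the $B$-coloured knot (say at a top cap and a bottom cup) to present the resulting bichrome graph as the closure of a bichrome graph with boundary. Applying $\LRTfunctor$ to the open graph produces a morphism
\begin{align*}
\eta_L(X)_B \in \cat(\dualL{B} \tensor B,\, X \tensor \dualL{X}) \ ,
\end{align*}
and $\LRTfunctor((L \cup T^B_X)_X)$ is recovered from $\eta_L(X)_B$ by pre- and post-composition with $B$-independent structural morphisms built from $\evR_B$, $\coevR_B$ and the cut $X$-strand, in the same way as in the purely blue case.

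The crucial point is that $\{\eta_L(X)_B\}_{B \in \cat}$ forms a dinatural transformation $\dualL{\placeholder} \tensor \placeholder \To X \tensor \dualL{X}$. This is because the algorithmic definition of $\LRTfunctor$ --- cutting red cycles to obtain an $n$-bottom graph, replacing red bichrome coupons by the universal dinatural transformations $\dinatLyu$ and $\dinatLyu^{\cocoend}$, packaging the result through a coend via Fubini, and pre-composing with $\intLyu^{\tensor n}$ --- acts only on the red part of the graph together with the blue coupons in its complement. By hypothesis the $B$-labelled strand is a blue knot disjoint from all coupons, and is therefore left untouched by these operations; naturality of $\RTfunctor$ in $B$, the key input in the proof of \Cref{prop:ren_link_inv_factors_INITIAL}, transfers verbatim. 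Applying \cite[Lem.\,2.1]{GR-proj} to $\eta_L(X)$ and the short exact sequence $0 \to A \to B \to C \to 0$, post-composing with the $B$-independent remainder, and using linearity of $\modTr_X$ then yields
\begin{align*}
\renLRTinvariant(L \cup T^B_X) = \renLRTinvariant(L \cup T^A_X) + \renLRTinvariant(L \cup T^C_X) \ ,
\end{align*}
as required.

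The main obstacle will be careful bookkeeping in the argument above --- in particular, verifying that the Fubini step, which integrates the dinaturality in the red-cycle variables, genuinely commutes with the dinaturality in the blue variable $B$. Once this separation is made precise (and it is essentially built into the algorithmic definition of $\LRTfunctor$, since red and blue pieces are treated independently), the rest of the proof is a direct translation of \Cref{prop:ren_link_inv_factors_INITIAL}.
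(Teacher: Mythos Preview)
Your proposal is correct and follows essentially the same approach as the paper. The paper does not spell out a proof of this corollary; it simply states that \Cref{prop:renormalized_link_invariant_factors_through_Gr} generalises to the bichrome setting, relying on the observation in \Cref{rem:some_label_name_dont_know}\,(1) that the argument of \Cref{prop:ren_link_inv_factors_INITIAL} only requires the $B$-coloured edge to contribute naturally in $B$ --- exactly the point you make precise when arguing that the red-processing steps of $\LRTfunctor$ leave the blue $B$-knot untouched.
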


Lyubashenko's original invariant \cite{Lyu-inv-MCG}---or rather its extension allowing
for $\cat$-coloured ribbon graphs inside the $3$-manifolds---is the following special
case.

\begin{definition}[Lyubashenko invariant]
	\label{def:Lyubashenko_invariant}
	Let everything be as in \Cref{def:invariant_general}, but specialise to the ideal $\tensIdeal
	= \cat$.
	By \Cref{example:modified_traces} (2), without loss of generality the modified
	trace is the categorical trace $ \trCat$.
	We call 
	\begin{align}
		\invLyu(M,T)
		= \invDGGPRtrace{\trCat}(M,T)
		\notag
	\end{align}
	the \emph{Lyubashenko invariant} of the pair $(M,T)$.
\end{definition}

In case $\cat$ is semisimple, it is immediate from \cite[Sec.\,2.9]{DGGPR} that
$\invLyu$ coincides with the Reshetikhin-Turaev invariant 
as presented for example in~\cite[Thm.\,4.1.12]{BK}.  
For $\cat$ not semisimple, \Cref{prop:LRTfunctor_on_admissibles_is_zero} implies that $\invLyu (M, T)$ vanishes whenever $T$ has an edge coloured by 
an object in a proper ideal.
From \Cref{cor:manifold_invariants_factor_through_Gr} we see that  if $T$ is a link (i.e.\ there are no coupons),
$\invLyu (M, T)$ factors through $\grothring$ with respect to all components of $T$ simultaneously.

\medskip 
The invariants from \Cref{def:invariant_general} are related to the Lyubashenko
invariant as in the following proposition, which is an immediate consequence of
\cite[Prop.\,3.11]{DGGPR}.\footnote{
	There is a typo in \cite[Prop.\,3.11]{DGGPR}: the right hand side should have an extra factor of $\DD$, i.e.\ the identity should read, in the present notation, $\invDGGPR(M \# M',T')  
	= \DD \,          
	\invLyu(M) 
	\,
	\invDGGPR(M', T')$.
Also, \cite[Prop.\,3.11]{DGGPR} assumes that there is no ribbon graph in $M$, but the same proof works in the present situation where $M$ contains $T$.
}

\begin{proposition}
	\label{prop:relation_between_Lyu_and_rLyu}
	Let $M$ be a closed connected 3-manifold containing a bichrome graph
	$T = \widetilde{T} \sqcup T_{\textup{adm}}$ where $T_{\textup{adm}}$ is
		admissible and contained within an open ball in $M$ not intersecting
		$\widetilde{T}$.
		Then
		\begin{align}
			\invDGGPR(M,T) 
			= \DD \,          
			\invLyu(M, \widetilde{T}) 
			\,
			\invDGGPR(S^3, T_{\textup{adm}})
			\ .
			\notag
		\end{align}
In particular, if $\renLRTinvariant(T_{\textup{adm}}) = 1$, 
and so by \eqref{eq:3mfd-inv} $\invDGGPR(S^3, T_{\textup{adm}}) = \DD^{-1}$,
we have
		\begin{align}
			\invLyu(M,\widetilde{T})
			= \invDGGPR(M, \widetilde{T} \sqcup T_{\textup{adm}})
			\ .
			\notag
					\end{align}
		\end{proposition}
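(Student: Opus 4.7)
The plan is to reduce Part (1) to a multiplicativity property of $\renLRTinvariant$ on split bichrome graphs lying in disjoint balls of $S^3$, and then obtain Part (2) as an immediate corollary of Part (1).

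For Part (1), I would first fix a surgery link $L \subset S^3$ presenting $M$. Since $T_{\textup{adm}}$ lies in an open ball in $M$ disjoint from $\widetilde{T}$, we may isotope the bichrome graph inside $S^3$ so that $T_{\textup{adm}}$ is contained in a $3$-ball in $S^3$ disjoint from both $L$ and $\widetilde{T}$. Unfolding the definition of $\invDGGPR$ gives
\begin{align*}
	\invDGGPR(M, T) = \DD^{-1-\ell(L)} \anomaly^{-\sigma(L)} \renLRTinvariant\bigl(L \cup \widetilde{T} \sqcup T_{\textup{adm}}\bigr).
\end{align*}
The key step is then to establish that for any closed bichrome graph $A$ and any closed $\tensIdeal$-admissible bichrome graph $B$ sitting in disjoint balls of $S^3$,
\begin{align*}
	\renLRTinvariant(A \sqcup B) = \LRTfunctor(A) \cdot \renLRTinvariant(B).
\end{align*}
This follows by picking a cutting presentation $B_X$ of $B$ at an edge coloured by some $X \in \tensIdeal$, observing that $\LRTfunctor$ is a ribbon functor (hence monoidal) and $A$ sits in a ball disjoint from $B_X$, so $\LRTfunctor(A \sqcup B_X) = \LRTfunctor(A) \otimes \LRTfunctor(B_X)$, and then pulling the scalar $\LRTfunctor(A) \in \End_{\cat}(\tensUnit) = \field$ through the partial modified trace on $B_X$.

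Applying this multiplicativity with $A = L \cup \widetilde{T}$ and $B = T_{\textup{adm}}$ and using the two identifications \textbf{(i)} $\LRTfunctor(L \cup \widetilde{T}) = \renLRTinvariant[{\trCat}](L \cup \widetilde{T})$, which holds for $\tensIdeal = \cat$ and $\modTr = \trCat$ by \Cref{example:modified_traces}\,(1) and the fact that cutting any edge of a closed graph and closing it up with the categorical trace recovers the functor value, and \textbf{(ii)} $\invDGGPR(S^3, T_{\textup{adm}}) = \DD^{-1} \renLRTinvariant(T_{\textup{adm}})$, which follows from $S^3$ admitting the empty surgery link, yields Part~(1) after collecting the prefactors so that $\DD^{-1-\ell(L)} \anomaly^{-\sigma(L)} \LRTfunctor(L \cup \widetilde{T}) = \invLyu(M, \widetilde{T})$. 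For Part~(2), I would apply Part~(1) to the manifold $M \# S^3 \cong M$ with split graph $T \sqcup T_{\textup{adm}}$, noting that a surgery link for $M \# S^3$ can be taken equal to one for $M$, so $\invLyu(M \# S^3, T) = \invLyu(M, T)$; the normalisation $\invDGGPR(S^3, T_{\textup{adm}}) = 1$ then gives the claim.

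The main obstacle will be the careful verification of the split-union multiplicativity for $\renLRTinvariant$. Following the recipe in Section~\ref{sec:invariants_of_bichrome_graphs}, the cycles of $A \sqcup B$ decompose as the disjoint union of the cycles of $A$ and of $B$, so the insertions of $\intLyu$ and the applications of the Fubini theorem for coends factor through the two subgraphs. Making this precise requires tracking how cutting the admissible edge of $B$ interacts with the cycle decomposition, and justifying that the scalar $\LRTfunctor(A)$ indeed passes through the partial modified trace; once this is done, the remainder of the argument is bookkeeping with the prefactors $\DD$ and $\anomaly$.
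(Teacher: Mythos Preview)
Your approach is correct and is precisely the argument underlying \cite[Prop.\,3.11]{DGGPR}, which the paper simply cites without reproducing a proof. The split-union multiplicativity $\renLRTinvariant(A \sqcup B) = \LRTfunctor(A) \cdot \renLRTinvariant(B)$ is the heart of the matter, and your justification via monoidality of $\LRTfunctor$ together with linearity of the modified trace is exactly right; the cycle decomposition indeed factors cleanly because $A$ and $B$ live in disjoint balls.

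One point worth flagging: when you derive Part~(2) from Part~(1), the computation actually gives
\[
	\invDGGPR(M \# S^3, T \sqcup T_{\textup{adm}}) = \DD \cdot \invLyu(M, T) \cdot \invDGGPR(S^3, T_{\textup{adm}}) = \DD \cdot \invLyu(M, T),
\]
which differs from the stated identity in Part~(2) by a factor of $\DD$. This is not a flaw in your argument but rather an apparent typo in the paper's formulation of Part~(2), consistent with the footnote already acknowledging a $\DD$-typo in the cited reference. You should not gloss over this in your write-up; state what your derivation actually yields.
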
					
Combining \Cref{prop:relation_between_Lyu_and_rLyu} with \Cref{prop:LRTfunctor_on_admissibles_is_zero} gives:

\begin{corollary}
	\label{cor:Lyu-zero-on-ideal}
	Suppose the bichrome graph $T$ inside the 3-manifold $M$ can be written as $T_1
	\sqcup T_2$, where $T_1$ contains an edge coloured by an object in any proper tensor ideal, and $T_2$ is admissible and
	contained within an open ball in $M$ that does not intersect $T_1$.
	Then $\invDGGPR (M,T) = 0$.
\end{corollary}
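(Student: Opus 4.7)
The plan is to combine the two ingredients signalled just before the statement: \Cref{prop:relation_between_Lyu_and_rLyu}\,(1), which lets one split off the factor living in a disjoint ball, and \Cref{prop:LRTfunctor_on_admissibles_is_zero}, which annihilates the remaining Lyubashenko factor. Concretely, I would first apply \Cref{prop:relation_between_Lyu_and_rLyu}\,(1) to the decomposition $T = T_1 \sqcup T_2$, taking $\widetilde T := T_1$ and $T_{\textup{adm}} := T_2$. The hypotheses of that proposition are exactly the ones made on $T_2$ (admissibility, and containment in an open ball disjoint from $T_1$), so this produces
\[
\invDGGPR(M, T) \;=\; \DD \cdot \invLyu(M, T_1) \cdot \invDGGPR(S^3, T_2).
\]

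With this in hand, it remains to verify that $\invLyu(M, T_1) = 0$. For this I would pick any surgery link $L$ representing $M$ and unfold \Cref{def:Lyubashenko_invariant}:
\[
\invLyu(M, T_1) \;=\; \DD^{-1-\ell(L)} \anomaly^{-\sigma(L)} \,\renLRTinvariant[\trCat](L \cup T_1).
\]
Since the underlying ideal here is all of $\cat$ and the modified trace is the categorical trace, the cutting-and-partial-trace step in the definition of $\renLRTinvariant[\trCat]$ collapses by ribbon functoriality of $\LRTfunctor$ to the equality $\renLRTinvariant[\trCat](L \cup T_1) = \LRTfunctor(L \cup T_1)$; this is the bichrome analogue of \Cref{rem:some_label_name_dont_know}\,(\ref{rem:RT_invariant_factors_through_GR}). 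The graph $L \cup T_1$ inherits from $T_1$ an edge coloured by an object in a proper tensor ideal, so \Cref{prop:LRTfunctor_on_admissibles_is_zero} forces $\LRTfunctor(L \cup T_1) = 0$, and hence $\invLyu(M, T_1) = 0$.

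Substituting back into the displayed factorisation yields $\invDGGPR(M, T) = 0$, completing the plan. There is no genuinely hard step: the only point worth flagging is the identification $\renLRTinvariant[\trCat] = \LRTfunctor$ on closed bichrome graphs, which is immediate from the fact that postcomposing $\LRTfunctor$ of a cut presentation with the categorical trace just re-closes the cut edge in $\CategoryBichromeGraph$.
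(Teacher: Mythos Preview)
Your proof is correct and follows exactly the route the paper intends: apply \Cref{prop:relation_between_Lyu_and_rLyu}\,(1) to split off $T_2$, then use \Cref{prop:LRTfunctor_on_admissibles_is_zero} to kill the factor $\invLyu(M,T_1)$. The extra detail you give on why $\renLRTinvariant[\trCat] = \LRTfunctor$ for closed bichrome graphs is a helpful unpacking of what the paper leaves implicit.
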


As another corollary to the above proposition, consider a situation where the ribbon
graph $T$ in $M$ can be written as $T = T' \cup T''$, such that $T'$ is disjoint from
$T''$ but not necessarily contained in a 3-ball disjoint from $T''$.
Assume that $T'$ is an embedded blue loop labelled by $X \in \tensIdeal$ with one coupon
$f : X \to X$ which factors through $\tensUnit$. 

\begin{corollary}
	\label{prop:nilpotent_knot_yields_Lyu}
	We have $\invDGGPR(M, T) = \modTr_X(f) \invLyu(M, T'')$.
\end{corollary}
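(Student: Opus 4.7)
The plan is to cut the loop $T'$ and reduce the statement to a naturality computation. Cut the loop $T'$ at some $X$-edge disjoint from the coupon $f$ to obtain a cutting presentation $(L\cup T)_X \in \CategoryBichromeGraph((X,+),(X,+))$. After sliding the coupon $f$ along the strand to sit just below the top endpoint (an isotopy that does not affect the value of $\LRTfunctor$), the cutting presentation factors as the composition of two bichrome morphisms: a ``bare strand''---an open $X$-ribbon from $(X,+)$ to $(X,+)$ that links with the closed bichrome components $L\cup T''$---followed by the coupon $f$. Writing $\nu_X \in \End_\cat(X)$ for the $\LRTfunctor$-value of the bare strand, we obtain $\LRTfunctor((L\cup T)_X) = f\circ \nu_X$, and therefore $\renLRTinvariant(L\cup T) = \modTr_X(f\circ \nu_X) = \modTr_X(b\circ a\circ \nu_X)$.

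The crucial step is to observe that $\nu_X$ is the value at $X$ of a natural transformation $\nu \colon \id_\cat \To \id_\cat$. Replacing the colour of the bare strand by any $Y\in\cat$ yields an endomorphism $\nu_Y\in\End_\cat(Y)$. Transporting this family along the isomorphism $\cat(Y,Y) \cong \cat(\dualL Y \otimes Y,\tensUnit)$ produces a dinatural transformation $\dualL Y\otimes Y \to \tensUnit$, the dinaturality being the standard compatibility of the LRT-evaluation with the insertion of a coupon on the open strand. By the universal property of the coend it factors through a morphism $\zeta\colon \coend \to \tensUnit$, and the isomorphism $\cat(\coend,\tensUnit)\cong \End(\id_\cat)$ of \eqref{eq:isomorphism_EndidC_Hom1L} turns $\zeta$ into a natural transformation $\nu \in \End(\id_\cat)$ whose components recover the original $\nu_Y$.

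Next I compute $\nu_\tensUnit$: when the bare strand is coloured by the tensor unit, every braiding and twist involving it is trivial, so the strand can be isotoped out of all its crossings with $L\cup T''$ without changing the value of $\LRTfunctor$. The resulting configuration is the closed bichrome graph $L\cup T''$ sitting beside a trivial $\tensUnit$-strand, yielding $\nu_\tensUnit = \LRTfunctor(L\cup T'')\cdot \id_\tensUnit$. Combined with naturality applied to $a\colon X\to \tensUnit$, this gives $a\circ \nu_X = \nu_\tensUnit\circ a = \LRTfunctor(L\cup T'')\cdot a$, so
\begin{align*}
\renLRTinvariant(L\cup T)
&= \modTr_X(b\circ a\circ \nu_X)
= \LRTfunctor(L\cup T'')\cdot \modTr_X(b\circ a)
\\
&= \modTr_X(f)\cdot \LRTfunctor(L\cup T'').
\end{align*}
Multiplying both sides by $\DD^{-1-\ell(L)}\anomaly^{-\sigma(L)}$ and invoking $\invLyu(M,T'') = \DD^{-1-\ell(L)}\anomaly^{-\sigma(L)}\, \LRTfunctor(L\cup T'')$ from \Cref{def:invariant_general} and \Cref{def:Lyubashenko_invariant} yields the claim.

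The main obstacle is the naturality statement for $\nu$ in the second paragraph: concretely, one must verify the dinaturality of $Y\mapsto \nu_Y$, which amounts to checking that inserting a coupon on the bare strand just before its top endpoint coincides with the adjoint insertion on the bottom endpoint. This is a structural compatibility of $\LRTfunctor$ with the coend, but it needs to be stated carefully because sliding a coupon through the body of the strand would in general introduce nontrivial braidings with $L\cup T''$; the argument is that the coend encodes precisely the universal receiver for such dinatural data, and the cutting presentation is consequently determined by a single morphism $\coend \to \tensUnit$ independent of the colour $Y$.
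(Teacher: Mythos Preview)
Your argument is correct and arrives at the same conclusion as the paper, but by a different route. The paper argues geometrically: since $f = b\circ a$ with $a\colon X\to\tensUnit$ and $b\colon\tensUnit\to X$, one replaces the $f$-coupon by the two coupons $a,b$; the resulting $\tensUnit$-segment of the loop has trivial braiding with everything, so $T'$ can be isotoped into a $3$-ball disjoint from $T''$, after which one recombines $a,b$ into $f$ and applies \Cref{prop:relation_between_Lyu_and_rLyu}\,(1). You instead cut $T'$, argue that the resulting open strand gives a natural transformation $\nu\in\End(\id_\cat)$, and then use naturality at the morphism $a$ to pull out the scalar $\nu_\tensUnit=\LRTfunctor(L\cup T'')$. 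The two arguments are really the same idea expressed in different registers: the paper performs the disentangling at the level of ribbon graphs, you perform it at the level of the evaluated morphisms. Your version avoids invoking \Cref{prop:relation_between_Lyu_and_rLyu}, at the cost of spelling out the naturality; the paper's version is shorter because that proposition already packages the relevant factorisation.

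One remark: you are overthinking the naturality of $\nu$. No detour through the coend is needed. The cutting presentation $B_Y$ is a morphism in $\CategoryBichromeGraph$ built from the open $Y$-strand passing through braidings with fixed (blue or red) edges; since $\LRTfunctor$ is a ribbon functor and braidings and twists are natural in each argument, the composite $\nu_Y=\LRTfunctor(B_Y)$ is automatically natural in $Y$. The concern you raise about sliding coupons through crossings does not arise: naturality of the braiding $c_{Y,W}$ says precisely that a $g$-coupon can be passed through such a crossing.
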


\begin{proof}
As $f$ factors through $\tensUnit$, we can replace $f$ by two coupons, one from $X$ to $\tensUnit$ and one from $\tensUnit$ to $X$. This makes $T'$ contractible, and we can disentangle it from $T''$. Replacing the two new coupons again by $f$, we end up with an unknot $U_f$ with coupon $f$ embedded in a 3-ball disjoint from $T''$. 
One can now apply \Cref{prop:relation_between_Lyu_and_rLyu} with $T_{\textup{adm}} = U_f$ and $\widetilde{T} = T''$.
\end{proof}

We finish this section by showing that for $\cat$ modular, see above \eqref{eq:definition_modularity_parameter}, Lyubashenko's invariant detects semisimplicity of $\cat$ in the sense that the invariant of $S^2 \times S^1$ is non-zero if and only if $\cat$ is semisimple.

\begin{proposition}
	\label{prop:Lyubashenko_problems}
	Let $\cat$ be a modular tensor category.
	Then $\invLyu(S^2 \times S^1, \emptyset) \neq 0$ if and only if $\cat$ is semisimple.
\end{proposition}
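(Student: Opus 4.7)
First I realise $S^2 \times S^1$ as $0$-surgery on the unknot in $S^3$: writing $L \subset S^3$ for the $0$-framed red unknot, we have $\ell(L) = 1$ and $\sigma(L) = 0$, so by \Cref{def:invariant_general} applied to $\tensIdeal = \cat$ and $\modTr = \trCat$ (and inserting an auxiliary blue $\tensUnit$-labelled unknot to restore admissibility, which does not change the value since $\trCat_\tensUnit(\id_\tensUnit) = 1$),
\begin{align*}
\invLyu(S^2 \times S^1, \emptyset) = \DD^{-2} \cdot \LRTfunctor(L).
\end{align*}

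Next I apply the recipe for $\LRTfunctor$ from \Cref{sec:invariants_of_bichrome_graphs} to the $0$-framed red unknot $L$: it contains a single cycle, and cutting and blueifying it produces a single strand closed by a cap. Using the characterisation $\counitL \circ \dinatLyu_X = \evR_X$ of the counit of $\coend$, the universal property together with Fubini identifies the resulting morphism $f_{\widetilde L} \colon \coend \to \tensUnit$ with $\counitL$, and the prescription then yields a single insertion of $\intLyu$, so
\begin{align*}
\LRTfunctor(L) = \counitL \circ \intLyu \;\in\; \End_\cat(\tensUnit) = \field.
\end{align*}
Hence the proposition reduces to showing that $\counitL \circ \intLyu \neq 0$ iff $\cat$ is semisimple with $\dim \cat \neq 0$.

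In the semisimple case, $\coend \cong \bigoplus_{U \in \Irr} \dualL{U} \otimes U$ and the (up to scalar unique) two-sided integral is $\intLyu = \sum_{U \in \Irr} \dim_\cat(U) \, \chi_U$ with internal character $\chi_U = \dinatLyu_U \circ \coevR_U$. Then $\counitL \circ \chi_U = \evR_U \circ \coevR_U = \dim_\cat(U)$, which gives
\begin{align*}
\counitL \circ \intLyu = \sum_{U \in \Irr} \dim_\cat(U)^2 = \dim \cat.
\end{align*}
This is non-zero iff $\dim \cat \neq 0$, handling both directions in the semisimple case.

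The main obstacle is to show $\counitL \circ \intLyu = 0$ when $\cat$ is not semisimple. My plan is to invoke the description of the two-sided integral of $\coend$ in a unimodular finite ribbon category in terms of the projective cover $\projCover{\tensUnit}$ of the unit, see \cite{Sh-integrals,BGR2} and the quasi-Hopf treatment of \Cref{sec:Lyu-coend-int-for-qHopf}: the integral factors through $\dinatLyu_{\projCover{\tensUnit}}$ in such a way that $\counitL \circ \intLyu$ equals, up to a non-zero scalar, the categorical dimension $\dim_\cat \projCover{\tensUnit}$. In the non-semisimple case $\projCover{\tensUnit} \in \ProjIdeal \subsetneq \cat$ by \Cref{prop:proj_is_smallest}, so by \Cref{prop:categorical_trace_vanishes_on_all_proper_ideals} we obtain $\dim_\cat \projCover{\tensUnit} = 0$, and hence $\counitL \circ \intLyu = 0$. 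Pinning down this factorisation and its proportionality constant outside the (quasi-)Hopf setting is the technical heart of the proof.
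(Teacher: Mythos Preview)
Your reduction to $\counitL \circ \intLyu$ is correct and matches the paper. The semisimple direction is also fine, modulo the minor slip that $\counitL \circ \dinatLyu_X = \evL_X$ rather than $\evR_X$ (harmless here since $\cat$ is ribbon, hence spherical).

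The non-semisimple direction, however, has a genuine gap, and the paper takes an entirely different route. The paper does not factor $\intLyu$ through projectives at all: it observes (via \cite[Sec.\,6.2]{BGR2}) that $\intLyu$ is a cointegral for the Hopf monad $T = \coend \otimes (-)$, applies the Maschke theorem for Hopf monads \cite[Thm.\,6.5]{BV-hopfmonads} to get that $\counitL \circ \intLyu \neq 0$ iff $T$ is semisimple, identifies $T$-modules with the Drinfeld centre $\cat[Z](\cat)$, and then proves a short lemma that for ribbon $\cat$ the centre is semisimple iff $\cat$ is semisimple with $\dim\cat \neq 0$.

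Your claimed factorisation of $\intLyu$ through $\dinatLyu_{\projCover{\tensUnit}}$ is already false in the semisimple case with more than one simple: there $\projCover{\tensUnit} = \tensUnit$, yet $\intLyu = \sum_U \dim_\cat(U)\,\chi_U$ has nonzero components in every summand $U^\vee \otimes U \subset \coend$, not just $\tensUnit^\vee \otimes \tensUnit$. One could instead hope that $\intLyu$ lifts along $\dinatLyu_P$ for a projective \emph{generator} $P$; granted such a lift, a nonzero $\counitL \circ \intLyu = \evL_P \circ \alpha$ would exhibit $\tensUnit$ as a retract of the projective $P^\vee \otimes P$ and force semisimplicity, without any appeal to $\dim_\cat \projCover{\tensUnit}$. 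But this lift is exactly what is not automatic: $\dinatLyu_P$ is an epimorphism, and $\cat(\tensUnit,-)$ fails to be right exact precisely when $\tensUnit$ is not projective. Neither \cite{Sh-integrals} nor \cite{BGR2} provides this lift in the generality you need, and the quasi-Hopf formula $\intLyu(h) = \coint^r(S(\betaQ)h)$ from \Cref{sec:Lyu-coend-int-for-qHopf} does not make it manifest either.
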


\begin{proof}
	The surgery presentation of $S^2 \times S^1$ is a zero-framed unknot. From this one easily computes
	\begin{align}
		\invLyu(S^2\times S^1, \emptyset) 
		= \counitL \circ \intLyu
		\ ,
		\notag
	\end{align}
see e.g.\ \cite[Sec.\,5.3]{Lyu-inv-MCG}. As $\cat$ is modular, the Hopf pairing $\hopfPair$ is non-degenerate and hence by \eqref{eq:definition_modularity_parameter} transforms the integral $\Lambda$ into a cointegral,
\begin{equation}
    \counitL \circ \intLyu = \hopfPair \circ (\intLyu \otimes \unitL) = \modularityParameter \cdot \cointLyu \circ \unitL \ .
\end{equation}
It is shown in \cite[Prop.\,5.6]{Shimizu:2015} that $\cointLyu \circ \unitL \neq 0$ if and only if $\cat$ is semisimple. 
Note that \cite{Shimizu:2015} uses the so-called adjoint algebra $A$ to define the integral of $\cat$ while our $\coend\cong A^*$ as coalgebras in $\cat$ and our cointegral $\cointLyu$ corresponds to 
Shimizu's categorical integral, by comparing~\eqref{eq:Lambda-co} with~\cite[Eq.\,(5.7)]{Shimizu:2015}.
\end{proof}

One source of modular tensor categories are Drinfeld centres $\cat[Z](\cat)$
of pivotal finite tensor categories $\cat$
    satisfying a sphericality condition,
see \cite[Thm.\,5.11]{Sh-ribbon}, and these centers 
allow for a non-degenerate two-sided modified 
trace on the ideal of projective objects~\cite[Cor.\,4.7]{GR-proj}.
It is thus useful to know when exactly such centres are semisimple, or by~\Cref{prop:Lyubashenko_problems}, when the corresponding invariant of $S^2\times S^1$ is non-zero. For this, recall first  that the \emph{global dimension} of a 
    ribbon
fusion category~$\cat$ is $\dim \cat = \sum_{U \in \Irr} (\dim_{\cat} U)^2$.
For $\cat$ ribbon, semisimplicity of $\cat[Z](\cat)$ and of  $\cat$ are related as follows.

\begin{lemma}
	For  a ribbon finite tensor category $\cat$, the following are equivalent:
	\begin{enumerate}
		\item
		The centre $\cat[Z](\cat)$ is semisimple.
		\item
		$\cat$ is semisimple  and $\dim \cat \neq 0$.\footnote{Over an algebraically closed field of characteristic 0, the global dimension is automatically non-zero \cite[Thm.\,7.21.12]{EGNO}.}
	\end{enumerate}
\end{lemma}

\begin{proof}
	$(2)\Rightarrow (1)$ is shown in 
\cite[Thm.\,1.2]{Mueger}, see also  \cite[Cor.\,2.4]{BV-centers}
which uses  the theory of Hopf monads \cite{BV-hopfmonads}.
	For $(1)\Rightarrow (2)$, we first show that $\cat$ is semisimple.
	Since $\cat$ is braided, it embeds into $\cat[Z](\cat)$ via a fully faithful monoidal
	functor.
	Explicitly, the embedding sends objects $X \in \cat$ 
	to $(X, \braiding_{X, -})$ with $\braiding$ the braiding in $\cat$.
	If $\cat[Z](\cat)$ is semisimple, therefore, $\cat$ is  equivalent to a
	full (replete) abelian subcategory of a semisimple abelian category and hence itself
	semisimple abelian.
	In particular,  $\cat$ is a
ribbon
    fusion category.
     This, together with the assumption that $\cat[Z](\cat)$ is semisimple, implies that $\dim \cat \neq 0$ by \cite[Cor.\,2.3]{BV-centers}.
\end{proof}

\begin{remark}
	Suppose that $\cat$ is non-semisimple
        and modular.
	As a consequence of 
	\Cref{prop:Lyubashenko_problems} we get that
		the Lyubashenko invariant $\invLyu$ of closed $3$-manifolds can not be extended to a (rigid) TFT, see e.g.\ the introduction of \cite{Kerler-connectivity}. 
		The reason is that in a rigid TFT, the invariant of $S^2 \times S^1$ gives the dimension of the state space of $S^2$. If this is zero, the TFT functor is zero on all non-empty bordisms as one can always write such a bordism as a composition with a 3-ball, seen as a bordism $\emptyset \to S^2$. In particular, it would be zero on $S^3$, but by construction $\invLyu(S^3, \emptyset) = \DD^{-1} \neq 0$.
        Nonetheless, one can construct a 3d\,TFT on a reduced set of so-called admissible bordisms, we refer to \cite{DGGPR} for details.
\end{remark}

\subsection{Example: Lens spaces}
\label{sec:Examples:lens_spaces}

For this section let us fix two coprime positive integers $p,q$. 
The \emph{lens space} $\mathfrak{L}(p,q)$ is defined as the quotient of $S^3 = \big\{ (z,w) \in \field[C]^2 \,\big|\, |z|^2 + |w|^2 = 1 \big\}$ by the free $\field[Z]/p\field[Z]$ action
\begin{align}
	k.(z,w) = \big( e^{2\pi i k/p} z , e^{2\pi i k q/p} w \big) ~.
\end{align}
Since $S^3$ is simply connected, it follows
that the fundamental group of $\mathfrak{L}(p,q)$ is $\field[Z]/p\field[Z]$, and so the parameter $p$ is an invariant of $\mathfrak{L}(p,q)$. The parameter $q$ on the other hand is not, as the presentation above shows $\mathfrak{L}(p,q) = \mathfrak{L}(p,q+p)$.
See e.g.\ \cite[Ch.\,IV \S 11]{Prasolov-Sossinsky} for more on lens spaces.

A surgery presentation of $\mathfrak{L}(p,q)$ can be given as follows, see e.g.\ \cite[Ch.\,VI \S 17]{Prasolov-Sossinsky} or \cite[Sec.\,5.3]{Lyu-inv-MCG}.\footnote{
	One obtains an isotopic surgery link if one exchanges over- and underbraidings, so
	the surgery descriptions of $\mathfrak{L}(p, q)$ in \cite{Lyu-inv-MCG} and
	\cite{Prasolov-Sossinsky} are the same.
	Similarly, one can reverse the order of $a_1,\dots,a_n$.
}
Fix a (generalised) continued fraction expression
\begin{align}
	\frac{p}{q}
	= \raisebox{+1.0cm}{
		$a_n - \cfrac{1}{a_{n-1} - \cfrac{1}{\ddots - \cfrac{1}{a_2 - \cfrac{1}{a_1}}}}$
	}
	=: [a_n; a_{n-1}, \ldots, a_1]
	, \quad a_i \in \field[Z]
	\ .
	\label{eq:continued-fraction-notation}
\end{align}
Then we have the surgery presentation
\begin{align}
	\mathfrak{L}(p, q) = 
	\ipic{-0.4}{./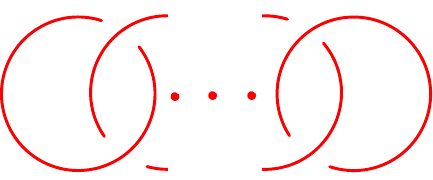}{0.7}
	\put (-130,-30) {\textcolor{red}{$a_1$}}
	\put (-100,-30) {\textcolor{red}{$a_{2}$}}
	\put (-060,-30) {\textcolor{red}{$a_{n-1}$}}
	\put (-030,-30) {\textcolor{red}{$a_n$}}
	\label{eq:Lpq-surgery-link}
\end{align}
in $S^3$.
From now on, we will require the continued fraction expression to satisfy the following
\begin{align}
	\text{\textit{Assumption:}} \quad
	\tfrac{p_i}{q_i} := [a_i; a_{i-1},
	\ldots, a_1] > 0 \quad
	\text{for $i=1,\dots,n$} \ .
	\label{eq:cont-frac-pos-assum}
\end{align}
This assumption can always be satisfied. For example, choose $a_n$ to be the ceiling of $\frac{p}{q}$ (smallest integer $\ge \frac{p}{q}$). Then in the next step one starts from $(a_n-\frac{p}{q})^{-1} \ge 0$. If this is zero, one is done, else take $a_{n-1}$ to be its ceiling, and so on.

We will show in the appendix (\Cref{prop:continued_fractions_main_statement}) that under the above assumption, the signature of the surgery link in \eqref{eq:Lpq-surgery-link} is
\begin{align}\label{eq:Lpq-surgery-link-signature}
	\sigma = \sigma(\mathfrak{L}(p, q)) = n \ .
\end{align}

With these data fixed, we will consider two types of admissible bichrome graphs
embedded in the lens space $\mathfrak{L}(p, q)$, corresponding to whether or not the
graph is contained within a ball.
More precisely, let $\alpha \in \End(\id_{\cat})$ and $P \in \tensIdeal$.
Define the bichrome graphs 
\begin{align}
	\mathfrak{L}_{*}(\alpha, P) 
	= \ipic{-0.4}{./img/lens_space_no_loop.pdf}{0.7}
	\put (-130,-30) {\textcolor{red}{$a_1$}}
	\put (-100,-30) {\textcolor{red}{$a_{2}$}}
	\put (-060,-30) {\textcolor{red}{$a_{n-1}$}}
	\put (-030,-30) {\textcolor{red}{$a_n$}}
	\quad
	\ipic{-0.4}{./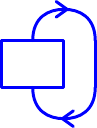}{0.9}
	\put (-35,002) {\textcolor{blue}{$\alpha_P$}}
\end{align}
and 
\begin{align}
	\mathfrak{L}_{\circ}(\alpha, P)
	= \ipic{-0.4}{./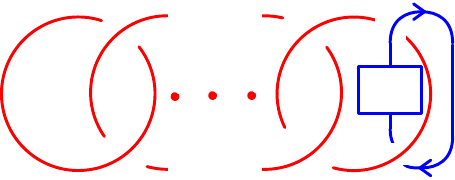}{0.9}
	\put (-180,-35) {\textcolor{red}{$a_1$}}
	\put (-150,-35) {\textcolor{red}{$a_{2}$}}
	\put (-090,-35) {\textcolor{red}{$a_{n-1}$}}
	\put (-050,-35) {\textcolor{red}{$a_n$}}
	\quad
	\put (-35,005) {\textcolor{blue}{$\alpha_P$}}
	\label{eq:Lens-circ-def}
\end{align}
in $S^3$.
Then we consider as manifold the lens space with embedded blue ribbon graph obtained by
surgery on the red part of the respective link, and denote it by the same symbol as the
link.
Note that in $\mathfrak{L}_*(\alpha, P)$, the blue graph is embedded along a
contractible cycle, and that $\invDGGPR(\mathfrak{L}_*(\alpha, T)) = \invDGGPR(\mathfrak{L}_\circ(\alpha, T))$ for any transparent object $T$.

Let $\cat$ be unimodular and twist-nondegenerate, and recall
the definition of $\modS_\coend, \modT_{\coend} : \coend \to \coend$ from \eqref{eq:S_L-def} and \eqref{eq:T_L-def}.
To state the resulting invariants of $\mathfrak{L}_{*/\circ}(\alpha, P)$, let us define
the morphism
\begin{align}
	f(a) =
	\big(
	\prod_{i=n}^1
	\modT_{\coend}^{a_i} \circ \modS_{\coend}
	\big) (\unitL)
	=
	\big(
	\modT_{\coend}^{a_n} \circ \modS_{\coend}
	\circ \ldots \circ
	\modT_{\coend}^{a_1} \circ \modS_{\coend}
	\big) (\unitL)
	~ \in ~ \cat(\tensUnit, \coend)
	\ ,
	\label{eq:definition_morphism_representing_Lpq}
\end{align}
where we abbreviate $a = (a_1, \ldots, a_n)$.
We also need the natural transformation 
$Q \colon \coend \tensor \id_{\cat} \To \id_{\cat}$ with components determined by the dinatural transformation
\begin{align}
	\dualL{X} X V 
	\xrightarrow{\id_{\dualL{X}} \tensor (\braiding_{V, X} \braiding_{X, V})}
	\dualL{X} XV 
	\xrightarrow{\evL_X \tensor \id_V}
	V
	\label{eq:definition_natural_transformation_Lid_id}
\end{align}
for $V\in \cat$.

\begin{proposition}
	\label{prop:invariants_lens_spaces_general}
Let $\cat$ be unimodular and twist-nondegenerate, 
	$\tensIdeal \subset \cat$ a tensor ideal with modified trace $\modTr$, $P \in \tensIdeal$ and $\alpha \in \End(\id_{\cat})$.
	We have
	\begin{align}
		\invDGGPR(\mathfrak{L}_{*}(\alpha, P))
		&= \anomaly^{-\sigma} \DD^{-1 -n} \cdot \modTr_{P}(\alpha_{P})
		\cdot \counitL \circ f(a) ~,
		\notag
		\\
		\invDGGPR(\mathfrak{L}_\circ(\alpha, P)) 
		&=
		\anomaly^{-\sigma} \DD^{-1 -n}
		\cdot
		\modTr_P\big(\alpha_P \circ Q_P \circ (f(a) \tensor \id_P)\big)
		\ .
		\notag
	\end{align}
\end{proposition}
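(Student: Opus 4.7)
The plan is to apply \Cref{def:invariant_general}, giving
\[
\invDGGPR\bigl(\mathfrak{L}_{*/\circ}(\alpha,P)\bigr) \;=\; \DD^{-1-n}\,\anomaly^{-\sigma}\,\renLRTinvariant(L\cup T_{*/\circ}),
\]
where $L$ is the $n$-component red surgery chain of \eqref{eq:Lpq-surgery-link} and $T_{*/\circ}$ denotes the blue $P$-coloured loop with coupon $\alpha_P$, disjoint from $L$ in the $*$ case and meridional to the last red component $L_n$ in the $\circ$ case. Since $P\in\tensIdeal$, the $P$-edge is admissible, and the strategy is to use it as cutting edge in \Cref{prop:renormalized_bichrome_invariant}: this reduces the task to computing the endomorphism of $P$ produced by $\LRTfunctor$ on the cut graph, and then evaluating $\modTr_P$ on it.

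The main geometric ingredient will be the identity
\[
\LRTfunctor(L) \;=\; \counitL\circ f(a),
\]
which I would establish by induction on $n$. In the base case $n=1$, the LRT algorithm applied to a single $a_1$-framed red unknot returns $\counitL\circ\modT_{\coend}^{a_1}\circ\intLyu$, which coincides with $\counitL\circ f(a_1)$ once one verifies that $\modS_{\coend}(\unitL)=\intLyu$: the double braiding with $\tensUnit$ is trivial, so $\mathcal{Q}\circ(\unitL\otimes\intLyu)=\unitL\otimes\intLyu$, and then $(\counitL\otimes\id_{\coend})\circ(\unitL\otimes\intLyu)=\intLyu$ follows from $\counitL\circ\unitL=\id_{\tensUnit}$. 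In the inductive step, attaching a further Hopf-linked red component $L_{i+1}$ with framing $a_{i+1}$ contributes, via the LRT algorithm of \Cref{sec:invariants_of_bichrome_graphs}, one extra cycle-cut plus Fubini integration (yielding a new $\intLyu$), a double braiding $\mathcal{Q}$ from the linkage with $L_i$, a framing factor $\modT_{\coend}^{a_{i+1}}$, and a $\counitL$ closing the lower end of the new blue strand; packaging these pieces according to \eqref{eq:S_L-def} and \eqref{eq:T_L-def} produces the extra factor $\modT_{\coend}^{a_{i+1}}\circ\modS_{\coend}$ post-composed onto the inductive hypothesis.

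With this identity available, the $\mathfrak{L}_*$ case will follow from \Cref{prop:relation_between_Lyu_and_rLyu}\,(1): the blue $P$-loop lies in a $3$-ball disjoint from $L$, hence $\invDGGPR(\mathfrak{L}_*(\alpha,P))=\DD\cdot\invLyu(\mathfrak{L}(p,q))\cdot\invDGGPR(S^3,T_*)$, with $\invLyu(\mathfrak{L}(p,q))=\DD^{-1-n}\anomaly^{-\sigma}\counitL\circ f(a)$ from the identity above and $\invDGGPR(S^3,T_*)=\DD^{-1}\modTr_P(\alpha_P)$ by direct inspection. For the $\mathfrak{L}_\circ$ case, the blue $P$-strand is meridional to $L_n$, so in the LRT algorithm the $\coend$-strand obtained from $L_n$ does not get closed off by $\counitL$ but instead double-braids with the $P$-strand; by dinaturality of $\coend$ and the definition \eqref{eq:definition_natural_transformation_Lid_id} of $Q$, this configuration will contribute the component $Q_P\colon \coend\otimes P\to P$. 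The remaining $n-1$ red components are processed exactly as in the induction, still producing the same composition that defines $f(a)$, so the cut graph represents $\alpha_P\circ Q_P\circ(f(a)\otimes\id_P)\in\End_{\cat}(P)$, and applying $\modTr_P$ gives the second formula.

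The hard part will be the inductive identity for the pure red chain: picking a cut edge on each red cycle on the side opposite both the next linkage and (in the $\circ$ case) the $P$-wrapping, ordering the Fubini integrations consistently, and matching each geometric feature---framing, Hopf-linkage, closing---to the corresponding occurrence of $\modT_{\coend}$, $\mathcal{Q}$ and $\counitL$ in $\modT_{\coend}^{a_n}\circ\modS_{\coend}\circ\cdots\circ\modT_{\coend}^{a_1}\circ\modS_{\coend}(\unitL)$.
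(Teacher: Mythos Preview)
Your proposal is correct and arrives at the same answer via essentially the same algebraic identities, but the organisation differs from the paper in two places worth noting.

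First, the paper does not set up an induction on the number of red components; instead it writes down the full $n$-bottom graph for $\mathfrak{L}_\circ$ after cutting all $n$ red cycles at once, identifies the resulting morphism $\coend^{\tensor n}\tensor P\to P$ directly as the iterated composite of $\mathcal{Q}$'s and $\modT_{\coend}^{a_i}$'s capped by $Q_P$, and then inserts $\intLyu^{\tensor n}$. The identity $\modS_{\coend}\circ\unitL=\intLyu$ (which you also use) is invoked once at the end to absorb the leftmost integral and produce $f(a)$. Your inductive repackaging is equivalent but slightly more bookkeeping-heavy; the all-at-once picture makes the matching of geometric features to $\mathcal{Q}$, $\modT_{\coend}$, $Q_P$ more transparent and avoids having to argue that the LRT algorithm is compatible with the inductive attachment.

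Second, for $\mathfrak{L}_*$ the paper does not invoke \Cref{prop:relation_between_Lyu_and_rLyu}\,(1); it simply observes that the only difference from the $\mathfrak{L}_\circ$ computation is the absence of the double braiding between the last $\coend$-strand and $P$, so $Q_P$ is replaced by $\counitL\tensor\id_P$. Your route through the connected-sum factorisation is perfectly valid and is in fact how the paper relates the result back to $\invLyu$ in the subsequent remark, but as a proof of the proposition it imports more machinery than needed.
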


\begin{proof}
	Let us start with $\mathfrak{L}_\circ(\alpha, P)$.
	To compute $\renLRTinvariant$,
by the procedure in \Cref{sec:invariants_of_bichrome_graphs}	
	we need to evaluate the
	$n$-dinatural transformation
	\begin{align}
		\label{eq:lens-computation-aux1}
		\ipic{-0.5}{./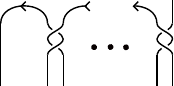}{1.5}
		\put (-137,-20) {$a_1$}
		\put (-075,-20) {$a_2$}
		\put (-025,-20) {$a_n$}
		\put (-005,-42) {$P$}		
	\end{align}
    and then compose with $\intLyu^{\tensor n}  \tensor \id_P$.
	Here we have disregarded the coupon $\alpha_P$, as we may just reinsert it on top
	once we are done.
	Now one finds that this dinatural transformation corresponds to the morphism
	\begin{align}
		\label{eq:lens-computation-aux2}
		\ipic{-0.4}{./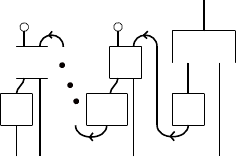}{1.5}
		\colon \coend^{\tensor n} \tensor P \to P
		\put (-250,-15) {$\modT^{a_1}$}
		\put (-235,020) {$\mathcal{Q}$}
		\put (-189,-15) {$\modT^{a_{n-1}}$}
		\put (-168,020) {$\mathcal{Q}$}
		\put (-125,-15) {$\modT^{a_n}$}
		\put (-110,030) {$Q_P$}
		\ ,
	\end{align}
	where $\mathcal{Q}$ was defined in \eqref{eq:curly-Q-def}.
From the explicit expression for $\modS_{\coend}$ in \eqref{eq:S_L-def} it is easy to see that
$\modS_{\coend} \circ \unitL = \intLyu$.
Thus, precomposing the above morphism with $\intLyu^{\tensor n}  \tensor \id_P$ and replacing the first factor of $\Lambda$ with $\modS_{\coend} \circ \unitL$
yields $Q_P \circ (f(a) \tensor \id_{P})$.
This establishes the expression for $\invDGGPR(\mathfrak{L}_\circ(\alpha, P))$. 
	
	In order to obtain $\invDGGPR(\mathfrak{L}_*(\alpha, P))$ one simply repeats the
	above computation without the double braiding between $\coend$ and $P$ in
	\eqref{eq:lens-computation-aux1}.
	This amounts to replacing $Q_P$ in \eqref{eq:lens-computation-aux2} by 
	$\counitL \otimes \id_P$.
\end{proof}

\begin{remark}\label{rem:L-vs-Lyu}
By \Cref{prop:nilpotent_knot_yields_Lyu} we have
	\begin{align}
		\invDGGPR(\mathfrak{L}_*(\alpha, P)) 
		= \modTr_P(\alpha_P) \cdot \invLyu(\mathfrak{L}(p, q))
		\ .
		\label{eq:lens-L-vs-Lyu}
	\end{align}
	Comparing to \Cref{prop:invariants_lens_spaces_general}, we read off
	\begin{align}
		\invLyu(\mathfrak{L}(p, q))
		= \anomaly^{-\sigma} \DD^{-1 -n}
		\cdot \counitL \circ f(a) 
		\ ,
		\label{eq:lens-Lyu=epsof(a)}
	\end{align}
	in agreement with 
 \cite[Sec.\,5.3]{Lyu-inv-MCG}, 
 where this invariant was first computed. Strictly speaking, we still need to show that $\modTr_P(\alpha_P) \neq 0$ for some choice of $\tensIdeal$, $P$ and $\alpha$. This is the case e.g.\ for $\tensIdeal = \cat$, $P = \tensUnit$ and any $\alpha$ with $\alpha_\tensUnit = \id_\tensUnit$, or for $\tensIdeal = \ProjIdeal$, $P = \projCover{\tensUnit}$ and the unique $\alpha \in \End(\id_{\cat})$  which sends the top of $\projCover{\tensUnit}$ to its socle, and which is zero on the projective covers of the other simples.
\end{remark}

We end this section by computing the invariants in \Cref{prop:invariants_lens_spaces_general} in the example $\cat = \hmodM[\sympFerm]$, the representations of the symplectic fermion
quasi-Hopf algebra $\sympFerm = \sympFerm(N, \beta)$ already mentioned in \Cref{sec:examples_link_invariants}, and which will be described in detail in~\Cref{sec:qHopf}.
In this example, $\cat$ is modular (and hence twist-nondegenerate, see the sentence before \Cref{rem:modular-remark}),
and we use the Lyubashenko integral with normalisation~\eqref{eq:norm-omega}; 
this also fixes a canonical
normalisation for the modified trace,
see \Cref{rem:modular_actions}\,(4).

\subsubsection*{Categorical trace}

Let us start with the case where the ideal is $\tensIdeal = \cat$ and we use the categorical trace $\trCat$. By \Cref{def:Lyubashenko_invariant}, in this case we get the original Lyubashenko invariant. We show in \Cref{sec:cat-tr-Lyu-Lens} that
\begin{equation}\label{eq:Lyu-inv-lens}
	\invLyu(\mathfrak{L}(p, q))
	= 
	p^N
	\ .
\end{equation}
Including blue ribbons as in $\mathfrak{L}_{*/\circ}(\alpha, X)$ does not add anything new. Indeed, by \Cref{cor:manifold_invariants_factor_through_Gr} it is enough to consider $X \in \cat$ simple. There are four simple objects in $\hmodM[\sympFerm]$, namely $X_0^\pm$ and $X_1^\pm$. Of these, $X_1^\pm$ are projective and so the invariant is zero by \Cref{cor:Lyu-zero-on-ideal} (where we choose $T_2 = \emptyset$). The object $X_0^+$ is the tensor unit, and since the Grothendieck class of its projective cover is $[P_0^+] = 2^{2N-1} ( [X_0^+]+[X_0^-] )$ (see \Cref{sec:mod-cat-QM}), the invariant of $X_0^-$ must be minus that of $X_0^+$. Altogether, for $x = *,\circ$,
\begin{align*}
	\invLyu(\mathfrak{L}_x(\id, X_0^\pm)) = \pm p^N
	\quad , \qquad 
	\invLyu(\mathfrak{L}_x(\id, X_1^\pm))= 0
	\ .
\end{align*}
In particular, $\invLyu$ cannot detect whether or not the blue ribbon wraps a nontrivial cycle in $\mathfrak{L}(p, q)$.

\subsubsection*{Relation to semisimplification}

The result~\eqref{eq:Lyu-inv-lens} suggests that for symplectic fermions, the Lyubashenko invariant of a 3-manifold is just a classical invariant, namely
the order of the $1^\mathrm{st}$ homology group $H_1(M)$ of the manifold $M$, in power of the ``rank'' of the category, $N$ in our case. 
We conjecture that for all closed 3-manifolds~$M$,
\begin{align*}
\invLyu(M) = \begin{cases} 0 &; H_1(M) \text{ infinite}
\\
|H_1(M)|^N &; \text{else}
\end{cases}
\end{align*}
This fits into a series of results and conjectures on the relation between non-semisimple and semisimple invariants. The first instance was proven in~\cite{CKS}: the Hennings invariant for small quantum $sl(2)$ for an odd order root of unity is proportional to the $SO(3)$ Chern-Simons invariant at the corresponding level. The proportionality coefficient is $|H_1(M)|$ if it is finite, and zero else. Related results and conjectures have been given in \cite{CYZ12,CGP2,CDGS}.
While similar, our case of $\cat = \hmodM[\sympFerm]$ is not directly covered by those previous observations. Indeed, it is the first higher rank example for manifolds $M$ which are not integer homology spheres (i.e.\ for which $|H^1(M)|>1$).

    When comparing semisimple and non-semisimple invariants, one cannot just take the semisimplification itself. Indeed,
let $\cat'_{\mathrm{ss}}$ be the semisimplification obtained by dividing out negligible morphisms, see~\cite[Thm.\,2.9]{Barrett:1993zf},
\cite[Prop.\,1.4.2]{Br-Css}, \cite[Sec.\,3.4]{Virelizier-Kirby} and \cite[Sec.\,2]{EO}. 
The projection $\cat \to \cat'_{\mathrm{ss}}$ is a ribbon functor. In $\cat'_{\mathrm{ss}}$, indecomposable objects of non-zero dimension become simple objects \cite[Prop.\,2.4]{EO}. This may lead to an uncountably infinite number of simples (this happens e.g.\ for $\hmodM[\sympFerm]$ with $N \ge 2$). Instead, let use introduce the \textit{minimal semisimplification} $\cat_{\mathrm{ss}} \subset \cat'_{\mathrm{ss}}$ to be the full subcategory tensor-generated by the images of the simple objects in $\cat$ under the projection.

The minimal semisimplification of $\cat = \hmodM[\sympFerm]$ is $\cat_{\mathrm{ss}} = \mathsf{SVect}$. This category is not modular but still twist-nondegenerate. Hence it defines Reshetikhin-Turaev invariants 
    $\mathrm{RT}_{\cat_{\mathrm{ss}}}(M)$
of closed three-manifolds $M$.
    With appropriate overall normalisation, for $\cat_{\mathrm{ss}} = \mathsf{SVect}$ these are all equal to one. 
Motivated by these examples,
it would be very interesting to see if the following general statement holds or not:
\begin{quote}
Let $\cat$ be a modular tensor category. Then its semisimplification $\cat_{\mathrm{ss}}$ is twist-nondegenerate and the corresponding invariants are related as $\invLyu(M) = |H_1(M)|^n\,\mathrm{RT}_{\cat_{\mathrm{ss}}}(M)$
for some positive integer number $n$ if $|H_1(M)|$ is finite. Otherwise $\invLyu(M)$ is zero.
\end{quote}

\subsubsection*{Modified trace on projective ideal}

Next we turn to the projective ideal $\tensIdeal = 
\ProjIdeal[{\hmodM[\sympFerm]}]$. There is no need to consider $\mathfrak{L}_*(\alpha, P)$ as by \eqref{eq:lens-L-vs-Lyu}  the corresponding invariant is given by $\invLyu(\mathfrak{L}(p, q))$ times a topology-independent prefactor.

Recall from \Cref{sec:examples_link_invariants} that $P_0^\pm$ denotes the projective cover of $X_0^\pm$.
In the next theorem, by $|t|$ for $t \in \field[Z]_2^N$ we mean the sum $\sum_{j=0}^N t_j$.

\begin{theorem}
	\label{prop:invariants_lens_spaces_symp_ferm}
	We have
	\begin{align*}
		\invDGGPR(\mathfrak{L}_\circ(\id, P_0^\pm))
		= \pm \tfrac12\big( c^0 + \beta^2 q^{N} \big) 
		\ ,
	\end{align*}
	where $c^0 \in \{0\} \cup \{ \beta^m | m\in \field[Z] \}$.
	Furthermore, for $t \in \field[Z]_2^N$ with $|t| \geq 1$, there are $\alpha_t \in \End(\id_{\cat})$ 
	(independent of $p,q$) 
	such that
	\begin{align}
		\invDGGPR(\mathfrak{L}_\circ(\alpha_t, P_0^\pm))
		&=
		\pm (-1)^{|t|} \beta^{2}
		2^{- |t| - 1}
		p^{|t|}
		q^{N - |t|}
		\notag
		\ .
	\end{align}
\end{theorem}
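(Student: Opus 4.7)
The strategy is to specialise the general lens space formula \Cref{prop:invariants_lens_spaces_general} to $\cat = \hmodM[\sympFerm]$ with $\sympFerm = \sympFerm(N,\beta)$, and then to evaluate the right-hand side using the quasi-Hopf translation in \Cref{prop:lens-quasi-Hopf-general}. Via the isomorphism $\rho\circ\psi\colon \End(\id_\cat)\to\cat(\tensUnit,\coend)$ of \eqref{eq:isomorphism_EndidC_Hom1L}, the coend morphism $f(a)$ corresponds to
\[
g(a) := \modT_\cat^{a_n}\,\modS_\cat\,\cdots\,\modT_\cat^{a_1}\,\modS_\cat(\id_\cat) \;\in\; \End(\id_\cat)\cong Z(\sympFerm),
\]
and the modified trace in \Cref{prop:invariants_lens_spaces_general} becomes an algebraic computation in the centre of $\sympFerm$. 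With $\sigma=n$ from \eqref{eq:Lpq-surgery-link-signature}, the prefactor $\anomaly^{-\sigma}\DD^{-1-n}$ collapses to a $p,q$-independent constant read off from the stabilisation coefficients $\stabCoeff$ of $\hmodM[\sympFerm]$ computed in \Cref{sec:qHopf}.

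The first key input is the explicit $\slTwoZ$-action on $Z(\sympFerm)$ from \cite{FGR2}. I would introduce a basis of $Z(\sympFerm)$ indexed by $t\in\field[Z]_2^N$ (together with a parity label) reflecting the Gra\ss mann/Clifford grading of $\sympFerm$, and take the natural transformations $\alpha_t$ to be those whose image under $\rho\circ\psi$ is (a multiple of) the $t$-th basis vector; by construction the $\alpha_t$ depend only on $N$ and $\beta$. Because $\sympFerm(N,\beta)$ splits in an appropriate sense as an $N$-fold product of its $N=1$ building block, the $\slTwoZ$-action on $Z(\sympFerm)$ factorises as an $N$-th tensor power, so the coefficient of $g(a)$ in the $t$-sector becomes a product of $N$ single-block contributions.

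The second key input is the classical continued-fraction identity
\[
\prod_{i=n}^{1}\begin{pmatrix}a_i & -1\\ 1 & 0\end{pmatrix} \;=\; \pm\begin{pmatrix} p & * \\ q & * \end{pmatrix}\quad\text{in } SL(2,\field[Z]),
\]
which makes $p,q$ appear as explicit matrix entries. Applied in each of the $N$ blocks, and tracked against the appropriate coordinate encoded by the bit $t_j$, it produces either a $p$-contribution or a $q$-contribution weighted by powers of $\beta$ and binomial factors. Taking the product over $j=1,\dots,N$ gives a monomial proportional to $p^{|t|}q^{N-|t|}$, and combining with $\anomaly^{-n}\DD^{-1-n}$, the sign from the parity of $P_0^\pm$ (entering through $Q_{P_0^\pm}$), and the normalisation of the modified trace from \Cref{rem:modular-remark}\,(4) yields the closed form $\pm(-1)^{|t|}\beta^2\,2^{-|t|-1}\,p^{|t|}q^{N-|t|}$ for $|t|\geq 1$.

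The main obstacle is the $|t|=0$ case, i.e.\ $\alpha=\id$. Here the top-degree argument fails because the trivial sector is not stable under the full $\slTwoZ$-orbit: $\modS_\cat$ moves it to the $|t|=N$ sector, while $\modT_\cat$ can return it via $\beta$-scalings. A separate recursive analysis along the sequence $(a_1,\dots,a_n)$ is needed to track whether, after iterating $\modT_\cat^{a_i}\modS_\cat$, one lands back in the trivial sector. The resulting quantity takes values in $\{0\}\cup\{\beta^m\mid m\in\field[Z]\}$ since each $\modS_\cat,\modT_\cat$-step on that sector acts either by zero or by multiplication by an integer power of $\beta$; this yields $c^0$. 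Adding the $|t|=N$ contribution, which survives the $Q_{P_0^\pm}$-insertion with $\alpha=\id$ and, by the continued-fraction matrix identity above, carries the factor $q^N$ rather than $p^N$, produces the stated expression $\pm\tfrac12(c^0+\beta^2 q^N)$.
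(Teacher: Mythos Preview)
Your overall strategy matches the paper closely: translate to $Z(\sympFerm)$ via \Cref{prop:lens-quasi-Hopf-general}, use the explicit $\slTwoZ$-action from \cite{FGR2}, exploit the $N$-fold tensor factorisation of the relevant part of the centre, and extract $p,q$ via continued-fraction identities. The choice of $\alpha_t$ as the central elements $\eQ_0\prod_j(\genF^+_j\genF^-_j)^{t_j}$ is exactly what the paper does, and for $|t|\ge 1$ your outline is essentially the paper's argument (\Cref{prop:fa_on_non-proj_center} and \Cref{prop:central_element_of_LPQ}).

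There is, however, a genuine gap in your treatment of $\alpha=\id$. The centre decomposes as an $\slTwoZ$-module into $Z(\sympFerm)=Z_P\oplus Z_{\mathbf\Lambda}$, where only $Z_{\mathbf\Lambda}$ carries the $N$-fold tensor structure you describe, while $Z_P$ is a separate $3$-dimensional summand spanned by $\phiQ{P_0^+},\phiQ{X_1^\pm}$. Crucially, $\phiQ{\tensUnit}$ has nonzero components in \emph{both} summands (see \eqref{eq:phi1-in-basis}). Your mechanism for $c^0$ --- ``$\modS_\cat$ moves $|t|=0$ to $|t|=N$ and $\modT_\cat$ returns it via $\beta$-scalings'' --- does not occur inside $Z_{\mathbf\Lambda}$: there the $T$-action is unipotent with integer entries $\begin{psmallmatrix}1&2\\0&1\end{psmallmatrix}$ and never produces powers of $\beta$. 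The constant $c^0$ is instead the coefficient of $\phiQ{P_0^+}$ after iterating $\modT_Z^{a_i}\modS_Z$ on the $Z_P$-component, where $\modT_Z$ acts diagonally by $(1,\beta^{-1},-\beta^{-1})$; the recursion in \eqref{eq:coefficients_ST_projective center} (cf.\ \Cref{rem:form-of-c0}) is what forces $c^0\in\{0\}\cup\{\beta^m\}$. The $q^N$ term, by contrast, does come from $Z_{\mathbf\Lambda}$ as you say. Without isolating the $Z_P$-summand you would not obtain $c^0$ in the stated form.
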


The arduous proof will be presented in \Cref{prop:invariants_lens_spaces_symp_ferm_extended_version} below, which in fact contains some additional invariants to those presented here.

\begin{remark}
We see that $\invDGGPR(\mathfrak{L}_\circ(\id, P_0^\pm))$ and $\invDGGPR(\mathfrak{L}_\circ(\alpha_t, P_0^\pm))$ (for $N \geq 2$ and $1 \le |t| \le N$ ) are not invariant 
 under $q \leadsto q+p$, and so are not invariants of the lens space $\mathfrak{L}(p,q)$ but only of the lens space with 
the embedded ribbon loop.
\end{remark}

\subsubsection*{Pullback trace on intermediate ideal}

Finally, we also consider the intermediate ideal 
    $\relativeProjectives{A}{H}$ with pullback modified trace $\pullbackTrace[{\modTr^A}]$,
as in the computation of link
invariants in \Cref{sec:examples_link_invariants}.

\begin{theorem}\label{thm:lens-for-intermediate}
	Let $H = \sympFerm(2, \beta)$, and consider the $\operatorname{Mat}_2(\field[C])$-indexed 
	$H$-modules $P_\parMat$ already described in \Cref{sec:examples_link_invariants}.
	Then
	\begin{align*}
		\invDGGPRtrace{\pullbackTrace[{\modTr^A}]} 
		(\mathfrak{L}_{\circ}(\id, P_\parMat))
		= - 2 
		p q (1 + \det\parMat)
		\ ,
	\end{align*}
	and there are $\alpha_{j,l} \in \End(\id_{\cat})$ (independent of $p,q$) with $j, l = 1,2$ such that 
	\begin{align*}
		\invDGGPRtrace{\pullbackTrace[{\modTr^A}]} (\mathfrak{L}_{\circ}(
		\alpha_{j, l}, P_\parMat))
		=
		p^2 
		\parMat_{j, l} \ .
	\end{align*}
\end{theorem}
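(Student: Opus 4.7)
The strategy is to specialise \Cref{prop:invariants_lens_spaces_general} to the present input data $\cat=\hmodM[H]$ with $H=\sympFerm(2,\beta)$, $\tensIdeal=\relativeProjectives{A}{H}$, modified trace $\pullbackTrace[{\modTr^A}]$, and $P=P_\parMat$. Combined with the signature identity \eqref{eq:Lpq-surgery-link-signature}, that proposition gives
\begin{align*}
\invDGGPRtrace{\pullbackTrace[{\modTr^A}]}(\mathfrak{L}_{\circ}(\alpha,P_\parMat))
=\anomaly^{-n}\,\DD^{-1-n}\;\pullbackTrace[{\modTr^A}]_{P_\parMat}\Big(\alpha_{P_\parMat}\circ Q_{P_\parMat}\circ\bigl(f(a)\otimes\id_{P_\parMat}\bigr)\Big),
\end{align*}
so the task splits cleanly into (i) evaluating the ``centre'' piece $f(a)$ via the iterated $\slTwoZ$-action, and (ii) evaluating the resulting endomorphism of $P_\parMat$ against the pullback trace.

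For step~(i) I would use the explicit description of $\modS_{\cat}$ and $\modT_{\cat}$ on $\End(\id_\cat)\cong Z(H)$ for $H=\sympFerm(2,\beta)$ obtained from~\cite{FGR2}, as already prepared in~\Cref{sec:lens_space_symp_ferm} through \Cref{prop:lens-quasi-Hopf-general}. The continued-fraction recursion $[a_i;a_{i-1},\ldots,a_1]=p_i/q_i$ (guaranteed by the positivity assumption~\eqref{eq:cont-frac-pos-assum}) causes the iterated composition $\modT_{\cat}^{a_n}\modS_{\cat}\cdots\modT_{\cat}^{a_1}\modS_{\cat}$ applied to the class of $\unitL$ to telescope into an expression depending on the continued fraction only through its numerator~$p$ and denominator~$q$, mirroring the analogous collapse in the proof of \Cref{prop:invariants_lens_spaces_symp_ferm}. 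The prefactor $\anomaly^{-n}\DD^{-1-n}$ is absorbed using $\DD^2=\stabCoeffM\stabCoeffP$ and the Lyubashenko-integral normalisation~\eqref{eq:norm-omega}, exactly as in the projective-ideal case.

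For step~(ii), the pullback-trace construction of \Cref{prop:pullback_ideal} identifies $\pullbackTrace[{\modTr^A}]_{P_\parMat}(-)$ with $\modTr^A_{\restrictionFunctor P_\parMat}(\restrictionFunctor(-))$; by the construction of $P_\parMat$ in \Cref{sec:symp_ferm_pullback}, $\restrictionFunctor P_\parMat$ is a fixed projective cover of the tensor unit in $\hmodM[A]$, so the $A$-trace is the canonical one on $\ProjIdeal[{\hmodM[A]}]$ and the dependence on $\parMat$ enters purely through the $H$-action inside the endomorphism. For $\alpha=\id$, the endomorphism $Q_{P_\parMat}\circ(f(a)\otimes\id_{P_\parMat})$ decomposes as the ``Hopf-link with framing'' operator built from $f(a)$, whose $\pullbackTrace[{\modTr^A}]_{P_\parMat}$-evaluation has already been computed in \Cref{sec:symp_ferm_pullback} (it is the same calculation that produced the factor $2(1+\det\parMat)$ for $\framedUnknot{n}{P_\parMat}$ in \Cref{table:invariants}). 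Combining this with the $pq$-factor emerging from step~(i) yields $-2pq(1+\det\parMat)$.

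For the $\alpha_{j,l}$-statement, I would take $\alpha_{j,l}\in\End(\id_\cat)$ to be the natural transformation induced by the central element $z_{j,l}\in Z(H)$ singled out in~\Cref{sec:symp_ferm_pullback}, whose action on $P_\parMat$ extracts the matrix coefficient $\parMat_{j,l}$ up to a universal non-zero factor. Composing $\alpha_{j,l}$ with $Q_{P_\parMat}\circ(f(a)\otimes\id_{P_\parMat})$ selects the summand of the central calculation that depends linearly on~$p$ (and annihilates the tensor-unit contribution responsible for the $q^N$-term appearing in \Cref{prop:invariants_lens_spaces_symp_ferm}), yielding $p^2\,\parMat_{j,l}$. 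The main obstacle is the bookkeeping in step~(i): tracking exactly which linear combinations of central elements in $Z(\sympFerm(2,\beta))$ survive the pullback trace on $P_\parMat$ (as opposed to the plain modified trace on $\ProjIdeal$, which would reproduce \Cref{prop:invariants_lens_spaces_symp_ferm}) and proving the telescoping of the $\slTwoZ$-action through the continued fraction down to a bilinear expression in~$p,q$. That a continuous parameter~$\parMat$ survives at all is precisely the phenomenon predicted by \Cref{rem:no-contiua-for-C-and-ProjC}, so making it explicit requires genuine use of the intermediate ideal.
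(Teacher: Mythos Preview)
Your outline is essentially the paper's own proof: the paper reduces the invariant via \Cref{prop:lens-quasi-Hopf-general} to the pairing $(\widehat\alpha,\zeta_\circ)_{P_\parMat}$, computes $\zeta_\circ$ explicitly (\Cref{prop:central_element_of_LPQ}) by tracking the $\slTwoZ$-action on $Z(H)$ with the continued-fraction telescoping (\Cref{prop:central_element_SF_computed}, \Cref{prop:fa_on_non-proj_center}), evaluates the pullback trace on the relevant central elements (\Cref{prop:pullback_trace_on_central_elements}), and assembles everything in \Cref{thm:lens-pullback-trace-SF}.

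Two small corrections to your sketch. First, the central elements $z_{j,l}$ are not ``singled out in \Cref{sec:symp_ferm_pullback}''; that section only sets up the ideal and the $A$-cointegrals. The paper identifies them at the end of \Cref{sec:proof_pullback_lens_spaces} as $\alpha_{j,l}=\pm\,\eQ_0\genF_1^\gamma\genF_2^\varepsilon$ for appropriate signs and $\gamma,\varepsilon\in\{+,-\}$, which is natural once one computes (\Cref{prop:pullback_trace_on_central_elements}) that $\pullbackTrace[{\modTr^A}]_{P_\parMat}(\eQ_0\genF_1^\gamma\genF_2^\varepsilon)$ already isolates individual entries of $\parMat$. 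Second, your ``depends linearly on $p$'' is off: the $Z_{\mathbf\Lambda}$-part of $\zeta_\circ$ carries the prefactor $p^N=p^2$, and pairing against $\eQ_0\genF_1^\gamma\genF_2^\varepsilon$ simply selects the $|s|=0$ term in the sum, giving $p^2\cdot(\text{matrix entry})$ in one step rather than via two linear factors.
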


The above theorem is part of the more general statement in \Cref{thm:lens-pullback-trace-SF}
where also explicit expressions for the $\alpha_{j, l}$ are given.
Note that it is possible to recover the matrix $M$ uniquely by computing all four
values of $\invDGGPRtrace{\pullbackTrace[{\modTr^A}]} (\mathfrak{L}_{\circ}(
\alpha_{j, l}, P_\parMat))$,
thereby distinguishing all modules $P_\parMat$.

\bigskip

This ends the first part of the paper, in which we set out general properties of the non-semisimple link- and three-manifold invariants, and where we presented some explicit results in the example of the symplectic fermion quasi-Hopf algebra. 

In the second part of the paper we review the necessary background on quasi-Hopf algebras and provide the technical computations of the various invariants in the symplectic fermion example.

\section{Quasi-Hopf algebras, tensor ideals and modified traces}
\label{sec:qHopf}

This \namecref{sec:qHopf} contains our conventions for quasi-Hopf
algebras, as well as background material on pivotal structures, braidings, and ribbon structures. We discuss modified traces and explain in detail how to define modified traces on tensor ideals described by quasi-Hopf subalgebras.

\subsection{Quasi-Hopf algebras and their modules}
Our conventions for ribbon quasi-Hopf algebras follow \cite{FGR1,BGR2}.

\subsubsection{Quasi-Hopf algebras}
An algebra $H$ is a quasi-Hopf algebra if it comes equipped with elements $\alphaQ,
\betaQ \in H$, $\coassQ \in H^{\tensor 3}$, and linear maps $\Delta: H \to H\tensor
H$, $\counit : H \to \field$, and $S:H \to H$ satisfying the axioms described in this
subsection.
We denote the unit of $H$ by $\oneQ$, or also by $\oneQ_H$ if it can be confused with the tensor unit.
Throughout \Cref{sec:qHopf} we will assume that $H$ is a finite-dimensional $k$-vector space.

We employ the sumless Sweedler notation to write
\begin{align}
	\Delta(h) = h\sweedler{1} \tensor h\sweedler{2}
	\qandq
	(\Delta \tensor \id) (\Delta(h)) 
	= h\sweedler{1,1} \tensor h\sweedler{1,2} \tensor h\sweedler{2}
	\ ,
	\notag
\end{align} etc., for $h \in H$.
Similarly, for $w\in
H^{\tensor 2}$ we write $w = w_1 \tensor w_2$, 
again with implicit summation,
and $w_{21} = \tau(w)$, where $\tau$ is the flip map in vector spaces.
This notation is extended to higher tensor powers in the obvious way.

The coproduct $\Delta$ and the counit $\counit$ are required to be algebra maps.
Furthermore, the counit makes the coproduct counital.
In Sweedler notation, these conditions read
$(hg)\sweedler{1} \tensor (hg)\sweedler{2} 
= h\sweedler{1} g\sweedler{1} \tensor h\sweedler{2} g\sweedler{2}$, $\counit(hg) =
\counit(h)\counit(g)$ and $\counit( h\sweedler{1} ) h\sweedler{2} = h = h\sweedler{1}
\counit( h\sweedler{2} )$ for $h,g \in H$.

The coassociator $\coassQ$ is invertible and we write $\invCoassQ = \coassQ\inv$.
    The components of the coassociator and its inverse will be written as
    \begin{align}
        \coassQ = \coassQ_1 \otimes \coassQ_2 \otimes \coassQ_3
        \quad \text{and} \quad
        \invCoassQ = \invCoassQ_1 \otimes \invCoassQ_2 \otimes \invCoassQ_3,
    \end{align}
    respectively. 
Further, it is normalised, i.e.\ $(\id \tensor \counit \tensor \id)(\coassQ) = \oneQ
\tensor \oneQ$, it makes the coproduct coassociative in the sense that
$(\Delta \tensor \id) (\Delta(h)) \cdot \coassQ = \coassQ \cdot (\id\tensor \Delta )
(\Delta(h))$
for all $h\in H$, and it satisfies a cocycle condition
\begin{align}
	(\Delta \otimes \id \otimes \id)(\coassQ) 
	\cdot (\id \otimes \id \otimes \Delta)(\coassQ) 
	= (\coassQ \otimes \oneQ) 
	\cdot (\id \otimes \Delta \otimes \id)(\coassQ)
	\cdot (\oneQ \otimes \coassQ)\ .
	\notag
\end{align}
Note that our conventions for $\coassQ$ differ from those of e.g.~\cite{HN-integrals,Bulacu_book} in that we use the inverse coassociator.

Finally, the antipode $S$ is an anti-algebra map, and together with the evaluation and
coevaluation elements $\alphaQ$ and $\betaQ$ it satisfies
\begin{align}
	S(h\sweedler{1}) \alphaQ h\sweedler{2}
	= \counit(h) \alphaQ
	, \quad
	h\sweedler{1} \betaQ S(h\sweedler{2})
	= \counit(h) \betaQ \ ,
	\quad h\in H,
	\label{eq:antipodeAxioms}
\end{align}
and
\begin{align}
	S(\coassQ_1) \alphaQ \coassQ_2 \betaQ S(\coassQ_3) = \oneQ
	, \quad
	\invCoassQ_1 \betaQ S(\invCoassQ_2) \alphaQ \invCoassQ_3 = \oneQ
	\ .
	\label{eq:zigzag-qHopf}
\end{align}
From the last two axioms it is easy to see that one may always rescale $\alphaQ$ and
$\betaQ$ such that $\counit(\alphaQ) = 1 = \counit(\betaQ)$.

We will also use the (co)opposite (co)multiplications $\mu\op = \mu \circ \tau$ and
$\Delta\cop = \tau \circ \Delta$, where $\tau$ is again the flip map in $\Vect$.

\subsubsection{Finite tensor category structure of $H$-modules}
A left $H$-module is a left module $V$ over the algebra $H$, and the action of $h \in
H$ on $v \in V$ is denoted by $h.v$.
We write $\hmodM$ for the category of finite-dimensional left $H$-modules.
It is a (non-strict) finite tensor category:
the monoidal product of two modules $V, W$ is the vector space $V \tensor W$ with
the diagonal action $h.(v \tensor w) = h\sweedler{1}.v \tensor h\sweedler{2}.w$, and
the associator of this monoidal structure is
\begin{align}
	U \tensor (V \tensor W) \to (U \tensor V) \tensor W,
	\quad
	u \tensor v \tensor w \mapsto 
	\coassQ_1 . u \tensor \coassQ_2 . v \tensor \coassQ_3 . w
	\ .
	\notag
\end{align}
Next we describe a rigid structure on $\hmodM$.
To this end, denote by 
\begin{align}
	V^* \times V \to \field, 
	\quad (f,v) \mapsto \langle f \mid v \rangle 
	\vcentcolon = f(v)
	\notag
\end{align}
the canonical pairing between a vector space $V$ and its linear dual $V^*$.
An $H$-module $V$ has both a left and a right dual.
They are given by the dual vector space $V^*$, and $h\in H$ acts on the 
left dual $\dualL{V}$ resp.\ the right dual $\dualR{V}$ by
\begin{align}
	(h.f)(v) = \langle f \mid S(h) . v \rangle
	\qtextq{resp.}
	(h.f)(v) = \langle f \mid S\inv(h) . v \rangle
	\ ,
	\notag
\end{align}
for $v\in V$, $f\in V^*$.
The corresponding evaluation is given by\footnote{
    The notation for the (co)evaluation morphisms given here differs from \Cref{sec:finitetenscat}, as we will later fix a convention adapted to a given pivotal structure, see \eqref{eq:pivot-qHopf-ev} and \eqref{eq:pivot-qHopf-coev} below.
}
\begin{align}
	\evL^\mathrm{L}_V(f \tensor v) 
	= 
	\langle f \mid \alphaQ . v \rangle
	\qtextq{resp.}
	\evL^\mathrm{R}_V(v \tensor f) 
	= \langle f \mid S\inv(\alphaQ) . v \rangle
	\ ,
	\label{eq:qHopf-ev}
\end{align}
and the coevaluation by
\begin{align}
	\coevL^\mathrm{L}_V(1) = \sum_{i=1}^{\dim V} \betaQ . v_i \tensor v^i
	\qtextq{resp.}
	\coevL^\mathrm{R}_V(1) = \sum_{i=1}^{\dim V} v^i \tensor S\inv(\betaQ) . v_i
	\ ,
	\label{eq:qHopf-coev}
\end{align}
where $\{v_i\}$ is a basis of $V$ with corresponding dual basis $\{v^i\}$.
That these four maps are indeed morphisms in the category is guaranteed by
\eqref{eq:antipodeAxioms}, and the zig-zag axioms for $\hmodM$ follow from the zig-zag
axioms \eqref{eq:zigzag-qHopf} for $H$.

\subsubsection{Special elements and (co)integrals}
\label{sec:qHopf-coints_and_qRpR}
A \emph{left integral} for a quasi-Hopf algebra is an element $c \in H$ satisfying $hc
= \counit(h) c$ for all $h \in H$.
In other words, $c \in \hmodM(\tensUnit, H)$, regarding $H$ as the left regular
representation.
One similarly defines right integrals.
It is well-known that, in the finite-dimensional case, left (right) integrals exist and
span a one-dimensional space.
A priori, left and right integrals are different;
by dimension considerations, if $c$ is a left integral, then $ch = \modulus(h) c$ for
all $h \in H$, where $\modulus \in H^*$ is an algebra morphism called the
\emph{modulus} of $H$.
Then a left integral is a right integral if and only if $\modulus = \counit$, in which
case one says that $H$ is \emph{unimodular}.\footnotemark\
\footnotetext{%
	The modulus may be identified with the one-dimensional socle of
	$\projCover{\tensUnit}$ \cite[Sec.~6.5]{EGNO}.
	Thus $H$ is unimodular if and only if $\hmodM$ is unimodular.
}

The dual notion of \emph{left/right cointegrals} for $H$ is more involved
\cite{HN-integrals}, since the (linear) dual of a quasi-Hopf algebra is not a quasi-Hopf
algebra in general.
Following \cite{Drinfeld}, define the elements 
\begin{alignat}{3}
	\qR &= \invCoassQ_1 \tensor S\inv(\alphaQ \invCoassQ_3) \invCoassQ_2,
	\quad \quad
	& \pR &= \coassQ_1 \tensor \coassQ_2 \betaQ S(\coassQ_3),
	\notag \\
	\qL &= S(\coassQ_1)\alphaQ \coassQ_2 \tensor \coassQ_3,
	& \pL &= \invCoassQ_2 S\inv(\invCoassQ_1 \betaQ) \tensor \invCoassQ_3
	\label{eq:qRpR}
	\ ,
\end{alignat}
which satisfy a number of useful identities, see  e.g.~\cite[Sec.\,3.4]{BGR2} for a
list in our conventions and \cite[Sec.\,3]{BGR1} for a representation in graphical
calculus.
With these, one may describe the \emph{Drinfeld twist} $\Dt$ of $H$, i.e.\ the
invertible element in $H \tensor H$ implementing the natural isomorphism $\dualL{W}
\tensor \dualL{V} \cong \dualL{(V \tensor W)}$ in $\hmodM$, see
e.g.~\cite[Sec.\,3.4]{BGR2} for an explicit expression.

With the application to link and three-manifold invariants in mind, we will in the
following restrict ourselves to the case that $H$ is unimodular.
In that case, a linear form $\coint^r$ on $H$ is a right cointegral, resp.\ $\coint^l$
a left cointegral, if and only if it satisfies
\begin{align}\label{eq:coint-def}
	(\coint^r \tensor \id) \big( V\cop \Delta(h) U\cop \big) = \coint^r(h) \oneQ
	\qtextq{resp.}
	(\id \tensor \coint^l) \big( V\Delta(h) U \big) = \coint^l(h) \oneQ
\end{align}
for all $h \in H$, see \cite{HN-integrals,BC1,BC2} and \cite[Def.\,3.2]{BGR1} for a
definition in our conventions.
The elements $U^{\textup{(cop)}}, V^{\textup{(cop)}} \in H
\tensor H$ are defined as
\begin{alignat*}{3}
	U &= \Dt\inv (S \tensor S)(\qR_{21}),
	\quad & V &= (S\inv \tensor S\inv)(\Dt_{21} \pR_{21}),
	\\
	U\cop &= (S\inv \tensor S\inv)(\qL_{21} \Dt\inv_{21}),
	\quad & V\cop &= (S \tensor S)(\pL_{21}) \Dt_{21}
	\ .
\end{alignat*}

\subsubsection{Pivotal structure, braiding, and ribbon structure on $\hmodM$}

Here, we briefly review results on pivotal structures, braidings and ribbon twists. For more details we refer to \cite{EGNO} for the categorical structures, and to \cite{Bulacu_book} for their quasi-Hopf algebra counterparts.

Recall that a \emph{pivotal structure} on a rigid monoidal category is a monoidal
natural isomorphism $\id_{\cat} \To \ddualL{(\placeholder)}$.
A \emph{pivot} for $H$ is an invertible element $\pivotQ \in H$ satisfying
$S^2(h) = \pivotQ h \pivotQ\inv$
and
$\Delta(\pivotQ) = \Dt\inv \cdot (S \tensor S)(\Dt_{21}) \cdot (\pivotQ \tensor
\pivotQ)$.
A \emph{pivotal quasi-Hopf algebra} is a quasi-Hopf algebra together with chosen pivot.

\begin{proposition}
	\label{prop:qHA:pivot}
	There is a bijective correspondence 
 	\begin{align*}
		\renewcommand{\arraystretch}{0.9}
		\left\{
		\begin{matrix}
			\text{pivotal structures}
			\\
			\text{on } \hmodM
		\end{matrix}
		\right\}
		\xrightarrow{~\sim~}
		\left\{
		\begin{matrix}
			\text{pivots}
			\\
			\text{for } H
		\end{matrix}
		\right\}
		, \quad 
		\pivotalStruct \longmapsto 
(\pivotalStruct^\Vect_{H})\inv(\pivotalStruct_H(\oneQ))
		, \
	\end{align*}
	where $\pivotalStruct^\Vect_V$ is the canonical isomorphism $V \cong V^{**}$ of
	vector spaces.
The inverse maps $\pivotQ$ to the pivotal structure $\pivotalStruct_V(v) = \pivotalStruct^\Vect_V(\pivotQ . v)$.
	A pivot satisfies $\counit(\pivotQ) = 1$ and $S(\pivotQ) =
	\pivotQ\inv$.
\end{proposition}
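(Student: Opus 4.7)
The plan is to construct mutually inverse maps between pivotal structures and pivots, and then derive the two extra properties. Since $\pivotalStruct^\Vect \colon \id_\Vect \To (-)^{**}$ is already a natural monoidal isomorphism of vector spaces, a pivot $\pivotQ$ gives a candidate pivotal structure by the formula in the statement; the content is to identify what it means for this candidate to be a pivotal structure on $\hmodM$ rather than only on $\Vect$. Conversely, $H$ itself (as the left regular representation) is a universal testing object, so any pivotal structure on $\hmodM$ is determined by a single element, namely the image of $\oneQ$ under $\pivotalStruct_H$ transported back to $H$ via $\pivotalStruct^\Vect_H$.

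In the forward direction, given an invertible $\pivotQ \in H$, set $\pivotalStruct_V(v) := \pivotalStruct^\Vect_V(\pivotQ.v)$. Each $\pivotalStruct_V$ is a linear isomorphism, and naturality in $V$ reduces to naturality of $\pivotalStruct^\Vect$. I would then check $H$-linearity by comparing $\pivotalStruct_V(h.v)$ with $h.\pivotalStruct_V(v)$; since $H$ acts on $V^{**}$ via $S^2$, the two expressions equal $\pivotalStruct^\Vect_V(\pivotQ h.v)$ and $\pivotalStruct^\Vect_V(S^2(h)\pivotQ.v)$, so $H$-linearity is equivalent to $\pivotQ h = S^2(h)\pivotQ$, i.e.\ the first pivot axiom. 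For monoidality I would expand the coherence square involving the canonical $\hmodM$-isomorphism $V^{**} \tensor W^{**} \cong (V \tensor W)^{**}$; this latter iso is built from the twisted duality $\dualL{W} \tensor \dualL{V} \cong \dualL{(V \tensor W)}$ implemented by $\Dt$ applied twice, so the resulting condition on $\pivotQ$ comes out to $\Delta(\pivotQ) = \Dt^{-1} (S \tensor S)(\Dt_{21}) (\pivotQ \tensor \pivotQ)$, which is the second pivot axiom. In the backward direction, given $\pivotalStruct$, for any $v \in V$ the map $\phi_v \colon H \to V$, $h \mapsto h.v$, is $H$-linear, so naturality applied to $\phi_v$ and evaluation at $\oneQ$ yields $\pivotalStruct_V(v) = \pivotalStruct^\Vect_V(\pivotQ.v)$, identifying the two constructions as mutual inverses.

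The remaining identities both follow from the monoidal structure. Monoidality forces $\pivotalStruct_\tensUnit = \id_\tensUnit$, and since $\tensUnit = \field$ carries $h.1 = \counit(h)$, the definition gives $\pivotalStruct_\tensUnit(1) = \counit(\pivotQ) \cdot 1$, so $\counit(\pivotQ) = 1$. For $S(\pivotQ) = \pivotQ^{-1}$, I would first apply $S$ to the first pivot axiom and combine the result with the original axiom to conclude that $S(\pivotQ)\pivotQ$ is central in $H$; then I would pair the second pivot axiom with the antipode relations~\eqref{eq:antipodeAxioms}--\eqref{eq:zigzag-qHopf} and the normalisation $(\counit \tensor \id)\Dt = (\id \tensor \counit)\Dt = \oneQ$ to pin down the value of this central element to $\oneQ$. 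The main obstacle will be the monoidality computation: unfolding the canonical iso $(V\tensor W)^{**} \cong V^{**} \tensor W^{**}$ in $\hmodM$ involves two applications of the twisted duality and careful bookkeeping of all occurrences of $\Dt$ and $(S \tensor S)(\Dt_{21})$ in order to match exactly the second pivot axiom.
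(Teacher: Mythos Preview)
The paper does not actually prove this proposition: it is stated as part of a review of standard quasi-Hopf material, with the reader referred to \cite{EGNO} and \cite{Bulacu_book} for details. So there is no ``paper's own proof'' to compare against; your task was to supply what the paper omits.

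Your outline is the standard one and is essentially correct. The reduction of $H$-linearity to the first pivot axiom is clean, and the identification of monoidality with the second pivot axiom via the double application of the Drinfeld-twist duality isomorphism is the right mechanism (and, as you note, is where the real work lies). The naturality argument using $\phi_v \colon H \to V$ to recover $\pivotQ$ from $\pivotalStruct$ is also the standard trick and is fine. The derivation of $\counit(\pivotQ)=1$ is correct.

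The one place that is genuinely thin is the final step for $S(\pivotQ)=\pivotQ^{-1}$. You correctly deduce that $S(\pivotQ)\pivotQ$ is central, but ``pair the second pivot axiom with the antipode relations and zigzag to pin down the value'' hides a real computation. One concrete route: apply the antipode axiom $S(h\sweedler{1})\alphaQ h\sweedler{2}=\counit(h)\alphaQ$ to $h=\pivotQ$, then substitute the second pivot axiom for $\Delta(\pivotQ)$ and use the first pivot axiom to rewrite the resulting $S^2$-terms; the known identities for the Drinfeld twist (in particular how $\Dt$ interacts with $\alphaQ$, $\betaQ$ and the coassociator) then collapse the expression to $S(\pivotQ)\alphaQ\pivotQ = \alphaQ$, from which centrality of $S(\pivotQ)\pivotQ$ and invertibility of $\alphaQ$ on suitable test modules (or the zigzag identity) force $S(\pivotQ)\pivotQ=\oneQ$. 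Alternatively, and more conceptually, one can use the categorical fact that for any monoidal natural isomorphism $\pivotalStruct\colon\id\Rightarrow(-)^{\vee\vee}$ one has $\pivotalStruct_{\dualL{V}} = (\dualL{(\pivotalStruct_V)})^{-1}$ up to the canonical identification $(\dualL{V})^{\vee\vee}\cong \dualL{(V^{\vee\vee})}$; translating this to $H$ with $V=H$ gives $S(\pivotQ)=\pivotQ^{-1}$ directly. Either way, this step deserves more than a single sentence.
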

In the pivotal case we will choose the (co)evaluation maps in $\hmodM$ such that they are related by the pivotal structure. Our convention is to define the two-sided dual to be $\dualL{V}$. Accordingly we take the left (co)evaluation map from \eqref{eq:qHopf-ev}, \eqref{eq:qHopf-coev} and we write
\begin{align}
	\evL_V := \evL^\mathrm{L}_V
	\quad , \quad 
	\coevL_V := \coevL^\mathrm{L}_V ~.
	\label{eq:pivot-qHopf-ev}
\end{align}
The right-duality on $\dualL{V}$ is given by $\evR_V := \evL^\mathrm{L}_{\dualL{V}} \circ (\pivotalStruct_V \otimes \id_{\dualL{V}})$ and similar for $\coevR_V$. Explicitly,
\begin{align}
	\evR_V(v \tensor f) 
	= \langle f \mid S(\alphaQ)\pivotQ . v \rangle
	\quad , \quad 
	\coevR_V(1) = \sum_{i=1}^{\dim V} v^i \tensor S\inv(\pivotQ\betaQ) . v_i
	~.
	\label{eq:pivot-qHopf-coev}
\end{align}

An \emph{$R$-matrix} for $H$ is an invertible element $\rMatrix \in H\tensor H$
satisfying
$(\Delta \tensor \id) (\rMatrix) 
= \invCoassQ_{231} \rMatrix_{13} \coassQ_{132} \rMatrix_{23} \invCoassQ$, 
$(\id \tensor \Delta) (\rMatrix) 
= \coassQ_{312} \rMatrix_{13} \invCoassQ_{213} R_{12} \coassQ$
and $\rMatrix \Delta(h) \rMatrix\inv = \Delta\cop(h)$.
A \emph{quasi-triangular quasi-Hopf algebra} is a quasi-Hopf algebra together with a
chosen $R$-matrix.

\begin{proposition}
	\label{prop:qHA:braiding}
	There is a bijective correspondence 
	\begin{align*}
		\renewcommand{\arraystretch}{0.9}
		\left\{
		\begin{matrix}
			\text{braidings} \\ \text{on } \hmodM
		\end{matrix}
		\right\}
		\xrightarrow{\sim}
		\left\{
		\begin{matrix}
			R\text{-matrices} \\ \text{for } H
		\end{matrix}
		\right\}
		, \quad
		\braiding \longmapsto \tau \circ \braiding_{H, H}(\oneQ \tensor \oneQ)
	\end{align*}
	where  $\tau$ is the flip map in $\Vect$.
The inverse maps $\rMatrix$ to the braiding
$\braiding_{X, Y}(x \tensor y)
= \rMatrix_2 . y \tensor \rMatrix_1 . x$.
\end{proposition}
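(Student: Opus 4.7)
The plan is to check that each of the two maps in question is well-defined and that they are mutually inverse. First, I would fix a braiding $\braiding$ on $\hmodM$ and set $\rMatrix \defined \tau \circ \braiding_{H,H}(\oneQ \tensor \oneQ)$. To extract the formula for $\braiding$ on arbitrary modules, I exploit naturality: for every $x \in X$, the map $\rho_x \colon H \to X$, $h \mapsto h.x$, is $H$-linear, and applying naturality of $\braiding$ to $\rho_x \tensor \rho_y$ shows $\braiding_{X,Y}(x \tensor y) = \rMatrix_2 . y \tensor \rMatrix_1 . x$. This simultaneously proves injectivity of the assignment and recovers the proposed inverse formula on objects.

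Next, one verifies that $\rMatrix$ so-constructed satisfies the axioms of an $R$-matrix. Invertibility of $\rMatrix$ is immediate from the invertibility of $\braiding_{H,H}$. The intertwining identity $\rMatrix \Delta(h) = \Delta\cop(h) \rMatrix$ follows from $H$-linearity of $\braiding_{H,H}$ evaluated at $h.(\oneQ \tensor \oneQ)$, using that the action of $h$ on $H \tensor H$ is $\Delta(h)$ on the source and $\Delta\cop(h)$ on the target (because target coordinates are swapped by the braiding). The two coproduct-hexagon identities are obtained by applying the hexagon axioms for $\braiding$ to the triple $(H,H,H)$ and evaluating at $\oneQ \tensor \oneQ \tensor \oneQ$; here the monoidal associators of $\hmodM$ act precisely by multiplication with $\coassQ$ (resp.\ $\invCoassQ$), which is what produces the $\coassQ$- and $\invCoassQ$-insertions in the stated $R$-matrix axioms.

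Conversely, given an $R$-matrix $\rMatrix$, one defines $\braiding_{X,Y}$ by the stated formula. $H$-module linearity of each $\braiding_{X,Y}$ is the quasi-triangularity relation $\rMatrix \Delta(h) \rMatrix\inv = \Delta\cop(h)$; naturality in $X,Y$ is automatic from the formula; invertibility is inherited from that of $\rMatrix$; and the two hexagon axioms for $\braiding$ unwind, again using the associator of $\hmodM$, to the two coproduct-hexagon identities for $\rMatrix$. That the two constructions are mutually inverse is then immediate: evaluating $\braiding_{H,H}$ on $\oneQ \tensor \oneQ$ and post-composing with $\tau$ returns $\rMatrix$, while the naturality argument of step one shows that $\braiding$ is determined by its value at $(H,H)$.

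The main obstacle is the careful translation between the hexagon axioms in $\hmodM$ and the two coproduct-hexagon identities for $\rMatrix$ in the non-strict setting. Each hexagon diagram contains two instances of the monoidal associator, each of which contributes a factor of $\coassQ$ or $\invCoassQ$ on evaluation at $\oneQ^{\tensor 3}$; identifying the resulting equations in $H^{\tensor 3}$ with the displayed $R$-matrix axioms (and vice versa) is the one substantive departure from the strict Hopf-algebra case, and amounts to bookkeeping that ultimately matches both sides term by term.
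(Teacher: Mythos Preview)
The paper does not actually supply a proof of this proposition; it is stated as a well-known background result, with the reader referred to \cite{EGNO} and \cite{Bulacu_book} for details. Your sketch is the standard argument and is correct: naturality against the regular-module maps $\rho_x$ pins down $\braiding$ from its value at $(H,H)$, $H$-linearity of $\braiding_{H,H}$ gives the intertwining relation, and the hexagon axioms translate into the two quasi-triangularity identities with the $\coassQ$-insertions coming from the associator. (Incidentally, you have silently corrected what appears to be a typo in the stated inverse formula: the braiding should read $\rMatrix_2.y \tensor \rMatrix_1.x$, not $\rMatrix_2.y \tensor \rMatrix_2.x$.)
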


Associated to an $R$-matrix is also its monodromy $\monodromy = \rMatrix_{21}
\rMatrix$, which encodes the double braiding in $\hmodM$.
Lastly, since $\hmodM$ is rigid, there is a canonical (but in general non-monoidal)
natural isomorphism $\id_{\cat} \To \ddualL{(\placeholder)}$ built using left
dualities and a braiding;
it is completely determined by an element $\drinfeldElement \in H$, called the
\emph{Drinfeld element} of $H$, see e.g.\ \cite[Sec.\,6.3]{FGR1} for an explicit
expression in the current conventions.

Finally, recall that a \emph{ribbon twist} on a braided rigid monoidal category is a
natural automorphism of the identity functor, satisfying certain axioms.
A \emph{ribbon element} for $H$ is a non-zero central element $\elRibbon \in H$
satisfying
$\Delta(\ribbon) = \monodromy\inv \cdot \ribbon \tensor \ribbon$
and $S(\ribbon) = \ribbon$.
A \emph{ribbon quasi-Hopf algebra} is a quasi-Hopf algebra together with a chosen
ribbon element.

\begin{proposition}[\cite{Sommerhaeuser}]
	A ribbon element is automatically invertible and satisfies $\counit(\ribbon) = 1$,
	and there is a bijective correspondence
	\begin{align*}
		\renewcommand{\arraystretch}{0.9}
		\left\{
		\begin{matrix}
			\text{ribbon twists} \\ \text{on } \hmodM
		\end{matrix}
		\right\}
		\xrightarrow{\sim}
		\left\{
		\begin{matrix}
			\text{ribbon elements} \\ \text{for } H
		\end{matrix}
		\right\}
		, \quad
		\ribTwist \mapsto (\ribTwist_H)\inv(\oneQ) \ .
	\end{align*}
 The inverse maps $\elRibbon$ to the ribbon twist $\ribTwist_V(v) = \elRibbon\inv . v$.
\end{proposition}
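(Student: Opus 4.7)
The plan is to establish the two automatic properties of a ribbon element first, and then to construct both directions of the claimed bijection and check that they are mutually inverse.

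First I would prove $\counit(\ribbon) = 1$. Apply $(\counit \tensor \id)$ to both sides of $\Delta(\ribbon) = \monodromy\inv \cdot (\ribbon \tensor \ribbon)$. Counitality of $\Delta$ turns the left-hand side into $\ribbon$. The standard $R$-matrix properties $(\counit \tensor \id)(\rMatrix) = \oneQ = (\id \tensor \counit)(\rMatrix)$ imply $(\counit \tensor \id)(\monodromy\inv) = \oneQ$, so the right-hand side collapses to $\counit(\ribbon) \cdot \ribbon$. As $\ribbon \neq 0$, this forces $\counit(\ribbon) = 1$.

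Invertibility is the main technical obstacle. I would derive the quasi-Hopf analogue of the Hopf-algebra identity $\ribbon^2 = \drinfeldElement \cdot S(\drinfeldElement)$. The right-hand side is central and invertible because the Drinfeld element $\drinfeldElement$ is invertible, since it implements $S^2$ by conjugation. Hence $\ribbon^2$, and therefore $\ribbon$, is invertible. The quasi-Hopf correction requires careful use of the elements $\qR, \pR$ from \eqref{eq:qRpR} together with the zig-zag identities \eqref{eq:zigzag-qHopf}.

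For the assignment $\ribbon \mapsto \ribTwist^{\ribbon}$ with $\ribTwist^{\ribbon}_V(v) \defined \ribbon\inv.v$, centrality of $\ribbon$ makes each component $H$-linear and invertibility makes it an isomorphism with inverse $v \mapsto \ribbon.v$; naturality in $V$ is immediate from $H$-linearity of morphisms in $\hmodM$. The tensor-product axiom $\ribTwist_{V \tensor W} = (\ribTwist_V \tensor \ribTwist_W) \circ \braiding_{W,V} \circ \braiding_{V,W}$ follows from $\Delta(\ribbon\inv) = (\ribbon\inv \tensor \ribbon\inv) \cdot \monodromy$, which is the inverse of the coproduct axiom on $\ribbon$, combined with the formula for the braiding in $\hmodM$ induced from $\rMatrix$. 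Compatibility with left duals reduces to $S(\ribbon) = \ribbon$ via the $H$-action on $\dualL{V}$ through $S$, and the unit condition $\ribTwist_{\tensUnit} = \id_{\tensUnit}$ is exactly $\counit(\ribbon) = 1$.

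Conversely, given a ribbon twist $\ribTwist$, set $\ribbon \defined (\ribTwist_H)\inv(\oneQ) \in H$. Centrality follows from naturality applied to the right-multiplication maps $R_h \colon H \to H$, $x \mapsto xh$, which are left-$H$-linear: combining $R_h \circ \ribTwist_H = \ribTwist_H \circ R_h$ evaluated at $\oneQ$ with $H$-linearity of $\ribTwist_H$ yields $\ribTwist_H(\oneQ) \cdot h = h \cdot \ribTwist_H(\oneQ)$ for every $h$, hence $\ribbon$ is central. The coproduct axiom on $\ribbon$ translates from the ribbon-twist tensor axiom applied to $H \tensor H$, and $S(\ribbon) = \ribbon$ from compatibility with duals. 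Finally the two assignments are mutually inverse: $(\ribTwist^{\ribbon}_H)\inv(\oneQ) = \ribbon$ is immediate, while for any $v \in V$ the module map $\phi_v \colon H \to V$, $h \mapsto h.v$ together with naturality gives $\ribTwist_V(v) = \phi_v(\ribTwist_H(\oneQ)) = \ribTwist_H(\oneQ).v = \ribbon\inv.v$, matching $\ribTwist^{\ribbon}_V(v)$.
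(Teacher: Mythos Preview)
The paper does not prove this proposition; it simply records the statement with a citation to \cite{Sommerhaeuser} and moves on. So there is no proof in the paper to compare against.

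Your argument is essentially the standard one and is correct in outline. The derivation of $\counit(\ribbon)=1$ is clean, and the bijection part (centrality from naturality with right-multiplication maps, the translation between the coproduct axiom on $\ribbon$ and the tensor-product axiom on $\ribTwist$, and duality via $S(\ribbon)=\ribbon$) is exactly how one proves this. The only soft spot is the invertibility step: you correctly identify the target identity $\ribbon^2 = \drinfeldElement\,S(\drinfeldElement)$ and note that quasi-Hopf corrections involving $\qR,\pR$ are needed, but you do not actually carry this out. In the quasi-Hopf setting this computation is genuinely more delicate than in the strict Hopf case, and is precisely the content of the cited paper \cite{Sommerhaeuser}; so as written, that part of your proposal is a pointer to the right argument rather than a proof.
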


Since a ribbon category is pivotal, a ribbon Hopf algebra admits a canonical pivot
which is compatible with the ribbon structure.
In terms of the ribbon and Drinfeld element, it is explicitly given by $\pivotQ
=\drinfeldElement \ribbon\inv$.

\subsection{Modified traces on \texorpdfstring{$\hmodM$}{hM} and
\texorpdfstring{$\ProjIdeal[\hmodM]$}{Proj(hM)}}

Let $(H, \pivotQ)$ be a pivotal quasi-Hopf algebra.

\subsubsection{The categorical trace}
From \eqref{eq:pivot-qHopf-ev} and \eqref{eq:pivot-qHopf-coev} one finds that the right
resp.\ left categorical trace of $\cat = \hmodM$ is given by
\begin{align}
	\trCat[r, \cat]_V(f) = \tr_V \left( \pivotQ \betaQ S(\alphaQ) \circ f \right)
	\qtextq{resp.}
	\trCat[l, \cat]_V(f) 
	= \tr_V \left( \alphaQ S\inv(\pivotQ \betaQ) \circ f \right)
	\label{eq:categorical_trace_qHopf}
\end{align}
for $f \in \End_H(V)$, where by $h = \pivotQ \betaQ S(\alphaQ)$ we mean the linear
endomorphism of $V$ given by acting with $h$, and $\tr$ on the right hand sides is the
usual trace of linear maps. If $H$ is ribbon then  both the categorical traces agree.

\subsubsection{The modified trace on the projective ideal}
Next, consider $\tensIdeal = \ProjIdeal[{\hmodM}]$, the subcategory of projective
$H$-modules.
Since non-degenerate modified traces do not exist in the non-unimodular
setting~\cite{FOG,GKP_m-traces}, we now restrict to the case that $H$ is unimodular.
Building on \cite{BBGa}, in \cite{BGR1} it was shown that non-degenerate right (resp.\
left) modified traces on $\tensIdeal$ are in bijection with linear forms
$\symRightCoint \in H^*$ (resp.\ $\symLeftCoint \in H^*$) satisfying
\begin{align}
	(\symRightCoint \tensor \id) \big( \qR \Delta(h) \pR \big) = \symRightCoint(h)
	\pivotQ\inv
	\qtextq{resp.}
	(\id \tensor \symLeftCoint) \big( \qL \Delta(h) \pL \big) = \symLeftCoint(h) \pivotQ
	\label{eq:symm_coint_equation}
\end{align}
for all $h \in H$.
Moreover, the unique (up to scalar)
solutions to these equations are given by
\begin{align}
	\label{eq:sum-int-from-int}
	\symRightCoint(h) = \coint^r ( \pivotQ h )
	\qtextq{resp.}
	\symLeftCoint(h) = \coint^l ( \pivotQ\inv h )
	, \quad h \in H
	\ ,
\end{align}
where $\coint^r$ (resp.\ $\coint^l$) is a non-zero right (resp.\ left) cointegral of
$H$ defined via~\eqref{eq:coint-def}.
Such a solution is then called a \emph{right (resp.\ left) symmetrised cointegral} for
$H$, the name reflecting the fact that it is
a symmetric form associated to a
cointegral.

Let us describe explicitly the right modified trace associated to the right symmetrised
cointegral $\symRightCoint$.
By linearity of the modified trace it is enough to consider endomorphisms of
indecomposable projective modules, or more generally, any direct summand of the regular $H$-module $H$.
So let $P \in \hmodM$ be a direct summand in $H$ via the split idempotent $H
\xrightarrow{p} P \xrightarrow{i} H$.
The associated right modified trace of $f \in \End_H(P)$ is given by
\begin{align}
	\label{eq:modified_trace_on_indecomp_proj}
	\modTr^r_P(f) 
	= \symRightCoint\big((i \circ f \circ p)(\oneQ)\big)
	= \coint^r\big(\pivotQ \cdot (i \circ f \circ p)(\oneQ)\big)
	\ ,
\end{align}
and is independent of the choice of $i$ and $p$ with $p \circ i = \id_P$, see \cite[Prop.\,4.3\,\& Thm.\,4.5]{BGR1}.
Replacing $\symRightCoint$ by the left symmetrised cointegral $\symLeftCoint$ 
yields the left modified trace.
Note that if $H$ is ribbon, then there is no difference between left and right modified
traces, and so the left and right symmetrised cointegrals agree.
The corresponding left and right cointegrals do not have to agree (but they do e.g.\ if
$\pivotQ$ has order 2).

A special case of \eqref{eq:modified_trace_on_indecomp_proj}, which will often appear
later, is when $f$ is a component of a natural transformation $\xi \in \End(\id_{\cat})$.
It is well-known that $\End(\id_{\cat})$ is
isomorphic to the centre
$Z(H)$ of $H$;
in fact, $\xi$ is given by the action with $\hat{\xi} = \xi_H(\oneQ) \in Z(H)$.
Then we have $\modTr^r_P(\xi_P) = \coint^r(\pivotQ \hat{\xi} e)$ where $e = (i \circ
p)(\oneQ)$ is the idempotent of the direct summand $P$ in $H$.

\subsection{Lyubashenko coend and integral}\label{sec:Lyu-coend-int-for-qHopf}

Let $H$ be a quasi-triangular quasi-Hopf algebra, so that $\hmodM$ is braided.
The Hopf algebra $\coend$ (as recalled in \Cref{sec:twist-non-deg_etc}) exists
and is given by the coadjoint representation --- i.e.\ $\coend \cong H^*$ as vector spaces,
with action given by $(h.f)(v) = f(S(h\sweedler{1}) v h\sweedler{2})$ for $f \in H^*$ and 
$h,v \in H$ --- together with the universal dinatural transformation with components
\begin{align}
	\dinatLyu_X(x^* \tensor y)(h) = x^*(h y)
 \label{eq:dinatCoend_qHopf}
\end{align}
for $X \in \hmodM$, $x^* \in \dualL{X}$, $y \in X$, and $h \in H$.
See \cite[Sec.\,7]{FGR1} for more details.

Let now $H$ be unimodular.
This is equivalent to $\hmodM$ being unimodular, and so $\coend$ admits a two-sided integral $\intLyu : \tensUnit\to\coend$, which in the present case translates into $\intLyu \in H^*$.
It was shown in \cite[Cor.\,6.4]{BGR2} that given a non-zero right cointegral $\coint^r$ for $H$,
a non-zero integral $\intLyu$ for $\coend$
is obtained as  $\intLyu(h) = \coint^r(S(\betaQ) h)$.
This is in fact a bijection with inverse 
\begin{align}
	\label{eq:H-int-from-Lam-int}
	\coint^r(h) = \intLyu(S(\qL_1) h \qL_2) \ .
\end{align}

The condition \eqref{eq:ribboncat-twistnondeg} for twist-nongedeneracy of $\hmodM$ can be expressed in terms of quasi-Hopf data as follows. By \cite[Eqn.\,(8.3)]{FGR1}, the endomorphism $\modT_\coend : \coend \to \coend$ from \eqref{eq:T_L-def} acts on $f \in \coend = H^*$ as $(\modT_\coend(f))(a) = f( \elRibbon\inv a)$, where $a \in H$. The counit $\varepsilon_\coend : \coend \to \tensUnit$ is given by $\varepsilon_\coend(f) = f(\alphaQ)$ for $f \in H^*$, see \cite[Thm.\,7.3]{FGR1}. Combining this with the above expression for the integral $\intLyu$, we obtain
\begin{align}\label{eq:qHopf-twistnondeg}
    \hmodM \text{ twist-nondegenerate }
    \quad
    \Leftrightarrow
    \quad
    \stabCoeff =
    \coint^r(S(\betaQ) \elRibbon^{\mp1} \alphaQ) \neq 0 ~.
\end{align}
We call quasi-Hopf algebras satisfying the non-vanishing condition on the right hand side of~\eqref{eq:qHopf-twistnondeg} \textit{twist-nondegenerate}.

Recall from \Cref{rem:factorisable-cat} the condition for a unimodular braided finite tensor category to be factorisable. We call the unimodular quasi-triangular quasi-Hopf algebra $H$ \textit{factorisable} if $\hmodM$ is factorisable. A characterisation in terms of quasi-Hopf data was given in \cite[Sec.\,7.3]{FGR1} and agrees with the original definition in \cite{BT} as shown in \cite[Sec.\,7.4]{FGR1}.

\smallskip

Next we explain that two a priori different looking ways of normalising the modified trace on the projective ideal in terms of the integral $\intLyu$ actually agree.
We start by fixing an arbitrary non-zero integral $\intLyu : \tensUnit\to\coend$. In case $H$ is modular, one can further fix $\intLyu$ up to a sign by demanding \Cref{rem:modular_actions}\,(1), but here we do not assume this.

From \eqref{eq:H-int-from-Lam-int} and \eqref{eq:sum-int-from-int}, one obtains a (two-sided) symmetrised cointegral by setting
$\symRightCoint(h) = \intLyu(S(\qL_1) \pivotQ h \qL_2)$. Via \eqref{eq:modified_trace_on_indecomp_proj} this in turn fixes a modified trace $\modTr^\mathrm{Hopf}$ on the projective ideal. Altogether, we have determined $\modTr^\mathrm{Hopf}$ from the initial choice of $\intLyu$.
This Hopf algebraic normalisation condition has a categorical counterpart:

\begin{remark}
	Assume for this remark that $\cat$ is modular. 
	Given $\intLyu$, there is a unique cointegral $\cointLyu$ for $\coend$ such that $\cointLyu \circ \intLyu = \id_{\tensUnit}$, cf.~\eqref{eq:Lam-co-norm}. 
	Fix a surjection (unique up to scalar) 
	$\projCoverMorphism{\tensUnit} \colon \projCover{\tensUnit} \to \tensUnit$.
	By \cite[Prop.~6.5]{GR-proj}, this yields a unique morphism $\injHullMorphism{\tensUnit}
	\colon \tensUnit \to \projCover{\tensUnit}$ such that
	\begin{align}
		\cointLyu \circ \dinatLyu_{\projCover{\tensUnit}}
		= \ev_{\projCover{\tensUnit}} \circ
		\big( \id_{\dualL{\projCover{\tensUnit}}}
		\tensor (\injHullMorphism{\tensUnit} \circ \projCoverMorphism{\tensUnit})\big)
		\ .
		\label{eq:cond_inj_hull_morph}
	\end{align}
	Note that the composition $\injHullMorphism{\tensUnit} \circ \projCoverMorphism{\tensUnit}$ is independent of the choice of $\projCoverMorphism{\tensUnit}$. From \cite[Thm.\,6.1\,\&\,Prop.\,6.5]{GR-proj} it follows that for a non-zero modified trace $\modTr$ on $\ProjIdeal$ one has $\modTr_{\projCover{\tensUnit}}(\injHullMorphism{\tensUnit} \circ \projCoverMorphism{\tensUnit}) \neq 0$. We may thus normalise $\modTr$ such that
	\begin{align}\label{eq:cat-trace-norm}
		\modTr_{\projCover{\tensUnit}}(\injHullMorphism{\tensUnit} \circ \projCoverMorphism{\tensUnit}) = 1 \ .
	\end{align}
	This is the same condition as in \Cref{rem:modular_actions}\,(4), again by  \cite[Prop.\,6.5]{GR-proj}. Altogether, starting from $\intLyu$ we obtained a unique choice of modified trace $\modTr$ on $\ProjIdeal$.
\end{remark}

Let us return to the unimodular quasi-triangular quasi-Hopf algebra $H$. We have:

\begin{lemma}\label{lem:tHopf-normal}
	$\modTr^\mathrm{Hopf}$ satisfies \eqref{eq:cat-trace-norm}.
\end{lemma}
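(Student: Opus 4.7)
The plan is to reduce both sides of the claimed identity to $\intLyu(c)$, where $c \in H$ is a non-zero two-sided integral of the unimodular algebra $H$, and then match them via the normalization $\cointLyu \circ \intLyu = \id_\tensUnit$.

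Let $\phi_\tensUnit := \psi\inv(\cointLyu) \in \End(\id_\cat)$; by \eqref{eq:cond_inj_hull_morph} (equivalently, by $\phi_V = (\rho\psi)\inv(\chi_V)$ at $V = \tensUnit$, cf.\ \Cref{rem:modular-remark}(3)) its component at $\projCover{\tensUnit}$ is exactly $\injHullMorphism{\tensUnit} \circ \projCoverMorphism{\tensUnit}$. Writing $(\phi_\tensUnit)_X(x) = \widehat{\phi}_\tensUnit . x$ for the representing central element $\widehat{\phi}_\tensUnit \in Z(H)$, a short computation on the regular module using \eqref{eq:dinatCoend_qHopf} and the pivotal evaluation \eqref{eq:pivot-qHopf-ev} with $y = \oneQ_H$ yields $\cointLyu(F) = F(\alphaQ \widehat{\phi}_\tensUnit)$ for all $F \in \coend = H^\ast$, so that $\cointLyu \circ \intLyu = \id_\tensUnit$ translates into $\intLyu(\alphaQ \widehat{\phi}_\tensUnit) = 1$.

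The key structural step is to show $\widehat{\phi}_\tensUnit = \alpha\, c$ for some $\alpha \in \field$. First one verifies that $F \mapsto F(c)$ is a non-zero right cointegral of the Hopf algebra $\coend$ in $\cat$: it is $H$-linear by the identity $S(h_{(1)}) c\, h_{(2)} = \counit(h) c$ (an immediate consequence of $hc = ch = \counit(h) c$), and it satisfies the right-cointegral equation after unpacking the structure of $\coend$ from \cite[Sec.\,7]{FGR1}. Uniqueness of cointegrals up to a scalar gives $\cointLyu(F) = \alpha\, F(c)$ for a unique $\alpha$, hence $\alphaQ \widehat{\phi}_\tensUnit = \alpha c$ as elements of $H$. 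Multiplying the zig-zag identity \eqref{eq:zigzag-qHopf} $S(\coassQ[1]_1) \alphaQ \coassQ[1]_2 \betaQ S(\coassQ[1]_3) = \oneQ$ on the right by $\widehat{\phi}_\tensUnit$, then moving $\widehat{\phi}_\tensUnit$ past the factors using centrality and collapsing using $hc = \counit(h)c$, $\counit(\betaQ) = 1$ and $(\counit^{\otimes 3})(\coassQ) = 1$, one obtains $\widehat{\phi}_\tensUnit = \alpha c$. Combined with $\alphaQ c = \counit(\alphaQ)\, c = c$ and the previous step, this fixes $\alpha = \intLyu(c)\inv$.

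Finally, I would evaluate \eqref{eq:modified_trace_on_indecomp_proj} directly. Fix a primitive idempotent $e \in H$ with $He \cong \projCover{\tensUnit}$, rescaled so $\counit(e) = 1$; the associated split $(p, i)$ gives $(i \circ (\phi_\tensUnit)_{\projCover{\tensUnit}} \circ p)(\oneQ) = \widehat{\phi}_\tensUnit\, e = \alpha c e = \alpha c$ via $ce = \counit(e)\, c = c$. Using $\pivotQ c = \counit(\pivotQ)\, c = c$, formula \eqref{eq:modified_trace_on_indecomp_proj} becomes
\[
	\modTr^\mathrm{Hopf}_{\projCover{\tensUnit}}\!\big(\injHullMorphism{\tensUnit} \circ \projCoverMorphism{\tensUnit}\big) \;=\; \alpha\, \coint^r(c).
\]
Applying \eqref{eq:H-int-from-Lam-int} and the two-sided integral property of $c$ gives $\coint^r(c) = (\counit\otimes\counit)(\qL) \cdot \intLyu(c) = \intLyu(c)$, where the small identity $(\counit\otimes\counit)(\qL) = 1$ is immediate from $(\id\otimes\counit\otimes\id)(\coassQ) = \oneQ\otimes\oneQ$ and $\counit(\alphaQ) = 1$. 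Altogether $\modTr^\mathrm{Hopf}_{\projCover{\tensUnit}}\!\big(\injHullMorphism{\tensUnit} \circ \projCoverMorphism{\tensUnit}\big) = \alpha\, \intLyu(c) = 1$, as desired.

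The main obstacle is the verification that $F \mapsto F(c)$ genuinely defines a right cointegral of $\coend$ in $\cat$: while the strict-Hopf case is classical (via the duality between integrals of $H$ and cointegrals of $H^\ast$) and $H$-linearity is immediate from the two-sided integral property, in the quasi-Hopf setting the comultiplication and unit of $\coend$ involve $\coassQ$, $\alphaQ$, $\betaQ$ non-trivially, so the check reduces to verifying a specific identity in $H^{\otimes 3}$ against the explicit formulas in \cite[Sec.\,7]{FGR1}.
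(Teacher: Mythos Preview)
Your argument is correct and follows the same underlying idea as the paper's proof: both reduce the computation to evaluating the symmetrised cointegral on a two-sided integral $c$ of $H$, and then use $\cointLyu \circ \intLyu = \id_\tensUnit$ together with the integral property $hc = ch = \counit(h)c$ to collapse everything to $1$.

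The paper's version is considerably shorter because it bypasses your ``main obstacle'' entirely by citing \cite[Prop.\,7.8]{FGR1}, which already establishes that the cointegral of $\coend$ has the form $\cointLyu = \langle - \mid c\rangle$ for a unique integral $c \in H$. With this in hand, one can simply \emph{choose} $\injHullMorphism{\tensUnit} = \pi(c)$ (where $\pi$ is the projection $H \to \projCover{\tensUnit}$), verify \eqref{eq:cond_inj_hull_morph} by a one-line computation, and observe directly that $\symRightCoint(c) = \intLyu(c) = \cointLyu \circ \intLyu = 1$. This avoids the detour through the central element $\widehat{\phi}_\tensUnit$, the scalar $\alpha$, and the zig-zag manipulation you use to get from $\alphaQ\widehat{\phi}_\tensUnit = \alpha c$ to $\widehat{\phi}_\tensUnit = \alpha c$. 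Your route has the merit of being more self-contained (it rederives the needed part of \cite[Prop.\,7.8]{FGR1} rather than quoting it), but the paper's use of that reference makes the proof much more direct.
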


Note that this holds without assuming that $\hmodM$ is modular.

\begin{proof}
	Denote by $i$ and $\pi$ 
	a choice of morphisms in $\hmodM$ which give 
	the injection and projection of $\projCover{\tensUnit}$ in
	$H$. The map $i$ is unique up to scalar as $\projCover{\tensUnit}$ is also
an injective hull of $\tensUnit$
and the space of $H$-intertwiners from $\tensUnit$ to ${}_HH$ is the space of left integrals and hence one-dimensional.
	Fix the
	morphism $\projCoverMorphism{\tensUnit} = \counit \circ i \colon \projCover{\tensUnit}
	\to \tensUnit$ and let $\injHullMorphism{\tensUnit}$ be determined by \eqref{eq:cond_inj_hull_morph}. To describe $\injHullMorphism{\tensUnit}$ more explicitly, first note that by \cite[Prop.\,7.8]{FGR1} there is a unique choice of integral $c \in H$ of $H$ such that the cointegral $\cointLyu \in H^{**}$ of $\coend$ satisfies $\cointLyu = \langle - | c \rangle$.
	A short calculation using the normalisation $\counit(\alphaQ) = 1$
 shows that 
 $\injHullMorphism{\tensUnit} = \pi(c)$ 
    (viewing $\pi(c) \in \projCover{\tensUnit}$ as a linear map $k \to \projCover{\tensUnit}$) 
 satisfies \eqref{eq:cond_inj_hull_morph}.
	Note further that $\symRightCoint(c) = \intLyu(c) = \cointLyu \circ \intLyu = 1$.
	Altogether we have (here we write $\oneQ_H \in H$ for the unit of $H$ to distinguish it from the tensor unit $\tensUnit$ of $\hmodM$)
	\begin{align*}
		\modTr^\mathrm{Hopf}_{\projCover{\tensUnit}}
		(\injHullMorphism{\tensUnit} \circ \projCoverMorphism{\tensUnit})
		=
		\symRightCoint(
		(i \circ \injHullMorphism{\tensUnit}
		\circ \projCoverMorphism{\tensUnit} \circ \pi)
		(\oneQ_H)
		)
		=
		\counit(e_{\tensUnit})
		\symRightCoint(c)
		= 1 \ ,
	\end{align*}
	where $e_{\tensUnit} = (i \circ \pi)(\oneQ_H)$.
	The last equality follows from the fact that the counit is only non-zero on the
	idempotent corresponding to the tensor unit in an isotypic decomposition of the regular
	module, which implies $1 = \counit(\oneQ_H) 
 = \counit(e_{\tensUnit})$.
\end{proof}

\subsection{Pullback ideals from quasi-Hopf subalgebras}
\label{sec:pullback-qHopf}
For any quasi-Hopf algebra $H$, a linear subspace $A \subset H$ is a \emph{quasi-Hopf
	subalgebra} if it is a (unital) subalgebra,
such that $\Delta(A) \subset A\tensor A$ and $S(A)
\subset A$, and which moreover contains the coassociator in the sense that $\coassQ \in
A^{\tensor 3}$.
Denote by
\begin{align}
	\restrictionFunctor = 
	\restrictionFunctorExplicit{H}{A} \colon \hmodM \to \hmodM[A]
	\notag
\end{align}
the canonical (forgetful) restriction functor, and note that it is strict monoidal.
We say that an $A$-module $M$ \emph{lifts} to the $H$-module $N$ if
$\restrictionFunctor N \cong M$ as $A$-modules.

Given a quasi-Hopf subalgebra $A \subset H$, we may now consider the category of
\emph{$A$-projective $H$-modules}.
This is the full subcategory $\relativeProjectives{A}{H}$ of $\hmodM$ consisting of
$H$-modules which are projective as $A$-modules, i.e.\ its objects are precisely the
lifts of projective $A$-modules.
In other words, it is a pullback in the sense of \Cref{prop:pullback_ideal},
\begin{align*}
	\relativeProjectives{A}{H}
	= 
	\restrictionFunctor^\ast \big( \ProjIdeal[{\hmodM[A]}] \big)
	\ ,
\end{align*}
and so in particular, $\relativeProjectives{A}{H}$ is a tensor ideal in $\hmodM$.

Since every ideal contains the projective ideal (cf.~\Cref{prop:proj_is_smallest}), we
have the following chain of ideals\footnotemark
\begin{align}
	\ProjIdeal[\hmodM]
	~ \subset ~
	\relativeProjectives{A}{H}
	~ \subset ~
	\hmodM
	\ .
	\label{eq:chain_of_ideals}
\end{align}
We also remark that the two extremes in \eqref{eq:chain_of_ideals} are precisely the
$H$-projective and the $\field$-projective $H$-modules.
\footnotetext{
	To see this without \Cref{prop:proj_is_smallest}, recall that $H$ is free as an
	$A$-module by the Nichols-Zoeller theorem for quasi-Hopf algebras
	\cite{Schauenburg_freeness}.
	In particular, every projective indecomposable $H$-module restricts to a direct
	summand of a free $A$-module;
	since $\ProjIdeal[\hmodM]$ is generated by indecomposable projectives, we get the
	inclusion of ideals as in \eqref{eq:chain_of_ideals}.
}

\smallskip

Let now $A \subset H$ be a unimodular quasi-Hopf subalgebra of the pivotal quasi-Hopf
algebra $(H,\pivotQ)$.
If $\pivotQ \in A$, then $(A,\pivotQ)$ is a unimodular pivotal quasi-Hopf algebra,
and thus admits a non-degenerate right modified trace $\modTr^A$ on
$\ProjIdeal[{\hmodM[A]}]$, constructed via its right symmetrised cointegral.
The same holds for the left version.
As a further application of \Cref{prop:pullback_ideal} we now get a modified trace on
$\relativeProjectives{A}{H}$ by pulling back $\modTr^A$ along the restriction functor.
Together with \eqref{eq:modified_trace_on_indecomp_proj}, we obtain
a quasi-Hopf generalisation of~\cite[Cor.\,2.8]{FOG}:

\begin{corollary}
	\label{prop:pullback_modTr_along_restriction_qHopf}
	Abbreviate $I(A) = \relativeProjectives{A}{H}$, and let 
	$\widehat\coint{}^A \in A^*$ 
	be a right
	(left) symmetrised cointegral for $A$, with corresponding right (left) modified trace
	$\modTr^A$ on $\ProjIdeal[{\hmodM[A]}]$.
 For $M\in I(A)$,
	the family of linear maps
	\begin{align*}
		(\pullbackTrace[{\modTr^A}])_M : \End_{I(A)}(M) \to \field
		,\qquad
		(\pullbackTrace[{\modTr^A}])_M (f) 
		=
		\modTr^A_{\restrictionFunctor M}(f)
		\ ,
	\end{align*}
	defines a right (left) modified trace on $I(A)$.
	Explicitly,
    with $\oneQ_A$ the unit of $A$,
	\begin{align}
		(\pullbackTrace[{\modTr^A}])_M (f) 
    = \sum_{i=1}^n \widehat\coint^A \big( (p_i \circ f \circ j_i)(\oneQ_A) 
    \big)
		\ ,
		\notag
	\end{align}
	where $n$ is a natural number and $p_i \colon \restrictionFunctor{M} \to A$, $j_i
	\colon A \to \restrictionFunctor{M}$ are $A$-module morphisms, for $i=1,...,n$, such
	that $\sum_{i=1}^n j_i \circ p_i = \id_M$ and $p_l \circ j_m = \delta_{l,m} p_l \circ
	j_l$.
\end{corollary}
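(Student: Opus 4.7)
The plan is to obtain the first assertion as a direct application of \Cref{prop:pullback_ideal}\,(2) to the restriction functor $\restrictionFunctor = \restrictionFunctorExplicit{H}{A} \colon \hmodM \to \hmodM[A]$, and then to derive the explicit formula by unfolding this pullback construction using the description of $\modTr^A$ from \eqref{eq:modified_trace_on_indecomp_proj}.

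The first step is to verify that $\restrictionFunctor$ is a pivotal strict monoidal functor between pivotal finite tensor categories. Strict monoidality is immediate: the coproduct, counit, and coassociator of $A$ are by definition inherited from $H$, so the $A$-action on $\restrictionFunctor(V \tensor W)$ coincides with the diagonal $A$-action on $\restrictionFunctor V \tensor \restrictionFunctor W$, and the associators match because $\coassQ \in A^{\tensor 3}$. The assumption $\pivotQ \in A$ then ensures that the pivotal structures of $\hmodM$ and $\hmodM[A]$ are both given by the action of the same element $\pivotQ$. Since $\restrictionFunctor$ is strict monoidal, the canonical natural isomorphism $\xi_X$ from \eqref{eq:canonical_iso_duals_strong_monoidal_functor} becomes the identity on the underlying vector space $X^*$, so the pivotality square \eqref{eq:comm_diag:preserve_pivotal_structures} reduces to comparing the action of $\pivotQ$ on both sides and commutes tautologically. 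Invoking \Cref{prop:pullback_ideal} then gives that $\pullbackTrace[{\modTr^A}]$ is a right (or left) modified trace on $I(A) = \restrictionFunctor^\ast \ProjIdeal[{\hmodM[A]}]$.

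For the explicit formula I will exploit that $M \in I(A)$ means $\restrictionFunctor M$ is projective over $A$, hence a direct summand of a free $A$-module $A^{\oplus n}$. The maps $p_i$ and $j_i$ record the components of a chosen splitting, with $\sum_i j_i \circ p_i = \id_M$ expressing the section--retraction property, while the orthogonality condition $p_l \circ j_m = \delta_{l,m}\, p_l \circ j_l$ reflects the block-diagonal idempotent structure of the concrete realisation. Using linearity of $\modTr^A$ together with its cyclicity --- which applies because both $A$ and $\restrictionFunctor M$ lie in $\ProjIdeal[{\hmodM[A]}]$ --- I rewrite $\modTr^A_{\restrictionFunctor M}(\restrictionFunctor f)$ as a sum over $i$ of terms $\modTr^A_A(p_i \circ \restrictionFunctor f \circ j_i)$, each of which is the modified trace of an endomorphism of the regular representation. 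Applying \eqref{eq:modified_trace_on_indecomp_proj} with $P = A$ and trivial split idempotent evaluates each term to $\widehat\coint{}^A$ applied to the image of $\oneQ_A$, giving the claimed formula.

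The only non-formal step is the verification of pivotality of $\restrictionFunctor$, which requires unpacking the definition of $\xi_X$ and checking that \eqref{eq:comm_diag:preserve_pivotal_structures} collapses. I expect this to be routine because all the constituents of $\xi_X$ either become identities or duplicate pieces of the quasi-Hopf structure already living inside $A$ (antipode, $\alphaQ$, $\betaQ$). Once this is settled, everything else is a formal consequence of \Cref{prop:pullback_ideal} and the known expression \eqref{eq:modified_trace_on_indecomp_proj}, so no further obstacles arise.
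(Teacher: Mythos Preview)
Your proposal is correct and matches the paper's approach exactly: the corollary is obtained by applying \Cref{prop:pullback_ideal} to the (pivotal, strict monoidal) restriction functor, and the explicit formula is then read off from \eqref{eq:modified_trace_on_indecomp_proj} via cyclicity. The paper does not spell out the verification of pivotality or the cyclicity manipulation you outline, but these are precisely the routine steps implicit in the sentence preceding the corollary.
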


\begin{remark}
  Suppose $H$ is ribbon.
  In light of \Cref{rem:modified_traces_in_ribbon_cats}, the pullback modified trace
  from \Cref{prop:pullback_modTr_along_restriction_qHopf} is then always two-sided,
  even if $\ProjIdeal[{\hmodM[A]}]$ has distinct right and left modified traces
  $\modTr^{A,r}$ and $\modTr^{A,l}$.
  In the example we consider below, $\modTr^{A,r}$ and $\modTr^{A,l}$ are not proportional, but the two pullback traces still turn out to agree up to a scalar (\Cref{prop:symCoint_for_subqHA} and \Cref{cor:pullback-tr-proportional}). 
\end{remark}

\begin{remark}\label{rem:tens-ideal-comodule-alg}
Every finite-dimensional right $H$-comodule algebra $A$ gives a right $\cat$-module category  $\hmodM[A]$ for $\cat=\hmodM[H]$ with the action functor $\rightact$ exact in both variables.
In particular, this holds for a right coideal subalgebra $I\subset H$.
By \Cref{rem:tens-ideal-module}, such an $A$ and a left $A$-module $M$ provide a $\cat$-linear functor $F_M\colon \hmodM \to {}_A\mathcal{M}$, $X \mapsto M \otimes X$.
The pullback $F_M^* \ProjIdeal[{\hmodM[A]}]$ therefore is a right tensor ideal in $\hmod$.
However, less is  known about existence of traces on $\ProjIdeal[{\hmodM[A]}]$, as compared to pivotal Hopf subalgebras treated in \Cref{prop:pullback_modTr_along_restriction_qHopf}. For example, we are not aware of analogues of symmetrised right cointegral expression, as in~\cite{BBGa,FOG,BGR1}.
\end{remark}

\section{The symplectic fermion quasi-Hopf algebras}
\label{sec:qHopf_applications}

After presenting some general aspects of the theory of quasi-Hopf algebras, we now turn to the family of examples which we will use to evaluate link and three-manifold invariants. These are the symplectic fermion quasi-Hopf algebras, which in fact are ribbon and factorisable. They were constructed in
\cite{Gainutdinov:2015lja,FGR2}
to reproduce the ribbon finite tensor categories obtained in \cite{Davydov:2012xg,Runkel:2012cf} from the study of the chiral two-dimensional logarithmic 
conformal field theory of symplectic fermions 
\cite{Kausch:1995py,Gaberdiel:1996np,Abe:2005}.

\subsection{Quasi-Hopf algebra and ribbon structure}
\label{sec:symplectic_fermions_qHA}

Here we define the family $\sympFerm(N, \beta)$ of \emph{symplectic fermion quasi-Hopf algebras},
give the ribbon structure on $\sympFerm(N, \beta)$ and
compute its (co)integrals.
The two parameters are a natural number $N \geq1$ and a complex number $\beta$  s.t.\ 
$\beta^4 = (-1)^N$. We fix such an $N$ and $\beta$ and abbreviate $\sympFerm = \sympFerm(N, \beta)$.

\subsubsection{Quasi-Hopf algebra structure}\label{sec:SF-qHopf-structure}
As an algebra, $\sympFerm$ is the (unital associative) $\field[C]$-algebra generated by
$\{\,\genK, \genF_i^\epsilon \ |\ 1\leq i \leq N,~ \epsilon= \pm \,\}$.
With the elements $\eQ_0  = \tfrac{1}{2}(\oneQ + \genK^2)$ and $\eQ_1  =
\tfrac{1}{2}(\oneQ - \genK^2)$, the relations of $\sympFerm$ are
\begin{align}
	\{\genF^{\pm}_i,\genK \} = 0\ ,
	\quad
	\{\genF^+_i, \genF^-_j \} = \delta_{i,j} \eQ_1\ ,
	\quad
	\{\genF^\pm_i, \genF^\pm_j \} = 0\ ,
	\quad
	\genK^4 = \oneQ\ ,
	\notag
\end{align}
for all $1\leq i,j\leq N$, and
where $\{-,-\}$ is the anticommutator. 
Then $\eQ_0, \eQ_1$ satisfy $\eQ_0 + \eQ_1 = \oneQ$, and are central orthogonal
idempotents.
A basis is given by
\begin{align}
	\{
	\mathfrak{b}(m, s, t) = 
	\genK^m \prod_{j=1}^N (\genF^+_j)^{s_j} (\genF^-_j)^{t_j}
	\mid m \in \field[Z]_4, s, t \in \field[Z]_2^N
	\}
	\ ,
	\label{eq:basis_sympFerm}
\end{align}
and the dimension of $\sympFerm$ is $2^{2N+2}$.
On generators, the coproduct and counit are
\begin{alignat}{2}
	\Delta(\genK) 
	&= \genK \otimes \genK - (1+(-1)^N)\ \eQ_1 \genK \otimes \eQ_1 \genK \ ,
	\qquad
	&& \counit(\genK) = 1 \ ,
	\notag\\ 
	\Delta(\genF^\pm_i)
	&= \genF^\pm_i \otimes \oneQ + \omega_\pm \otimes \genF^\pm_i \ ,
	&&
	\counit(\genF^\pm_i) = 0 \ ,
	\label{eq:sympFermCoprod}
\end{alignat}
where $\omega_\pm = (\eQ_0 \pm i \eQ_1) \genK$. 
With $\betaQ_\pm = \eQ_0 + \beta^2 (\pm i\genK)^N \eQ_1$, the coassociator and its
inverse are
\begin{align}
	\coassQ^{\pm 1} = 
	\oneQ \otimes \oneQ \otimes \oneQ
	+ \eQ_1 \otimes \eQ_1 \otimes 
	\left\{
	\eQ_0(\genK^N - \oneQ)
	+ \eQ_1(\betaQ_\pm  - \oneQ) 
	\right\}\ .
	\label{eq:coassociator_SF}
\end{align}
Finally, the antipode $S$ and the evaluation and coevaluation elements 
$\alphaQ$ and $\betaQ$ are given by
\begin{alignat}{2}
	S(\genK) &= \genK^{(-1)^N} = (\eQ_0 + (-1)^N \eQ_1)\genK \ ,
	\qquad\qquad && \alphaQ = \oneQ \ ,
	\notag\\ 
	S(\genF^\pm_i) &= \genF^\pm_i (\eQ_0 \pm (-1)^N i\eQ_1) \genK \ , 
	&& \betaQ = \betaQ_+\ .
	\notag
\end{alignat}
The inverse of the antipode is found to be
\begin{align}
	S\inv(\genK) = \genK^{(-1)^N} \ , \qquad
	S\inv(\genF^\pm_i) = \omega_{\pm} \genF^\pm_i\ .
	\notag
\end{align}
Note that $S(\betaQ_\pm)=S\inv(\betaQ_\pm)=\betaQ_\mp$, and $\betaQ_+\betaQ_-=\oneQ$.

The algebra $\sympFerm$ is the direct sum of two ideals $\sympFerm_i = \eQ_i
\sympFerm$, $i=0, 1$.
As algebras, $\sympFerm_0$ (resp.\ $\sympFerm_1$) is the semidirect product of a Grassmann algebra (resp.\ Clifford algebra) in $2N$ generators with the group algebra of $\field[Z]_2$. In particular, $\sympFerm_1$ is semisimple.
This decomposition imparts a $\field[Z]_2$-grading on the category of $\sympFerm$-modules:
$\hmodM[\sympFerm] = (\hmodM[\sympFerm])_0 \oplus (\hmodM[\sympFerm])_1$.
Since $\sympFerm_1$ is semisimple, $(\hmodM[\sympFerm])_1$ is semisimple as an abelian (sub)category.
The monoidal structure on $\hmodM[\sympFerm]$ respects the grading, see~\cite[Sec.\,3]{FGR2} for details.

\subsubsection{Further elements and structure}
\label{sec:qHopf:sympf_ferm_elements}
In \cite[Sec.\,5]{BGR1}, the canonical elements from \eqref{eq:qRpR} were computed to be
\begin{align}
	\qR &= \oneQ \tensor \oneQ + \eQ_1 \tensor \big( \eQ_1 (\betaQ - \oneQ) \big) \ ,
	\notag \\
	\pR &= \oneQ \tensor \oneQ + \eQ_0 \tensor \big( \eQ_1 (\betaQ - \oneQ) \big) \ ,
	\notag \\
	\qL &= \oneQ \tensor \oneQ + \eQ_1 \tensor 
	\big\{
	\eQ_0 (\genK^N - \oneQ) + \eQ_1 (\betaQ - \oneQ)
	\big\} \ ,
	\notag \\
	\pL &= \betaQ_- \tensor \oneQ + \eQ_1 \betaQ_- \tensor 
	\big\{
	\eQ_0 (\genK^N - \oneQ) + \eQ_1 (\betaQ_- - \oneQ)
	\big\}
	\ .
	\label{eq:qRpR_SF}
\end{align}
The Drinfeld twist and its inverse are given by
\begin{align}
	\Dt^{\pm 1} = \eQ_0 \tensor \oneQ + \eQ_1 \tensor \eQ_0 \genK^N 
	+ \eQ_1 \betaQ_{\mp} \tensor \eQ_1
	\ .
\end{align}
$\sympFerm$ is quasi-triangular.
The $R$-matrix and its inverse are
\begin{align*}
	\rMatrix = \big( \sum_{n,m = 0}^1 \beta^{nm} \rho_{n,m} \eQ_n \tensor \eQ_m \big)
	\cdot \prod_{j=1}^N (\oneQ \tensor \oneQ - 2 \genF^-_j \omega_- \tensor \genF^+_j)
\end{align*}
and
\begin{align*}
	\rMatrix\inv = 
	\prod_{j=1}^N (\oneQ \tensor \oneQ + 2 \genF^-_j \omega_- \tensor \genF^+_j)
	\cdot \big( \sum_{n,m = 0}^1 \beta^{-nm} \rho_{n,m} \eQ_n \tensor \eQ_m \big)
	\ ,
\end{align*}
where
\begin{align*}
	\rho_{n,m} 
	= \frac{1}{2} \sum_{k,l=0}^1 (-1)^{kl} i^{-kn + lm} \genK^k \tensor \genK^l
	\ .
\end{align*}
Its monodromy may be described as 
\begin{equation}\label{eq:M}
    \monodromy = \rMatrix_{21}
\rMatrix = \sum_{I} g_I \tensor f_I\ , \qquad
    \text{for}\; I = (a, b, c, d)
\end{equation}
with $a, b \in \field[Z]_2$ and $c, d \in \field[Z]_2^N$, and where 
\begin{align}
	f_I &= 
	\genK^a \eQ_b \prod_{k=1}^N (\genF^-_k)^{d_k} (\genF^+_k)^{c_k} \ .
	\notag \\
	g_I &=
	(- \beta^2)^{ab} 2^{|c| + |d|}
	(-1)^{|c| + b |d|}
	\genK^b \eQ_a \prod_{k=1}^N (\genF^+_k \omega_-)^{d_k} (\genF^-_k \omega_-)^{c_k} \ ,
	\label{eq:symp_ferm_monodromy_basis}
\end{align}
see \cite[Lem.~4.2]{FGR2}.
This is used there to further show that  
the Hopf pairing $\hopfPair$ in \eqref{eq:Hopf-pairing} is non-degenerate, 
i.e.\ that $\hmodM[\sympFerm]$ is factorisable.

Finally, $\sympFerm$ is ribbon with (inverse) ribbon element
\begin{align}
	\elRibbon^{\pm 1}
	&= (\eQ_0 - \beta^{\pm 1} i \genK \eQ_1) 
	\prod_{j=1}^N \big(\oneQ \mp 2 (\eQ_0 \pm \eQ_1) \genF^+_j \genF^-_j \big)
	\label{eq:symp_ferm_ribbon}
\end{align}
and so $\hmodM[\sympFerm]$ 
is a modular tensor category.
The pivot compatible with the ribbon structure is
\begin{align}
	\label{eq:SF_pivot}
	\pivotQ = (\eQ_0 + (-i)^{N+1} \eQ_1 \genK^N) \genK
	\ .
\end{align}
Note that $\pivotQ\inv = \pivotQ$.

\subsubsection{Integrals and cointegrals}\label{sec:SF-int-coint}

\newcommand{\normalizationModTrSF}{c_{\modTr}}
In the basis \eqref{eq:basis_sympFerm}, any left 
integral of $\sympFerm$ is a scalar multiple of $\sum_{m = 0}^3 \mathfrak{b}(m,
1, 1)$, where by $1 \in \field[Z]_2^N$ we mean the vector with $1$ everywhere.
    Since $\sympFerm$ is unimodular\footnote{
    This follows from the explicit form of $\projCover{\tensUnit}$ given in \Cref{sec:simple-projective} below. Or one can use that $\sympFerm$ is factorisable which implies unimodularity, see \cite[Sec.\,6]{BT} for the quasi-Hopf algebra result, and \cite[Lem.\,5.2.8]{KerlerLyubashenko} for the general categorical statement.},
    all left integrals of $\sympFerm$ are also right integrals. 
In \cite[Sec.\,5]{BGR1} it was shown that the left (and, since $H$ is ribbon, also
right) symmetrised cointegral yielding the non-degenerate modified trace on
$\ProjIdeal[\hmodM]$ satisfies
\begin{align}
	\symRightCoint(\mathfrak{b}(m, 1, 1))
	= \symLeftCoint(\mathfrak{b}(m, 1, 1))
	= \normalizationModTrSF \, \deltaOdd{m} (\beta^2 + i^m)\ ,
	\label{eq:symp_ferm_symmetrized_cointegrals}
\end{align}
and is zero on the other basis elements. The normalisation coefficient
where $\normalizationModTrSF \in \field[C]$ will
be determined shortly.
By $\deltaOdd{m}$ we mean the Kronecker delta modulo 2, i.e.\ $\deltaOdd{m} = 1$
if $m$ is odd and 0 otherwise.
The corresponding left (and, since $\pivotQ$ has order 2, also right) cointegral for
$H$ is given by
\begin{align}
	&\coint^r (\mathfrak{b}(m, 1, 1))
	= \coint^l (\mathfrak{b}(m, 1, 1))
	\notag\\
	&= 
	\normalizationModTrSF
	\left(
	\deltaEven{m} (\beta^2 + \deltaEven{N} i^m) + 
	\deltaOdd{m} 
	\deltaOdd{N} 
	i^m
	\right)
	\label{eq:symp_ferm_cointegrals}
\end{align}
and zero elsewhere.

We now determine the coefficient $\normalizationModTrSF$. The integral $\intLyu$ of $\coend$ in the normalisation $\hopfPair \circ (\intLyu \tensor \intLyu) = \id_{\tensUnit}$ was computed in \cite{FGR2} to be
\begin{align}
	\intLyu\big( \mathfrak{b}(m, 1, 1) \big)
	= \nu (-1)^N \beta^2 2^{-(N-1)}
	\delta_{m, 0} 
	\ ,
	\label{eq:intLyu_SF_recap}
\end{align}
where $\nu \in \{ \pm 1\}$ is the remaining sign freedom. We will for simplicity from now on choose $\nu = 1$.
As in \Cref{sec:Lyu-coend-int-for-qHopf} we normalise the cointegral $\coint^r$ of $\sympFerm$ by $\intLyu(h) = \coint^r(S(\betaQ) h )$.
One computes $\coint^r(S(\betaQ) \mathfrak{b}(m, 1, 1)) = \normalizationModTrSF 2 \beta^2 \delta_{m, 0}$, whence
\begin{align}
	\normalizationModTrSF 
	=
	(-1)^N 2^{-N}
	\ .
	\label{eq:normalization_constant_modTr}
\end{align}
Evaluating \eqref{eq:ribboncat-twistnondeg} as in \eqref{eq:qHopf-twistnondeg} gives
\begin{align}
	\stabCoeff
	= \counitL \circ \modT^{\pm 1} \circ \intLyu
	= \coint^r\big( S(\betaQ) \elRibbon^{\mp 1} \alphaQ \big)
	~ \oversetEq[\eqref{eq:symp_ferm_ribbon}] ~
	\beta^{\mp 2}
	\ ,
	\notag
\end{align}
which implies $\DD^2 = \stabCoeffM \stabCoeffP = 1$, and we choose $\DD = 1$.
Then we have $\anomaly = \beta^{-2}$ for the anomaly of $\hmodM[\sympFerm]$.

\subsection{The modular tensor category \texorpdfstring{$\hmodM[\sympFerm]$}{qM}}\label{sec:mod-cat-QM}

Here we briefly discuss the simple and projective modules of $\hmodM[\sympFerm]$ and give their internal characters.
Then we show that $\hmodM[\sympFerm]$ does not factorise into a product of
modular tensor categories. While we will not use this fact later on, we still think it is a noteworthy observation. 

\subsubsection{Simple and projective modules}\label{sec:simple-projective}
The simple and projective modules of $\sympFerm$ have been computed in \cite[Sec.\,3.7]{FGR2}.
One finds that $\sympFerm$ has four simple modules which we denote by $X_0^\pm \in (\hmodM[\sympFerm])_0$ and $X_1^\pm \in (\hmodM[\sympFerm])_1$.

The simple modules $X_0^\pm$ are one-dimensional.
They are distinguished by the action of $\genK$, which acts as $\pm1$ on $X_0^\pm$. The tensor unit of $\hmodM[\sympFerm]$ is given by $X_0^+$.
As submodules of $\sympFerm$, the projective covers $P_0^\pm$ of $X_0^\pm$ are
generated by the primitive (non-central) idempotents 
\begin{align}\label{eq:e0+-}
	\eQ_0^\pm = \tfrac{1}{2}(\oneQ \pm \genK)
	\eQ_0
	\ .
\end{align}
They are both $2^{2N}$-dimensional, and the spaces of intertwiners from
$P_0^\varepsilon$ to $P_0^\delta$ have dimension $2^{2N-1}$ for $\varepsilon, \delta
\in \{+, -\}$. 

The simple modules $X_1^\pm$ are $2^N$-dimensional and projective, and $\genK$ acts on their highest weight state as $\pm i$. As $X_1^\pm$ is projective, it can be realised as a direct summand of $\sympFerm$. The corresponding central (non-primitive) idempotents give $2^N$ copies of $X_1^\pm$. Namely, $2^N X_1^\pm \subset \sympFerm$ is the image of
\begin{align}\label{eq:e1+-}
	\eQ_1^\pm
	= \frac{1}{2} \eQ_1
	\big(
	\oneQ \mp i \genK \prod_{j=1}^N (\oneQ - 2 \genF^+_j \genF^-_j)
	\big)
	\ .
\end{align}

\begin{remark}\label{rem:X-transparent}
Since $\hmodM[\sympFerm]$ is factorisable, it has no transparent objects other than direct sums of the tensor unit (see \cite{Shimizu:2016} for equivalent characterisations of factorisability). The grade-0 component $(\hmodM[\sympFerm])_0$ is also a braided finite tensor category, but it is not factorisable. The transparent objects in $(\hmodM[\sympFerm])_0$ are precisely all finite direct sums of $X_0^+$ and $X_0^-$, see \cite[Prop.\,5.3]{Davydov:2012xg}. That $X_0^-$ is transparent in $(\hmodM[\sympFerm])_0$ can also be easily read off from the monodromy matrix $\monodromy$ in \eqref{eq:M}.
Namely, when acting on $V \otimes X_0^-$ for any 
$V \in (\hmodM[\sympFerm])_0$, in the sum over $I$ in $\monodromy$, only the summand with $I=0 \in \field[Z]_2 \times \field[Z]_2 \times \field[Z]_2^N \times \field[Z]_2^N$ contributes, and this summand acts as the identity on $V \otimes X_0^-$.
\end{remark}

The tensor product closes on the set of simples and projectives, explicitly we have (see \cite[Sec.\,2.2]{FGR2})
\begin{align*}
	X_1^\varepsilon \otimes X_1^\delta \cong P_0^{\varepsilon\delta}
	~~,\quad
	X_0^\varepsilon \otimes Y^\delta \cong Y^{\varepsilon \delta}
	~~\text{where}~~ 
	Y \in \{ X_0, X_1, P_0 \} \ ,
\end{align*}

In the Grothendieck ring $\grothring[{\hmodM[\sympFerm]}]$ of $\hmodM[\sympFerm]$, we have $\grothringclass{P_0^\varepsilon} = 2^{2N - 1} ( \grothringclass{X_0^+} + \grothringclass{X_0^-})$, so that the above tensor products result in the following product for the generators of $\grothring[{\hmodM[\sympFerm]}]$:
\begin{alignat}{2}
	\grothringclass{X_0^\delta} \cdot \grothringclass{X_0^\varepsilon}
	&= \grothringclass{X_0^{\delta \varepsilon}}~~,
	\quad \quad
	\grothringclass{X_0^\delta} \cdot \grothringclass{X_1^\varepsilon}
	&= \grothringclass{X_1^{\delta \varepsilon}}~,
	\notag
	\\
	\grothringclass{X_1^\delta} \cdot \grothringclass{X_1^\varepsilon}
	&= 2^{2N - 1} \big( \grothringclass{X_0^+} + \grothringclass{X_0^-} \big)
	\ .
	\label{eq:symp_ferm_structure_constants}
\end{alignat}

\subsubsection{Internal characters and their S-transformation}
\label{sec:SF_modular_action_elements}
Recall from \Cref{rem:modular_actions} that $\End(\id_{\cat})$ is equipped with an
action of $\slTwoZ$, and that there are special natural transformations $\phi_V$ and
$\modS_{\cat}(\phi_V)$ 
for any $V \in \cat$.
It is known from \cite{Fuchs:2010mw,Shimizu:2015} (see \cite[Sec.\,3]{GR-nonssi-Verlinde} for a summary) that the map
\begin{equation} \label{eq:sigma-ring-hom}
\grothring[\cat] \to \End(\id_{\cat})~~, \quad[V] \mapsto 
\modS_{\cat}(\phi_V)
\end{equation}
is an injective ring homomorphism.
This fact has been used in  \cite[Thm.\,3.9]{GR-nonssi-Verlinde} to give a non-semisimple variant of the Verlinde formula.

For any algebra $A$, $\End(\id_{\hmodM[A]})$ is canonically isomorphic to $Z(A)$, the
centre of~$A$. In the case of $\sympFerm$, the central elements $\elQbold{\chi}_V$ and $\elQbold{\phi}_V$
corresponding to $\modS_{\cat}(\phi_V)$ and $\phi_V$, respectively, have been worked
out in~\cite[Sec.\,4.4]{FGR2}.
Since $\elQbold{\chi}_V$, $\elQbold{\phi}_V$ depend on $V$ only through the class in $\grothring[{\hmodM[\sympFerm]}]$, knowing them on simple modules is
enough:
\begin{align}\label{eq:phi-SF-explicit_X}
	\elQbold{\phi}_{X_0^\pm} 
	=
	2^{N+1} \beta^2 \eQ_0^\pm \prod_{j=1}^N \genF^+_j \genF^-_j
	~~,\qquad
	\elQbold{\phi}_{X_1^\pm} = \pm 
	2^{N+1} \eQ_1^\pm
\end{align}
and
\begin{align}
	\elQbold{\chi}_{X_0^\pm} = \eQ_1 \pm \eQ_0
	~~,\qquad
	\elQbold{\chi}_{X_1^\pm} 
	= \pm \beta^2 4^N \eQ_0 \genK \prod_{j=1}^N \genF^+_j \genF^-_j 
	+ 2^N (\eQ_1^+ - \eQ_1^-)
	\ .
	\label{eq:internal_characters_central_SF}
\end{align}

As an immediate consequence of \eqref{eq:sigma-ring-hom}, the map $\grothring[{\hmodM[\sympFerm]}] \to Z(\sympFerm)$, $[V] \mapsto \elQbold{\chi}_V$ is an injective ring homomorphism, and thus has to agree with the product given in \eqref{eq:symp_ferm_structure_constants}. Indeed, we have, for example
\begin{align*}
\elQbold{\chi}_{X_0^-} \elQbold{\chi}_{X_1^+}=\elQbold{\chi}_{X_1^-}
~~,\quad
(\elQbold{\chi}_{X_1^+})^2 = 2^{2N} (\eQ_1^+ + \eQ_1^-) = 2^{2N-1}(\elQbold{\chi}_{X_0^+}+\elQbold{\chi}_{X_0^-})\ .
\end{align*}

\subsubsection{Primality of \texorpdfstring{$\hmodM[\sympFerm]$}{MQ}}
\updatelabelname{Primality of MQ}

Recall from e.g.~\cite{Laugwitz-Walton} that a \emph{topologising subcategory} of an
abelian category $\cat[A]$ is a full subcategory closed under finite direct sums and
subquotients in $\cat[A]$.
Recall further that the \emph{M\"uger centralizer} $M_{\cat[A]}(\cat[X])$ of a class of 
objects $\cat[X]$ in the braided monoidal category $\cat[A]$ is the full subcategory 
consisting of those objects in $\cat[A]$ which have trivial monodromy (i.e.\ double 
braiding) with every object in $\cat[X]$.
Note that for a braided tensor category $\cat$, we have $M_{\cat}(\tensUnit) \cong \cat$, and
if $\cat$ is factorisable also $M_{\cat}(\cat) \cong \Vect$ (see \cite{Shimizu:2016}).

In \cite{Laugwitz-Walton}, the following remarkable theorem was shown, extending
M\"uger's result to the \emph{non}-semisimple case.
Before restating it, let us remark that a \emph{tensor subcategory} 
of a tensor category $\cat$ is a
subcategory closed under tensor products and containing the
tensor unit.

\begin{theorem}[{\cite[Thm.~4.17]{Laugwitz-Walton}}]
	Let $\cat$ be a modular tensor category, and let $\cat[D]$ be a topologising tensor subcategory of $\cat$.
	If $\cat[D]$ is factorisable
with respect to the braiding inherited from $\cat$,
	then $\cat \cong \cat[D] \boxtimes M_{\cat}(\cat[D])$
	as ribbon categories.
\end{theorem}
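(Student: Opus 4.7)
\textit{Proof proposal.} My plan is to construct an explicit candidate ribbon tensor functor and show it is an equivalence. Define
\begin{align*}
F \colon \cat[D] \boxtimes M_{\cat}(\cat[D]) \to \cat, \qquad X \boxtimes Y \mapsto X \tensor Y,
\end{align*}
with tensorator given by the braiding of $\cat$, namely
\begin{align*}
F_2 \colon F(X_1 \boxtimes Y_1) \tensor F(X_2 \boxtimes Y_2) \xrightarrow{\id \tensor \braiding_{Y_1, X_2}^{-1} \tensor \id} F\bigl((X_1 \tensor X_2) \boxtimes (Y_1 \tensor Y_2)\bigr).
\end{align*}
The hexagons, as well as the compatibility with the ribbon twist, both reduce to the assumption that objects of $M_{\cat}(\cat[D])$ have trivial monodromy with those of $\cat[D]$; in particular this assumption makes the choice between $\braiding$ and $\braiding^{-1}$ in $F_2$ irrelevant and makes $F$ braided and ribbon. (For $F$ to land in $\cat$ rather than only in some completion, one should also check that $F$ extends $\field$-linearly from $\cat[D] \boxtimes M_{\cat}(\cat[D])$ in the Deligne-product sense, which is a standard finite-tensor-category check.)

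Next I would establish that $F$ is fully faithful. The claim to prove is that for $X_i \in \cat[D]$ and $Y_j \in M_{\cat}(\cat[D])$,
\begin{align*}
\cat[D](X_1, X_2) \tensor M_{\cat}(\cat[D])(Y_1, Y_2) \xrightarrow{~\sim~} \cat(X_1 \tensor Y_1, X_2 \tensor Y_2).
\end{align*}
The key input is factorisability of $\cat[D]$: by \Cref{rem:factorisable-cat} the Hopf pairing on $\coend$ internal to $\cat[D]$ is nondegenerate, equivalently the morphism $\modS_{\coend}$ for $\cat[D]$ is invertible. This provides a projector-like element inside $\cat$, built from the image under the inclusion of the integral of $\coend_{\cat[D]}$, that isolates the $\cat[D]$-centralising part of any Hom-space. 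Chasing this projector through the dinatural coend morphisms then yields the displayed decomposition of $\cat(X_1 \tensor Y_1, X_2 \tensor Y_2)$; this is a Hopf-algebraic reformulation of Müger's classical orthogonality argument adapted to the non-semisimple setting, for which the topologising hypothesis on $\cat[D]$ provides the requisite closure under subquotients so that the projector stays inside $\cat[D]$.

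The main obstacle is essential surjectivity. In Müger's original fusion setting one compares global dimensions; here the natural replacement is the Frobenius--Perron dimension, which is multiplicative over the Deligne product. It therefore suffices to establish
\begin{align*}
\operatorname{FPdim}(\cat) = \operatorname{FPdim}(\cat[D]) \cdot \operatorname{FPdim}(M_{\cat}(\cat[D])),
\end{align*}
after which full faithfulness forces $F$ to be an equivalence, since an exact fully faithful functor between finite tensor categories of the same FP-dimension is automatically essentially surjective. To obtain this dimension identity I would view $\cat$ as a $\cat[D]$-module category --- the topologising assumption guarantees the required exactness --- and use that factorisability of $\cat[D]$ is equivalent to the Deligne product $\cat[D] \boxtimes \cat[D]^{\mathrm{op}}$ being braided-equivalent to the Drinfeld centre $\cat[Z](\cat[D])$. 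The centralising $\cat[D]$-action on $\cat$ then identifies $\operatorname{End}_{\cat[D]}(\cat)$ with $M_{\cat}(\cat[D])$, and the standard FP-dimension formula for module categories over a finite tensor category closes the argument. Ribbon-compatibility of the resulting equivalence is inherited from step one.
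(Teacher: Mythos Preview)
The paper does not contain a proof of this theorem: it is quoted verbatim as \cite[Thm.~4.17]{Laugwitz-Walton} and is used only as a black box to define primality and then to prove \Cref{prop:QM-prime}. There is therefore no ``paper's own proof'' to compare your proposal against.

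As for the proposal itself, the overall architecture (build the obvious tensor functor $X\boxtimes Y\mapsto X\tensor Y$, verify it is braided/ribbon using triviality of the monodromy between $\cat[D]$ and its centraliser, then establish fully-faithfulness and essential surjectivity separately, the latter via a Frobenius--Perron dimension count) is the standard strategy going back to M\"uger in the semisimple case and is indeed the shape of the argument in \cite{Laugwitz-Walton}. However, two of your steps are genuine gaps rather than routine checks. First, your fully-faithful argument invokes a ``projector-like element built from the integral of $\coend_{\cat[D]}$'' without saying what it is or why it splits the Hom-space as claimed; in the non-semisimple setting this is exactly the delicate point, and the Laugwitz--Walton proof does not proceed this way but rather through a relative-centre / centraliser duality argument. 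Second, the identification $\End_{\cat[D]}(\cat)\simeq M_{\cat}(\cat[D])$ that you need for the FP-dimension identity is asserted but not justified, and making this precise (together with the required exactness properties of the $\cat[D]$-module structure on $\cat$) is a substantial portion of the actual work.
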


This leads to the notion of \emph{prime modular tensor categories} as those modular
tensor categories $\cat$ which do not admit a proper non-trivial (i.e.\ not
equivalent to $\cat$ or $\Vect$) factorisable topologising
tensor subcategory.

We can now show that symplectic fermions provide a  prime example of such a category.

\begin{proposition}\label{prop:QM-prime}
	The modular tensor category $\hmodM[\sympFerm]$ is prime.
\end{proposition}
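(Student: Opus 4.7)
The plan is a case analysis on the set $\Irr[\cat[D]] \subseteq \Irr$ of simple objects of a putative factorisable topologising tensor subcategory $\cat[D] \subseteq \cat = \hmodM[\sympFerm]$. Recall $\Irr = \{X_0^\pm, X_1^\pm\}$. Since $\cat[D]$ is closed under subquotients in $\cat$, every simple composition factor of an object in $\cat[D]$ must lie in $\Irr[\cat[D]]$, and the tensor unit $X_0^+$ is always in $\cat[D]$.

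The first case I would dispatch is when some $X_1^\varepsilon \in \cat[D]$. Tensor closure yields $P_0^+ \cong X_1^\varepsilon \otimes X_1^\varepsilon \in \cat[D]$, and the Grothendieck relation $[P_0^+] = 2^{2N-1}([X_0^+] + [X_0^-])$ together with the topologising property forces $X_0^- \in \cat[D]$. Then $X_1^{-\varepsilon} \cong X_0^- \otimes X_1^\varepsilon$ and $P_0^- \cong X_1^+ \otimes X_1^-$ also belong to $\cat[D]$, so all four projective indecomposables of $\cat$ are in $\cat[D]$. Since every object of $\cat$ is a quotient of a finite direct sum of projective indecomposables, I conclude $\cat[D] = \cat$.

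The remaining case is $\Irr[\cat[D]] \subseteq \{X_0^+, X_0^-\}$. My first step is to show $\cat[D] \subseteq \cat_0$: every $V \in \cat[D]$ has its socle in $\Irr[\cat[D]] \subseteq \cat_0$, and $V/\mathrm{soc}(V) \in \cat[D]$ by topologising closure has strictly smaller composition length. Since the grading $\cat = \cat_0 \oplus \cat_1$ has no non-zero morphisms across grades, a short argument shows $\cat_0$ is closed under extensions in $\cat$, so induction on composition length gives $V \in \cat_0$. If $X_0^- \in \cat[D]$, Remark~\ref{rem:X-transparent} tells us $X_0^-$ is transparent in $\cat_0$ and hence in $\cat[D]$, contradicting factorisability of $\cat[D]$ since $X_0^- \not\cong X_0^+$. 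Thus $\Irr[\cat[D]] = \{X_0^+\}$ and every object of $\cat[D]$ is ``unipotent'': all composition factors are $X_0^+$.

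To finish, I would show that such a unipotent $V$ is necessarily a direct sum of copies of $X_0^+$, so $\cat[D] \cong \Vect$. Each composition factor has $\genK$-eigenvalue $+1$, so the action of $\genK$ on $V$ has all eigenvalues $+1$; combined with $\genK^4 = \oneQ$ this forces $\genK = \oneQ$ on $V$. The algebra relation $\{\genK, \genF^\pm_i\} = 0$ then yields $2\genF^\pm_i v = 0$ for all $v \in V$, so each $\genF^\pm_i$ acts as zero and $V$ is indeed a sum of $X_0^+$'s. The main potential obstacle is the inclusion $\cat[D] \subseteq \cat_0$, which requires combining the topologising property with closure of $\cat_0$ under extensions, a feature of the $\field[Z]_2$-grading that does not follow directly from $\cat[D]$ being topologising; everything else follows fairly directly from the structure of the simple and projective modules recalled in \Cref{sec:simple-projective}.
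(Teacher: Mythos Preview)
Your proof is correct and follows the same route as the paper's: rule out $\cat[D]\subseteq\cat_0$ via transparency of $X_0^-$, then use tensor products starting from some $X_1^\varepsilon$ to recover all projectives and hence all of $\cat$. The only difference is that where the paper cites \cite[Sec.\,4.4]{EGNO} for $\mathrm{Ext}^1_\cat(\tensUnit,\tensUnit)=0$ to dispose of the case $\Irr[\cat[D]]=\{X_0^+\}$, you give an explicit argument from the relations $\genK^4=\oneQ$ and $\{\genK,\genF^\pm_i\}=0$; your concern about establishing $\cat[D]\subseteq\cat_0$ is also unnecessary, since in the abelian decomposition $\cat=\cat_0\oplus\cat_1$ each summand is automatically a Serre subcategory, so an object whose composition factors all lie in $\cat_0$ already lies in $\cat_0$.
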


\begin{proof}
	Let $\cat[D]$ be a factorisable topologising tensor subcategory of $\hmodM[\sympFerm]$ which is not $\Vect$. We will show that then already $\cat[D]=\hmodM[\sympFerm]$.
	
	We start by showing that $\cat[D]$ contains the four simple objects of $\hmodM[\sympFerm]$. It contains $\tensUnit = X_0^+$ by definition. Since we assume $\cat[D] \neq \Vect$,
and there are no self-extensions of the tensor unit \cite[Sec.\,4.4]{EGNO},
 it must contain at least one more simple object. 
	
Since  $\cat[D]$ is factorisable, it cannot be wholly contained in $(\hmodM[\sympFerm])_0$.
Otherwise, it would necessarily contain $X_0^-$,
the only simple object of $(\hmodM[\sympFerm])_0$ that is not the tensor unit. But 
we have seen in \Cref{rem:X-transparent} that $X_0^-$ is transparent in $(\hmodM[\sympFerm])_0$.
 
Therefore, $\cat[D]$ must contain at least one of $X_1^\pm$. Suppose it contains $X_1^+$. Since $X_1^+ \otimes X_1^+ \cong P_0^+$ contains $X_0^+$ and $X_0^-$ as subquotients,
we have by topologising property that $X_0^-\in \cat[D]$.
Since $X_1^+ \otimes X_0^- = X_1^-$, $\cat[D]$ then also contains $X_1^-$. Finally, $X_1^+ \otimes X_1^- \cong P_0^-$, and so $\cat[D]$ contains all projectives and hence is equal to $\hmodM[\sympFerm]$. The argument starting from $X_1^-$ instead of $X_1^+$ is the same.
\end{proof}

One consequence of \Cref{prop:QM-prime} is that $\hmodM[\sympFerm]$ is not a product of more elementary modular tensor categories,
and in particular one cannot directly reduce the study of general $N$ to the $N=1$ case, see however an interesting observation in~\cite[Rem.\,5.6]{McRae}.

\subsection{Pullback ideals}
\label{sec:symp_ferm_pullback}

In this section we consider an example of a pull back ideal and compute the resulting
modified trace.
We start with some general comments on quasi-Hopf subalgebras of $\sympFerm(N,\beta)$ and
then specialise to a specific subalgebra $A$ in $H := \sympFerm(2,\beta)$.

\subsubsection{Quasi-Hopf subalgebras in $\sympFerm$}\label{sec:other-A}

Write $W^+ \subset \sympFerm = \sympFerm(N,\beta)$ for the $N$-dimen\-sional vector space linearly spanned by the $\genF^+_i$, and analogously for $W^-$. 
The total space $W^+ \oplus W^-$ carries a symplectic form given by $(\genF^+_i,\genF^-_j) = \delta_{i,j}$.

Before turning to the specific example we will study in more detail, let us look more generally at quasi-Hopf subalgebras $A$ generated by
	some subspace of
$W^+ \oplus W^-$ and by powers of $\genK$. 
First note that for $w = w^+ + w^- \in W^+ \oplus W^-$, the coproduct \eqref{eq:sympFermCoprod} gives 
\begin{equation}\label{eq:cop-w}
    \Delta(w) = w \otimes \oneQ + \eQ_0 \otimes w + i \eQ_1 \otimes (w^+-w^-)\ .
\end{equation}
By assumption, $\eQ_0, \eQ_1 \in A$, and so if $w^++w^- \in A$, closure under the coproduct requires that also $w^+-w^- \in A$.
We may therefore pick two subspaces $V^\pm \subset W^\pm$ and define $A = A(V^+,V^-) \subset \sympFerm$ to be the unital subalgebra generated by elements in $V^+ \oplus V^-$ and by $\genK$. A quick look at the explicit expressions in \Cref{sec:SF-qHopf-structure} confirms that the coproduct and antipode restrict to $A$, and that $\alphaQ,\betaQ \in A$ and
$\coassQ^{\pm 1} \in A^{\otimes 3}$. The pivot $\pivotQ$ from \eqref{eq:SF_pivot} is also contained in $A$.
However, the $R$-matrix does not restrict to $A$ (unless $A=\sympFerm$).

If the restricted symplectic form on $V^+ \oplus V^-$ is non-degenerate,
$A$ is isomorphic as an algebra to $\sympFerm(N',\beta')$ with $N' = \dim V^\pm$ and $\beta'$ arbitrary.
The isomorphism maps $\genK\mapsto\genK$ and takes a symplectic basis of $V^+ \oplus V^-$ to the generators $\genF^\pm_i$ of $\sympFerm(N',\beta')$. From the expressions in \Cref{sec:SF-qHopf-structure} one sees that this defines an isomorphism of quasi-Hopf algebras iff $N' \equiv N \, (\mathrm{mod} \, 2)$ and $\beta^2 = \beta'{}^2$.

The projective cover $P_0(A)$ of the trivial $A$-module $\field[C]$ (with $\genK$ acting as $1$) is given by $A \eQ_0^+$, cf.\ \eqref{eq:e0+-}. The socle of $P_0(A)$ is one-dimensional and $\genK$ acts as $(-1)^{\dim V^+ + \dim V^-}$. Thus $A$ is unimodular iff $\dim V^+ + \dim V^-$ is even.

\begin{remark}
We note that by~\eqref{eq:cop-w}, any element $w\in W_+ \oplus W_-$, together with $\genK$, 
generates a coideal subalgebra in $\sympFerm=\sympFerm(N,\beta)$. More generally, $\genK$ and any subspace $V\subset W_+ \oplus W_-$ 
generate a coideal subalgebra $I_V\subset \sympFerm$. 
Therefore, following \Cref{rem:tens-ideal-comodule-alg} we have a family of two-sided tensor ideals in $\hmodM[\sympFerm]$ parametrised by the subspaces~$V$ and a choice of module $M\in \hmodM[I_V]$, possibly with a modified trace on them.\footnote{
We thank the anonymous referee for pointing out this more general construction.} 
It is a very interesting  problem to classify such modified traces for a given subspace $V$ of the symplectic space $W_+ \oplus W_-$
but this is out of the scope of the current paper. 
\end{remark}

Altogether we see that each such choice of a unimodular quasi-Hopf subalgebra~$A$ gives a tensor ideal with modified trace via pull-back,
recall~\Cref{prop:pullback_modTr_along_restriction_qHopf}.
We have thus a continuum of intermediate tensor ideals 
in $\hmodM[\sympFerm]$,
but here we will study just one of them.

\medskip

Namely, the example we study in detail now is
\begin{align*}
	V^\pm = \field[C] \genF_1^\pm
	~~,\quad
	A = A(V^+,V^-) \, \subset \, \sympFerm(2,\beta) =: H
\end{align*}
where $\beta$ is any choice of 4th root of unity. 

\subsubsection{Lifts of projective modules of $A$}\label{sec:left-of-proj-A}

As an algebra (but not as a quasi-Hopf algebra), $A$ agrees with one pair of symplectic fermions $\sympFerm(1,\beta)$;
in particular, we know its projective modules.

Let $\projCoverTensorUnitSF[A]$ be the projective cover of the tensor unit of $\hmodM[A]$, and similarly let 
$P_0^-(A)$, $X_1^+(A)$, $X_1^-(A)$ 
the remaining indecomposable projectives.
If we refer to the projectives in $\hmodM[H]$, we will write $P_0^\pm(H)$ and $X_1^\pm(H)$ instead.
The $A$-module $\projCoverTensorUnitSF[A]$ is four-dimensional and 
is generated by a vector $v_0$ which has
$\genK$-eigen\-value~$1$.
We set $v_1^\pm = \genF_1^\pm . v_0$ and $v_2 = \genF_1^+ \genF_1^- . v_0$.
The $\genK$-eigenvalue of $v_2$ is $1$, while those of $v_1^\pm$ are $-1$.

Let $\parMat = \begin{psmallmatrix} a^- & a^+ \\ b^- & b^+ \end{psmallmatrix} \in
\operatorname{Mat}_2(\field[C])$.
On $\projCoverTensorUnitSF[A]$, we define the actions of $\genF_2^\pm \in H$ as
\begin{align}
	\label{eq:lifted_action}
	\genF_2^\varepsilon 
	= a^\varepsilon \, \genF_1^- + b^\varepsilon \, \genF_1^+
	\in \End_{\field}( \projCoverTensorUnitSF[A] )
	,\quad\quad
	\varepsilon = \pm
	\ .
\end{align}
It is not hard to verify that with this we indeed obtain an $H$-module structure on
$\projCoverTensorUnitSF[A]$, and we shall denote it by $P_\parMat \in \hmodM[H]$.
By construction, $P_\parMat$ is a lift of $\projCoverTensorUnitSF[A]$
and we will now show that these are in fact all lifts:

\begin{proposition}
	\label{prop:lift_of_proj_cover_tU_sympFerm}
	Every lift of $\projCoverTensorUnitSF[A]$ to $\hmodM[H]$ is of the form $P_\parMat$ for
	some $\parMat \in \operatorname{Mat}_2(\field[C])$.
	Moreover, $P_\parMat \cong P_{\parMat'}$ in $\hmodM[H]$ iff $\parMat = \parMat'$.
\end{proposition}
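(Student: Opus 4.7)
Given a lift $N$ of $\projCoverTensorUnitSF[A]$, I would fix an $A$-linear isomorphism $\psi : \projCoverTensorUnitSF[A] \xrightarrow{\sim} \restrictionFunctor N$ and transport the $H$-action to $\projCoverTensorUnitSF[A]$ along $\psi\inv$. Hence without loss of generality I can assume the lift has underlying vector space $\projCoverTensorUnitSF[A]$ with its prescribed $A$-action, and I only need to classify the extensions of this $A$-action to an $H$-action. Equivalently I must determine the possible actions of the two remaining generators $\genF_2^+$ and $\genF_2^-$ compatible with the $H$-relations.

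The algebra relations force a very rigid structure. Since $\{\genF_2^\varepsilon, \genK\} = 0$, the operator $\genF_2^\varepsilon$ must swap the $\genK$-eigenspaces $\langle v_0,v_2\rangle$ (eigenvalue $+1$) and $\langle v_1^+,v_1^-\rangle$ (eigenvalue $-1$). Writing
\begin{equation*}
\genF_2^\varepsilon . v_0 = b^\varepsilon v_1^+ + a^\varepsilon v_1^-,
\qquad
\genF_2^\varepsilon . v_2 = c^\varepsilon v_1^+ + d^\varepsilon v_1^-,
\end{equation*}
I would then apply $\{\genF_2^\varepsilon,\genF_1^+\} = 0$ and $\{\genF_2^\varepsilon,\genF_1^-\} = 0$ to $v_0$ and to $v_2$. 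Using $\genF_1^+ . v_1^- = v_2 = -\genF_1^- . v_1^+$, $\genF_1^\pm . v_1^\pm = 0$, and $\genF_1^\pm . v_2 = 0$, evaluation on $v_2$ forces $c^\varepsilon = d^\varepsilon = 0$, while evaluation on $v_0$ forces
\begin{equation*}
\genF_2^\varepsilon . v_1^+ = -a^\varepsilon v_2,
\qquad
\genF_2^\varepsilon . v_1^- = b^\varepsilon v_2,
\end{equation*}
which is exactly the action of $a^\varepsilon \genF_1^- + b^\varepsilon \genF_1^+$. A direct check then shows that the remaining relations $(\genF_2^\pm)^2 = 0$ and $\{\genF_2^+,\genF_2^-\} = \eQ_1$ hold automatically, where for the second one uses that $\eQ_1 = \tfrac12(\oneQ - \genK^2)$ acts as zero on $\projCoverTensorUnitSF[A]$. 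Hence the $H$-module structure on $\projCoverTensorUnitSF[A]$ is completely captured by $\parMat = \begin{psmallmatrix} a^- & a^+ \\ b^- & b^+\end{psmallmatrix}$, i.e.\ $N \cong P_\parMat$.

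For the uniqueness half I would use that $\End_A(\projCoverTensorUnitSF[A])$ is two-dimensional: since $v_0$ generates $\projCoverTensorUnitSF[A]$ as an $A$-module and lies in the $\genK = 1$ eigenspace, any $\phi \in \End_A(\projCoverTensorUnitSF[A])$ is determined by $\phi(v_0) = \lambda v_0 + \mu v_2$ with $\lambda,\mu \in \field[C]$, giving $\phi(v_1^\pm) = \lambda v_1^\pm$ and $\phi(v_2) = \lambda v_2$. If $\phi : P_\parMat \xrightarrow{\sim} P_{\parMat'}$ is an $H$-linear isomorphism it is in particular $A$-linear and invertible, so $\lambda \neq 0$. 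Imposing $\phi(\genF_2^{\varepsilon,\parMat} . v_0) = \genF_2^{\varepsilon,\parMat'} . \phi(v_0)$ and using $\genF_2^{\varepsilon,\parMat'} . v_2 = 0$ gives
\begin{equation*}
\lambda(b^\varepsilon v_1^+ + a^\varepsilon v_1^-) = \lambda(b'{}^\varepsilon v_1^+ + a'{}^\varepsilon v_1^-),
\end{equation*}
so $\parMat = \parMat'$ as claimed.

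There is no single hard step; the main obstacle is purely book-keeping, namely keeping track of all signs in the anticommutation relations (in particular that $\genF_1^- . v_1^+ = -v_2$) and making sure no quadratic relation is overlooked when verifying consistency of the four-parameter family.
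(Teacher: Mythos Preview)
Your proof is correct and follows essentially the same approach as the paper: reduce to classifying possible $\genF_2^\pm$-actions on the fixed $A$-module $\projCoverTensorUnitSF[A]$ via the anticommutation relations, then use the two-dimensional $\End_A(\projCoverTensorUnitSF[A])$ to show rigidity. The paper phrases the first step in matrix form (finding all matrices anticommuting with the given matrices for $\genK,\genF_1^\pm$) and the second via $\xi_{x,y}=x\oneQ+y\genF_1^-\genF_1^+$, but this is the same computation as yours with $\lambda=x$, $\mu=-y$.
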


\begin{proof}
	To lift the $A$-module structure, we classify all possible $\genF_2^\pm$-actions
	on $\projCoverTensorUnitSF[A]$.
	This is a linear algebra problem: 
	the algebra relations (in particular the anticommutators) of $H$ have to
	be satisfied by the matrices representing the actions of generators.
	In the basis $\{v_0, v_1^-, v_1^+, v_2\}$ of $\projCoverTensorUnitSF[A]$, the
	action of the $A$-generators $\genK$, $\genF_1^-$, and $\genF_1^+$ is given by
	\begin{align*}
		\begingroup
		\genK =
		\begin{psmallmatrix}
			1 & 0 & 0 & 0 \\
			0 & -1 & 0 & 0 \\
			0 & 0 & -1 & 0 \\
			0 & 0 & 0 & 1
		\end{psmallmatrix}
		,
		\quad
		\genF_1^- =
		\begin{psmallmatrix}
			0 & 0 & 0 & 0 \\
			1 & 0 & 0 & 0 \\
			0 & 0 & 0 & 0 \\
			0 & 0 & 1 & 0
		\end{psmallmatrix},
		\quad \text{and }
		\genF_1^+ = 
		\begin{psmallmatrix}
			0 & 0 & 0 & 0 \\
			0 & 0 & 0 & 0 \\
			1 & 0 & 0 & 0 \\
			0 & -1 & 0 & 0
		\end{psmallmatrix}
		,
		\endgroup
	\end{align*}
	respectively.
	The only matrices that anticommute with these are of the form
	\begin{align*}
		E_{a,b} = 
		\begin{psmallmatrix}
			0 & 0 & 0 & 0 \\
			a & 0 & 0 & 0 \\
			b & 0 & 0 & 0 \\
			0 & -b & a & 0
		\end{psmallmatrix}
		= 
		a\, \genF_1^- + b\, \genF_1^+
		\ ,
		\qquad
		a,b \in \field[C]
		\ .
	\end{align*}
	The matrices
	$E_{a,b}$ anticommute with $E_{c,d}$ for any four
	complex numbers $a, b, c, d$.
	In particular, given $(a^-, a^+, b^-, b^+) \in \field[C]^4$, we get a
	$H$-module on which the action of $\genF_2^\pm$ is represented by
	$a^\pm$, $b^\pm$ 
	as in \eqref{eq:lifted_action}.
	By construction these are all possible lifts.
	
	Now we show that these are pairwise non-isomorphic. 
	Let $\parMat = \begin{psmallmatrix} a^- & a^+ \\ b^- & b^+ \end{psmallmatrix}$ and $\parMat' = \begin{psmallmatrix} c^- & c^+ \\ d^- & d^+ \end{psmallmatrix}$, and 
	suppose $P_{\parMat} \cong P_{\parMat'}$.
	This isomorphism must come from an invertible endomorphism of
	$\projCoverTensorUnitSF[A]$ intertwining the $A$-action, so is necessarily given
	by the action with $\xi_{x,y} = x \oneQ
	+ y \genF_1^- \genF_1^+$, where $x,y \in \field[C]$ with $x \neq 0$.
	Requiring $\xi_{x,y}$ to intertwine the $\genF_2^{\varepsilon}$-actions on
	$P_{\parMat}$ and $P_{\parMat'}$ 
	forces $a^\varepsilon = c^\varepsilon$ and
	$b^\varepsilon = d^\varepsilon$, for $\varepsilon = +, -$.
	Hence, $\parMat=\parMat'$.
\end{proof}

The following simple \namecref{prop:symp_ferm_f2_acting_on_PM} to the proof of the above proposition will be useful later.
\begin{corollary}
	\label{prop:symp_ferm_f2_acting_on_PM}
	The element $\genF^+_2 \genF^-_2 \in H$ acts on $P_\parMat$ as $\det\parMat \cdot
	\genF^+_1 \genF^-_1$, and consequently $\prod_{j=1}^2 (\oneQ - x \genF^+_j
	\genF^-_j)$ acts as $\oneQ - x (1 + \det\parMat) \genF^+_1 \genF^-_1$ for any $x \in
	\field[C]$.
\end{corollary}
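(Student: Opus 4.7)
The plan is to prove both statements by direct computation using the explicit formula \eqref{eq:lifted_action} for the action of $\genF_2^\pm$ on $P_\parMat$ together with the algebra relations in $H$. A preliminary observation I would invoke repeatedly: since $P_\parMat$ is a lift of $\projCoverTensorUnitSF[A]$ and hence sits in the even-graded component $(\hmodM[H])_0$, the element $\genK^2$ acts as the identity on $P_\parMat$, so $\eQ_1 = \tfrac12(\oneQ - \genK^2)$ acts as zero.

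For the first claim, I would substitute $\genF_2^\varepsilon = a^\varepsilon \genF_1^- + b^\varepsilon \genF_1^+$ into $\genF_2^+ \genF_2^-$ and expand. The diagonal terms carry $(\genF_1^\pm)^2$, which vanish because $\{\genF_1^\pm, \genF_1^\pm\} = 0$, leaving two cross-terms. Rewriting $\genF_1^- \genF_1^+ = \eQ_1 - \genF_1^+ \genF_1^-$ via the anticommutation relation $\{\genF_1^+, \genF_1^-\} = \eQ_1$, I would collect coefficients to obtain
\begin{align*}
\genF_2^+ \genF_2^- \;=\; a^+ b^- \eQ_1 \,+\, (b^+ a^- - a^+ b^-)\,\genF_1^+ \genF_1^-
\end{align*}
as an equation in $H$. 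On $P_\parMat$ the $\eQ_1$-term vanishes by the preliminary observation, and the remaining coefficient is exactly $\det\parMat$, giving the first claim.

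For the second claim, I would expand $(\oneQ - x\genF_1^+\genF_1^-)(\oneQ - x\genF_2^+\genF_2^-)$ and immediately substitute the identity just established, reducing the problem to simplifying $(\oneQ - x\genF_1^+\genF_1^-)(\oneQ - x\det\parMat\cdot\genF_1^+\genF_1^-)$ on $P_\parMat$. The only non-linear term is $x^2 \det\parMat \cdot (\genF_1^+\genF_1^-)^2$; using the anticommutation relation once more together with $(\genF_1^\pm)^2 = 0$ and the fact that $\genK^2$ commutes with $\genF_1^\pm$, one sees $(\genF_1^+\genF_1^-)^2 = \eQ_1 \genF_1^+\genF_1^-$ as an identity in $H$, hence acts as zero on $P_\parMat$. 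Collecting the surviving linear terms yields $\oneQ - x(1 + \det\parMat)\genF_1^+\genF_1^-$.

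There is no real obstacle here: the whole argument is a short algebraic manipulation whose only subtle ingredient is recognising that $\eQ_1$ acts trivially on $P_\parMat$ and using this to kill the terms that would otherwise spoil the clean formulas.
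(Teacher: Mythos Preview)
Your argument is correct and is essentially what the paper intends: it presents the corollary as an immediate consequence of the explicit action formula \eqref{eq:lifted_action} established in the proof of \Cref{prop:lift_of_proj_cover_tU_sympFerm}, and your computation carries this out cleanly. One small wording issue: the displayed identity $\genF_2^+\genF_2^- = a^+ b^- \eQ_1 + (b^+ a^- - a^+ b^-)\genF_1^+\genF_1^-$ is not an equation in $H$ (where $\genF_2^\pm$ are independent generators) but rather an identity of operators on $P_\parMat$; your reasoning is unaffected, but the phrase ``as an equation in $H$'' should be replaced by ``as operators on $P_\parMat$''.
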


\begin{remark}
By contrast, the simple projective modules of $A$ do not admit lifts, because their dimensions are too small:
the grade 1 part of $\relativeProjectives{A}{H}$ has to coincide with the grade 1 part of $\hmodM[H]$ by \Cref{rem:ideals_in_graded_categories}. 
Instead, a suitable direct sum of simple $A$-projectives lifts to each of the two simple projective of $H$, and so no continuum of parameters is involved.
\end{remark}

\subsubsection{Symmetrised cointegrals of $A$}

Next, we describe the left and the right symmetrised cointegral of $A$, which
by~\cite[Thm.~1.1]{BGR1} correspond precisely to the left and the right modified trace
on $\ProjIdeal[{\hmodM[A]}]$.

\begin{proposition}
	\label{prop:symCoint_for_subqHA}
	A symmetrised right resp.\ left cointegral for $A$ is given by
	\begin{align}
		\symRightCointA
		\big( \genF_1^+ \genF_1^- \genK^m \big)
		=
		\delta_{m,1} - i \beta^2 \delta_{m,3}
		\qtextq{resp.}
		\symLeftCointA
		\big( \genF_1^+ \genF_1^- \genK^m \big)
		=
		\delta_{m,1} + i \beta^2 \delta_{m,3},
		\ ,
		\notag
	\end{align}
	and by 0 on all other basis vectors.
	The left and the right modified trace $\modTr^{A,l}$ and $\modTr^{A,r}$ on
	$\ProjIdeal[{\hmodM[A]}]$ corresponding to the cointegrals
	\begin{itemize}
		\item are proportional on the grade 0 part and the grade 1 part of
		$\ProjIdeal[{\hmodM[A]}]$ separately, but
		\item are not proportional on all of $\ProjIdeal[{\hmodM[A]}]$.
	\end{itemize}
\end{proposition}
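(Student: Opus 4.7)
Since $A$ inherits its coassociator, (co)evaluation elements and pivot from $H = \sympFerm(2,\beta)$, the first task is to identify the relevant quasi-Hopf data. Using $\beta^4 = 1$ (so $\beta^2 = \pm 1$) and $\genK^2 \eQ_1 = -\eQ_1$, one finds $\betaQ = \eQ_0 + \beta^2 \eQ_1$,
\[
	\qR = \oneQ \tensor \oneQ + (\beta^2-1)\eQ_1 \tensor \eQ_1,
	\quad
	\pR = \oneQ \tensor \oneQ + (\beta^2-1)\eQ_0 \tensor \eQ_1,
\]
and analogously for $\qL, \pL$, with pivot $\pivotQ = \pivotQ\inv = (\eQ_0 - i\eQ_1)\genK$; all lie in $A$ (or $A^{\otimes 2}$). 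Unimodularity of $A$ then follows from \Cref{sec:left-of-proj-A}: on the regular $A$-module one checks $\genK^2 \cdot \genF_1^+\genF_1^- = \genF_1^+\genF_1^-$, hence $\eQ_1 \cdot \genF_1^+\genF_1^- = 0$ in $A$, so the socle of $P_0^+(A) = A\eQ_0^+$ is spanned by $\genF_1^+\genF_1^-\eQ_0^+$ and is isomorphic to the tensor unit $X_0^+(A)$. By~\cite{BGR1} the spaces of right and of left symmetrised cointegrals for $A$ are therefore one-dimensional.

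To verify \eqref{eq:symm_coint_equation} for the proposed $\symRightCointA$, I would check it basis element by basis element. The functional $\symRightCointA$ is supported on the top-degree monomials $\genF_1^+\genF_1^-\genK^m$, and $\qR, \pR$ add no $\genF$-degree in the first tensor factor; so both sides of \eqref{eq:symm_coint_equation} vanish for $h$ of $\genF$-degree at most one. For $h = \genF_1^+\genF_1^-\genK^m$, I would expand $\Delta(\genF_1^+\genF_1^-) \Delta(\genK^m)$ using $\Delta(\genF_1^\pm) = \genF_1^\pm \tensor \oneQ + \omega_\pm \tensor \genF_1^\pm$, together with the identities $\omega_+\omega_- = \genK^2$ and $\genF_1^\pm \omega_\mp = -\omega_\mp \genF_1^\pm$, then extract those summands whose first tensor factor has $\genF$-degree two (all others are killed by $\symRightCointA$ or by $\eQ_1 \cdot \genF_1^+\genF_1^- = 0$), and finally multiply by $\qR, \pR$. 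Applying $\symRightCointA$ to the resulting first factor and comparing with $\symRightCointA(h) \pivotQ\inv$ yields the claim. The verification for $\symLeftCointA$ is parallel, with $\qL, \pL, \pivotQ$ in place of $\qR, \pR, \pivotQ\inv$.

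For the statements on modified traces, I apply \eqref{eq:modified_trace_on_indecomp_proj}. On $P_0^\pm(A) = A\eQ_0^\pm$ the endomorphism algebra is spanned by the identity and a nilpotent $\nu_\pm$ sending the top $\eQ_0^\pm$ to the socle $\genF_1^+\genF_1^-\eQ_0^\pm$. Using $\eQ_0^\pm = \tfrac14(\oneQ \pm \genK + \genK^2 \pm \genK^3)$ and the fact that both $\symRightCointA, \symLeftCointA$ vanish on pure $\genK$-powers yields $\modTr^{A,r}(\id_{P_0^\pm}) = \modTr^{A,l}(\id_{P_0^\pm}) = 0$, and
\[
	\modTr^{A,r}(\nu_\pm) = \pm\tfrac14(1 - i\beta^2),
	\quad
	\modTr^{A,l}(\nu_\pm) = \pm\tfrac14(1 + i\beta^2),
\]
so on the grade 0 part the two traces are proportional with ratio $c_0 = (1-i\beta^2)/(1+i\beta^2)$. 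The grade 1 simple projectives $X_1^\pm(A)$ embed into $A$ with multiplicity two via the central idempotents $\eQ_1^\pm$ of \eqref{eq:e1+-} (for $N=1$); evaluating $\symRightCointA, \symLeftCointA$ on $\eQ_1^\pm$ and dividing by $2$ gives $\modTr^{A,r}(\id_{X_1^\pm}) = \pm\tfrac14(i - \beta^2)$ and $\modTr^{A,l}(\id_{X_1^\pm}) = \pm\tfrac14(i + \beta^2)$, with ratio $c_1 = (i-\beta^2)/(i+\beta^2)$. A short cross-multiplication gives $(1-i\beta^2)(i+\beta^2) - (1+i\beta^2)(i-\beta^2) = 4\beta^2 \neq 0$, so $c_0 \neq c_1$; thus the two modified traces are proportional on each graded piece of $\ProjIdeal[{\hmodM[A]}]$ but not on the whole ideal.

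The main technical obstacle will be the direct verification of \eqref{eq:symm_coint_equation} in the middle paragraph: although the simplifications $\eQ_1 \cdot \genF_1^+\genF_1^- = 0$, $\omega_+\omega_- = \genK^2$, and $\coassQ^2 = \oneQ$ (the last from $\beta^4 = 1$) cut down the summands substantially, the expansion of $\qR \Delta(\genF_1^+\genF_1^-\genK^m) \pR$ still involves careful sign bookkeeping from the anticommutators of $\genF_1^\pm$ with $\genK$. An alternative that bypasses $\qR, \pR$ would be to first determine a (co)integral of $A$ by solving \eqref{eq:coint-def}, and then recover the symmetrised cointegral from \eqref{eq:sum-int-from-int}; by one-dimensionality one only needs to check the normalisation on one top-degree basis element.
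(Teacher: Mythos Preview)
Your plan matches the paper's proof: solve \eqref{eq:symm_coint_equation} directly for $\symRightCointA$, then evaluate the associated modified traces on the indecomposable projectives $P_0^\pm(A)$ and $X_1^\pm(A)$ and compare ratios. Your trace values agree with the paper's (after simplification, your ratios $c_0$ and $c_1$ are $-i\beta^2$ and $i\beta^2$ respectively).

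One correction: the identity $\eQ_1\cdot\genF_1^+\genF_1^- = 0$ is \emph{false} in $A$, since $\genK^2$ is central but not equal to $\oneQ$ (only $\genK^4=\oneQ$). What is true is that $\eQ_1\eQ_0^+ = 0$, so $\eQ_1$ acts trivially on $P_0^+(A)=A\eQ_0^+$; this is what gives unimodularity. In the cointegral verification you do not need the false identity anyway: $\qR,\pR$ contain no $\genF$'s, so the first tensor factor of $\qR\Delta(h_m)\pR$ has $\genF$-degree at most two, and the strictly lower-degree terms are already killed by $\symRightCointA$. The paper's computation keeps exactly the summands $h_m\otimes\eQ_0\genK^m + h_m\genK^{2\deltaOdd{m}}\otimes\eQ_1\genK^m$ (using the $N=2$ form of $\Delta(\genK)$), and the equation then reduces to two linear relations among the $c_m := \symRightCointA(h_m)$, forcing $c_0=c_2=0$ and $c_3=-i\beta^2 c_1$.

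One shortcut you missed: rather than re-running the verification for $\symLeftCointA$, the paper takes $\symLeftCointA$ proportional to $\symRightCointA\circ S^{-1}$ (a general fact from \cite{BGR1}), which immediately yields the stated formula after a one-line computation of $S^{-1}$ on $\genF_1^+\genF_1^-\genK^m$.
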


The final statement implies that the pivotal quasi-Hopf algebra $A$ cannot be made ribbon, or equivalently, 
that the pivotal structure on $\hmodM[A]$ cannot be extended to a ribbon structure.

\begin{proof}
	We start by computing the right symmetrised cointegral.
	Since $A$ is pivotal and unimodular, this can be done by solving
	\eqref{eq:symm_coint_equation}.
	Using the explicit formulas for $\qR, \pR$ from \eqref{eq:qRpR_SF},
	\eqref{eq:symm_coint_equation} becomes
	\begin{align}
		\symRightCointA(h) \oneQ 
		&= 
		\symRightCointA(h\sweedler{1}) 
		\pivotQ \eQ_0 h\sweedler{2}
		+
		\symRightCointA(\eQ_0 h\sweedler{1}) 
		\pivotQ \eQ_1 h\sweedler{2} \betaQ
		+
		\symRightCointA(\eQ_1 h\sweedler{1}) 
		\pivotQ \eQ_1 \betaQ h\sweedler{2}
		\ .
		\notag
	\end{align}
	Abbreviate $h_m = \genF^+_1 \genF^-_1 \genK^m$, and note that
	\begin{align}
		\Delta(h_m) 
		&\approx
		h_m \tensor \eQ_0 \genK^m 
		+ h_m \genK^{2 \deltaOdd{m}} \tensor \eQ_1 \genK^m 
		\ ,
		\notag
	\end{align}
	where $\approx$ means that we disregard terms that have fewer than all $\genF$s in
	the first tensor factor.
	We then make the ansatz $\symRightCointA(\genF^+_1 \genF^-_1 \genK^m) = c_m \in
	\field[C]$ and zero otherwise.
	Clearly this satisfies \eqref{eq:symm_coint_equation} for any $h$ not of the form
	$h_m$.
	Using the explicit expression \eqref{eq:SF_pivot} for $\pivotQ$ and the fact that
	$[\betaQ, \genK] = 0$, the cointegral equation \eqref{eq:symm_coint_equation} for $h
	= h_m$ becomes
	\begin{align}
		c_m \cdot \oneQ 
		&= 
		\symRightCointA((h_m)\sweedler{1}) \cdot
		\eQ_0 (h_m)\sweedler{2} \genK 
		- i
		\beta^2 
		\symRightCointA((h_m)\sweedler{1}) \cdot
		\eQ_1 (h_m)\sweedler{2} \genK
		\notag
		\\
		&= 
		c_m \eQ_0 \genK^{m+1}
		- i \beta^2 c_{m + 2 \deltaOdd{m}} \eQ_1 \genK^{m+1}
		\ .
		\notag
	\end{align}
	Immediately this implies $c_0 = c_2 = 0$, so we are only left with the two equations
	\begin{align}
		c_1 \cdot \oneQ 
		&= c_1 \eQ_0 + i \beta^2 c_{3} \eQ_1
		\qandq
		c_3 \cdot \oneQ 
		= 
		c_3 \eQ_0 - i \beta^2 c_{1} \eQ_1 
		\ ,
		\notag
	\end{align}
	which imply $c_3 = - i \beta^2 c_1$, so that $\symRightCointA$ can be chosen as in the
	statement of this \namecref{prop:symCoint_for_subqHA}.
	
	A left symmetrised cointegral is given by $\symRightCointA \circ S\inv$, see
	\cite[Sec.~3]{BGR1}.
	That is, up to a scalar $\symLeftCointA$ is completely determined as
	\begin{align}
		(\symRightCointA \circ S\inv) (\genF^+_1 \genF^-_1 \genK^m)
		= \delta_{m, 3} - i \beta^2 \delta_{m, 1}
		= - i \beta^2 (\delta_{m, 1} + i \beta^2 \delta_{m, 3})
		\ ,
		\notag
	\end{align}
	and we choose $- i \beta^2 \symLeftCointA = \symRightCointA \circ S\inv$, which yields
	$\symLeftCointA$ as in the statement.
	
	We will now show that the associated modified traces are proportional on the degree
	0 and the degree 1 component separately, but not simultaneously.
	Let us write $\widehat\coint{}^{A,-} := \symRightCointA$ and $\widehat\coint{}^{A,+} :=
	\symLeftCointA$, so that $\widehat\coint{}^{A,\pm}(\genK^m \genF^+_1 \genF^-_1) =
	\delta_{m, 1} \pm i \beta^2 \delta_{m, 3}$.
	
	The endomorphism space of $P_0^\pm(A)$ has as basis the identity and the map given
	by acting
	with $\genF^+_1 \genF^-_1$, 
    and hence we need to
	evaluate $\widehat\coint{}^{A,\pm}$ on $\eQ_0^\pm$ and $\eQ_0^\pm \genF^+_1 \genF^-_1$.
	Clearly, the value on $\eQ_0^\pm$ is zero.
	One easily computes
	\begin{align}\label{eq:sym-A-coint-computed}
		\widehat\coint{}^{A,\pm}(\eQ_0^\varepsilon \genF^+_1 \genF^-_1)
		= \tfrac{1}{4} \sum_{m=0}^3 \varepsilon^m \widehat\coint{}^{\pm,A}(\genK^m \genF^+_1 \genF^-_1)
		= \varepsilon \frac{1 \pm i \beta^2}{4}
		\ .
	\end{align}
	In the degree 1 component, indecomposable projectives only have the identity
	endomorphism.
	Using additivity of the modified trace, it is enough to compute its value on the
	identity endomorphism
$(-) \cdot \eQ_1^\pm$ of $2 X_1^\pm(A) \subset A$ (the image of $\eQ_1^\pm$ is given above \eqref{eq:e1+-}).
	We have
	\begin{align*}
		\widehat\coint{}^{\pm,A} (\eQ_1^\varepsilon)
		=  \varepsilon i \widehat\coint{}^{\pm,A}(\eQ_1 \genK \genF^+_1 \genF^-_1)
        =  \varepsilon i \frac{1 \mp i \beta^2}2
		\ .
	\end{align*}
	Since $\beta^4 = 1$, $(1 - i \beta^2) i \beta^2 = 1 + i \beta^2$.
	Thus for all $P \in \ProjIdeal[{\hmodM[A]}]$
	\begin{align}
		\modTr^{A,r}_{\eQ_0 P} = - i \beta^2 \modTr^{A,l}_{\eQ_0 P}
		\qandq
		\modTr^{A,r}_{\eQ_1 P} = i \beta^2 \modTr^{A,l}_{\eQ_1 P}
		\ ,
		\notag
	\end{align}
	and so the two traces are not proportional.
\end{proof}

\subsubsection{Pullback modified traces from $A$}\label{sec:pullback-trace-from-A}

The modified traces on $\relativeProjectives{A}{H}$ obtained via 
pullback (\Cref{prop:pullback_ideal} and \Cref{prop:pullback_modTr_along_restriction_qHopf}) 
from the
modified traces in \Cref{prop:symCoint_for_subqHA} are not non-degenerate.
Namely, we have:

\begin{proposition}
	\label{prop:pullback_trace_vanishes_on_proj_SF}
	The pullback modified traces vanish on projective $H$-modules:
	\begin{align*}
		\pullbackTrace[{\modTr^{A,r}}] \big|_{\ProjIdeal[{\hmodM[H]}]}
		\equiv 0
		\quad \text{and} \quad
		\pullbackTrace[{\modTr^{A,l}}] \big|_{\ProjIdeal[{\hmodM[H]}]}
		\equiv 0
		\ .
	\end{align*}
\end{proposition}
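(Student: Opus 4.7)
The plan is to exploit the uniqueness of the modified trace on $\ProjIdeal[{\hmodM[H]}]$: since $H$ is unimodular and $\hmodM[H]$ is ribbon, the space of right (resp.\ left) modified traces on $\ProjIdeal[{\hmodM[H]}]$ is one-dimensional by \cite[Cor.\,5.6]{GKP_m-traces}. The restriction of $\pullbackTrace[{\modTr^{A,r}}]$ (resp.\ $\pullbackTrace[{\modTr^{A,l}}]$) to the sub-ideal $\ProjIdeal[{\hmodM[H]}] \subset \relativeProjectives{A}{H}$ is again a modified trace on $\ProjIdeal[{\hmodM[H]}]$, hence equals $c \cdot \modTr^H$ for some $c \in \field[C]$. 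To establish the vanishing it is thus enough to produce a single pair $(P,f)$ with $P \in \ProjIdeal[{\hmodM[H]}]$, $\modTr^H_P(f) \neq 0$, and $\pullbackTrace[{\modTr^A}]_P(f) = 0$.

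My test case would be $P = P_0^+(H)$ together with $\phi_0 \defined (\phi_{\tensUnit})_{P_0^+(H)}$, the (essentially unique) endomorphism factoring through the tensor unit. With the normalisation from \Cref{rem:modular_actions}\,(4) --- equivalently \Cref{lem:tHopf-normal} --- one has $\modTr^H_{P_0^+(H)}(\phi_0) = 1$. What remains is to verify that $\modTr^{A}_{\restrictionFunctor P_0^+(H)}(\phi_0) = 0$ for each of the two modified traces on $\ProjIdeal[{\hmodM[A]}]$ from \Cref{prop:symCoint_for_subqHA}.

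I would establish this zero by a direct computation using the PBW-type basis of $P_0^+(H) = H\eQ_0^+$. Writing $v_0 = \eQ_0^+$ for the generator and $s = \genF_1^+\genF_1^-\genF_2^+\genF_2^- v_0$ for the socle element, $\phi_0$ is characterised by $v_0 \mapsto s$ while killing every other basis vector (as it vanishes on the radical of $P_0^+(H)$). The cyclic $A$-submodule $Q_1 \defined A \cdot v_0$ is four-dimensional, isomorphic to $P_0^+(A)$, and contains no monomials in $\genF_2^\pm$; in particular $s \notin Q_1$. Completing this to a direct sum decomposition $\restrictionFunctor P_0^+(H) = Q_1 \oplus Q_2 \oplus Q_3 \oplus Q_4$ with $Q_2 = A \cdot (\genF_2^+\genF_2^- v_0) \cong P_0^+(A)$ (which does contain $s$) and $Q_3, Q_4 = A \cdot \genF_2^\pm v_0 \cong P_0^-(A)$, the matrix of $\phi_0$ is strictly off-diagonal: it sends the top of $Q_1$ into $Q_2$ and annihilates $Q_2, Q_3, Q_4$ (each of whose elements lies in the radical of $P_0^+(H)$, which $\phi_0$ kills). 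Cyclicity and linearity then give
\[
	\modTr^A_{\restrictionFunctor P_0^+(H)}(\phi_0)
	= \sum_{i=1}^{4} \modTr^A_{Q_i}\bigl((\phi_0)_{ii}\bigr) = 0,
\]
since every diagonal block vanishes.

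The main (though essentially routine) obstacle is the explicit decomposition of $\restrictionFunctor P_0^+(H)$ and the observation that $v_0$ and $s$ sit in different $P_0^+(A)$-summands; this is bookkeeping with the anticommutation relations between $\genF_1^\pm$ and $\genF_2^\pm$. Once in place, the argument applies verbatim to both the left and right modified traces, since the off-diagonal shape of $\phi_0$ is a decomposition-level statement independent of which cointegral of $A$ one uses.
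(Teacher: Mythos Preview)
Your argument is correct and takes a genuinely different route from the paper. The paper proves the vanishing for \emph{every} endomorphism of every indecomposable projective by exhibiting tensor factorisations
\[
\restrictionFunctor(X_1^\pm(H)) \cong X_1^+(A)\otimes\field[C]^{1|1},
\qquad
\restrictionFunctor(P_0^\pm(H)) \cong P_0^+(A)\otimes\field[C]^{2|2},
\]
and then invoking the partial trace property: each endomorphism becomes a sum of $g\otimes h$ with $h$ of vanishing categorical trace on $\field[C]^{n|n}$ (identity for the odd sector, nilpotent for the even sector). You instead use the one-dimensionality of the space of modified traces on $\ProjIdeal[{\hmodM[H]}]$ to reduce to a \emph{single} test pair $(P_0^+(H),\phi_0)$, and then a block-matrix argument for that one endomorphism. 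Your decomposition $Q_1\oplus Q_2\oplus Q_3\oplus Q_4$ is the same underlying splitting as the paper's $P_0^+(A)\otimes\field[C]^{2|2}$, but you exploit only the off-diagonality of $\phi_0$ rather than the full partial-trace mechanism.

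What each approach buys: yours is shorter and more conceptual, leaning on the structural uniqueness result from \cite{GKP_m-traces} to avoid checking the entire endomorphism algebra. The paper's approach is more self-contained (no appeal to uniqueness) and makes transparent the geometric reason for the vanishing --- the equal even/odd multiplicities in the $\field[C]^{n|n}$ factor --- which also covers the odd-sector projectives $X_1^\pm(H)$ uniformly; your single test case happens not to touch these, though of course it need not.
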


\begin{proof}
	We use the suggestive notation $\field[C]^{1 \mid 0}$ and $\field[C]^{0 \mid 1}$
	for the one-dimensional simple modules of both $A$ and $H$,
	on which $\genK$ acts as $1$ and $-1$, respectively, and on
	which the actions of the $\genF^\pm_j$ vanish.
	Similarly we write $\field[C]^{n \mid m} = n \field[C]^{1 \mid 0} \oplus m
	\field[C]^{0 \mid 1}$.
	We have $\trCat[A, r]_{V \tensor \field[C]^{n \mid n}}(f \tensor \id) = 0$ for any $V
	\in \hmodM[A]$ and $f \in \End_A(V)$, where $\trCat[A,r]$ is the right partial trace
	of $\hmodM[A]$.
	Indeed, this follows from 
	\begin{align}
		\trCat[A]_{\field[C]^{1 \mid 0}} (\id)
		= \trCat[{\Vect}]_{\field[C]} \left( \genK \right)
		= 1
		\qandq
		\trCat[A]_{\field[C]^{0 \mid 1}} (\id) = - 1
		\ ,
		\notag
	\end{align}
	see \eqref{eq:categorical_trace_qHopf}.
	It is not hard to verify that
	\begin{align}
		\restrictionFunctor (X_1^\pm(H)) \cong X_1^+(A) \tensor \field[C]^{1 \mid 1}
		\qandq
		\restrictionFunctor (P_0^\pm(H)) \cong P_0^+(A) \tensor \field[C]^{2 \mid 2}
		\label{eq:trace-vanish-aux1}
	\end{align}
	as $A$-modules.
	Thus for any projective indecomposable $P \in \ProjIdeal[{\hmodM[H]}]$ and any $f \in
	\End_H(P)$ we have e.g.\
	\begin{align}
		\pullbackTrace[{\modTr^{A,r}}]_P(f) 
		= \modTr^A_{P'}
		\big(
		\trCat[A,r]_{P' \tensor \field[C]^{n \mid n}}
		(f')
		\big)
		\ ,
		\notag
	\end{align}
	where $P'$ is $X_1^+(A)$ or $P_0^+(A)$, $n$ is $1$ or $2$, and
	$f'$ is $f$ conjugated by one of the isomorphisms in \eqref{eq:trace-vanish-aux1}.
	Our goal is therefore now to show that in all situations $f'$ will be a sum of terms of
	the form $g \tensor h$, with $h \colon \field[C]^{n \mid n} \to \field[C]^{n \mid n}$ of trace zero.
	
	For the odd sector, i.e.\ $P = X_1^\pm(H)$, this is obvious since
	$\End_H(X_1^\pm(H))$ is one-dimensional,
	and so 
    $h$ is a multiple of $\id$.
	
	For $P = P_0^\pm(H)$, we note first that an explicit basis of the $8$-dimensional
	space $\End_H(P_0^\pm(H))$ is given by acting with the elements
	\begin{align*}
		\oneQ
		, \quad \genF^+_1 \genF^-_1
		, \quad \genF^+_1 \genF^+_2
		, \quad \genF^+_1 \genF^-_2
		, \quad \genF^-_1 \genF^+_2
		, \quad \genF^-_1 \genF^-_2
		, \quad \genF^+_2 \genF^-_2
		, \quad \genF^+_1 \genF^-_1 \genF^+_2 \genF^-_2
		\ ,
	\end{align*}
	which are central on the even sector.
Let us denote the second isomorphism in \eqref{eq:trace-vanish-aux1} by $\phi$. 
Explicitly, a basis of $P_0^\pm(H)$ is given by 
\begin{align*}
    \big\{ \, (\genF_1^+)^a (\genF_1^-)^b (\genF_2^+)^c (\genF_2^-)^d \eQ_0^\pm \, \big|\, a,b,c,d \in \field[Z]_2 \big\} \ ,
\end{align*} 
see \cite[Sec.\,3.7]{FGR2}. A basis of $P_0^\pm(A)$ is obtained by just using $\genF_1^\pm$. If we denote a basis of $\field[C]^{2|2}$ by $\{ (\genF_2^+)^c (\genF_2^-)^d | c,d \in \field[Z]_2 \}$, then $\phi$ is simply given by 
\begin{align*}
(\genF_1^+)^a (\genF_1^-)^b (\genF_2^+)^c (\genF_2^-)^d \eQ_0^\pm ~\longmapsto~ (\genF_1^+)^a (\genF_1^-)^b \eQ_0^\pm \otimes (\genF_2^+)^c (\genF_2^-)^d \ ,
\end{align*}
and it commutes with the action of $\genF_1^\pm$.
One can now verify
that the transported actions $\phi \circ (\genF^\pm_2.(-)) \circ \phi^{-1}$ 
and $\phi \circ (\genF^+_2\genF^-_2.(-)) \circ \phi^{-1}$
are of the form $g \otimes h$ with nilpotent $h$ (which hence has trace zero).
\end{proof}

\begin{corollary}\label{cor:pullback-tr-proportional}
	The pullback traces $\pullbackTrace[{\modTr^{A,r}}]$ and
	$\pullbackTrace[{\modTr^{A,l}}]$ are proportional.
\end{corollary}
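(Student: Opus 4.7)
The plan is to decompose the ideal $\relativeProjectives{A}{H}$ according to the $\field[Z]_2$-grading of $\hmodM[H]$ and compare the two pullback traces on each graded piece separately. Since the idempotents $\eQ_0, \eQ_1$ lie in the sub-quasi-Hopf algebra~$A$, restriction $\restrictionFunctor : \hmodM[H] \to \hmodM[A]$ preserves the grading, and therefore so does the pullback ideal $\relativeProjectives{A}{H}$.

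First I would dispose of the grade~$1$ component. Since $\sympFerm_1 = \eQ_1 H$ is semisimple (\Cref{sec:SF-qHopf-structure}), every grade~$1$ object of $\hmodM[H]$ is a direct sum of the projective simples $X_1^\pm(H)$, hence lies in $\ProjIdeal[{\hmodM[H]}]$. By \Cref{prop:pullback_trace_vanishes_on_proj_SF} both pullback traces then vanish identically on the grade~$1$ part of $\relativeProjectives{A}{H}$, so they are trivially proportional there (with any proportionality constant).

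Next I would handle the grade~$0$ component. By \Cref{prop:symCoint_for_subqHA}, the modified traces $\modTr^{A,r}$ and $\modTr^{A,l}$ on $\ProjIdeal[{\hmodM[A]}]$ satisfy
\[
\modTr^{A,r}_{\eQ_0 Q} = -i\beta^2 \, \modTr^{A,l}_{\eQ_0 Q}
\]
for all $Q \in \ProjIdeal[{\hmodM[A]}]$. Given $M \in \relativeProjectives{A}{H}$ in grade~$0$, the restriction $\restrictionFunctor M$ is again in grade~$0$, and the defining formula of the pullback trace in \Cref{prop:pullback_modTr_along_restriction_qHopf} immediately transports this identity, yielding
\[
(\pullbackTrace[{\modTr^{A,r}}])_M = -i\beta^2 \, (\pullbackTrace[{\modTr^{A,l}}])_M
\]
on the grade~$0$ part.

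Combining the two pieces via additivity of modified traces along the grading decomposition shows the global identity $\pullbackTrace[{\modTr^{A,r}}] = -i\beta^2 \, \pullbackTrace[{\modTr^{A,l}}]$ on all of $\relativeProjectives{A}{H}$. There is no real obstacle here; the only point to verify carefully is that the decomposition of $M \in \relativeProjectives{A}{H}$ into $\eQ_0 M \oplus \eQ_1 M$ is a decomposition inside the ideal and that both summands are again in $\relativeProjectives{A}{H}$, which follows from $\eQ_0, \eQ_1 \in A$ together with closure of ideals under retracts.
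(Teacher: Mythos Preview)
Your proof is correct and follows essentially the same approach as the paper: split into graded pieces, use \Cref{prop:pullback_trace_vanishes_on_proj_SF} on grade~$1$ (where everything is projective since $\sympFerm_1$ is semisimple), and use the grade-$0$ proportionality from \Cref{prop:symCoint_for_subqHA} on grade~$0$. The paper phrases the grade-$1$ step via \Cref{rem:ideals_in_graded_categories}, but this amounts to the same observation you make directly.
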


\begin{proof}
	By \Cref{rem:ideals_in_graded_categories}, the degree 1 component of
	$\relativeProjectives{A}{H}$ equals the degree 1 component of
	$\ProjIdeal[{\hmodM[H]}]$.
	Thus both pullback traces vanish on $( \relativeProjectives{A}{H} )_1$ by
	\Cref{prop:pullback_trace_vanishes_on_proj_SF}.
	By \Cref{prop:symCoint_for_subqHA},
	$\modTr^{A,r}$ and $\modTr^{A,l}$ are proportional on the degree~0 component of
	$\ProjIdeal[{\hmodM[A]}]$.  
\end{proof}

To make things more readable we choose a specific normalisation and set
\begin{align}
	\label{eq:pullback_trace_choice}
	\pullbackTrace[\modTr^A] 
	:= 
	\frac{4}{1 - i \beta^2} \pullbackTrace[{\modTr^{A,r}}]
	=
	\frac{4}{1 + i \beta^2} \pullbackTrace[{\modTr^{A,l}}]
	\ ,
\end{align}
which is well-defined since $\beta^2 = \pm 1$.

Since by \Cref{prop:renormalized_link_invariant} we are free to choose which $\tensIdeal$-labelled edge of a closed ribbon graph $T$ to cut when computing $\renRTinvariant(T)$, as another consequence from \Cref{prop:pullback_trace_vanishes_on_proj_SF} we get:

\begin{corollary}\label{cor:proj-inv-zero}
Let $T$ be an $\tensIdeal$-admissible
closed ribbon graph and $\modTr = \pullbackTrace[\modTr^A]$. If at least one edge of $T$ is coloured by a 
projective $H$-module, then $\renRTinvariant(T) = 0$.
\end{corollary}

\section{Explicit computations of link invariants}
\label{sec:explicit_computation_link_invariants}

In this section we provide the details on how to obtain the link invariants presented in Table~\ref{table:invariants}. We consider the  $n$-framed unknot, the $a,b$-framed Hopf link, and the $(2,m)$-torus knot, each for three choices of ideal in $\hmodM[\sympFerm]$.

Throughout this section we will often write $\varepsilon$, $\delta$, \emph{etc}.\ for multiple
disjoint occurrences of~$\pm$.

\subsection{Framed unknot}\label{sec:compute-unknot}

Recall the ribbon twist from \eqref{eq:symp_ferm_ribbon}.
One shows via induction that 
\begin{align*}
	\elRibbon^{\pm m}
	= (\eQ_0 + (- \beta^{\pm 1} i \genK)^m \eQ_1) 
	\prod_{j=1}^N 
	\big(
	\oneQ \mp 2(m \eQ_0 \pm \deltaOdd{m} \eQ_1) \genF^+_j \genF^-_j
	\big)
\end{align*}
holds for natural $m \geq 0$.
By convention, the ribbon twist is given by acting with $\elRibbon^{-1}$, and so the $n$-framed unknot corresponds to the trace of $\elRibbon^{-n}$, for $n  \in \field[Z]$.

\subsubsection*{Categorical trace}
The categorical trace of the projective module $X_1^\pm$ vanishes, and it is clear that
$\elRibbon$ 
acts as the identity on $X_0^\pm$ since the $\genF$'s act
trivially.
Thus, for $n \in \field[Z]$, 
\begin{align*}
	\trCat[{\hmodM[\sympFerm]}]_{X_0^\pm}(\elRibbon^{n})
	&= \tr_{X_0^\pm}(\pivotQ \betaQ S(\alphaQ))
	= \tr_{X_0^\pm}(\genK)
	= \pm 1
	\ ,
\end{align*}
as claimed in Table~\ref{table:invariants}.

\subsubsection*{The modified trace on the projective ideal}

For the modified trace on the projective ideal, we find on the even sector that the invariant of the $n$-framed unknot ($n \in \field[Z]$) coloured with $P_0^\varepsilon$ is given by
\begin{align*}
	\modTr_{P_0^\varepsilon} (\elRibbon^{-n})
	&= \symRightCoint(\elRibbon^{-n} \eQ_0^\varepsilon)
	= 
	(2 n)^N
	\symRightCoint
	\big(
	\eQ_0^\varepsilon
	\prod_{j=1}^N \genF^+_j \genF^-_j
	\big)
	\overset{(*)}=
	n^N \tfrac{1}{2} \varepsilon \beta^{-2} 
	\ .
\end{align*}
In step $(*)$ we used
\begin{align}
	\symRightCoint
	\big(
	\eQ_0^\varepsilon
	\prod_{j=1}^N \genF^+_j \genF^-_j
	\big)
	&\overset{\eqref{eq:e0+-}}= 
	\frac{1}{4}
	\sum_{m = 0}^3 \varepsilon^{m} \symRightCoint(\genK^m
	\prod_{j=1}^N \genF^+_j \genF^-_j
	)
\underset{\eqref{eq:symp_ferm_symmetrized_cointegrals}}{\overset{\eqref{eq:normalization_constant_modTr}}=}
\frac{1}{2} \varepsilon \beta^2 (-1)^N 2^{-N}
	\nonumber\\
	&    \overset{\phantom{\eqref{eq:e0+-}}}= 2^{-N-1} \varepsilon \beta^{-2} 
	\ .
	\label{eq:hat-lam-e0+-top}
\end{align}
On the odd sector, indecomposable projectives are simple and therefore have
one-dimensional endomorphism spaces.
It was shown in \cite[Lem.~3.14]{FGR2} that $\elRibbon^{\pm 1} \eQ_1^\varepsilon = \varepsilon \beta^{\pm 1}
\eQ_1^\varepsilon$ (where $\eQ_1^\varepsilon$ was given in \eqref{eq:e1+-}),
so that for integer powers we have $\elRibbon^{-n} \eQ_1^\varepsilon =
(\varepsilon \beta)^{-n} \eQ_1^\varepsilon$.
The invariant of the $n$-framed unknot coloured by $X_1^\pm$ is then given by
\begin{align*}
	\modTr_{X_1^\varepsilon}(\elRibbon^{-n}) 
	= 2^{-N} \modTr_{2^N X_1^\varepsilon}(\elRibbon^{-n}) 
	= 2^{-N} (\varepsilon \beta)^{-n} \modTr_{2^N X_1^\varepsilon}
	(\id) = 2^{-N} (\varepsilon \beta)^{-n} 
	\symRightCoint(\eQ_1^\varepsilon)
	\ .
\end{align*}
We find
\begin{align}
	\symRightCoint(\eQ_1^\varepsilon)
	&=
	- \varepsilon
	\frac{1}{2} i
	(-2)^N
	\symRightCoint
	\big(
	\eQ_1 \genK \prod_{j=1}^N \genF^+_j \genF^-_j
	\big)
	\nonumber\\
	&=
	- \varepsilon
	\frac{1}{2} i
	(-2)^N
	\frac{1}{2}
	c_\modTr (\beta^2 + i - \beta^2 - i^3)
	=
	\varepsilon \frac{1}{2}
	\ ,
	\label{eq:sym-coint-on-e1+-}
\end{align}
where $c_{\modTr}$ is the constant from \eqref{eq:normalization_constant_modTr}.

\subsubsection*{Pullback trace}

For the pullback trace, we restrict ourselves to the situation worked out in \Cref{sec:pullback-trace-from-A}, i.e.\ to the pullback trace obtained from $A \subset H = \sympFerm(2,\beta)$.
We first use \Cref{prop:symp_ferm_f2_acting_on_PM} to
rewrite the action of $\elRibbon^{-n}$ on $P_\parMat$ as, for $u\in P_\parMat$,
\begin{align*}
	\elRibbon^{-n}.u  =
	\prod_{j=1}^2
	\big(
	\oneQ +2n \genF^+_j \genF^-_j
	\big).u
	= \bigl(\oneQ +2n (1 + \det\parMat) \genF^+_1 \genF^-_1\bigr).u
	\ .
\end{align*}
Thus for $n \in \field[Z]$ the invariant of the $n$-framed unknot coloured by $P_\parMat$ is given by
\begin{align*}
	\pullbackTrace[{\modTr^A}]_{P_\parMat} (\elRibbon^{-n})
	= 2n (1 + \det\parMat) 
	\frac{4}{1 - i \beta^2}
	\modTr^{A,r}_{P_0^+(A)}(\genF^+_1 \genF^-_1)
	= 2 n (1 + \det\parMat) ~,
\end{align*}
where we used \eqref{eq:modified_trace_on_indecomp_proj} and \eqref{eq:sym-A-coint-computed} to see that in terms of the symmetrised  cointegral for $A$,
\begin{align}\label{eq:P0A+-mod-trace}
	\modTr^{A,r}_{P_0^+(A)}(\genF^+_1 \genF^-_1) = 
	\symRightCointA(\eQ_0^+\genF^+_1 \genF^-_1) = (1-i \beta^2)/4
\end{align} 

\subsection{Framed Hopf link}

We compute the  Hopf link invariants 
$\hopflink{X,n}{U,m}$ coloured with $X \in  \tensIdeal$ and $U \in \Irr$, and with $n$-fold twist on $X$ and $m$-fold twist on $U$. 
Since we can take $U$ to be irreducible, the twist amounts to an overall factor times the invariant of $\hopflink{X,n}{U,0}$. For the latter we need to compute the trace $\modTr_X(\elRibbon^{-n} \modS_{\cat}(\phi_U)_X)$ of the $(-n)$'th power of the ribbon element \eqref{eq:symp_ferm_ribbon} composed  with the open Hopf link operator given in  \eqref{eq:openHopfLink_definition}.
We saw in Section~\ref{sec:SF_modular_action_elements} that $\modS_{\cat}(\phi_U)_X$ is given by multiplication with the central element $\elQbold{\chi}_U$ from \eqref{eq:internal_characters_central_SF}.

\subsubsection*{Categorical trace}
For $\tensIdeal = \cat$,
the only thing to compute is the invariant of $\hopflink{X_0^\varepsilon,n}{X_0^\delta,m}$.
But with respect to the grade 0 part of $\hmodM[\sympFerm]$, $X_0^\pm$ is transparent (see \Cref{rem:X-transparent}) 
and has trivial twist, therefore we just obtain the product of the categorical dimensions of the simple objects.

\subsubsection*{Modified trace on projective ideal}

We first note that since $X_1^\delta$ is simple, the $m$-fold twist is given by multiplying with an overall factor, which in
    this case is given by $(\delta \beta^{-1})^m$ as explained below \eqref{eq:hat-lam-e0+-top}.
Alternatively, one can directly use the twist eigenvalue given in  \cite[Sec.\,2.6]{FGR2}.

We start by computing $\hopflink{P,n}{U,m}$ for $U = X_0^\pm$.
If $P$ is in the grade 0 part of $\hmodM[\sympFerm]$, we use that $X_0^\pm$ is transparent relative to $P$ and has trivial twist, resulting in the quantum dimension $\pm1$ of $X_0^\pm$ times the invariant of the $n$-framed $P$-coloured unknot already computed in \Cref{sec:compute-unknot}:
\begin{align*}
\hopflink{P_0^\varepsilon,n}{X_0^\delta,m}: ~~  \tfrac{\varepsilon \delta}{2} \, n^N  \beta^{-2} \ .
\end{align*}
On the other hand, for $P = 2^N X_1^\varepsilon$ we first compute the zero-framed invariant,
\begin{align*}
	\hopflink{2^N X_1^\varepsilon,0}{X_0^\delta,0}: \quad
	\modTr_{2^N X_1^\varepsilon} ( \modS_{\cat}(\phi_{X_0^\delta})_{2^N X_1^\varepsilon} )
	= \symRightCoint(\eQ_1^\varepsilon)
	\overset{\eqref{eq:sym-coint-on-e1+-}}= \tfrac{1}{2} \varepsilon
	\ ,
\end{align*}
and by linearity the invariant of $\hopflink{X_1^\varepsilon, 0}{X_0^\delta, 0}$ is then equal to $\varepsilon 2^{-N-1}$. Using the twist eigenvalue for $X_1^\varepsilon$ and the fact that $X_0^\pm$ has trivial twist, we get
\begin{align*}
\hopflink{X_1^\varepsilon,n}{X_0^\delta,m}: ~~\varepsilon 2^{-N-1} (\varepsilon \beta^{-1})^n \ .
\end{align*}

Next we compute $\hopflink{P,n}{U,m}$ for $U = X_1^\delta$. 
For the zero-framed Hopf link we have
\begin{align*}
	\hopflink{2^N X_1^\varepsilon,0}{X_1^\delta,0}: \quad
	\modTr_{2^N X_1^\varepsilon} ( \modS_{\cat}(\phi_{X_1^\delta})_{2^N X_1^\varepsilon} )
	&= 2^N \symRightCoint( (\eQ_1^+ - \eQ_1^-) \eQ_1^\varepsilon )
 \\
	&= 2^N \varepsilon \symRightCoint( \eQ_1^\varepsilon ) = 2^{N-1}
	\ ,
\end{align*}
whence the invariant of $\hopflink{X_1^\varepsilon,n}{X_1^\delta,m}$ is $2^{-1} \, (\varepsilon \beta^{-1})^n \, (\delta \beta^{-1})^m $.

Finally,
\begin{align*}
	\hopflink{P_0^\varepsilon,n}{X_1^\delta,0}: \quad
	\modTr_{P_0^\varepsilon} (\elRibbon^{-n} \modS_{\cat}(\phi_{X_1^\delta})_{P_0^\varepsilon} )
	&\overset{\eqref{eq:internal_characters_central_SF}}= 
	\delta \beta^2 4^N
	\symRightCoint( \elRibbon^{-n} \eQ_0^\varepsilon \genK \prod_{j=1}^N \genF^+_j \genF^-_j )
	\overset{(*)}= \delta 2^{N - 1}
	\ ,
\end{align*}
where the calculation for $(*)$ is as in \eqref{eq:hat-lam-e0+-top}, up to an overall factor of $\varepsilon$, together with the observation that from the ribbon element \eqref{eq:symp_ferm_ribbon} only $\eQ_0$ contributes, which acts as one on $\eQ_0^\varepsilon$. Thus we arrive at
\begin{align*}
\hopflink{P_0^\varepsilon,n}{X_1^\delta,m}: ~~ \delta 2^{N - 1} (\delta \beta^{-1})^m \ .
\end{align*}

\subsubsection*{Pullback trace}

The next remark shows that the invariant of $\hopflink{P_\parMat,n}{X_0^\pm,m}$ is $\pm 1$ times that of the framed unknot  $\framedUnknot{n}{P_\parMat}$, while the invariant of $\hopflink{P_\parMat,n}{X_1^\pm,m}$ is zero.

\begin{remark}\label{rem:link-only-knot-matters}
Consider any link $L$ with (at least) one of its components coloured by $P_\parMat$. In this situation the symplectic fermion example is somewhat special in the following sense.
By \Cref{cor:manifold_invariants_factor_through_Gr} it is enough to consider the case where the other components are labelled by simple objects. If any component is labelled by $X_1^\pm$, we can use that
$X_1^\pm$ is in the pullback ideal. So we may as well compute the invariant via the
pullback trace on the appropriate endomorphism of $X_1^\pm$.
But by \Cref{prop:pullback_trace_vanishes_on_proj_SF}, the pullback trace vanishes on
projective modules of the larger algebra. Hence to get a non-zero invariant, all other link components have to be decorated by $X_0^\pm$. But these objects are transparent in the degree 0 (see \Cref{rem:X-transparent}),
and so just result in overall factors of $\pm1$. Altogether, the invariant of $L$ is given by the invariant of the $P_\parMat$-coloured component times the product of the quantum dimensions of the colours of the other components.
\end{remark}

\subsection{Torus knot}
The torus knot $\torusknot{m}{X}$ for $m \in 2 \field[Z]+1$ is the braid closure of $m$
braidings.
Cutting open one strand results in the endomorphism $(\xi_m)_X$ of $X$, defined as (omitting all tensor products):
\begin{align}
	(\xi_m)_X := \big[ & \underbrace{X 
		\xrightarrow{\sim} \tensUnit X
		\xrightarrow{\coevR_X \,1_X} (\dualL{X} X) X
		\xrightarrow{\alpha^{-1}} \dualL{X} (X X)}_{=:F}
	\notag\\
	&\xrightarrow{1_{\dualL{X}} \,c_{X,X}^m} 
	\underbrace{\dualL{X} (X X)
		\xrightarrow{\alpha} (\dualL{X} X) X
		\xrightarrow{\evL_X \,1_X} \tensUnit X
		\xrightarrow{\sim} X}_{=:G} \big]~.
	\label{eq:xi-F-G-def}
\end{align}
Note that $\xi_m$ defines a natural isomorphism of the identity functor and is therefore given by the action of a central element of the quasi-Hopf algebra, which by abuse of notation we also call $\xi_m$, i.e.\ $(\xi_m)_X = \xi_m.(-)$. To compute $\xi_m$ in terms of the quasi-Hopf algebra data in \eqref{eq:qHopf-ev},
\eqref{eq:qRpR} and
\eqref{eq:pivot-qHopf-coev}, first note that the maps $F$ and $G$ in \eqref{eq:xi-F-G-def} read
\begin{align*}
	F : u \mapsto \sum_{i=1}^{\dim X} v^i \tensor \pL_1 \pivotQ\inv . v_i \otimes \pL_2.u 
	\quad,\quad
	G : w^* \otimes u \otimes v \mapsto 
	\langle w^* \mid \qL_1 . u \rangle \, \qL_2.v \ ,
\end{align*}
where $\{ v_i \}$ is a basis of $X$ and $\{ v^i \}$ the dual basis of $\dualL{X}$.
For the explicit computation it will be convenient to distinguish $m>0$ and $m<0$. Namely,
write $|m| = 2n+1$ and 
for $m>0$ set $\xi^+_n := \xi_{2n+1}$ and for $m<0$ set $\xi^-_n := \xi_{-(2n+1)}$. 
Then, explicitly,
\begin{align}
	\xi_n^+
	&= \qL_2 \rMatrix_1 (\monodromy^n)_1 \pL_1 
	\pivotQ\inv \qL_1 \rMatrix_2 (\monodromy^n)_2 \pL_2
	\ ,\label{eq:xi+-_n_def}
	\\
	\xi_n^-
	&= \qL_2 (\overline{\monodromy}^n)_2 \overline{\rMatrix}_2 \pL_1 
	\pivotQ\inv \qL_1 (\overline{\monodromy}^n)_1 \overline{\rMatrix}_1 \pL_2
	\ ,\nonumber
\end{align}
where $\overline{\rMatrix}$ and $\overline{\monodromy}$ are the multiplicative inverses
of $\rMatrix$ and $\monodromy$.

\subsubsection*{Categorical trace}
Let us start with $\tensIdeal = \cat$.
Using the explicit formulas for the braiding and its inverse, one immediately sees that
$\rMatrix$ and $\overline{\rMatrix}$ act as $\pm \oneQ \tensor \oneQ$ on $X_0^\pm
\tensor X_0^\pm$.
Since on $X_0^\pm$ also $\pivotQ$ acts as $\pm \oneQ$, we find that $\xi_n^\pm$ acts as
$\oneQ$, and so $\trCat[{\hmodM[\sympFerm]}]_{X_0^\pm}(\xi_n^\pm) = \trCat[{\hmodM[\sympFerm]}]_{X_0^\pm}(\id) = \pm 1$ as claimed in Table~\ref{table:invariants}.

\subsubsection*{Modified trace on projective ideal}

On the projective ideal, we used Mathematica to compute $\xi^\pm_n$ for small values of
$n$ and $N$, and then interpolated a (conjectural)
general formula from that.
Explicitly, we did the following, splitting the problem into odd and even sector.

On the even sector, the endomorphism spaces of $P_0^\pm$ have basis $\prod_{j=1}^N
(\genF^+_j)^{s_j} (\genF^-_j)^{t_j}$ with $s, t \in \field[Z]_2^N$ such that $|s| +
|t|$ is even. 
After multiplying with $\eQ_0$ (which acts as $\id$ on $P_0^\pm$), these elements are central \cite[Prop.\,3.15]{FGR2} and the endomorphism is given by acting with them.
The modified trace only sees the top component, i.e.\ the coefficient of the basis
vector  
with all $s_j$ and $t_j$ equal to $1$.
Thus, to compute the link invariant of $\torusknot{m}{P_0^\varepsilon}$, it is
enough to extract the top component of the element $\xi_m \eQ_0^\varepsilon$.
Small values -- more precisely, $N = 1$ for $0 \leq n \leq 10$, $N=2$ for $0 \leq n
\leq 5$, and $N = 3$ for $n = 0, 1$ -- suggest the following general formula
\begin{align*}
	\xi_n^\pm\big|_{\substack{\text{Top component,} \\ \text{acting on }
			P_0^\varepsilon}}
	= (\pm 2 (2n+1))^N \eQ_0 \prod_{j=1}^N \genF^+_j \genF^-_j
	\ ,
\end{align*}
and from this one computes
\begin{align*}
	\modTr_{P_0^\varepsilon} ( \xi_m )
	= (2m)^N 
	\symRightCoint(\eQ_0^\varepsilon \prod_{j=1}^N \genF^+_j \genF^-_j)
	\overset{\eqref{eq:hat-lam-e0+-top}}=\varepsilon m^N \tfrac{1}{2} \beta^{-2}
	\ .
\end{align*}
On the odd sector, projective modules are simple and thus $\xi_n^\pm$ act as scalars.
Small values ($N = 1$ for $0 \leq n \leq 10$, and $N=2$ for $0 \leq n
\leq 5$) suggest the following general formula
\begin{align*}
	\xi^\pm_n \big|_{\text{acting on } X_1^\varepsilon}
	= \varepsilon (2n + 1)^N \beta^{\pm (2 n - 1)}
	= \varepsilon m^N \beta^{m-2}
	\ ,
\end{align*}
where we used $m = \pm(2n+1)$ and $(\pm1)^N = \beta^{\pm2 - 2}$.
From this one computes
\begin{align*}
	\modTr_{X_1^\varepsilon} ( \xi_m )        
	= 2^{-N} \varepsilon m^N  \beta^{m-2}
	\symRightCoint(\eQ_1^\varepsilon)
	\overset{\eqref{eq:sym-coint-on-e1+-}}=
	2^{-N-1} m^N \beta^{m-2}
	\ .
\end{align*}

\subsubsection*{Pullback trace}

Let $P_\parMat$ be the lift of $P_0^+(A)$ defined in Section~\ref{sec:left-of-proj-A}.
We introduce a shorthand notation, which will allow for more compact statements and computations. For any $X \in \sympFerm$, $u \in P_\parMat$, consider the maps $\rho^{1,2}_{X,u} : \sympFerm \otimes \sympFerm \to P_\parMat$,
$$
    \rho^1_{X,u}(E \otimes F) = EXF.u \quad , \quad
    \rho^2_{X,u}(E \otimes F) = FXE.u \ .
$$
For two elements $A,B \in \sympFerm \otimes \sympFerm$ we will write
$$
A \approx B
~~\overset{\text{def.}}{\Longleftrightarrow}~~
\rho^{a}_{X,u}(A) = \rho^{a}_{X,u}(B) ~
\text{for all $X,\parMat,u$ and $a = 1,2$} \ .
$$

A helpful lemma, which is not hard to verify, is the following.

\begin{lemma}\label{lem:R-simple-PMPM}
We have
	\begin{align*}
		\prod_{j=1}^2 (x \oneQ \tensor \oneQ + y \genF^-_j \omega_\pm \tensor \genF^+_j)
		~\approx~& 
		x^2 \oneQ \tensor \oneQ 
		+ xy (1 + a^- b^+) \genF^-_1 \genK \tensor \genF^+_1
		\\ \quad
		&  + xy a^+ b^- \genF^+_1 \genK \tensor \genF^-_1
	\end{align*}
\end{lemma}


Let us first compute $\xi_n^+$ from~\eqref{eq:xi+-_n_def}.
Using the formula for the $R$-matrix from \Cref{sec:qHopf:sympf_ferm_elements}, one
finds:
\begin{lemma}\label{lem:R-calc}
We have
	\begin{align*}
		\rMatrix
		\approx
		\sum_{r, s \in \field[Z]_2}
		(-1)^{rs} 
		\big(
		\tfrac{1}{2} 
		\genK^r \tensor \genK^s 
		+ (1 + a^- b^+) 
		\genK^{r+1} \genF^-_1 \tensor \genK^s \genF^+_1
		+ a^+ b^- 
		\genK^{r+1} \genF^+_1 \tensor \genK^s \genF^-_1
		\big)
	\end{align*}
	and for $n \in \field[N]$
	\begin{align*}
		\monodromy^n
		\approx
		\oneQ \tensor \oneQ
		+ 2 n (1 + \det\parMat) 
		\genK \genF^-_1 \tensor \genF^+_1
		- 2 n (1 + \det\parMat)
		\genK \genF^+_1 \tensor \genF^-_1
		\ .
	\end{align*}
\end{lemma}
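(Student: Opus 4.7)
The plan is to establish the two formulas by direct computation, exploiting that $P_\parMat \otimes P_\parMat$ lies entirely in the even sector (so $\eQ_1$ annihilates it and $\omega_- \approx \genK$).

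For the $\rMatrix$ formula I substitute the explicit expression from \Cref{sec:qHopf:sympf_ferm_elements} with $N = 2$. The sum $\sum_{n,m=0}^{1} \beta^{nm}\rho_{n,m}\eQ_n \otimes \eQ_m$ collapses to $\rho_{0,0} = \tfrac{1}{2}\sum_{k,l\in\field[Z]_2} (-1)^{kl}\genK^k\otimes\genK^l$, and the preceding lemma (with $x = 1$, $y = -2$) rewrites $\prod_{j=1}^{2}(\oneQ\otimes\oneQ - 2\genF^-_j\omega_-\otimes\genF^+_j)$ as $\oneQ\otimes\oneQ - 2(1+a^-b^+)\genF^-_1\genK\otimes\genF^+_1 - 2a^+b^-\genF^+_1\genK\otimes\genF^-_1$. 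Multiplying by $\rho_{0,0}$ on the left and commuting each $\genK$ past $\genF^\pm_1$ via $\genK\genF^\pm_1 = -\genF^\pm_1\genK$ produces the claimed expression after reindexing $k \mapsto r$, $l \mapsto s$.

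For the monodromy, I use the closed form $\monodromy = \sum_I g_I \otimes f_I$ from~\eqref{eq:M} restricted to $a = b = 0$, which expands as a sum over $(c, d) \in (\field[Z]_2^2)^2$. Substituting $\genF^\varepsilon_2 \approx a^\varepsilon\genF^-_1 + b^\varepsilon\genF^+_1$ on $P_\parMat$ reduces each term to a combination of $\genK$'s and powers of $\genF^\pm_1$. Tuples with $|c| + |d| \geq 3$ vanish because they place three or more $\genF^\pm_1$ in at least one tensor factor, which telescope to $0$ via $(\genF^\pm_1)^2 = 0$ and the even-sector identity $\genF^+_1 \genF^-_1 \approx -\genF^-_1\genF^+_1$. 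The four tuples with $|c| + |d| = 1$ combine into the linear-in-$\genF$ part: the two with the $\genF$ on the $j = 2$ factor introduce the combination $a^-b^+ - a^+b^- = \det\parMat$, so together with the direct $j = 1$ terms they produce precisely $Y := 2(1 + \det\parMat)(\genK\genF^-_1\otimes\genF^+_1 - \genK\genF^+_1\otimes\genF^-_1)$, yielding $\monodromy \approx \oneQ\otimes\oneQ + Y + W$ where $W$ collects the $|c| + |d| = 2$ contributions.

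For the $n$-th power, one checks that $W$ is proportional to $\genF^-_1\genF^+_1\otimes\genF^-_1\genF^+_1$, that $YW \approx WY \approx W^2 \approx 0$ (each being cubic in $\genF^\pm_1$ on a tensor factor), and that $Y^2 \approx 2W$; the multinomial expansion of $(\oneQ\otimes\oneQ + Y + W)^n$ then collapses to $\oneQ\otimes\oneQ + nY + n^2 W$. The main obstacle is to justify $n^2 W \approx 0$: the element $\genF^-_1\genF^+_1\otimes\genF^-_1\genF^+_1$ does not vanish in $H \otimes H$, so this equivalence can only be read in the sense of the preceding lemma, namely that $\genF^-_1\genF^+_1 \cdot X \cdot \genF^-_1\genF^+_1$ acts as zero on $P_\parMat$ for the relevant $X$ appearing between the two copies of $\monodromy^n$ in~\eqref{eq:xi+-_n_def}. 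I would carry out this finite check using the explicit formulas for $\pL$, $\qL$, $\pivotQ$, $\rMatrix$ given in \Cref{sec:qHopf:sympf_ferm_elements} and the four-dimensional module structure of $P_\parMat$ from \Cref{sec:left-of-proj-A}.
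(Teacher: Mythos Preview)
Your computation of $\rMatrix$ is exactly the paper's approach. For $\monodromy$ you start from the closed form $\sum_I g_I \otimes f_I$ rather than multiplying out $\rMatrix_{21}\rMatrix$; this is a valid alternative and the $|c|+|d|=1$ terms do combine as you claim (the $\genF^-_1\otimes\genF^-_1$ and $\genF^+_1\otimes\genF^+_1$ cross terms from $j=2$ cancel in pairs).

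The detour through $W$ and the relation $Y^2 \approx 2W$ is where you make life harder than necessary, and that relation is not verified (and likely carries a wrong sign or coefficient). The point you are missing is that the very check you propose at the end --- that $\genF^+_1\genF^-_1\, X\, \genF^+_1\genF^-_1$ acts as zero on $P_\parMat$ for the relevant $X$ --- applies uniformly to \emph{every} term whose tensor factors are each quadratic in $\genF^\pm_1$. This is easy: on $P_\parMat$ the element $\genF^+_1\genF^-_1$ has image $\field[C]v_2$, and $\genF^\pm_1 v_2 = 0$, so for any $X$ in the subalgebra generated by $\genK,\genF^\pm_1$ one has $X v_2 \in \field[C]v_2$ and then the outer $\genF^+_1\genF^-_1$ kills it. Both $W$ and $Y^2$ are of this form, so $W \approx 0$ and $Y^2 \approx 0$ directly. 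The paper exploits this by dropping such terms already in the product $\rMatrix_{21}\rMatrix$, obtaining $\monodromy \approx \oneQ + Y$; the induction step is then just $(\oneQ+Y)(\oneQ+nY) = \oneQ + (n{+}1)Y + nY^2 \approx \oneQ + (n{+}1)Y$. No multinomial bookkeeping, no $Y^2 \approx 2W$, no separate treatment of $n^2 W$.
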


\begin{proof}
	We have
	\begin{align*}
		\rMatrix 
		&= \tfrac{1}{2} 
		\big(
		\sum_{n, m, r, s \in \field[Z]_2}
		\beta^{2nm} (-1)^{rs} i^{-rn+sm} \genK^r \eQ_n \tensor \genK^s \eQ_m
		\big)
		\prod_{j=1}^2 (\oneQ \tensor \oneQ - 2 \genF^-_j \omega_- \tensor \genF^+_j)
		\\
    & \hspace*{-8mm} \overset{\text{Lem.\,\ref{lem:R-simple-PMPM}}}\approx
		\tfrac{1}{2} 
		\big(
		\sum_{r, s \in \field[Z]_2}
		(-1)^{rs} \genK^r \tensor \genK^s 
		\big)
		\big(
		\oneQ \tensor \oneQ
		- 2 (1 + a^- b^+) \genF^-_1 \genK \tensor \genF^+_1
		- 2 a^+ b^- \genF^+_1 \genK \tensor \genF^-_1
		\big)
		\\
		=& 
		\sum_{r, s \in \field[Z]_2}
		(-1)^{rs} 
		\big(
		\tfrac{1}{2} 
		\genK^r \tensor \genK^s 
		+ (1 + a^- b^+) 
		\genK^{r+1} \genF^-_1 \tensor \genK^s \genF^+_1
		+ a^+ b^- 
		\genK^{r+1} \genF^+_1 \tensor \genK^s \genF^-_1
		\big)
	\end{align*}
	Thus
	\begin{align*}
		\monodromy
		&= \rMatrix_{21} \rMatrix
		\\
		&\approx
		\sum_{r, s \in \field[Z]_2}
		(-1)^{rs} 
		\big(
		\tfrac{1}{2} 
		\genK^s 
		\tensor 
		\genK^r 
		+ (1 + a^- b^+) 
		\genK^s \genF^+_1
		\tensor 
		\genK^{r+1} \genF^-_1 
		+ a^+ b^- 
		\genK^s \genF^-_1
		\tensor 
		\genK^{r+1} \genF^+_1 
		\big)
		\\ \times &
		\sum_{m, n \in \field[Z]_2}
		(-1)^{mn} 
		\big(
		\tfrac{1}{2} 
		\genK^m \tensor \genK^n 
		+ (1 + a^- b^+) 
		\genK^{m+1} \genF^-_1 \tensor \genK^n \genF^+_1
		+ a^+ b^- 
		\genK^{m+1} \genF^+_1 \tensor \genK^n \genF^-_1
		\big)
		\\
		&=
		\tfrac{1}{2} 
		\sum_{r, s, m, n\in \field[Z]_2}
		(-1)^{rs + mn} 
		\bigg[
		\genK^{m+s} \tensor \genK^{n+r}
		\big(
		\tfrac{1}{2} 
		\oneQ \tensor \oneQ
		+ (1 + a^- b^+) 
		\genK \genF^-_1 \tensor \genF^+_1
		\\ & \quad
		+ a^+ b^- 
		\genK \genF^+_1 \tensor \genF^-_1
		\big)
		+
		(-1)^{m+n}
		\genK^{s+m} \tensor \genK^{n+r}
		\big(
		(1 + a^- b^+) 
		\genF^+_1 \tensor \genK \genF^-_1
		\\ & \quad
		+ a^+ b^- 
		\genF^-_1 \tensor \genK \genF^+_1
		\big)
		\bigg]
	\end{align*}
	One can show that
	\begin{align*}
		\eQ_0 \tensor \eQ_0 \cdot
		\sum_{r, s, m, n\in \field[Z]_2}
		(-1)^{rs + mn} \genK^{m+s} \tensor \genK^{n+r}
		= 4 \eQ_0 \tensor \eQ_0
	\end{align*}
	and 
	\begin{align*}
		\eQ_0 \tensor \eQ_0 \cdot
		\sum_{r, s, m, n\in \field[Z]_2}
		(-1)^{rs + mn + m + n} \genK^{m+s} \tensor \genK^{n+r}
		= -4 \eQ_0 \genK \tensor \eQ_0 \genK
		\ ,
	\end{align*}
	whence
	\begin{align*}
		\monodromy
		&\approx
		2
		\bigg[
		\big(
		\tfrac{1}{2} 
		\oneQ \tensor \oneQ
		+ (1 + a^- b^+) 
		\genK \genF^-_1 \tensor \genF^+_1
		+ a^+ b^- 
		\genK \genF^+_1 \tensor \genF^-_1
		\big)
		\\ & \quad
		-
		\genK \tensor \genK
		\big(
		(1 + a^- b^+) 
		\genF^+_1 \tensor \genK \genF^-_1
		+ a^+ b^- 
		\genF^-_1 \tensor \genK \genF^+_1
		\big)
		\bigg]
		\\
		&=
		\oneQ \tensor \oneQ
		+ 2 (1 + \det\parMat) 
		\genK \genF^-_1 \tensor \genF^+_1
		- 2 (1 + \det\parMat)
		\genK \genF^+_1 \tensor \genF^-_1
	\end{align*}
The expression for $\monodromy^n$ from the statement now follows by induction.
\end{proof}

\begin{lemma}
	For $n \in \field[N]$ we have
    \begin{align*}
		\pullbackTrace[{\modTr^A}]_{P_\parMat} \big(\xi^+_n\big) 
		~=~
		2 (2n + 1) (1 + \det\parMat) \pullbackTrace[{\modTr^A}]_{P_\parMat} \big(\genF^+_1 \genF^-_1\big)
	\end{align*}
\end{lemma}

\begin{proof}
	We  first compute $\rMatrix \monodromy^n$ using \Cref{lem:R-calc}:
	\begin{align*}
		\rMatrix
		\monodromy^n
		& \approx
		\sum_{r, s \in \field[Z]_2}
		(-1)^{rs} 
		\big(
		\tfrac{1}{2} 
		\genK^r \tensor \genK^s 
		+ (1 + a^- b^+) 
		\genK^{r+1} \genF^-_1 \tensor \genK^s \genF^+_1
		\\ &
		+ a^+ b^- 
		\genK^{r+1} \genF^+_1 \tensor \genK^s \genF^-_1
		\big)
		\times
		\big(
		\oneQ \tensor \oneQ
		+ 2 n (1 + \det\parMat) 
		\genK \genF^-_1 \tensor \genF^+_1
		\\ &
		- 2 n (1 + \det\parMat)
		\genK \genF^+_1 \tensor \genF^-_1
		\big)
		\\ 
		& \approx
		\sum_{r, s \in \field[Z]_2}
		(-1)^{rs} 
		\times
		\big(
		\tfrac{1}{2} 
		\genK^r \tensor \genK^s 
		+ (n (1 + \det\parMat) + 1 + a^- b^+) 
		\genK^{r+1} \genF^-_1 \tensor \genK^s \genF^+_1
		\\ & \quad
		- (n (1 + \det\parMat) - a^+ b^-)
		\genK^{r+1} \genF^+_1 \tensor \genK^s \genF^-_1
		\big)
		\ .
	\end{align*}
	By definition in \eqref{eq:xi+-_n_def}, $\xi_n^+
		= \qL_2 \rMatrix_1 (\monodromy^n)_1 \pL_1 
		\pivotQ\inv \qL_1 \rMatrix_2 (\monodromy^n)_2 \pL_2$,
	and since we will evaluate the right symmetrised cointegral of the smaller algebra
	$A$ on $\eQ_0^+ \xi_n^+$, we see that we can also disregard the $\genK^r \tensor
	\genK^s$ terms in the above expression for $\rMatrix \monodromy^n$.
	Indeed, for these terms, no $\genF$'s will appear in $\xi^+_n \eQ_0^+$, and so these
	terms don't contribute to the final result.
	
	Note now that on the tensor product of even sectors, $\pL$ and $\qL$ from
	\eqref{eq:qRpR_SF} act as the identity, and also $\eQ_0 \pivotQ = \eQ_0 \genK$.
	Thus, abbreviating $\mathrm{T}(X) = \pullbackTrace[{\modTr^A}]_{P_\parMat} (X)$, we get 
	\begin{align*}
		\mathrm{T}(\xi_n^+)
		&=
        \mathrm{T}\bigl(
		\rMatrix_1 (\monodromy^n)_1 \genK \rMatrix_2 (\monodromy^n)_2\bigr) 
		\\
		&=
        \sum_{r, s \in \field[Z]_2}
        (-1)^{rs}\mathrm{T} 
		\big(
		(n (1 + \det\parMat) + 1 + a^- b^+) 
		\genK^{r+1} \genF^-_1 \genK \genK^s \genF^+_1
		\\ & \hspace{10em}
		- (n (1 + \det\parMat) - a^+ b^-)
		\genK^{r+1} \genF^+_1 \genK \genK^s \genF^-_1
		\big)
		\\
		&=
		\sum_{r, s \in \field[Z]_2}
        (-1)^{rs + s}
		(2n + 1) (1 + \det\parMat) \mathrm{T}\big(
		\genK^{r+s} \genF^+_1 \genF^-_1 \big)
		\ .
	\end{align*}
Noticing
		$\eQ_0 \sum_{r, s \in \field[Z]_2}
		(-1)^{rs + s}
		\genK^{r+s} 
		= 2 \eQ_0$,
	 the claim follows.
\end{proof}

As a result of all of the computations, we finally find
that the $\relativeProjectives{A}{\sympFerm}$-based invariant of the $P_\parMat$-coloured
torus knot $\torusknot{2n+1}{P_\parMat}$ is
\begin{align*}  
    \pullbackTrace[{\modTr^A}]_{P_\parMat} (\xi^+_n)
	&\overset{\eqref{eq:pullback_trace_choice}}= 
	2 (2n + 1) (1 + \det\parMat) 
	\,\frac{4}{1 - i \beta^2}\,
	\modTr^{A,r}_{P_0^+(A)}(\genF^+_1 \genF^-_1)
	\\
	&\overset{\eqref{eq:P0A+-mod-trace}}= 2 (2n + 1) (1 + \det\parMat) \ .
\end{align*}

Completely analogous computations for the inverse braiding and the associated monodromy
show that $\pullbackTrace[{\modTr^A}]_{P_\parMat} (\xi_n^-) = - \pullbackTrace[{\modTr^A}]_{P_\parMat} (\xi_n^+)$.

\section{Lens space invariants for symplectic fermions}
\label{sec:lens_space_symp_ferm}
In order to do the computation, we will first make some observations
which simplify the problem and do not depend on the specific case of symplectic
fermions, and then we proceed with the actual computation.

\subsection{Lens space invariants for quasi-Hopf algebras}

Here we will rewrite the categorical expression in \Cref{prop:invariants_lens_spaces_general} in the case $\cat = \hmodM[H]$ for a unimodular and twist-nondegenerate (recall \eqref{eq:qHopf-twistnondeg}) ribbon quasi-Hopf algebra $H$.

We start with the morphism $f(a) : \tensUnit \to \coend$ from \eqref{eq:definition_morphism_representing_Lpq}. Recall from 
\eqref{eq:isomorphism_EndidC_Hom1L}
the isomorphism $\rho \circ
\psi \colon \End(\id_{\cat}) \to \cat(\tensUnit, \coend)$. The natural endomorphisms of the identity functor are in bijection (as an algebra) with the centre of $H$.
If we call this last bijection $\xi$, then altogether
\begin{align}\label{eq:ZH-EndId-C1L}
	Z(H) \xrightarrow{~\xi~} \End(\id_{\cat}) \xrightarrow{~\rho \circ
		\psi~} \cat(\tensUnit, \coend) \ .
\end{align}
In \eqref{eq:SC-TC-explict} we transported the endomorphisms $\modS_\coend \circ (-)$, $\modT_\coend \circ (-)$ of $\cat(\tensUnit, \coend)$ to endomorphisms $\modS_{\cat}$, $\modT_{\cat}$ of  $\End(\id_{\cat})$. 
We denote their further transport to $Z(H)$ by $\modS_{Z}$ and $\modT_{Z}$. Explicit expressions for $\modS_{Z},\modT_{Z} :Z(H) \to Z(H)$ in terms of quasi-Hopf data can be found in \cite[Thm.\,8.1]{FGR1}.\footnote{
    The formulas in \cite{FGR1} were computed in the factorisable case.
    However, the same computation also works without assuming factorisability, resulting in the same expressions for the endomorphisms $\modS_Z$ and $\modT_Z$ of $Z(H)$.
    But they do not give a projective $\slTwoZ$-action unless $H$ is factorisable.
}
For example, $\modT_{Z}(z) = \ribbon\inv.z$, while the expression for $\modS_{Z}$ is more involved. As noted in \Cref{rem:modular_actions}\,(3), for an $H$-module $V$ we set 
$\phi_V = (\rho \circ \psi)\inv(\chi_V)$, and we have $\chi_\tensUnit = \unitL$.
We denote $\phiQ{V} := \xi^{-1}(\phi_V) \in Z(H)$, so that \eqref{eq:ZH-EndId-C1L} maps
\begin{align}\label{eq:phi-phi-chi-qHopf}
	\phiQ{V} \longmapsto \phi_V \longmapsto \chi_V~.
\end{align}
See \cite[Sec.\,7.6]{FGR1} for an explicit quasi-Hopf algebra formula for $\phiQ{V}$.

Let us write 
$\widehat{f(a)} := (\rho \circ \psi\circ \xi)^{-1}(f(a)) \in Z(H)$. 
Using the above notation, we obtain
\begin{align}\label{eq:hat-fa}
	\widehat{f(a)} = 
	\big(
	\prod_{i=n}^1 
	\modT_{Z}^{a_i} \circ \modS_{Z} 
	\big)
    (\phiQ{\tensUnit})
 \ .
\end{align}

Given a tensor ideal $\tensIdeal \subset \cat$, a modified trace $\modTr$ on $\tensIdeal$ and an object $P \in \tensIdeal$, we obtain a (possibly degenerate) symmetric and invariant bilinear form on $Z(H)$ via
\begin{align}
	(z, z')_P
	= \modTr_{P}(z z')  \ .
	\label{eq:Z(H)-pairing-via-trace_general}
\end{align}
We can now reformulate the categorical expressions from \Cref{prop:invariants_lens_spaces_general} as follows:

\begin{proposition}\label{prop:lens-quasi-Hopf-general}
	In terms of quasi-Hopf data, the invariants of $\mathfrak{L}_{x}(\alpha, P)$, $x \in \{*,\circ\}$ from \Cref{prop:invariants_lens_spaces_general} read
	\begin{align}
		\invDGGPR(\mathfrak{L}_{x}(\alpha, P))
		= \anomaly^{-\sigma} \DD^{-1 -n} 
		(\widehat\alpha,\zeta_x)_P
		\notag
	\end{align}
	where $\widehat\alpha \in Z(H)$ represents the natural transformation $\alpha$ of the identity functor and 
	\begin{align}
		\zeta_* &= \langle \intLyu | \alphaQ \widehat{f(a)} \rangle \cdot \oneQ \ ,
		\notag
		\\
		\zeta_\circ &= 
		\langle
		\intLyu |
		S(\coassQ_1 \invCoassQ_1) 
		\alphaQ \widehat{f(a)}
		\coassQ_2 \monodromy_1 \invCoassQ_2
		\rangle
		\cdot \coassQ_3 \monodromy_2 \invCoassQ_3
		\ .
		\notag
	\end{align}
	Here, $\intLyu\in H^*$ is the integral of $\coend$, see \Cref{sec:Lyu-coend-int-for-qHopf}.
\end{proposition}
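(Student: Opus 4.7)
The plan is to translate every categorical ingredient of \Cref{prop:invariants_lens_spaces_general} into its quasi-Hopf form and reassemble, using the dictionary from \Cref{sec:Lyu-coend-int-for-qHopf}: $\coend = H^*$ with universal dinatural \eqref{eq:dinatCoend_qHopf}, counit $\counitL(x) = x(\alphaQ)$, and integral $\intLyu \in H^*$. First I would spell out the isomorphism $\psi$ of \eqref{eq:isomorphism_EndidC_Hom1L} in quasi-Hopf terms: applying its defining property to $X = H$ with $y = \oneQ$ and using \eqref{eq:dinatCoend_qHopf} gives $\psi(\alpha_z)(x) = \langle x | \alphaQ z\rangle$ for every $x \in H^*$ and every $z \in Z(H)$, where $\alpha_z$ denotes the natural transformation corresponding to~$z$. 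For $\mathfrak{L}_{*}(\alpha, P)$ the computation is short: the counit axiom for $\comultL$ applied to the definition of $\rho$ in \eqref{eq:isomorphism_EndidC_Hom1L} yields $\counitL \circ \rho(g) = g \circ \intLyu$, and setting $g = \psi(\alpha_{f(a)})$ gives $\counitL \circ f(a) = \langle \intLyu | \alphaQ \widehat{f(a)}\rangle$. Combined with $\modTr_P(\alpha_P) = (\widehat\alpha, \oneQ)_P$, this reproduces the asserted form of $\zeta_*$.

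For $\mathfrak{L}_{\circ}(\alpha, P)$ the crucial step is to evaluate the natural transformation $Q_V\colon \coend \otimes V \to V$ of \eqref{eq:definition_natural_transformation_Lid_id} in quasi-Hopf terms. Because $\hmodM[H]$ is not strict, the composition defining $Q_V$ must be written with associators: $(\dualL{X} \otimes X) \otimes V \to \dualL{X} \otimes (X \otimes V)$ acts by $\invCoassQ$, and its inverse by $\coassQ$. Combining this with the double-braiding formula $\braiding_{V,X}\braiding_{X,V}(x \otimes v) = \monodromy_1 x \otimes \monodromy_2 v$ from \Cref{prop:qHA:braiding} and the quasi-Hopf left evaluation from \eqref{eq:pivot-qHopf-ev}, the dinatural universal property applied to $X = H$ with $y = \oneQ$ produces
\begin{align*}
Q_V(F \otimes v) = \langle F | S(\coassQ_1 \invCoassQ_1)\, \alphaQ\, \coassQ_2 \monodromy_1 \invCoassQ_2 \rangle \cdot \coassQ_3 \monodromy_2 \invCoassQ_3 \cdot v~.
\end{align*}
Setting $F = f(a)(1) \in H^*$ and applying $\modTr_P$ to $\alpha_P \circ Q_P \circ (f(a) \otimes \id_P)$ then expresses $\invDGGPR(\mathfrak{L}_{\circ}(\alpha, P))$, up to the prefactor $\anomaly^{-\sigma}\DD^{-1-n}$, as $F(AB) \cdot \modTr_P(\widehat\alpha\, C)$ where $A = S(\coassQ_1 \invCoassQ_1)\alphaQ$, $B = \coassQ_2 \monodromy_1 \invCoassQ_2$, $C = \coassQ_3 \monodromy_2 \invCoassQ_3$, all Sweedler-summed.

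The final step, which I expect to be the main technical obstacle, is to recognise this as $(\widehat\alpha, \zeta_\circ)_P$. It reduces to the key identity $\langle f(a)(1) | h\rangle = \langle \intLyu | \widehat{f(a)}\, h\rangle$ for all $h \in H$, which expresses that the isomorphism $\rho \circ \psi \circ \xi \colon Z(H) \to \cat(\tensUnit, \coend)$ sends a central element $z$ to the functional on $H$ given by $h \mapsto \intLyu(zh)$. I would establish it by unfolding $\rho(g) = (g \otimes \id) \circ \comultL \circ \intLyu$ together with the explicit coalgebra structure on $\coend$, with a consistency check provided by the special case $\widehat{f(a)} = \oneQ$, in which $\psi(\id) = \counitL$ and hence $f(a) = \intLyu$ by counitality. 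Once this identity is in place, centrality of $\widehat{f(a)}$ allows it to be slid into the argument of $\intLyu$ between $\alphaQ$ and $\coassQ_2 \monodromy_1 \invCoassQ_2$, producing precisely~$\zeta_\circ$. The secondary source of care is the bookkeeping of associator factors in the quasi-Hopf computation of $Q_V$ itself, which relies on the structure formulas assembled in \Cref{sec:qHopf}.
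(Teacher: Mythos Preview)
Your proposal is correct and follows essentially the same route as the paper's proof: both compute $Q_V$ in quasi-Hopf terms to obtain the same formula, and both reduce the problem to the key identity $(\rho\circ\psi\circ\xi)(z) = \langle \intLyu \mid (-)\cdot z\rangle$ in $H^*$. The one methodological difference is in how this identity is established: rather than unfolding the explicit comultiplication $\comultL$ on $\coend$ as you propose, the paper introduces the endomorphism $E(z)$ of $\coend$ determined by $E(z)\circ\dinatLyu_X = \dinatLyu_X\circ(\id\otimes\xi(z)_X)$, observes that $\psi(\xi(z)) = \counitL\circ E(z)$, and proves the commutation $(E(z)\otimes\id)\circ\comultL = (\id\otimes E(z))\circ\comultL$ directly from the defining dinatural transformation of $\comultL$ --- this yields $\rho(\psi(\xi(z))) = E(z)\circ\intLyu$ without ever writing out $\comultL$ in quasi-Hopf coordinates, which is a bit cleaner than the direct computation you have in mind.
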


\begin{proof}
	Let $z \in Z(H)$ and define $E(z) \colon \coend \to \coend$ by $E(z) \circ \dinatLyu_X = \dinatLyu_X \circ (\id \tensor \xi(z)_X)$.
 The dinatural transformation $\dinatLyu_X \colon X^* \otimes X \to \coend$ of $\coend$ is explicitly given in \eqref{eq:dinatCoend_qHopf}.
 From this it is easy to check that for $f \in \coend = H^*$ we have
	$E(z)(f) = f((-) \cdot z)$.
	
	Using the definition of the coproduct $\Delta_\coend$ (see \cite[Sec.\,3.3]{FGR1}), one can verify the identity $(E(z) \otimes \id) \circ \Delta_\coend = (\id \otimes E(z)) \circ \Delta_\coend$.
	
	The morphism $g := \psi(\xi(z)) \in \cat(\coend,\tensUnit) = H^{**}$ is defined by the dinatural transformation
	$g \circ \dinatLyu_X = \ev_X \circ (\id \otimes \xi(z)_X)$. Comparing to the definition of the counit $\counitL$ and to that of $E(z)$ above, we get $\psi(\xi(z)) = \counitL \circ E(z)$.
	The isomorphism $\rho : \cat(\coend,\tensUnit) \to \cat(\tensUnit,\coend)$ acts on $g$ as \cite[Sec.~2]{GR-proj}
	\begin{align*}
		\rho(g) = \big[ \tensUnit \xrightarrow{~\Lambda~} \coend \xrightarrow{~\Delta_\coend~} \coend \coend \xrightarrow{~g \otimes \id~} \tensUnit \coend \xrightarrow{~\sim~} \coend \big] \ .
	\end{align*}
	But $(g \otimes \id) \circ \Delta_\coend = ( \counitL \otimes \id) \circ (E(z) \otimes \id) \circ \Delta_\coend
	= ( \counitL \otimes E(z)) \circ \Delta_\coend$. Hence altogether
	\begin{align}
		\rho(\psi(\xi(z))) = E(z) \circ \Lambda = \langle \intLyu | (-) \cdot z \rangle ~.
		\label{eq:Ez-Lam}
	\end{align}
	After these preparations, we can compute $\zeta_*$ and $\zeta_\circ$:
	\begin{itemize}
		\item[$\zeta_*$:] We have
		\begin{align}
			\counitL \circ f(a)
			\overset{\eqref{eq:Ez-Lam}}=
			\counitL \circ E(\widehat{f(a)}) \circ \Lambda
			=
			\langle \intLyu | \alphaQ \widehat{f(a)} \rangle ~,
			\label{eq:eps-f(a)=Lam-alpha-hat-f(a)}
		\end{align}
		where the last equality follows from the explicit form of 
		$\counitL \in \cat(\coend,\tensUnit)=H^{**}$,
		which acts by evaluating an element of $H^*$ on $\alphaQ$  (see \cite[Sec.\,3.3]{FGR1}).
		
		\item[$\zeta_\circ$:]
		We need to compute $\modTr_P(\alpha_P \circ Q_P \circ (f(a) \tensor \id_P))$, where $Q \colon \coend \tensor
		\id_{\cat} \To \id_{\cat}$ is the natural transformation defined in
		\eqref{eq:definition_natural_transformation_Lid_id}. 
		It is not hard to see that the action of $Q_V \colon \coend \tensor V \to V$ on $f \tensor v \in \coend \tensor V$ is given by
		\begin{align}
			Q_V (f \tensor v) 
			= 
			\langle
			f | S(\coassQ_1 \invCoassQ_1) \alphaQ \coassQ_2 \monodromy_1 \invCoassQ_2
			\rangle
			\cdot
			\coassQ_3 \monodromy_2 \invCoassQ_3 . v
			\ .
		\end{align}
		Substituting $f = f(a) = E(\widehat{f(a)}) \circ \Lambda = \langle \intLyu | (-) \cdot \widehat{f(a)} \rangle$ and using that $\widehat{f(a)}$ is central gives the expressions as in the statement of the proposition. 
	\end{itemize}
\end{proof}

\subsection{The central element \texorpdfstring{$\widehat{f(a)}$}{f(a)} for symplectic fermions}

After rewriting the lens space invariants  for a general 
    unimodular and twist-nondegenerate
ribbon quasi-Hopf algebra in \Cref{prop:lens-quasi-Hopf-general}, in this section we specialise to the symplectic fermion quasi-Hopf algebra $H = \sympFerm(N, \beta)$, and we will work out the elements $\widehat{f(a)}$
introduced in~\eqref{eq:hat-fa}.

Recall from \eqref{eq:intLyu_SF_recap} our normalisation of the integral $\intLyu :
\tensUnit \to \coend$ in $\cat = \hmodM$,
\begin{align}
	\intLyu
	= \beta^6 2^{-(N-1)} \ \TOP^*
	\ ,
	\label{eq:Lyu-int-SF-with-nu=1}
\end{align}
where we abbreviated 
$$
\TOP = \prod_{j=1}^N \genF^+_j \genF^-_j \ .
$$
(When projected to $\sympFerm_0$ via $\eQ_0$, this is the top element of the Grassmann algebra, hence the name.)

We will need the internal characters $\phiQ{V}$ from \eqref{eq:phi-phi-chi-qHopf} for $\sympFerm(N, \beta)$. 
For $V = X_0^\pm, X_1^\pm$, they were given in \eqref{eq:phi-SF-explicit_X}. And since $\phiQ{V}$ only depends on the class of $V$ in the Grothendieck ring, for $V = P_0^\pm$ we have
\begin{align}\label{eq:phi-SF-explicit_P0}
	\phiQ{P_0^\pm} &= 2^{2N-1} \phiQ{X_0^+} + 2^{2N-1} \phiQ{X_0^-} \ .
\end{align}
Recall from \cite[Sec.~5]{FGR2} that the centre $Z(H)$ of $H$ can be decomposed as
\begin{align}\label{eq:SF-centre-sum}
	Z(H) = Z_P \oplus Z_{\mathbf{\Lambda}} 
\end{align}  
where the two summands have bases
\begin{align}
	Z_P ~&:~\left\{ \phiQ{P_0^+},\ \phiQ{X_1^+},\ \phiQ{X_1^-} \right\} ~,
	\nonumber\\
	Z_{\mathbf{\Lambda}} ~&:~\left\{
	\eQ_0 \prod_{j=1}^N (\genF^+_j)^{s_j} (\genF^-_j)^{t_j}
	~|~ s, t \in \field[Z]_2^N, ~ |s| + |t| \in 2\field[Z]
	\right\}
	\ .
	\label{eq:basis_center_SF}
\end{align}
The dimension of the centre is $\dim Z(H) = \dim Z_P + \dim Z_{\mathbf{\Lambda}} = 3 + 2^{2N-1}$.

We fix
\begin{align*}
	\tfrac{p}{q} = [a_n; a_{n-1}, \ldots, a_1]
\end{align*}
subject to the positivity condition \eqref{eq:cont-frac-pos-assum}.

\begin{proposition}
	\label{prop:central_element_SF_computed}
	The central element $\widehat{f(a)} = \big( \prod_{i=n}^1 \modT_{Z}^{a_i} \circ
	\modS_{Z} \big) (\phiQ{\tensUnit})$ is given by
	\begin{align*}
		\widehat{f(a)} = 
		\widehat{f(a)}\big|_{Z_{P}} + 
		\widehat{f(a)}\big|_{Z_{\mathbf{\Lambda}}} ~,
	\end{align*}    
	where
	\begin{align*}
		\widehat{f(a)}\big|_{Z_{P}}
		&=
		2^{-2N}
		\big(
		c_n^0 \phiQ{P_0^+} + c_n^+ \phiQ{X_1^+} + c_n^- \phiQ{X_1^-}
		\big) \ ,
		\\
		\widehat{f(a)}\big|_{Z_{\mathbf{\Lambda}}}
		&=
		2^N \beta^{6n + 2}
		\sum_{s \in \field[Z]_2^N}
		p^{|s|}
		\left( \frac{q}{2} \right)^{N - |s|}
		\eQ_0 \prod_{j=1}^N \big( \genF^+_j \genF^-_j \big)^{s_j} 
		\ .
	\end{align*}
 The coefficients $c_i^{0,\pm}$ are determined recursively by
	\begin{align}
		c_i^0 &= 2^{-N} ( c_{i-1}^+ - c_{i-1}^- ) \ ,
		\nonumber \\
		c_i^\pm &= \frac{1}{2} (\pm 1)^{a_i} \beta^{-a_i} 
		(c_{i-1}^+ + c_{i-1}^- \pm 2^N c_{i-1}^0) \ ,
		\label{eq:coefficients_ST_projective center}
	\end{align}
	for $1 \leq i \leq n$, with initial values $c_0^0 = 1$ and $c_0^\pm = 0$.
\end{proposition}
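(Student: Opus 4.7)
The plan is to proceed by induction on $n$, the length of the continued-fraction expansion. Set
\[
z_i \;:=\; \Big( \prod_{j=i}^{1} \modT_Z^{a_j} \circ \modS_Z \Big)(\phiQ{\tensUnit}) \,\in\, Z(H), \qquad z_0 := \phiQ{\tensUnit},
\]
so that $z_n = \widehat{f(a)}$ and $z_i = \ribbon^{-a_i} \cdot \modS_Z(z_{i-1})$ (using $\modT_Z(z) = \ribbon^{-1} z$). The induction hypothesis states that $z_i$ has the form given in the proposition with $(p, q, n, c_n^{0,\pm})$ replaced by $(p_i, q_i, i, c_i^{0,\pm})$, where $p_i/q_i = [a_i;a_{i-1},\ldots,a_1]$.

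For the base case $n=1$, note that $\modS_Z(\phiQ{\tensUnit}) = \elQbold{\chi}_{\tensUnit} = \eQ_0 + \eQ_1 = \oneQ$ by~\eqref{eq:internal_characters_central_SF} (since $\tensUnit = X_0^+$), so $z_1 = \ribbon^{-a_1}$. The explicit formula for $\ribbon^{-m}$ recalled in \Cref{sec:compute-unknot} then gives
\[
\eQ_0 z_1 = \eQ_0 \prod_{j=1}^N \bigl(\oneQ + 2 a_1 \genF^+_j \genF^-_j\bigr), \qquad \eQ_1 z_1 = \beta^{-a_1}\bigl( \eQ_1^+ + (-1)^{a_1} \eQ_1^- \bigr),
\]
where for the second equality we use $\ribbon^{-1}\eQ_1^\varepsilon = \varepsilon \beta^{-1}\eQ_1^\varepsilon$ from \cite[Lem.\,3.14]{FGR2}. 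Since $\beta^8 = 1$ and $\eQ_1^\pm = \pm 2^{-N-1}\phiQ{X_1^\pm}$, these two expressions match $\widehat{f(a_1)}|_{Z_\mathbf{\Lambda}}$ with $(p_1,q_1)=(a_1,1)$ and $\widehat{f(a_1)}|_{Z_P}$ with $c_1^0 = 0$, $c_1^\pm = \pm 2^{N-1}(\pm\beta^{-1})^{a_1}$, which is exactly the output of the recursion~\eqref{eq:coefficients_ST_projective center} from the initial values $c_0^0 = 1, c_0^\pm = 0$.

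For the inductive step I compute $z_i = \ribbon^{-a_i} \cdot \modS_Z(z_{i-1})$, treating the $Z_P$ and $Z_\mathbf{\Lambda}$ components separately. On the $Z_P$ summand, $\modS_Z(\phiQ{V}) = \elQbold{\chi}_V$ is read off from~\eqref{eq:internal_characters_central_SF}, and multiplication by $\ribbon^{-1}$ acts trivially on $\phiQ{P_0^+}$ (because $\phiQ{P_0^+} = 2^{3N}\beta^2 \eQ_0 \TOP$ and the correction $2\eQ_0\genF^+_j\genF^-_j$ in $\ribbon^{-1}|_{\eQ_0 H}$ annihilates $\TOP$) and as $\pm\beta^{-1}$ on $\phiQ{X_1^\pm}$; bookkeeping of the coefficients of $\phiQ{P_0^+}, \phiQ{X_1^+}, \phiQ{X_1^-}$ after composing the two operations reproduces the recursion~\eqref{eq:coefficients_ST_projective center} for $c_i^{0,\pm}$. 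On the $Z_\mathbf{\Lambda}$ summand I use the explicit quasi-Hopf formula for $\modS_Z$ from \cite[Thm.\,8.1]{FGR1} applied to the basis elements $\eQ_0 \prod_j(\genF^+_j)^{s_j}(\genF^-_j)^{t_j}$, together with $\ribbon^{-a_i}\eQ_0 = \eQ_0 \prod_j(\oneQ + 2 a_i \genF^+_j\genF^-_j)$. The nilpotency $(\genF^+_j \genF^-_j)^2 = 0$ ensures that the factorised form $\eQ_0 \prod_j(q_{i-1}/2 + p_{i-1}\genF^+_j\genF^-_j)$ is preserved, and the Euler-type recursion governing the continued fraction $[a_i;\ldots,a_1]$ is precisely what updates $(p_{i-1}, q_{i-1})$ to $(p_i, q_i)$.

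The main obstacle is the explicit evaluation of $\modS_Z$ on the $Z_\mathbf{\Lambda}$ subspace: while on $Z_P$ the $S$-action is controlled by the internal characters, on the nilpotent part one must unfold the quasi-Hopf formula involving the monodromy~\eqref{eq:M}, the Lyubashenko integral~\eqref{eq:Lyu-int-SF-with-nu=1}, and the canonical elements~\eqref{eq:qRpR_SF}. The bookkeeping is lengthy, and one must carefully track $\genK$-parity to distinguish which contributions land in $Z_P$ (feeding into the recursion for $c_i^{0,\pm}$) and which land back in $Z_\mathbf{\Lambda}$; verifying that the latter reassembles into the clean product $\eQ_0 \prod_j(q_i/2 + p_i \genF^+_j \genF^-_j)$ predicted by the continued-fraction recursion is the key technical point of the proof.
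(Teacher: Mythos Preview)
Your treatment of the $Z_P$ component is essentially the paper's argument: the paper quotes the $3\times 3$ matrices for $\modS_{Z_P}$ and $\modT_{Z_P}$ from \cite[Lem.\,5.1]{FGR2}, and the recursion \eqref{eq:coefficients_ST_projective center} is exactly what one gets by applying $\modT_{Z_P}^{a_i}\modS_{Z_P}$ to the coefficient vector. Your derivation of these matrix entries from $\modS_Z(\phiQ V)=\elQbold{\chi}_V$ and the $\ribbon$-eigenvalues is correct and amounts to re-deriving that lemma.

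Where you diverge from the paper is on the $Z_{\mathbf{\Lambda}}$ side, and there is a missing ingredient that would remove your ``main obstacle''. First, the decomposition $Z(H)=Z_P\oplus Z_{\mathbf{\Lambda}}$ is $\slTwoZ$-invariant (stated in the paper just before \eqref{eq:phi1-in-basis}); there are no cross-terms, so your worry about tracking contributions that land in $Z_P$ from the $Z_{\mathbf{\Lambda}}$ computation is unfounded. Second, the paper does \emph{not} unfold the quasi-Hopf formula for $\modS_Z$ from \cite[Thm.\,8.1]{FGR1} on $Z_{\mathbf{\Lambda}}$. Instead it uses \cite[Lem.\,5.2]{FGR2}: on the subalgebra $Z_{\mathrm{ev}}\subset Z_{\mathbf{\Lambda}}$ generated by the $\eQ_0\genF^+_j\genF^-_j$, the $\slTwoZ$-action factorises through the algebra isomorphism $\kappa\colon\bigotimes_{j=1}^N Z_{\mathrm{ev},j}\xrightarrow{\sim}Z_{\mathrm{ev}}$ as a tensor power of an explicit $2\times 2$ representation, $\modS|_{Z_{\mathrm{ev}}}=\beta^2\,\kappa\circ S^{\otimes N}\circ\kappa^{-1}$ and $\modT|_{Z_{\mathrm{ev}}}=\kappa\circ T^{\otimes N}\circ\kappa^{-1}$ with $S=\begin{psmallmatrix}0&2\\-1/2&0\end{psmallmatrix}$, $T=\begin{psmallmatrix}1&2\\0&1\end{psmallmatrix}$. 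Since $\eQ_0\TOP\in Z_{\mathrm{ev}}$, the whole orbit stays in $Z_{\mathrm{ev}}$, and the $N$-fold computation reduces to iterating the single $2\times 2$ matrix $T^{b}S=\begin{psmallmatrix}-b&2\\-1/2&0\end{psmallmatrix}$. This is what makes the continued-fraction recursion transparent (the paper's \Cref{prop:fa_on_non-proj_center}), and avoids the lengthy bookkeeping you anticipate.
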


The proof of this proposition will be given in the remainder of this subsection.
The decomposition \eqref{eq:SF-centre-sum} respects the 
$\slTwoZ$-action on $Z(H)$, i.e.\ $Z_P$ and $Z_{\mathbf{\Lambda}}$ are subrepresentations.
The central element
$\phiQ{\tensUnit}=\phiQ{X_0^+}$
from \eqref{eq:phi-SF-explicit_X} splits according to the direct sum decomposition \eqref{eq:SF-centre-sum} as
\begin{align}
	\phiQ{\tensUnit} = 2^{-2N} \phiQ{P_0^+} + 2^N \beta^2 \eQ_0 \TOP
	\ .
	\label{eq:phi1-in-basis}
\end{align}

\begin{proof}[Proof of \Cref{prop:central_element_SF_computed} Part 1]  
	In the ordered basis of $Z_P$ given above, the $S$- and $T$-generators act via the
	matrices (see \cite[Lem.\,5.1]{FGR2})
	\begin{align}
		\modS_{Z_P} = 
		\begin{pmatrix}
			0 & 2^{-N} & -2^{-N} \\
			2^{N-1} & \frac{1}{2} & \frac{1}{2} \\
			-2^{N-1} & \frac{1}{2} & \frac{1}{2}
		\end{pmatrix}
		\qandq
		\modT_{Z_P} =
		\begin{pmatrix}
			1 & 0 & 0 \\
			0 & \beta\inv & 0 \\
			0 & 0 & -\beta\inv
		\end{pmatrix}
		\ ,
		\notag
	\end{align}
	and it is not too hard to see that the coefficients in
	\begin{align}
		\big(
		\prod_{i=n}^1 \modT_{Z_P}^{a_i} \circ \modS_{Z_P}
		\big)
		(\phiQ{P_0^+})
		= c_n^0 \phiQ{P_0^+} + c_n^+ \phiQ{X_1^+} + c_n^- \phiQ{X_1^-}
		\ ,
		\notag
	\end{align}
	satisfy the recursive relation
	given in \eqref{eq:coefficients_ST_projective center}.
\end{proof}

\begin{remark}\label{rem:form-of-c0}
	For later use we note that the recursion relation \eqref{eq:coefficients_ST_projective center} implies that $c_i^0 \in \{0\} \cup \{ \beta^j \,|\, j \in \field[Z] \}$ for all $i$. To see this set $g_i = c_i^0$ and $h_i = 2^{-N}(c_i^+ + c_i^-)$. Then $g_0 = 1$, $g_1=0$, $h_0 = 0$, and the recursion now reads:
	\begin{align*}
		g_i = \frac{c^+_{i-1} - c^-_{i-1}}{2^{N}} = \beta^{-a_{i-1}}  \begin{cases}
			g_{i-2} &; a_{i-1} \text{ even}
			\\
			h_{i-2} &; a_{i-1} \text{ odd}
		\end{cases}
		~,~~
		h_i = \beta^{-a_{i}}  \begin{cases}
			h_{i-1} &; a_{i} \text{ even}
			\\
			g_{i-1} &; a_{i} \text{ odd}
		\end{cases}
	\end{align*}
	Thus the condition $g_i, h_i \in  \{0\} \cup \{ \beta^j \,|\, j \in \field[Z] \}$ is satisfied by the initial conditions and preserved by the recursion.
\end{remark}

To compute the action of $\prod_{i=n}^1 \modT_{Z}^{a_i} \circ \modS_{Z}$ 
on $\eQ_0 \TOP$, let $1 \leq j \leq N$ and let $Z_{\textup{ev},
	j}$ be the vector space with basis $\eQ_0 \genF^+_j \genF^-_j$ and $\eQ_0$.
We consider it as an algebra in the obvious way.
Denote by $Z_{\textup{ev}}$ the subspace of $Z_{\mathbf{\Lambda}}$ generated (as an
algebra with unit $\eQ_0$) by $\eQ_0 \genF^+_j \genF^-_j$, $1 \leq j \leq N$.
There is an algebra isomorphism
\begin{align}
	\kappa \colon \bigotimes_{j=1}^N Z_{\textup{ev}, j} \to Z_{\textup{ev}}
	, \quad x_1 \tensor \ldots \tensor x_N \mapsto x_1 \cdot \ldots \cdot x_N
	\ .
	\notag
\end{align}
Then by \cite[Lem.~5.2]{FGR2}, the $\slTwoZ$-action on $Z_{\mathbf{\Lambda}}$ restricts
to an action on $Z_{\textup{ev}}$, which can be described as
\begin{align}
	\modS_{Z_{\mathsf{\Lambda}}} \big|_{Z_{\textup{ev}}} 
	= \kappa \circ \bigg( \beta^2 \cdot \bigotimes_{j=1}^N S \bigg) \circ \kappa\inv
	\qandq
	\modT_{Z_{\mathsf{\Lambda}}} \big|_{Z_{\textup{ev}}} 
	= \kappa \circ \bigg( \bigotimes_{j=1}^N T \bigg) \circ \kappa\inv
	\ ,
	\notag
\end{align}
where $S$ and $T$ are the matrices
\begin{align}
	S = \begin{pmatrix} 0 & 2 \\ - \frac{1}{2} & 0 \end{pmatrix}
	\qandq
	T = \begin{pmatrix} 1 & 2 \\ 0 & 1 \end{pmatrix}
	\notag
\end{align}
in our chosen basis $\{\eQ_0 \genF^+_j \genF^-_j, \eQ_0\}$ of $Z_{\textup{ev}, j}$ for
each $j$.

\begin{lemma}
	\label{prop:fa_on_non-proj_center}
	Let $b_1, \ldots, b_n$ be non-zero integers, $n \geq 1$.
	Then
	\begin{align}
		&
		\big(
		\prod_{i=n}^1
		\modT_{Z_{\mathbf{\Lambda}}}^{b_i} \circ
		\modS_{Z_{\mathbf{\Lambda}}}
		\big)
		(\eQ_0 \TOP)
		\notag \\ 
		&=
		\beta^{6n} 
		\bigg(
		\frac{1}{2}
		\prod_{j=1}^{n-1} 
		[b_j; b_{j-1}, \ldots, b_1]
		\bigg)^N
		\sum_{s \in \field[Z]_2^N}
		\big(
		2 [b_n; b_{n-1}, \ldots, b_1] 
		\big)^{|s|}
		\eQ_0
		\prod_{j=1}^N
		\big( \genF^+_j \genF^-_j \big)^{s_j}
		\ .
		\label{eq:cont_fract_rewritten}
	\end{align}
\end{lemma}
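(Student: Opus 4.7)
The strategy is to exploit the tensor-product factorisation $\kappa\colon \bigotimes_{j=1}^N Z_{\textup{ev},j} \xrightarrow{\sim} Z_{\textup{ev}}$ stated just before the lemma. Since both $\modS_{Z_{\mathbf{\Lambda}}}|_{Z_{\textup{ev}}}$ and $\modT_{Z_{\mathbf{\Lambda}}}|_{Z_{\textup{ev}}}$ act diagonally through $\kappa$, with $\modS$ carrying the extra scalar $\beta^2$, the alternating composition telescopes to
\begin{equation*}
\prod_{i=n}^{1} \modT^{b_i}_{Z_{\mathbf{\Lambda}}} \circ \modS_{Z_{\mathbf{\Lambda}}}\Big|_{Z_{\textup{ev}}} = \beta^{2n}\, \kappa \circ (M_n)^{\otimes N} \circ \kappa^{-1}, \qquad M_n := T^{b_n} S\, T^{b_{n-1}} S \cdots T^{b_1} S.
\end{equation*}
Because $\eQ_0 \TOP = \kappa(w \otimes \cdots \otimes w)$ where $w$ denotes the first basis vector of each $Z_{\textup{ev},j}$, the whole computation reduces to determining the two-by-two matrix action of $M_n$ on $w$.

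From the given $S$ and $T$ I would read off $Sw = -\tfrac12 e$, $Se = 2w$, $T^b w = w$ and $T^b e = 2bw + e$, writing $e$ for $\eQ_0$. Setting $M_n w = \alpha_n w + \gamma_n e$ gives the recursion
\begin{equation*}
\alpha_{i+1} = 2\gamma_i - b_{i+1}\alpha_i, \qquad \gamma_{i+1} = -\tfrac12 \alpha_i, \qquad (\alpha_1,\gamma_1) = (-b_1,-\tfrac12).
\end{equation*}
Using the standard continued-fraction convention $[b_i;b_{i-1},\ldots,b_1] = p_i/p_{i-1}$ with recurrence $p_{i+1} = b_{i+1} p_i - p_{i-1}$, $p_0 = 1$, $p_{-1} = 0$ (the positivity assumption \eqref{eq:cont-frac-pos-assum} ensures every $p_i$ is positive so no denominator vanishes), an easy induction yields
\begin{equation*}
\alpha_n = (-1)^n p_n, \qquad \gamma_n = (-1)^n \tfrac12 p_{n-1}.
\end{equation*}

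Finally, expanding the tensor power,
\begin{equation*}
\kappa\bigl((\alpha_n w + \gamma_n e)^{\otimes N}\bigr) = \gamma_n^N \sum_{s\in\field[Z]_2^N}(\alpha_n/\gamma_n)^{|s|}\,\eQ_0 \prod_{j=1}^N (\genF^+_j \genF^-_j)^{s_j},
\end{equation*}
and I would read off $\alpha_n/\gamma_n = 2p_n/p_{n-1} = 2[b_n;b_{n-1},\ldots,b_1]$; the telescoping identity $p_{n-1} = \prod_{j=1}^{n-1}[b_j;b_{j-1},\ldots,b_1]$ (immediate from $p_k/p_{k-1} = [b_k;\ldots,b_1]$) rewrites $\gamma_n^N = (-1)^{nN}\bigl(\tfrac12\prod_{j=1}^{n-1}[b_j;b_{j-1},\ldots,b_1]\bigr)^N$. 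Combining with the prefactor $\beta^{2n}$ from the $n$ copies of $\modS$, the sign $(-1)^{nN}$ absorbs into the $\beta$-power via $\beta^{2n}(-1)^{nN} = \beta^{2n}\beta^{4n} = \beta^{6n}$ (using $\beta^4 = (-1)^N$), producing exactly \eqref{eq:cont_fract_rewritten}. No step is a genuine obstacle; the only real subtlety is carrying the alternating signs cleanly through the continued-fraction induction and combining them with the $\beta^{2n}$ prefactor.
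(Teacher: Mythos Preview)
Your proof is correct and takes essentially the same approach as the paper: both use the tensor factorisation through $\kappa$ to reduce to the single-factor computation of $M_n w$ in the two-dimensional space $Z_{\textup{ev},j}$, then induct. The paper inducts directly on the closed product form $(-1)^{Nn}\beta^{2n}2^{-N}\bigl(\prod_{j=1}^{n-1}[b_j;\ldots,b_1]\bigr)^N\prod_j(\eQ_0 + 2[b_n;\ldots,b_1]\eQ_0\genF^+_j\genF^-_j)$, whereas you phrase the same induction via the recurrence $p_{i+1}=b_{i+1}p_i-p_{i-1}$ for continued-fraction numerators and identify $\alpha_n=(-1)^n p_n$, $\gamma_n=(-1)^n\tfrac12 p_{n-1}$; these are equivalent bookkeepings of the same computation. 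One small remark: the positivity assumption \eqref{eq:cont-frac-pos-assum} is not part of the lemma's hypotheses---you only need the intermediate continued fractions $[b_j;\ldots,b_1]$ (equivalently the $p_j$) to be non-zero so that the right-hand side is defined, which is implicit in the statement itself.
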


\begin{proof}
We claim first that the left hand side of \eqref{eq:cont_fract_rewritten} equals
	\begin{align}
		(-1)^{Nn} \beta^{2n} 
		2^{-N}
		\bigg(
		\prod_{j=1}^{n-1} 
		[b_j; b_{j-1}, \ldots, b_1]
		\bigg)^N
		\prod_{j=1}^N
		\big(
		\eQ_0 + 2 [b_n; b_{n-1}, \ldots, b_1] \eQ_0 \genF^+_j \genF^-_j
		\big)
		\ ,
		\notag
	\end{align}
	and we will proceed now to prove this via induction.
	Clearly $\eQ_0 \TOP \in Z_{\textup{ev}}$.
	For any $m \in \field[N]$, we have 
	\begin{align*}
		T^m S = 
		\begin{pmatrix} -m & 2 \\ -\tfrac12 & 0 \end{pmatrix} \ .
	\end{align*}    
	This implies the base case for induction on $n$, i.e. we have
	\begin{align}
		\big( \modT^{b_1}_{Z_{\mathbf{\Lambda}}} \circ \modS_{Z_{\mathbf{\Lambda}}} \big)
		(\eQ_0 \TOP)
		=
		(-1)^N
		\beta^2 
		2^{-N}
		\prod_{j=1}^N \big( \eQ_0 + 2 b_1 \eQ_0 \genF^+_j \genF^-_j \big)
		\ .
		\notag
	\end{align}
	Assume now that it is true up to $n$.
	Then
	\begin{align}
		& 
		\left(
		(-1)^{Nn} \beta^{2n} 
		2^{-N}
		\bigg(
		\prod_{j=1}^{n-1} 
		[b_j; b_{j-1}, \ldots, b_1]
		\bigg)^N
		\right)\inv
		\big(
		\prod_{i=n+1}^1
		\modT_{Z_{\mathbf{\Lambda}}}^{b_i} \circ
		\modS_{Z_{\mathbf{\Lambda}}}
		\big)
		(\eQ_0 \TOP)
		\notag \\ \quad
		&=
		\big( 
		\modT_{Z_{\mathbf{\Lambda}}}^{b_{n+1}} \circ \modS_{Z_{\mathbf{\Lambda}}}
		\big)
		\prod_{j=1}^N
		\big(
		\eQ_0 + 2 [b_n; b_{n-1}, \ldots, b_1] \eQ_0 \genF^+_j \genF^-_j
		\big)
		\notag \\ \quad
		&=
		\beta^2
		\prod_{j=1}^N
		\big(
		2 \eQ_0 \genF^+_j \genF^-_j
		+ 2 [b_n; b_{n-1}, \ldots, b_1] 
		\big(
		- b_{n+1} \eQ_0 \genF^+_j \genF^-_j
		- \frac{1}{2} \eQ_0 
		\big)
		\big)
		\notag \\ \quad
		&\overset{(*)}=
		(-1)^N \beta^2
		\big( [b_n; b_{n-1}, \ldots, b_1] \big)^N
		\prod_{j=1}^N
		\big(
		\eQ_0 + 2 [b_{n+1}; b_{n}, \ldots, b_1] \eQ_0 \genF^+_j \genF^-_j
		\big)
		\ ,
		\notag
	\end{align}
	as claimed.
	In step $(*)$ we used that by definition of the continued fraction in \eqref{eq:continued-fraction-notation} we have
	\begin{align*}
		[b_{n+1}; b_{n}, \ldots, b_1]
		= b_{n+1} - \frac{1}{[b_{n}; b_{i-1}, \ldots, b_1]} \ .
	\end{align*}
	Multiplying out the product
	and using $\beta^4 = (-1)^N$
	yields \eqref{eq:cont_fract_rewritten}.
\end{proof}

\begin{proof}[Proof of \Cref{prop:central_element_SF_computed} Part 2]
Because of the assumptions on the continued fraction,
\Cref{prop:continued_fractions_main_statement} gives 
$$
 \prod_{j=1}^{n-1} [a_j; a_{j-1}, \ldots, a_1]
	= \frac{q}{p} \prod_{j=1}^{n} [a_j; a_{j-1}, \ldots, a_1] = q\ .
 $$
	The formula then follows from \Cref{prop:fa_on_non-proj_center}.
\end{proof}

\subsection{Categorical trace: Lyubashenko invariant}\label{sec:cat-tr-Lyu-Lens}

By \eqref{eq:lens-Lyu=epsof(a)}, it remains to compute $\counitL \circ f(a)$ which we know from \eqref{eq:eps-f(a)=Lam-alpha-hat-f(a)} to be equal to $\langle \intLyu | \widehat{f(a)} \rangle$ (recall that $\alphaQ=\oneQ$ for $\sympFerm(N, \beta)$).

Since by \eqref{eq:Lyu-int-SF-with-nu=1}, $\intLyu$ is proportional to $\TOP^*$, it can only see part of
$\widehat{f(a)}$ which lives in $Z_{\mathbf{\Lambda}}$.
Using \Cref{prop:central_element_SF_computed} we can simply read off
\begin{align}
	\langle \intLyu | \widehat{f(a)} \rangle
	& =
	2 
	\beta^{6n}
	\sum_{s \in \field[Z]_2^N}
	p^{|s|}
	\left( \frac{q}{2} \right)^{N - |s|}
	\langle
	\TOP^* |
	\eQ_0 \prod_{j=1}^N \big( \genF^+_j \genF^-_j \big)^{s_j}
	\rangle
	= \beta^{6n} p^N
	\notag
\end{align}
Combining this with $\DD = 1$ and $\anomaly = \beta^{-2}$ from \Cref{sec:SF-int-coint}, altogether \eqref{eq:lens-Lyu=epsof(a)} gives
\begin{align}
	\invLyu(\mathfrak{L}(p, q))
	= p^N
	\ .
	\label{eq:Lens-Lyu-SF}
\end{align}

\subsection{Modified trace: Renormalised Lyubashenko invariant}

Here we compute the renormalised Lyubashenko invariant $\invDGGPR(\mathfrak{L}_{x}(\alpha, P))$, $x \in \{ *,\circ \}$ with $\modTr$ the modified trace on the projective ideal. 
To do so, according to \Cref{prop:lens-quasi-Hopf-general} we need to compute the central elements $\zeta_x$ and the pairings $(-,-)_U$. 

We start with the explicit expressions for $\zeta_x$ for which we will need two preliminary lemmas. For $z \in Z(\sympFerm)$ write
\begin{align*}
	F(z) =       \langle
	\intLyu |
	S\inv(\coassQ_1 \invCoassQ_1) 
	\alphaQ \,z\,
	\coassQ_2 \monodromy_1 \invCoassQ_2
	\rangle
	\cdot \coassQ_3 \monodromy_2 \invCoassQ_3 \ .
\end{align*}
Note that $F(\widehat{f(a)}) = \zeta_\circ$.

\begin{lemma}
	$F(z) = \langle
	\intLyu | z \monodromy_1 \rangle
	\cdot \monodromy_2$
\end{lemma}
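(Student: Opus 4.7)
The lemma asserts a substantial simplification of $F(z)$: essentially all the coassociator/antipode clutter collapses, leaving only the pairing of $\intLyu$ with the monodromy. The strategy is to combine the centrality of $z$, the $H$-linearity of $\intLyu: \tensUnit \to \coend$ (for the coadjoint action on $\coend = H^*$), and the zig-zag axioms \eqref{eq:zigzag-qHopf} for $\alphaQ, \betaQ, \coassQ$.

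First I would exploit the centrality of $z$ to commute it freely past any element of $H$. In particular, rewrite
\[
F(z) = \intLyu\bigl(z\cdot S^{-1}(\coassQ_1 \invCoassQ_1)\,\alphaQ\,\coassQ_2 \monodromy_1 \invCoassQ_2\bigr)\cdot \coassQ_3 \monodromy_2 \invCoassQ_3,
\]
so that $z$ is isolated on the left of the integrand. The real work is then to show that everything between $z$ and $\monodromy_1$ is absorbed by an invariance property of $\intLyu$, and that the leftover tensor structure collapses to $\monodromy_2$.

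The crucial ingredient is that $\intLyu$ being a morphism $\tensUnit \to \coend$ in $\cat$ translates, via the coadjoint $H$-module structure \eqref{eq:dinatCoend_qHopf} on $\coend=H^*$, into the algebraic identity
\[
\intLyu\bigl(S(h_{(1)})\, a\, h_{(2)}\bigr) = \counit(h)\,\intLyu(a), \qquad h,a\in H.
\]
The plan is to apply this identity in such a way that $S(h_{(1)})$ matches $S^{-1}(\coassQ_1 \invCoassQ_1)\,\alphaQ$ and $h_{(2)}$ matches the $\coassQ_2,\invCoassQ_2$ factors sandwiching $\monodromy_1$, with $\coassQ_3 \monodromy_2 \invCoassQ_3$ being transported out of the integral via $\counit(h)$. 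Implementing this cleanly requires combining the pentagon/cocycle axiom for $\coassQ$ with the zig-zag identity $\invCoassQ_1 \betaQ S(\invCoassQ_2) \alphaQ \invCoassQ_3 = \oneQ$ (and its $S$-applied variant), which together make the $S^{-1}(\coassQ_1 \invCoassQ_1)\,\alphaQ$ prefix degenerate to $\counit$ of a suitable element, while the remaining $\coassQ_2(-)\invCoassQ_2 \otimes \coassQ_3(-)\invCoassQ_3$ structure is exactly the coproduct conjugation that relates $\monodromy$ to the tensor product $\monodromy_1 \otimes \monodromy_2$ with its correct categorical placement.

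After this absorption, the expression reduces to $\intLyu(z\,\monodromy_1)\cdot \monodromy_2$, which is the claim. The hard part will be the bookkeeping of Sweedler labels and making the quasi-triangularity axiom for $(\id\tensor\Delta)(\rMatrix)$ (and hence for $(\id\tensor\Delta)(\monodromy)$), which is itself laced with coassociator factors, align with the coassociators appearing in the zig-zag simplification — so that the two sets of coassociators cancel exactly. I expect this to be a short but delicate computation relying essentially on the categorical fact that $Q_V\circ(\intLyu\tensor\id_V)$ is a natural endomorphism of $\id_{\cat}$ and that the double braiding determines it through the coend universal property; the Hopf-algebraic proof is just an unpacking of this fact using \eqref{eq:zigzag-qHopf} and the quasi-triangularity axioms recalled in \Cref{sec:qHopf}.
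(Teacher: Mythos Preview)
Your plan aims for a general quasi-Hopf argument, but the paper's proof is entirely example-specific and relies on the concrete form of the $\sympFerm$-coassociator \eqref{eq:coassociator_SF}. There one simply splits $z$ into its even and odd parts. For $z=\eQ_0 z$ the coassociator reduces to $\oneQ^{\otimes 3}$ (since $\coassQ^{\pm1}-\oneQ^{\otimes3}\in \eQ_1\otimes\eQ_1\otimes H$), so the claim is immediate. For $z=\eQ_1 z$ one has $(\eQ_1\otimes\eQ_1\otimes\oneQ)\,\coassQ^{\pm1}=\eQ_1\otimes\eQ_1\otimes(\eQ_0\genK^N+\eQ_1\betaQ_\pm)$; one then observes that, because $\intLyu\propto\TOP^*$ and $z$ is central (hence even in the $\genF$'s), only summands of $\monodromy$ with an even number of $\genF$'s contribute, so the surviving $\monodromy_2$ commutes with $\genK$ and $\betaQ_\pm$, and the remaining third-slot factors collapse via $\genK^{2N}\eQ_0=\eQ_0$ and $\betaQ_+\betaQ_-=\oneQ$.

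The gap in your approach is that the adjoint-invariance identity $\intLyu(S(h_{(1)})\,a\,h_{(2)})=\counit(h)\intLyu(a)$ requires the prefactor and suffix to arise as the \emph{coproduct} of a single element $h$, whereas in $F(z)$ they are legs of the coassociator $\coassQ$ and its inverse. These are not of the form $h_{(1)}\otimes h_{(2)}$, and no combination of the zig-zag identities \eqref{eq:zigzag-qHopf} and the pentagon axiom will, in a generic quasi-Hopf algebra, force $S^{-1}(\coassQ_1\invCoassQ_1)\alphaQ\,\coassQ_2(-)\invCoassQ_2\otimes\coassQ_3(-)\invCoassQ_3$ to collapse to $(-)\otimes(-)$ after pairing with $\intLyu$. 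In particular, for a non-trivial coassociator the element $\intLyu(z\monodromy_1)\monodromy_2$ is not even manifestly central, while $F(z)$ always is; that these coincide is a feature of the very special $\sympFerm$-coassociator, not a general fact. Your outline also conflates the appearance of the quasi-triangularity axiom for $(\id\otimes\Delta)(\rMatrix)$ with the coassociators in $Q_V$: those coassociators come from the associator of $\dualL{X}\otimes X\otimes V$, not from expanding the $R$-matrix, so they will not cancel against the quasi-triangularity corrections. If you want a clean general argument, you would have to replace the right-hand side by the quasi-Hopf Drinfeld map (with its built-in coassociator corrections) and show it agrees with $F$; but then the statement of the lemma changes.
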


\begin{proof}
	From the expression \eqref{eq:coassociator_SF} for the coassociator of $\sympFerm$, we
	immediately see that the claim is true if $z$ is in the even sector, and
	therefore we must only show it for $z = \eQ_1 z$. 
	The first and second tensor factor of $\coassQ^{\pm1}$ in $F(z)$ are then multiplied by $\eQ_1$, and we have
	\begin{align*}
		(\eQ_1 \otimes \eQ_1 \otimes \oneQ) \cdot \coassQ^{\pm1}
		= \eQ_1 \otimes \eQ_1 \otimes 
		\big(
		\eQ_0 \genK^N 
		+ \eQ_1 \betaQ_\pm
		\big)\ .
	\end{align*}    
	Inserting this simplified expression in $F(z)$ gives
	\begin{align}
		F(z) &=
		\langle \intLyu | z \monodromy_1 \rangle 
		\big(
		\eQ_0 \genK^N 
		+ \eQ_1 \betaQ_+
		\big)
		\monodromy_2
		\big(
		\eQ_0 \genK^N 
		+ \eQ_1 \betaQ_-
		\big)
		\nonumber\\
		&=
		\langle \intLyu | z \monodromy_1 \rangle 
		\big(
		\eQ_0 \genK^N \monodromy_2 \genK^N
		+ \eQ_1 \betaQ_+ \monodromy_2 \betaQ_-
		\big)
		\ ,
	\end{align}
	where in the second step we used that $\eQ_0, \eQ_1$ are central. 
	Since $z$ is central, it in particular commutes with $\genK$ and hence contains an even number of $\genF$s. 
By~\eqref{eq:Lyu-int-SF-with-nu=1}, $\intLyu$ is a multiple of $\TOP^*$
    and so the amount of $\genF$s in the monodromy factors that contribute to the above expression is even as well.
	It follows that the non-zero summands $\monodromy_2$ commute with $\genK$ and therefore also with $\genK^N$ and $\betaQ_\pm$. 
	Since $\genK^{2N} \eQ_0 = \eQ_0$ and $\betaQ_+\betaQ_- = \oneQ$, the claim follows.
\end{proof}

Next we compute $F(z)$ for some special values of $z$:

\begin{lemma}
	\label{prop:central_element_of_natural_transformation_SF}
	We have
	\begin{align}
		F\big(\phiQ{X_1^\pm}\big) 
		&=
		\pm 2^{-N} \phiQ{P_0^+}
		+ \tfrac{1}{2} (\phiQ{X_1^+} + \phiQ{X_1^-}) \ ,
		\notag \\
		F\big(\phiQ{P_0^+}\big) 
		&= 2^{N - 1} (\phiQ{X_1^+} - \phiQ{X_1^-}) \ ,
		\notag
		\\
		F\big(\eQ_0 \prod_{j=1}^N (\genF^+_j \genF^-_j)^{s_j}\big)
		&=      \beta^2 2^{N - 2 |s|} (-1)^{|s|}
		\eQ_0 \prod_{k=1}^N (\genF^+_k \genF^-_k)^{1 - s_k}
		\qquad \text{for} ~~ s \in \field[Z]_2^N \ .
		\notag
	\end{align}
\end{lemma}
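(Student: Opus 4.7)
The plan is to apply the preceding lemma, reducing the problem to the explicit sum
\begin{align*}
F(z) = \sum_I \langle \intLyu \mid z\, g_I \rangle \, f_I
\end{align*}
with $\intLyu = \beta^6 2^{-(N-1)} \TOP^*$ from~\eqref{eq:Lyu-int-SF-with-nu=1} (where $\TOP = \prod_j \genF^+_j\genF^-_j$) and $\monodromy = \sum_I g_I \otimes f_I$ as in \eqref{eq:M}--\eqref{eq:symp_ferm_monodromy_basis}. Because $\intLyu$ only detects the top PBW basis vector $\TOP$, the multi-index $I = (a,b,c,d)$ is severely constrained: nilpotency of $\genF^\pm_j$ means each $\genF^\pm_j$ appearing in $zg_I$ must appear exactly once, and the projection of $\eQ_a \genK^b \TOP$ onto $\TOP$ pins down $a, b$ in each case.

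For the third formula, with $z = \eQ_0 \prod_j (\genF^+_j\genF^-_j)^{s_j}$, one reads off: $a = 0$ from $\eQ_0\eQ_a = \delta_{a,0}\eQ_0$; $c_j = d_j = 1 - s_j$ from the nilpotency/top condition (for $s_j = 1$ extra $\genF^\pm_j$ would kill the term, for $s_j = 0$ both $\genF^\pm_j$ must be provided by $g_I$); and $b = 0$ from the $\TOP^*$-projection. The sum collapses to a single term. Evaluating it uses the key identity $\genF^+_k \omega_- \genF^-_k \omega_- = -\genF^+_k\genF^-_k$, which follows from $\omega_-^2 = \oneQ$ and $\{\omega_-,\genF^\pm_k\} = 0$ (both easy consequences of $\omega_- = (\eQ_0 - i\eQ_1)\genK$). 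The remaining step is an exercise in tracking the prefactors $(-\beta^2)^{ab}$, $2^{|c|+|d|}$, $(-1)^{|c| + b|d|}$, the reorderings of the $\genF$'s, and the factor $\tfrac12$ from $\eQ_0 \TOP = \tfrac12(\TOP + \genK^2 \TOP)$, after which $\beta^6 = (-1)^N \beta^2$ collects into the stated $\beta^2 2^{N-2|s|}(-1)^{|s|}$ prefactor.

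For the second formula, the input $\phiQ{P_0^+}$ is (by \eqref{eq:phi-SF-explicit_X}--\eqref{eq:phi-SF-explicit_P0}) a scalar multiple of $\eQ_0 \TOP$, namely $\phiQ{P_0^+} = 2^{3N}\beta^2 \eQ_0 \TOP$, so the computation reduces to a special case of the third formula with $s = (1,\dots,1)$; one then rewrites the resulting central element inside $Z_P$ using the identifications of \Cref{sec:SF_modular_action_elements}.

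For the first formula, $\phiQ{X_1^\pm} = \pm 2^{N+1}\eQ_1^\pm$ lies in $\eQ_1 H$, which forces $a = 1$ in the sum. Expanding $\eQ_1^\pm$ via \eqref{eq:e1+-} and further expanding $\prod_j(\oneQ - 2\genF^+_j\genF^-_j)$ as a sum over subsets $T \subset \{1,\dots,N\}$ yields a double sum; for each $T$ the nilpotency/top conditions again determine a unique multi-index $I$. Collecting the contributions organises them into two groups: those depending on the overall sign $\pm$, which combine into the $\pm 2^{-N}\phiQ{P_0^+}$ piece, and those independent of $\pm$ (coming from the $\eQ_1 \genK \prod_j(\oneQ - 2\genF^+_j\genF^-_j)$ part of $\eQ_1^\pm$), which assemble into $\tfrac12(\phiQ{X_1^+} + \phiQ{X_1^-})$.

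The main obstacle is the bookkeeping in the first formula: the expansion of $\eQ_1^\pm$ produces $2^N$ terms, each contributing via a different multi-index, and one must carefully track all the signs (from $(-\beta^2)^{ab}$, $(-1)^{|c|+b|d|}$, the $\omega_-$ anticommutations, the reorderings of $\genF$'s, and the $\eQ_1\genK^b$-projections onto $\TOP$) to see them reassemble into the claimed $Z_P$-elements.
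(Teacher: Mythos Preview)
Your overall strategy matches the paper's exactly: start from the preceding lemma $F(z)=\sum_I\langle\intLyu\mid z\,g_I\rangle f_I$, use $\intLyu\propto\TOP^*$ to collapse the multi-index, and track the prefactors. For the third and the first formulas your sketch is essentially the paper's argument.

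There is, however, a genuine gap in your treatment of the second formula. You assert $\phiQ{P_0^+}=2^{3N}\beta^2\,\eQ_0\TOP$ and then invoke the third formula at $s=(1,\dots,1)$. But that route gives $F(\phiQ{P_0^+})=2^{2N}\eQ_0$, which lies in $Z_{\mathbf{\Lambda}}$ and cannot be ``rewritten inside $Z_P$'' as $2^{N-1}(\phiQ{X_1^+}-\phiQ{X_1^-})=2^{2N}\eQ_1$. Indeed, $\eQ_0\TOP$ is already one of the basis vectors of $Z_{\mathbf{\Lambda}}$, so if $\phiQ{P_0^+}$ were a scalar multiple of it the direct sum $Z(H)=Z_P\oplus Z_{\mathbf{\Lambda}}$ would fail. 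The paper's proof instead uses $\phiQ{P_0^+}=2^{3N}\beta^2\,\genK\,\eQ_0\TOP$; the extra $\genK$ is exactly what forces the surviving index to be $b=1$ (rather than $b=0$), because $\langle\TOP^*\mid\eQ_0\genK^{b+1}\TOP\rangle=\tfrac12\delta_{b,1}$, and this flips the output from $\eQ_0$ to $\eQ_1$. So the second formula is \emph{not} a special case of the third: the $\genK$ factor changes the parity selection. (There is an apparent inconsistency between the displayed formulas \eqref{eq:phi-SF-explicit_X}--\eqref{eq:phi-SF-explicit_P0} and the expression used in the proof, but the latter is what is compatible with the centre decomposition \eqref{eq:SF-centre-sum} and with \eqref{eq:phi1-in-basis}.)
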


\begin{proof}
	Recall from
	\Cref{sec:qHopf:sympf_ferm_elements} that the monodromy matrix of $\sympFerm$ can be written
	as $\monodromy = \sum_{I \in X} g_I \tensor f_I$, where the index set $X$ is
	$\field[Z]_2 \times \field[Z]_2 \times \field[Z]_2^N \times \field[Z]_2^N$, and for
	$I = (a,b,c,d) \in X$, $f_I$ and $g_I$ are as in
	\eqref{eq:symp_ferm_monodromy_basis}.
	Substituting \eqref{eq:Lyu-int-SF-with-nu=1} and \eqref{eq:phi-SF-explicit_X}, one computes
	\begin{align}
		&F(\phiQ{X_1^\pm}) 
		= 
		\pm \beta^6 \, 2^{2} \sum_{I \in X} f_I \times
		\langle \TOP^* | \eQ_1^\pm g_I \rangle
		\notag \\ 
		&\overset{(a)}=
		\pm \beta^6 2^{2}
		\sum_{b,c}
		(- \beta^2)^{b} 2^{2 |c|} (-1)^{b |c|}
		\cdot 
		\genK \eQ_b \prod_{k=1}^N (\genF^-_k \genF^+_k)^{c_k}
		\times
		\langle \TOP^* | 
		\eQ_1^\pm
		\genK^b \prod_{k=1}^N (\genF^+_k \genF^-_k)^{c_k}
		\rangle 
		\notag \\ 
		&\overset{(b)}=
		\pm \beta^6 2
		\sum_{b,c}
		(- \beta^2)^{b} 2^{2 |c|} (-1)^{b |c|}
		\cdot 
		\genK \eQ_b \prod_{k=1}^N (\genF^-_k \genF^+_k)^{c_k}
		\notag \\ & \quad \times
		\left(
		\delta_{b, 0} \delta_{c, 1}
		\langle \TOP^* | \eQ_1 \TOP \rangle 
		\pm i
		\delta_{b, 1}
		\langle \TOP^* | 
		\eQ_1 
		\prod_{k=1}^N 
		\big(
		\delta_{c_k, 0} \oneQ
		- (2 \delta_{c_k, 0} + \delta_{c_k, 1}) \genF^+_k \genF^-_k
		\big)
		\rangle 
		\right)
		\notag \\ 
		&\overset{(c)}=
		\pm \beta^6
		\bigg(
		2^{2 N} \genK \eQ_0 \prod_{k=1}^N \genF^-_k \genF^+_k
		\mp i (-1)^N \beta^2 
		\cdot 
		\genK \eQ_1 
		\prod_{k=1}^N 
		\sum_{c_k = 0}^1
		(-4)^{c_k}
		\big( 2 \delta_{c_k, 0} + \delta_{c_k, 1} \big)
		(\genF^-_k \genF^+_k)^{c_k}
		\bigg)
		\notag \\ 
		&\overset{(d)}=
		\pm 
		\beta^6 (-1)^N 
		2^{2 N} 
		\genK \eQ_0 \TOP
		- i 2^N
		\genK \eQ_1 \prod_{k=1}^N \big( \oneQ - 2 \genF^+_k \genF^-_k \big) \ .
		\notag
	\end{align}
	In step $(a)$ we used that $\eQ_1^\pm \eQ_a = \delta_{a,1} \eQ_1^\pm$, that
	the evaluation against $\TOP^*$ can be non-zero only if each $\genF^+_k$ is paired with $\genF^-_k$, resulting in $d=c$, 
	and that $\eQ_1 \genF^+_k \omega_- \genF^-_k \omega_- = -\eQ_1 \genF^+_k\genF^-_k$, giving an additional factor $(-1)^{|c|}$.
	In step $(b)$ we substituted the explicit form of $\eQ_1^\pm$ in \eqref{eq:e1+-} and used
	$\eQ_1(\oneQ - 2 \genF^+_k \genF^-_k) (\genF^+_k \genF^-_k)^{c_k} = 
	\eQ_1 \big(\delta_{c_k, 0} \oneQ
	- (2 \delta_{c_k, 0} + \delta_{c_k, 1}) \genF^+_k \genF^-_k\big)$.
	For step (c) note that only the product with all $\genF^+_k \genF^-_k$ can be non-zero against $\TOP^*$, resulting in a factor 
	$\prod_{k=1}^N \big( 2 \delta_{c_k, 0} + \delta_{c_k, 1} \big)$.
	In addition, we replaced $\sum_c \prod_{k=1}^N \leadsto \prod_{k=1}^N \sum_{c_k=0}^1$. The extra factor of $(-1)^N$ in both summands in step $(d)$ arises from reordering the $\genF$s.
	
	To write this in our preferred basis of the centre, note from \eqref{eq:e1+-} and \eqref{eq:phi-SF-explicit_P0} that
	\begin{align}
		\phiQ{P_0^+} &= 2^{3N}
		\beta^2 \genK \eQ_0 \TOP \ ,
		\nonumber\\
		\phiQ{X_1^+} + \phiQ{X_1^-}
		&= 2^{N+1} \big( \eQ_1^+ - \eQ_1^- \big)
		= -2^{N+1} i \eQ_1 \genK \prod_{j=1}^N (\oneQ - 2 \genF^+_j \genF^-_j)
		\ ,
		\nonumber\\
		\phiQ{X_1^+} - \phiQ{X_1^-}
		&= 2^{N+1} \big( \eQ_1^+ + \eQ_1^- \big)= 2^{N+1} \eQ_1
		\notag
		\ .
	\end{align}
	From this we read off
	\begin{align}
		F( \phiQ{X_1^\pm} )
		=
		\pm 2^{-N} \phiQ{P_0^+}
		+ \tfrac{1}{2} (\phiQ{X_1^+} + \phiQ{X_1^-})
		\ ,
		\notag
	\end{align}
	as claimed.
	For the remaining two equalities, first note that for $z \in Z(\sympFerm)$ in the even sector, i.e.\ $z = \eQ_0 z$, we have
	\begin{align}
		F(z)
		&= 
		\beta^6 \, 2^{-N+1} \sum_{I \in X} f_I \,
		\langle \TOP^* | z\, g_I \rangle
		\notag \\ 
		&=
		\beta^6 \, 2^{-N+1} \sum_{b, c}
		2^{2 |c|}
		(-1)^{b |c|}
		\langle 
		\TOP^*
		| z 
		\genK^b \prod_{k=1}^N (\genF^+_k \genF^-_k)^{c_k}
		\rangle
		\eQ_b \prod_{k=1}^N (\genF^-_k \genF^+_k)^{c_k}
		\notag
		\ .
	\end{align}
	This immediately yields
	\begin{align}
		F( \phiQ{P_0^+} )
		= 2^{2N} \eQ_1
		= 2^{N - 1} (\phiQ{X_1^+} - \phiQ{X_1^-})
		\notag
	\end{align}
	and
	\begin{align}
		F\big(\eQ_0 \prod_{j=1}^N (\genF^+_j \genF^-_j)^{s_j} \big)
		&= \beta^6 \, 2^{-N} 
		\sum_{b, c}
		2^{2 |c|}
		\langle 
		\TOP^* 
		| 
		\prod_{j=1}^N (\genF^+_j \genF^-_j)^{s_j + c_j}
		\rangle
		\eQ_0 \prod_{k=1}^N (\genF^-_k \genF^+_k)^{c_k}
		\notag
		\notag \\ 
		&=
		\beta^6 
		2^{N - 2 |s|} 
		(-1)^{N - |s|}
		\eQ_0 
		\prod_{k=1}^N (\genF^+_k \genF^-_k)^{1 - s_k} ~.
		\notag
	\end{align}
\end{proof}

With these preparations, we can compute $\zeta_*$, $\zeta_\circ$ from \Cref{prop:lens-quasi-Hopf-general}:

\begin{lemma}
	\label{prop:central_element_of_LPQ}
	We have
	\begin{align*}
		\zeta_* &= \langle \intLyu | \widehat{f(a)} \rangle \cdot \oneQ
		= \beta^{6n} p^N \cdot \oneQ
		\\
		\zeta_\circ &= F(\widehat{f(a)})
		= 
		2^{-2N}
		\big(
		c_{n+1}^0 \phiQ{P_0^+}
		+ c_{n+1}^+ \phiQ{X_1^+}
		+ c_{n+1}^- \phiQ{X_1^-}
		\big)
		\\ 
		& 
		\hspace{6em} +
		\beta^{6n}
		p^N
		\sum_{s \in \field[Z]_2^N}
		\left(- \tfrac{2 q}{p} \right)^{|s|}
		\eQ_0 
		\prod_{j=1}^N 
		(\genF^+_j \genF^-_j)^{s_j}
		\ .
	\end{align*}
	In the expression for $\zeta_\circ$ we have extended the recursive definition of $c_i^{0,\pm}$ in \eqref{eq:coefficients_ST_projective center} to $n+1$ by setting $a_{n+1} = 0$.
\end{lemma}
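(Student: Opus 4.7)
The statement has two parts, and in both the strategy is to feed the explicit expression for $\widehat{f(a)}$ from \Cref{prop:central_element_SF_computed} (split as a sum of a $Z_P$-component and a $Z_{\mathbf{\Lambda}}$-component) into the two scalar/central-element constructions defining $\zeta_*$ and $\zeta_\circ$, and then use the already-established formulas for those constructions on the basis elements of $Z_P$ and $Z_{\mathbf{\Lambda}}$.

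For $\zeta_*$ the plan is immediate: since $\alphaQ = \oneQ$ in $\sympFerm$ by \Cref{sec:symplectic_fermions_qHA}, we have $\zeta_* = \langle \intLyu \mid \widehat{f(a)}\rangle \cdot \oneQ$, and this pairing was already computed in \Cref{sec:cat-tr-Lyu-Lens}, giving $\beta^{6n} p^N$. The $Z_P$-part of $\widehat{f(a)}$ contributes zero because $\intLyu$ is proportional to $\TOP^*$ and the basis elements $\phiQ{P_0^+}, \phiQ{X_1^\pm}$ do not contain the top monomial $\TOP$; only the $Z_{\mathbf{\Lambda}}$-component survives, and it contributes the top monomial coefficient, which is exactly the $|s|=N$ term in \Cref{prop:central_element_SF_computed}.

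For $\zeta_\circ = F(\widehat{f(a)})$ the plan is to split $\widehat{f(a)} = \widehat{f(a)}|_{Z_P} + \widehat{f(a)}|_{Z_{\mathbf{\Lambda}}}$ and apply the linear map $F$ to each summand using \Cref{prop:central_element_of_natural_transformation_SF}. On the $Z_P$-part, $F$ acts by the matrix determined by $F(\phiQ{P_0^+}) = 2^{N-1}(\phiQ{X_1^+}-\phiQ{X_1^-})$ and $F(\phiQ{X_1^\pm}) = \pm 2^{-N}\phiQ{P_0^+} + \tfrac12(\phiQ{X_1^+}+\phiQ{X_1^-})$; applied to the triple $(c_n^0, c_n^+, c_n^-)$ this yields new coefficients whose formulas one then checks to agree with the recursion \eqref{eq:coefficients_ST_projective center} specialised to $a_{n+1}=0$ (since then $\modT_{Z_P}^{a_{n+1}}$ is the identity and $F$ restricted to $Z_P$ plays the role of an extra $\modS_{Z_P}$-step). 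The three required identities are direct arithmetic, so this part reduces to verifying that the recursion with $a_{n+1}=0$ produces precisely $c_{n+1}^0 = 2^{-N}(c_n^+ - c_n^-)$, $c_{n+1}^\pm = \tfrac12(c_n^+ + c_n^- \pm 2^N c_n^0)$.

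For the $Z_{\mathbf{\Lambda}}$-part, $F$ acts on the basis vector $\eQ_0 \prod_j (\genF^+_j\genF^-_j)^{s_j}$ by sending it to $\beta^2 2^{N-2|s|} (-1)^{|s|}$ times the complementary monomial indexed by $1-s$. Substituting this into $\widehat{f(a)}|_{Z_{\mathbf{\Lambda}}} = 2^N\beta^{6n+2}\sum_s p^{|s|}(q/2)^{N-|s|}\,\eQ_0\!\prod_j(\genF^+_j\genF^-_j)^{s_j}$ and reindexing by $t_k := 1-s_k$ (so $|t|=N-|s|$), the powers of $2$, $p$ and $q$ reorganise into $p^N(-2q/p)^{|t|}$, while the prefactor $\beta^{6n+2}\cdot\beta^2 = \beta^{6n+4}$ multiplies an overall $(-1)^N$ coming from $(-1)^{|s|} = (-1)^{N-|t|}$; using $\beta^4 = (-1)^N$ this simplifies to $\beta^{6n}$, yielding exactly the claimed expression.

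The main obstacle is not conceptual but bookkeeping: keeping track of the reindexing $s \leftrightarrow 1-s$ together with the powers of $2$, $\beta$, $(-1)^N$, and checking that the $Z_P$ recursion matches the action of $F|_{Z_P}$ under the $a_{n+1}=0$ convention. Once these are done carefully the two formulas follow directly by linearity.
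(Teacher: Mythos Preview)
Your proposal is correct and follows essentially the same approach as the paper: split $\widehat{f(a)}$ into its $Z_P$- and $Z_{\mathbf{\Lambda}}$-components via \Cref{prop:central_element_SF_computed}, then apply $F$ componentwise using \Cref{prop:central_element_of_natural_transformation_SF} and match the $Z_P$-coefficients against the recursion \eqref{eq:coefficients_ST_projective center} with $a_{n+1}=0$. The only cosmetic difference is in the $Z_{\mathbf{\Lambda}}$ bookkeeping: the paper rewrites the sum as a product $\prod_j(q\,\genF^+_j\genF^-_j - \tfrac{p}{2}\,\oneQ)$ before expanding, whereas you do a direct reindexing $t=1-s$; both routes are equivalent and lead to the same simplification $\beta^{6n+4}(-1)^N=\beta^{6n}$.
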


\begin{proof}
	The expression for $\zeta_*$ was already computed in \Cref{sec:cat-tr-Lyu-Lens}. To get the result for $\zeta_\circ$, recall from \Cref{prop:central_element_SF_computed} the decomposition of $\widehat{f(a)}$ into summands in $Z(\sympFerm) = Z_P \oplus Z_{\mathbf{\Lambda}}$.
	Using the formulas from the preceding
	\namecref{prop:central_element_of_natural_transformation_SF}, we get
	\begin{align}
		&F\big( \widehat{f(a)}\big|_{Z_{\mathbf{\Lambda}}} \big) 
		\overset{\text{Prop.\,\ref{prop:central_element_SF_computed}}}=
		2^N \beta^{6n + 2}
		\sum_{s \in \field[Z]_2^N}
		p^{|s|}
		\left( \frac{q}{2} \right)^{N - |s|}
		F\big(\eQ_0 \prod_{j=1}^N \big( \genF^+_j \genF^-_j \big)^{s_j} \big)
		\notag \\ & \quad
		\overset{\text{Lem.\,\ref{prop:central_element_of_natural_transformation_SF}}}=
		2^N \beta^{6n + 4}
		\eQ_0 
		\prod_{j=1}^N 
		\sum_{s_j = 0}^1
		p^{s_j}
		\left( \frac{q}{2} \right)^{1 - s_j}
		2^{1 - 2 s_j} (-1)^{s_j}
		\big( \genF^+_j \genF^-_j \big)^{1 - s_j}
		\notag \\ & \quad
		\overset{\phantom{\text{Lem.\,\ref{prop:central_element_of_natural_transformation_SF}}}}=
		\hspace{-.5em}
		2^N \beta^{6n + 4}
		\eQ_0 
		\prod_{j=1}^N 
		\big(
		q \genF^+_j \genF^-_j
		- p 2^{-1} 
		\oneQ
		\big)
		\notag \\ & \quad
		\overset{\phantom{\text{Lem.\,\ref{prop:central_element_of_natural_transformation_SF}}}}=
		\hspace{-.5em}
		\beta^{6n}
		p^N
		\sum_{s \in \field[Z]_2^N}
		\left(- \frac{2 q}{p} \right)^{|s|}
		\eQ_0 
		\prod_{j=1}^N 
		(\genF^+_j \genF^-_j)^{s_j}
		\ .
		\notag
	\end{align}
	
	On the projective centre $Z_P$, we find
	\begin{align}
		F\big( \widehat{f(a)} \big|_{Z_{P}} \big)
		&\overset{\text{Prop.\,\ref{prop:central_element_SF_computed}}}=
		2^{-2N}
		\big(
		c_n^0 F(\phiQ{P_0^+}) + c_n^+ F(\phiQ{X_1^+}) + c_n^- F(\phiQ{X_1^-})
		\big)
		\nonumber\\      
		&\overset{\text{Lem.\,\ref{prop:central_element_of_natural_transformation_SF}}}=
		2^{-2N}
		\bigg(
		2^{-N} \big( c_n^+ - c_n^- \big)
		\phiQ{P_0^+}
		+
		\big(
		c_n^0 2^{N-1}
		+ \tfrac{1}{2} (c_n^+ + c_n^-)
		\big)
		\phiQ{X_1^+}
		\notag \\ & \hspace{7em} 
		+
		\big(
		- c_n^0 2^{N-1}
		+ \tfrac{1}{2} (c_n^+ + c_n^-)
		\big)
		\phiQ{X_1^-}
		\bigg)
		\ .
		\notag
	\end{align}
	The coefficients match the recursive definition of $c^0_{n+1}$, $c_{n+1}^\pm$ from
	\eqref{eq:coefficients_ST_projective center} under the convention that $a_{n+1}=0$. Adding the two expressions gives the claimed result for $\zeta_\circ$.
\end{proof}

This completes the computation of $\zeta_*$ and $\zeta_\circ$ and we now turn to the pairings $(-,-)_P$ from \eqref{eq:Z(H)-pairing-via-trace_general}.
Let $\modTr$ be the modified trace associated with the symmetrised cointegral
in~\eqref{eq:symp_ferm_symmetrized_cointegrals}, which was explicitly given by
\begin{align*}
	\symRightCoint(\genK^m \TOP) = \normalizationModTrSF \deltaOdd{m} (\beta^2 +
	i^m) \quad \text{where} ~~ \normalizationModTrSF = (-1)^N 2^{-N} \ .
\end{align*}
Fix an isotypic decomposition $\sympFerm = \bigoplus_U n_U \projCover{U}$ with
idempotents $e_U$ and where the sum runs over distinct irreducibles $U$.
We abbreviate 
    $\{z, z'\}_U:=(z, z')_{n_U P_U}$.
In terms of the symmetrised cointegral, the pairing is given by
\begin{align}
    \{z, z'\}_U
	= \langle \symRightCoint | e_U z z' \rangle \ .
	\label{eq:Z(H)-pairing-via-trace_proj}
\end{align}
Explicitly, $n_{X_0^\pm}=1$ and $n_{X_1^\pm}=2^N$, so that
    \begin{align}\label{eq:curly-noncurly}
    \{z, z'\}_{X_0^\pm} = (z, z')_{P_0^\pm}
    ~~ , \qquad
    \{z, z'\}_{X_1^\pm} = 2^N (z, z')_{X_1^\pm} \ .
    \end{align}
The next lemma gives the value of the pairing on the basis \eqref{eq:basis_center_SF} of $Z(\sympFerm)$.

\begin{lemma}
	\label{prop:bilinear_symmetric_forms_on_center}
	\begin{enumerate}
		\item
		For $U = X_0^\pm$ we have 
		\begin{align}
			\left\{
			\eQ_0, \phiQ{P_0^+}
			\right\}_{X_0^\pm}
			= 2^{2N - 1}
			\notag
		\end{align}
		and
		\begin{align}
			\left\{
			\eQ_0 \prod_{j=1}^N (\genF^+_j)^{r_j} (\genF^-_j)^{s_j}
			, \eQ_0 \prod_{j=1}^N (\genF^+_j)^{t_j} (\genF^-_j)^{u_j}
			\right\}_{X_0^\pm}
			&=
			\pm
			\varepsilon \delta_{r + t, 1} \delta_{s + u, 1}
			2^{-N - 1} \beta^{-2}
			\ ,
			\notag
		\end{align}
		where $\varepsilon = \varepsilon(r,s,t,u)$ is a sign, which
		is $+1$ if $r = s$ or $t = u$. 
		For all other combinations of basis elements in \eqref{eq:basis_center_SF}, the pairing is zero.

		\item
		For $U = X_1^\pm$, the only non-zero values of the pairing are
		\begin{align}
			\left\{
			\phiQ{X_1^\pm}, \phiQ{X_1^\pm}
			\right\}_{X_1^\pm}
			& = \pm 2^{2N+1}
			\notag
		\end{align}
	\end{enumerate}
\end{lemma}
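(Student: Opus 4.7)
My plan is to expand the bilinear form $\{z,z'\}_U = \langle \symRightCoint \mid e_U z z'\rangle$ from \eqref{eq:Z(H)-pairing-via-trace_proj} directly on every pair of basis vectors from the decomposition $Z(\sympFerm) = Z_P \oplus Z_{\mathbf{\Lambda}}$ in \eqref{eq:basis_center_SF}, and to exploit the fact that by \eqref{eq:symp_ferm_symmetrized_cointegrals} the form $\symRightCoint$ is supported only on the four elements $\genK^m \TOP$ with $m$ odd. As preliminary inputs I would rewrite the projective-centre basis as $\phiQ{P_0^+} = 2^{3N}\beta^2 \eQ_0 \TOP$ (combining \eqref{eq:phi-SF-explicit_X} and \eqref{eq:phi-SF-explicit_P0}) and $\phiQ{X_1^\pm} = \pm 2^{N+1} \eQ_1^\pm$, and record the isotypic idempotents $e_{X_0^\pm} = \eQ_0^\pm$ and $e_{X_1^\pm} = \eQ_1^\pm$ from \eqref{eq:e0+-} and \eqref{eq:e1+-}. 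The first observation is that every basis vector of $Z(\sympFerm)$ lies either entirely in $\eQ_0 \sympFerm$ or in $\eQ_1 \sympFerm$, so pairings mixing the two sectors (or where $e_U$ lies in the sector opposite to $zz'$) vanish automatically; this reduces the whole statement to two self-contained calculations, one in each sector.

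For $U = X_1^\pm$ the only surviving odd-sector candidates are $\{\phiQ{X_1^\varepsilon}, \phiQ{X_1^\delta}\}_{X_1^\pm}$. Using the orthogonality $\eQ_1^\varepsilon \eQ_1^\delta = \delta_{\varepsilon,\delta}\,\eQ_1^\varepsilon$, the pairing vanishes unless $\pm = \varepsilon = \delta$, in which case it collapses to $2^{2N+2}\,\symRightCoint(\eQ_1^\pm)$, and \eqref{eq:sym-coint-on-e1+-} gives $\symRightCoint(\eQ_1^\pm) = \pm 1/2$, yielding the claimed $\pm 2^{2N+1}$. For $U = X_0^\pm$ I would dispose of three sub-cases. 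First, $(\phiQ{P_0^+})^2 = 0$ because $\TOP^2 = 0$. Second, $\{\eQ_0, \phiQ{P_0^+}\}_{X_0^\pm}$ equals $2^{3N}\beta^2\, \symRightCoint(\eQ_0^\pm \TOP)$; expanding $\eQ_0^\pm = \tfrac14(\oneQ \pm \genK + \genK^2 \pm \genK^3)$ and applying \eqref{eq:symp_ferm_symmetrized_cointegrals} reduces this to $\pm c_\modTr \beta^2/2$, and substituting $c_\modTr = (-1)^N 2^{-N}$ together with $\beta^4 = (-1)^N$ produces $\pm 2^{2N-1}$. Third, for two $Z_{\mathbf{\Lambda}}$-basis vectors $z = \eQ_0 \prod_j (\genF_j^+)^{r_j}(\genF_j^-)^{s_j}$ and $z' = \eQ_0 \prod_j (\genF_j^+)^{t_j}(\genF_j^-)^{u_j}$, the product $zz'$ can contribute only through a $\TOP$-component, which requires each of $\genF_j^+$ and $\genF_j^-$ to appear exactly once across the combined product, i.e.\ $r + t \equiv s + u \equiv (1,\dots,1) \pmod 2$ componentwise; reordering then yields $zz' = \varepsilon\, \eQ_0 \TOP$, and the same $\symRightCoint$-computation as in the previous case produces $\pm \varepsilon\, 2^{-N-1}\beta^{-2}$.

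The principal obstacle is controlling the sign $\varepsilon$ in the last step. By definition it is the parity of the permutation of $2N$ generators needed to bring $\prod_j(\genF_j^+)^{r_j}(\genF_j^-)^{s_j}\prod_j(\genF_j^+)^{t_j}(\genF_j^-)^{u_j}$ into the canonical form $\prod_j \genF_j^+\genF_j^-$. In the special case $r = s$ (equivalently $t = u$, given the complementarity constraint $r+t = s+u = (1,\dots,1)$), each index $j$ contributes either the empty word or a full block $\genF_j^+\genF_j^-$ concentrated in one of the two factors; these blocks have even length, so they commute freely and no sign is incurred, giving $\varepsilon = +1$. In the remaining cases $\varepsilon$ is still well-defined as an explicit permutation parity, but only the $r=s$ refinement is required for the statement of the lemma.
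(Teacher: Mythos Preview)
Your approach is the same as the paper's: both reduce everything to evaluating $\symRightCoint(e_U\,zz')$ case by case, use the even/odd sector split to kill mixed pairings, invoke orthogonality of $\eQ_1^\pm$ for part~(2), and for part~(1) exploit that $\symRightCoint$ is supported on $\genK^{\text{odd}}\TOP$.

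There is one concrete slip. You take $\phiQ{P_0^+} = 2^{3N}\beta^2\,\eQ_0\TOP$, and your own computation then yields $\{\eQ_0,\phiQ{P_0^+}\}_{X_0^\pm} = \pm\,2^{2N-1}$ --- note the sign, which does \emph{not} match the lemma's claim of $2^{2N-1}$. The paper's proof instead uses
\[
\phiQ{P_0^+} \,=\, 2^{3N}\beta^2\,\genK\,\eQ_0\,\TOP
\]
(with an extra $\genK$), and since $\eQ_0^\pm\genK = \pm\eQ_0^\pm$ this factor exactly cancels the sign, giving $2^{2N-1}$ as stated. The $\genK$-version is also what is forced by the expansion \eqref{eq:phi1-in-basis} of $\phiQ{\tensUnit}$, and by the requirement $Z_P \cap Z_{\mathbf\Lambda} = 0$: the element $\eQ_0\TOP$ already sits in the basis of $Z_{\mathbf\Lambda}$, whereas $\eQ_0\genK\TOP$ does not, so only the latter can serve as (a multiple of) a basis vector of the complementary summand $Z_P$. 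Your literal combination of \eqref{eq:phi-SF-explicit_X} with \eqref{eq:phi-SF-explicit_P0} does produce the $\genK$-free expression, so the discrepancy traces back to a typo in one of those displayed formulas; but the lemma statement and the paper's proof are consistent with each other and with the $\genK$-version. Once you insert the $\genK$, the rest of your argument goes through verbatim.
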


\begin{proof}
	Note first of all that the map 
 $z \mapsto \{z, -\}_U$ 
 is zero on the odd part of $z$ if
	$\projCover{U}$ is from the even sector.
  The same is true when swapping even and odd.
	
	\smallskip
	
	\noindent
	(1)
	For $U = X_0^\pm$ we have $e_U = \eQ_0^\pm$ as in \eqref{eq:e0+-}.
	From \eqref{eq:phi-SF-explicit_P0} we see that $\phiQ{P_0^+} = 2^{3N} \beta^2 \genK \eQ_0 \TOP$ already contains $\TOP$ as a factor and so the only non-zero pairing can be with the basis element $\eQ_0$. We compute
	\begin{align}
		\left\{
		\eQ_0, \phiQ{P_0^+}
		\right\}_{X_0^\pm}
		&=
		2^{3N} \beta^2 
		\langle
		\symRightCoint | \eQ_0^\pm \genK \cdot \TOP
		\rangle
		= 2^{2N - 1}
		\ .
		\notag
	\end{align}
	Let now $r, s, t, u \in \field[Z]_2^N$ with $|r| + |s|$ and $|t| + |u|$ even.
	We have
	\begin{align}
		\bigg(
		\eQ_0 \prod_{j=1}^N (\genF^+_j)^{r_j} (\genF^-_j)^{s_j}
		\bigg)
		\cdot
		\bigg(
		\eQ_0 \prod_{j=1}^N (\genF^+_j)^{t_j} (\genF^-_j)^{u_j}
		\bigg)
		=
		\varepsilon \eQ_0 \prod_{j=1}^N (\genF^+_j)^{r_j + t_j} (\genF^-_j)^{s_j + u_j}
		\ ,
		\notag
	\end{align}
	where $\varepsilon$ is a sign depending on $r, s, t, u$, which we do not compute
	explicitly.
	If $r = s$ or $t = u$, this sign is $+1$ as a pair $\genF^+_j \genF^-_j$ commutes with everything in the even sector.
	This shows that
	\begin{align}
		\left\{
		\eQ_0 \prod_{j=1}^N (\genF^+_j)^{r_j} (\genF^-_j)^{s_j}
		, \eQ_0 \prod_{j=1}^N (\genF^+_j)^{t_j} (\genF^-_j)^{u_j}
		\right\}_{X_0^\pm}
		&=
		\varepsilon \delta_{r + t, 1} \delta_{s + u, 1}
		\langle
		\symRightCoint | \eQ_0^\pm \TOP
		\rangle
		\notag \\
		&\overset{\eqref{eq:hat-lam-e0+-top}}=
		\pm
		\varepsilon \delta_{r + t, 1} \delta_{s + u, 1}
		2^{-N - 1} \beta^{-2}
		\ .
		\notag
	\end{align}
	
	\noindent
	(2)
	For the odd sector, recall from \eqref{eq:phi-SF-explicit_X} that $\phiQ{X_1^\pm} = \pm 2^{N + 1} \eQ_1^\pm$, and since
	$\eQ_1^\pm$ are orthogonal, we only need to compute:
	\begin{align}
		\left\{
		\phiQ{X_1^\pm}, \phiQ{X_1^\pm}
		\right\}_{X_1^\pm}
		&=
		2^{2N+2}
		\langle
		\symRightCoint | \eQ_1^\pm
		\rangle
		\overset{\eqref{eq:sym-coint-on-e1+-}}=
		\pm
		2^{2N+1}
		\
	\end{align}
	as claimed.
\end{proof}

Now we are ready to provide the invariants for all  natural endomorphisms of the identity functor,
acting on the non-contractible loop in $L(p, q)$ as given in \eqref{eq:Lens-circ-def}. 

\begin{theorem}
	\label{prop:invariants_lens_spaces_symp_ferm_extended_version}
	We have, for the basis elements in \eqref{eq:basis_center_SF} and $t \in \field[Z]_2^N$,
	\begin{align*}
		\invDGGPR(\mathfrak{L}_\circ(\eQ_0 \prod_{j=1}^N (\genF^+_j \genF^-_j)^{t_j},
		P_0^\pm))
		&=
		\begin{cases}
			\frac{1}{2} c_{n+1}^0 \beta^{2n} \pm
			\frac{1}{2} \beta^{2} q^N
			, & t = 0,
			\\
			\pm (-1)^{|t|} \beta^{2} 
			2^{- |t| - 1} q^{N - |t|} p^{|t|}
			, & |t| > 0
		\end{cases}
	\\
		\invDGGPR(\mathfrak{L}_\circ(\phiQ{P_0^+}, P_0^\pm))
		&= 2^{2N-1} p^N
\\
		\invDGGPR(\mathfrak{L}_\circ(\phiQ{X_1^\pm}, X_1^\pm))
  &= \pm 2^{1-N} \beta^{2 n} c^\pm_{n+1}
		\ .
	\end{align*}
\end{theorem}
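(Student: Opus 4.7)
The plan is to combine \Cref{prop:lens-quasi-Hopf-general} with the explicit form of $\zeta_\circ$ from \Cref{prop:central_element_of_LPQ} and the pairing values from \Cref{prop:bilinear_symmetric_forms_on_center}. First I would note the prefactors for the symplectic fermion category: by the computation in \Cref{sec:SF-int-coint} one has $\DD=1$ and $\anomaly=\beta^{-2}$, and by \eqref{eq:Lpq-surgery-link-signature} the signature of the surgery link (with the blue loop added, which does not affect the linking matrix) is $\sigma=n$. Hence the overall prefactor reads $\anomaly^{-\sigma}\DD^{-1-n}=\beta^{2n}$, and the invariant reduces to $\beta^{2n}\,(\widehat\alpha,\zeta_\circ)_P$. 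A second key simplification is that $\beta^{8n}=1$ (since $\beta^4=(-1)^N$), so the factor $\beta^{6n}$ appearing in the $Z_{\mathbf{\Lambda}}$-part of $\zeta_\circ$ combines with $\beta^{2n}$ to give $\beta^{8n}=1$.

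Next, I would treat the three cases in turn. For $\widehat\alpha=\eQ_0\prod_j(\genF^+_j\genF^-_j)^{t_j}\in Z_{\mathbf{\Lambda}}$ paired against $\zeta_\circ$ on $P=P_0^\pm$: the projective part of $\zeta_\circ$ contributes only through $2^{-2N}c_{n+1}^0\{\widehat\alpha,\phiQ{P_0^+}\}_{X_0^\pm}$, which by \Cref{prop:bilinear_symmetric_forms_on_center}(1) is nonzero only when $t=0$ and equals $2^{-2N}c_{n+1}^0\cdot 2^{2N-1}=\tfrac12 c_{n+1}^0$; after multiplying by $\beta^{2n}$ this gives the first summand of the $t=0$ line. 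The $Z_{\mathbf{\Lambda}}$-part of $\zeta_\circ$ pairs nontrivially with $\widehat\alpha$ only for the complementary index $s=\mathbf{1}-t$ (so that in each slot the two factors $\genF^+\genF^-$ combine to the full top $\TOP$), yielding the value $\pm 2^{-N-1}\beta^{-2}$. Combining this with the coefficient $\beta^{6n}p^N(-2q/p)^{N-|t|}$, using $(-1)^N=\beta^4$ to absorb signs, and multiplying by $\beta^{2n}$, I expect to arrive at $\pm(-1)^{|t|}\beta^2\,2^{-|t|-1}p^{|t|}q^{N-|t|}$.

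For $\alpha=\phiQ{P_0^+}$ on $P_0^\pm$, I would use that $\phiQ{P_0^+}=2^{3N}\beta^2\,\genK\eQ_0\TOP$ already contains the top element $\TOP$, so its product with any element of $Z_{\mathbf{\Lambda}}$ carrying further $\genF$'s vanishes. Only the $s=0$ summand of the $Z_{\mathbf{\Lambda}}$-part of $\zeta_\circ$ survives, contributing $\beta^{6n}p^N\{\phiQ{P_0^+},\eQ_0\}_{X_0^\pm}=\beta^{6n}p^N\cdot 2^{2N-1}$, while the projective part gives $2^{-2N}c_{n+1}^0\{\phiQ{P_0^+},\phiQ{P_0^+}\}=0$ because $\TOP^2=0$. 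After the $\beta^{2n}$ prefactor we obtain $2^{2N-1}p^N$. Finally, for $\alpha=\phiQ{X_1^\pm}$ on $X_1^\pm$, only the matching $\phiQ{X_1^\pm}$-component of $\zeta_\circ$ contributes (all other pieces are in the wrong sector or give a vanishing odd-sector pairing by \Cref{prop:bilinear_symmetric_forms_on_center}(2)); converting $\{\cdot,\cdot\}_{X_1^\pm}$ to $(\cdot,\cdot)_{X_1^\pm}$ via \eqref{eq:curly-noncurly} gives $2^{-2N}c_{n+1}^\pm\cdot 2^{-N}\{\phiQ{X_1^\pm},\phiQ{X_1^\pm}\}_{X_1^\pm}=\pm 2^{1-N}c_{n+1}^\pm$, and multiplication by $\beta^{2n}$ yields the stated formula.

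The main obstacle is not any deep structural issue but careful bookkeeping: tracking the signs $\pm$ attached to $P_0^\pm$ and $X_1^\pm$ through the various pairings, correctly identifying which $s$-indices in the sum over $\field[Z]_2^N$ pair nontrivially with $\widehat\alpha$ (so that the resulting powers of $p$ and $q$ come out right), and consistently simplifying the powers of $\beta$ using $\beta^4=(-1)^N$ and $\beta^8=1$. The only mildly conceptual point is the observation that $\phiQ{P_0^+}$ acts as a ``maximal'' $\TOP$-containing element, which reduces the second case essentially to a single term.
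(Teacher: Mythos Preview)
Your proposal is correct and follows essentially the same route as the paper's own proof: both apply \Cref{prop:lens-quasi-Hopf-general} to reduce the invariant to $\beta^{2n}(\widehat\alpha,\zeta_\circ)_P$, then insert the explicit form of $\zeta_\circ$ from \Cref{prop:central_element_of_LPQ} and read off each case using the pairing values of \Cref{prop:bilinear_symmetric_forms_on_center}, with the same sign and $\beta$-power simplifications via $\beta^8=1$ and $(-1)^N=\beta^4$. Your identification of which summands survive in each pairing (the complementary index $s=\mathbf{1}-t$ in the $Z_{\mathbf{\Lambda}}$-part, only $s=0$ for $\phiQ{P_0^+}$, and only the matching odd-sector term for $\phiQ{X_1^\pm}$) matches the paper exactly.
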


Note that combining the first formula in the theorem above for $t=0$ with \Cref{rem:form-of-c0} results in the invariant $\invDGGPR(\mathfrak{L}_\circ(\id, P_0^\pm))$ as stated in \Cref{prop:invariants_lens_spaces_symp_ferm}.

\begin{proof}
Recall from \Cref{prop:lens-quasi-Hopf-general} that
$\invDGGPR(\mathfrak{L}_{\circ}(\alpha, P)) = \anomaly^{-\sigma} \DD^{-1 -n}
(\widehat\alpha,\zeta_\circ)_P$, 
with
	$\sigma=n$ by \eqref{eq:Lpq-surgery-link-signature} and
$\zeta_\circ$ is given in \Cref{prop:central_element_of_LPQ}.
	From \Cref{sec:SF-int-coint} we have $\DD = 1$ and $\anomaly = \beta^{-2}$ so the surgery coefficient of $\mathfrak{L}_\circ$ is $\beta^{2\sigma}$.
	Let $t \in \field[Z]_2^N$ with $|t| \geq 1$.
	Using \Cref{prop:bilinear_symmetric_forms_on_center}, we compute
	\begin{align}
		&
		\big\{
		\eQ_0 \prod_{j=1}^{N} (\genF^+_j \genF^-_j)^{t_j}, \zeta_\circ \big\}_{X_0^\pm}
		\notag \\ & \quad
		=
		\beta^{6n} p^N
		\sum_{s \in \field[Z]_2^N}
		\left(- \tfrac{2 q}{p} \right)^{|s|}
		\big\{
		\eQ_0 \prod_{j=1}^N (\genF^+_j \genF^-_j)^{t_j}
		\,,\,
		\eQ_0 \prod_{j=1}^N (\genF^+_j \genF^-_j)^{s_j}
		\big\}_{X_0^\pm}
		\notag \\ & \quad
		=
		\pm 
		\beta^{6n + 6} p^N
		2^{-N-1} 
		\sum_{s \in \field[Z]_2^N}
		\left(- \tfrac{2 q}{p} \right)^{|s|}
		\delta_{s,1 - t} 
		\notag \\ & \quad
		=
		\pm 
		\beta^{6n + 6} p^N
		2^{-N-1} 
		\left(- \tfrac{2 q}{p} \right)^{N - |t|}
		\,=\,
		\pm 
		(-1)^{|t|}
		\beta^{6n + 2} 
		2^{- |t| - 1}
		q^{N - |t|}
		p^{|t|}
		\ .
		\notag
	\end{align}
	If $|t| = 0$, then 
	\begin{align}
		\big\{
		\eQ_0,\zeta_\circ
		\big\}_{X_0^\pm}
		&=
		2^{-2N} c_{n+1}^0 
		\left\{
		\eQ_0,
		\phiQ{P_0^+}
		\right\}_{X_0^\pm}
		+
		\beta^{6n}
		(-2q)^N
		\left\{
		\eQ_0,
		\eQ_0 \TOP
		\right\}_{X_0^\pm}
		\notag \\ 
		&=
		2^{-2N} c_{n+1}^0 
		2^{2N - 1}
		\pm
		\beta^{6n}
		(-2q)^N
		2^{-N - 1} 
		\beta^2 (-1)^N
		\notag \\ 
		&=
		\tfrac{1}{2}
		c_{n+1}^0 
		\pm
		\tfrac{1}{2}
		\beta^{6n+2}
		q^N
		\ .
		\notag
	\end{align}
Finally, we have
	\begin{align*}
		\big\{
		\phiQ{P_0^\pm},\zeta_\circ
		\big\}_{X_0^\pm}
		&=
		\beta^{6n}
		p^N
		\sum_{s \in \field[Z]_2^N}
		\left(- \tfrac{2 q}{p} \right)^{|s|}
		\big(
		\phiQ{P_0^\pm},
		\eQ_0 
		\prod_{j=1}^N 
		(\genF^+_j \genF^-_j)^{s_j}
		\big\}_{X_0^\pm}
		\\
		&=
		\beta^{6n}
		p^N
		\big\{
		\phiQ{P_0^\pm},
		\eQ_0 
		\big\}_{X_0^\pm}
		\,=\, 2^{2N-1} \beta^{6n} p^N
		\ , \text{ and }
		\\
		\big\{
		\phiQ{X_1^\pm},\zeta_\circ
		\big\}_{X_1^\pm}
		&=
		\pm 2 c_{n+1}^\pm \ .
	\end{align*}
Combining with \eqref{eq:curly-noncurly} to translate $\{-,-\}_U$ into $(-,-)_{P_U}$ results in the statement of the theorem.
\end{proof}

\begin{remark}
	Let us discuss a quick sanity check of the formulas above.
	In the category of modules over a factorisable (or, more generally, unimodular)
	ribbon quasi-Hopf algebra $H$, there is a natural transformation $\alpha \colon
	\id_{\cat} \To \id_{\cat}$ 
 such that $\alpha_{P_\tensUnit}$ is non-zero and factors through $\tensUnit$, cf.\ \Cref{rem:L-vs-Lyu}.
 Indeed, if $c$ is an integral in $H$, then it is central by unimodularity, and thus
	acting by it yields a natural transformation.
	In the notation of \Cref{sec:Examples:lens_spaces}, we must then necessarily have
	$ \invDGGPR (\mathfrak{L}_*(c, \projCover{\tensUnit})) = \invDGGPR
	(\mathfrak{L}_{\circ}(c, \projCover{\tensUnit}))$.
Indeed, this can be checked directly.
	Let $c$ be the integral of $\sympFerm$ corresponding to $\cointLyu$ with
	normalisation such that $\cointLyu \circ \intLyu = \id_{\tensUnit}$.
	By \cite[Sec.~3.5]{FGR2}, it is explicitly given by 
	\begin{align*}
		c = 2^{N+1} \beta^2 \eQ_0^+ \TOP 
		\overset{\eqref{eq:phi-SF-explicit_X}}= \phiQ{\tensUnit} 
		\overset{\eqref{eq:phi1-in-basis}}= 2^{-2N} \phiQ{P_0^+} + 2^N \beta^2 \eQ_0 \TOP
		\ .
	\end{align*}
	We saw in the proof of \Cref{lem:tHopf-normal} that $\modTr_{P_0^+}(c) = 1$.
	On the one hand, combining \eqref{eq:lens-L-vs-Lyu} and \eqref{eq:Lens-Lyu-SF} gives
	\begin{align*}
		\invDGGPR (\mathfrak{L}_*(c, P_0^+))
		= p^N
		\ .
	\end{align*}
	On the other hand, using \Cref{prop:invariants_lens_spaces_symp_ferm_extended_version} and the expansion of $c$ in the basis of the centre we get
	\begin{align*}
		\invDGGPR (\mathfrak{L}_{\circ}(c, P_0^+))
		&=
		2^{-2N} 
		\invDGGPR (\mathfrak{L}_{\circ}(\phiQ{P_0^+}, P_0^+))
		+ 2^N \beta^2 
		\invDGGPR (\mathfrak{L}_{\circ}(\eQ_0 \TOP, P_0^+))
		\\
		&=
		\tfrac12 p^N
+ \tfrac12 \beta^2 (-1)^N \beta^2 p^{N}
= p^N
		\ ,
	\end{align*}
	as expected.
\end{remark}

\subsection{Pullback trace}

\label{sec:proof_pullback_lens_spaces}
Finally, we consider $H = \sympFerm(2,\beta)$ and $A$ as in \Cref{sec:symp_ferm_pullback}, and treat the
invariants for lens spaces with one loop coloured by the object $P_\parMat$ from the pullback
ideal (cf.~\Cref{prop:lift_of_proj_cover_tU_sympFerm}).

We start by computing the values of 
$\pullbackTrace[{\modTr^A}]_{P_\parMat}(z)$ for $z$ from the
11-dimensional centre of $H$
    (recall \eqref{eq:SF-centre-sum}),
and where $\parMat = \begin{psmallmatrix} a^- & a^+ \\ b^- & b^+ \end{psmallmatrix}$.

\begin{lemma}
	\label{prop:pullback_trace_on_central_elements}
	The only non-zero values of $\pullbackTrace[{\modTr^A}]_{P_\parMat}(z)$ for $z$ an element from the
	canonical basis \eqref{eq:basis_center_SF} of $Z(H)$ are
	\begin{align*}
		\begin{tabular}{c||c|c|c}
			$z \in Z(H)$
			&
			$\eQ_0 \genF_1^+ \genF_1^-$
			&
			$\eQ_0 \genF_2^+ \genF_2^-$
			&
			$\eQ_0 \genF_1^\gamma \genF_2^\varepsilon$
			\\
			\hline
			$\pullbackTrace[{\modTr^A}]_{P_\parMat}(z)$
			& $1$ & $\det\parMat$ & $\delta_{\gamma, +} a^\varepsilon -
			\delta_{\gamma, -} b^\varepsilon$
		\end{tabular}
		\ ,
	\end{align*}
	where $\gamma, \varepsilon \in \{+, -\}$.
\end{lemma}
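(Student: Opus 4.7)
The plan is to compute, for each element $z$ in the canonical basis of $Z(H) = Z_P \oplus Z_{\mathbf{\Lambda}}$ given in~\eqref{eq:basis_center_SF}, how $z$ acts on $P_\parMat$ as an $A$-module endomorphism, and then apply the explicit formula for $\modTr^A$ on $\projCoverTensorUnitSF[A]$. The key inputs are: the isomorphism $\restrictionFunctor P_\parMat \cong \projCoverTensorUnitSF[A]$ (by construction in Section~\ref{sec:left-of-proj-A}); the normalisation~\eqref{eq:pullback_trace_choice} combined with~\eqref{eq:P0A+-mod-trace}, which yields $\pullbackTrace[{\modTr^A}]_{P_\parMat}(\genF_1^+\genF_1^-) = 1$; and the fact that $\pullbackTrace[{\modTr^A}]_{P_\parMat}(\id) = 0$ because $\symRightCointA$ from Proposition~\ref{prop:symCoint_for_subqHA} vanishes on $\eQ_0^+$. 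Any $A$-linear endomorphism of $\projCoverTensorUnitSF[A]$ is a linear combination of $\id$ and $\genF_1^+\genF_1^-$, so it suffices to determine the $\genF_1^+\genF_1^-$-coefficient of each $z \in Z(H)$ viewed as a linear operator on $P_\parMat$.

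Next I would dispose of the elements whose value is claimed to be zero. Since $P_\parMat$ lies in the degree-$0$ component of $\hmodM[H]$, the idempotent $\eQ_1$ annihilates it; as $\phiQ{X_1^\pm} = \pm 2^{N+1} \eQ_1^\pm \in \eQ_1 H$ by~\eqref{eq:phi-SF-explicit_X}, both trace values vanish. For $\phiQ{P_0^+}$, recall from \eqref{eq:phi-SF-explicit_X}--\eqref{eq:phi-SF-explicit_P0} that it is a scalar multiple of $\eQ_0 \genK \genF_1^+\genF_1^-\genF_2^+\genF_2^-$; Corollary~\ref{prop:symp_ferm_f2_acting_on_PM} rewrites $\genF_2^+\genF_2^-$ on $P_\parMat$ as $\det\parMat \cdot \genF_1^+\genF_1^-$, so that $\genF_1^+\genF_1^-\genF_2^+\genF_2^-$ acts as a multiple of $(\genF_1^+\genF_1^-)^2 = 0$ by nilpotence. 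The same argument handles $\eQ_0 \genF_1^+\genF_1^-\genF_2^+\genF_2^- \in Z_{\mathbf{\Lambda}}$, and $\eQ_0$ itself acts as the identity and has pullback trace $0$ by the preceding paragraph.

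For the six basis elements with non-zero trace, I would proceed by elementary computation using \eqref{eq:lifted_action}, which reads $\genF_2^\varepsilon = a^\varepsilon \genF_1^- + b^\varepsilon \genF_1^+$ on $P_\parMat$. The value on $\eQ_0 \genF_1^+\genF_1^-$ is immediate from the normalisation. For $\eQ_0 \genF_2^+\genF_2^-$, Corollary~\ref{prop:symp_ferm_f2_acting_on_PM} gives the factor $\det\parMat$. For the mixed elements $\eQ_0 \genF_1^\gamma \genF_2^\varepsilon$, expanding yields
\begin{align*}
	\genF_1^\gamma \genF_2^\varepsilon
	= a^\varepsilon \genF_1^\gamma \genF_1^- + b^\varepsilon \genF_1^\gamma \genF_1^+ \ ,
\end{align*}
which after using $(\genF_1^\pm)^2 = 0$ together with $\genF_1^-\genF_1^+ = -\genF_1^+\genF_1^- + \eQ_1 \equiv -\genF_1^+\genF_1^-$ on $P_\parMat$ reduces to $\bigl(\delta_{\gamma,+} a^\varepsilon - \delta_{\gamma,-} b^\varepsilon\bigr)\genF_1^+\genF_1^-$, matching the table.

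No single step is a serious obstacle: the mild bookkeeping issue is keeping track of the pullback normalisation~\eqref{eq:pullback_trace_choice}, while the only non-trivial algebraic input is the identity $\genF_2^+\genF_2^- = \det\parMat \cdot \genF_1^+\genF_1^-$ from Corollary~\ref{prop:symp_ferm_f2_acting_on_PM}. Everything else is straightforward manipulation of anticommutators.
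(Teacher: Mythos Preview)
Your proof is correct and follows essentially the same approach as the paper: reduce each central basis element to its action on $P_\parMat$, express that action as an $A$-endomorphism of $\projCoverTensorUnitSF[A]$ (necessarily a combination of $\id$ and $\genF_1^+\genF_1^-$), and evaluate using the normalised symmetrised cointegral. Your explicit framing in terms of extracting the $\genF_1^+\genF_1^-$-coefficient is a slightly cleaner organisational device than the paper's presentation, but the computational content is identical.
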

\begin{proof}
	Using \Cref{prop:symCoint_for_subqHA}, we compute
	\begin{align*}
		\widehat\coint{}^{A,r}(\eQ_0^+ \genF_1^- \genF_1^+)
		= \frac{1}{4} \sum_{m=0}^3 \widehat\coint{}^{A,r}(\genK^m \genF_1^- \genF_1^+)
		= \frac{1}{4} (1 - i \beta^2)
		\ ,
	\end{align*}
	so that in our chosen normalisation \eqref{eq:pullback_trace_choice}, the pullback
	trace at $P_\parMat$ equals 1 on $\eQ_0^+ \genF^+_1 \genF^-_1$.
	From \Cref{prop:symp_ferm_f2_acting_on_PM}, we know that $\genF_2^+ \genF_2^-$ acts
	on $P_\parMat$ as $\det\parMat \cdot \genF_1^+ \genF_1^-$.
	Similarly, from \eqref{eq:lifted_action} one finds for $\gamma, \varepsilon \in \{+, -\}$ and $p \in P_\parMat$ that
	$
	\genF_1^\gamma \genF_2^\varepsilon\,p
	=
	(\delta_{\gamma, +} a^\varepsilon - \delta_{\gamma, -} b^\varepsilon) \cdot
	\genF_1^+ \genF_1^- \, p
	$.

 Likewise, $\genF_1^+ \genF_1^- \genF_2^+ \genF_2^-$ is seen to act as $0$, whence the 
	trace vanishes for $\phiQ{P_0^+}$.
	Finally, since $P_\parMat$ is purely even, $\phiQ{X_1^\pm}$ act as zero as well.
\end{proof}

As above, we have a symmetric bilinear pairing $(z, z')_{P_\parMat} := \pullbackTrace[{\modTr^A}]_{P_\parMat}
(zz')$ for $z, z' \in Z(H)$.
Then \Cref{prop:pullback_trace_on_central_elements} shows that the only non-zero values
are
\begin{align*}
	(\eQ_0, \eQ_0 \genF_1^+ \genF_1^-)_{P_\parMat}
	= 1,
	\quad
	(\eQ_0, \eQ_0 \genF_2^+ \genF_2^-)_{P_\parMat}
	= \det\parMat,
\end{align*}
and
\begin{align*}
	(\eQ_0, \eQ_0 \genF_1^\gamma \genF_2^\varepsilon)_{P_\parMat}
	= \delta_{\gamma, +} a^\varepsilon - \delta_{\gamma, -} b^\varepsilon
	\ .
\end{align*}

\begin{theorem}\label{thm:lens-pullback-trace-SF}
	The invariants
 $\invDGGPRtrace{\pullbackTrace[{\modTr^A}]} (\mathfrak{L}_{\circ}(z, P_\parMat))$
 for basis elements 
        $z$
 of $Z(H)$ as in \eqref{eq:basis_center_SF} are:
	\begin{align*}
		\begin{tabular}{c|c}
			$z$ & $\invDGGPRtrace{\pullbackTrace[{\modTr^A}]} (\mathfrak{L}_{\circ}(z, P_\parMat))$ 
			\\
			\hline
			$\phiQ{P_0^+}$, $\phiQ{X_1^\pm}$ & $0$ \\
			$\eQ_0$ & $    -
			2
			p q
			(1 + \det\parMat)
			$\\
			$\eQ_0 \genF_j^+ \genF_j^-$ & $  
			p^2
			(\delta_{j, 1} + \delta_{j,2} \det\parMat)
			$\\
			$\eQ_0 \genF_1^\gamma \genF_2^\varepsilon $ & $  
			p^2
			(\delta_{\gamma, +} a^\varepsilon - \delta_{\gamma, -} 
			b^\varepsilon)
			$\\
			$\eQ_0 \TOP$ & $0$
		\end{tabular}
	\end{align*}
\end{theorem}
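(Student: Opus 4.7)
The strategy is to apply Proposition~\ref{prop:lens-quasi-Hopf-general} with $\cat = \hmodM[H]$, $\tensIdeal = \relativeProjectives{A}{H}$, modified trace $\pullbackTrace[{\modTr^A}]$, and $P = P_\parMat$. Since $\DD = 1$, $\anomaly = \beta^{-2}$ (by \Cref{sec:SF-int-coint}) and the signature of the surgery link is $\sigma = n$ (by \eqref{eq:Lpq-surgery-link-signature}), one has
\begin{align*}
\invDGGPRtrace{\pullbackTrace[{\modTr^A}]}(\mathfrak{L}_{\circ}(z, P_\parMat)) \;=\; \beta^{2n}\,(z,\zeta_\circ)_{P_\parMat},
\end{align*}
where $(-,-)_{P_\parMat}$ is the symmetric bilinear form \eqref{eq:Z(H)-pairing-via-trace_general} on $Z(H)$ associated to $\pullbackTrace[{\modTr^A}]$ and $P_\parMat$, and $\zeta_\circ$ is the central element given by Lemma~\ref{prop:central_element_of_LPQ} specialised to $N = 2$.

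The first simplification is to discard the projective part $\widehat{f(a)}|_{Z_P}$ of $\zeta_\circ$ entirely. Indeed, $\phiQ{X_1^\pm}$ lie in the odd sector of $H$, whereas $P_\parMat$ is purely even, so their contribution to the trace vanishes. For $\phiQ{P_0^+}$, observe from \eqref{eq:phi-SF-explicit_X} and \eqref{eq:phi-SF-explicit_P0} that it is proportional to $\genK \eQ_0 \genF_1^+\genF_1^-\genF_2^+\genF_2^-$; but by Corollary~\ref{prop:symp_ferm_f2_acting_on_PM}, $\genF_2^+\genF_2^-$ acts on $P_\parMat$ as $\det\parMat \cdot \genF_1^+\genF_1^-$, so that $\genF_1^+\genF_1^-\genF_2^+\genF_2^-$ acts as zero because $(\genF_1^+)^2 = 0 = (\genF_1^-)^2$. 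Thus the only relevant part of $\zeta_\circ$ is
\begin{align*}
\beta^{6n} p^2 \Bigl( \eQ_0 - \tfrac{2q}{p}\,\eQ_0 \genF_1^+\genF_1^- - \tfrac{2q}{p}\,\eQ_0 \genF_2^+\genF_2^- + \tfrac{4q^2}{p^2}\,\eQ_0 \genF_1^+\genF_1^-\genF_2^+\genF_2^- \Bigr).
\end{align*}

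Now the theorem is proved by pairing each basis element $z$ of $Z(H)$ from \eqref{eq:basis_center_SF} against this expression using Lemma~\ref{prop:pullback_trace_on_central_elements}. The key book-keeping observation is that products of any two basis elements $z, z'$ of the even $Z_{\mathbf{\Lambda}}$ either lie outside the small list of elements on which $\pullbackTrace[{\modTr^A}]_{P_\parMat}$ is non-zero, or else reduce to such an element up to signs, because every repeated $\genF_j^\pm$ factor squares to zero and $\genF_1^+\genF_1^-\genF_2^+\genF_2^-$ acts as zero on $P_\parMat$. A case-by-case check (e.g.\ for $z = \eQ_0$ only the two ``mixed'' terms survive, giving $-2\beta^{6n} pq(1+\det\parMat)$; for $z = \eQ_0\genF_j^+\genF_j^-$ only the pairing with $\eQ_0$ survives; similarly for $z = \eQ_0\genF_1^\gamma\genF_2^\varepsilon$) yields the table, after absorbing the prefactor using the identity $\beta^{2n}\cdot\beta^{6n} = \beta^{8n} = 1$, which holds because $\beta^4 = (-1)^N = 1$ for $N = 2$.

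Since the reductions are all immediate from $(\genF_j^\pm)^2 = 0$ and the explicit data in Lemma~\ref{prop:pullback_trace_on_central_elements}, the only non-routine point is ensuring that the few surviving products evaluate correctly; there is no genuine obstacle. The main thing to be careful about is the sign when anti-commuting generators past one another in products like $\eQ_0\genF_1^\gamma\genF_2^\varepsilon \cdot \eQ_0\genF_2^+\genF_2^-$, and to verify that all such ``long'' products vanish on $P_\parMat$ so as to reduce the computation to the three non-trivial entries in the table of Lemma~\ref{prop:pullback_trace_on_central_elements}.
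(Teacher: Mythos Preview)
Your proposal is correct and follows essentially the same approach as the paper: apply Proposition~\ref{prop:lens-quasi-Hopf-general} to reduce to computing $\beta^{2n}(z,\zeta_\circ)_{P_\parMat}$, use Lemma~\ref{prop:central_element_of_LPQ} for $\zeta_\circ$, observe via Lemma~\ref{prop:pullback_trace_on_central_elements} that the $Z_P$-part and the $\eQ_0\TOP$-term contribute nothing, and then run the same case-by-case pairing computations, collapsing $\beta^{2n}\cdot\beta^{6n}=1$ at the end. The paper's proof is slightly terser (it simply cites Lemma~\ref{prop:pullback_trace_on_central_elements} for the zero rows rather than spelling out why $\phiQ{P_0^+}$ and $\eQ_0\TOP$ act trivially on $P_\parMat$), but the logic is identical.
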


\begin{proof}
	The surgery coefficient is $\beta^{2\sigma}$ as in the proof of \Cref{prop:invariants_lens_spaces_symp_ferm_extended_version}.
	By \Cref{prop:lens-quasi-Hopf-general} we need to compute $(z,\zeta_\circ)_{P_\parMat}$. The zero entries in the table are immediate from \Cref{prop:pullback_trace_on_central_elements}.
	Using \Cref{prop:central_element_of_LPQ}, we first compute
	\begin{align}
		\big(
		\eQ_0, \zeta_\circ
		\big)_{P_\parMat}
		&=
		\beta^{6n}
		p^2
		\sum_{s \in \field[Z]_2^2}
		\left(- \frac{2 q}{p} \right)^{|s|}
		\big(
		\eQ_0,
		\eQ_0 \prod_{j=1}^2 (\genF^+_j \genF^-_j)^{s_j}
		\big)_{P_\parMat}
		\notag
		\\
		& =
		-
		\beta^{6n}
		2 p q
		\big(
		\eQ_0,
		\eQ_0 
		\genF^+_1 \genF^-_1
		+
		\eQ_0 
		\genF^+_2 \genF^-_2
		\big)_{P_\parMat}
		\notag
		\\
		& =
		-
		2
		\beta^{6n}
		p q
		(1 + \det\parMat)
		\notag
		\ .
	\end{align}
	Since $\sigma = n$, $\beta^{2 \sigma + 6n} = 1.$
	Similarly, we have
	\begin{align}
		\big(
		\eQ_0 \genF_j^+ \genF_j^-, 
		\zeta_\circ
		\big)_{P_\parMat}
		&=
		\beta^{6n}
		p^2
		\sum_{s \in \field[Z]_2^2}
		\left(- \frac{2 q}{p} \right)^{|s|}
		\big(
		\eQ_0 \genF_j^+ \genF_j^-, 
		\eQ_0 \prod_{j=1}^2 (\genF^+_j \genF^-_j)^{s_j}
		\big)_{P_\parMat}
		\notag
		\\
		& =
		\beta^{6n}
		p^2
		\big(
		\eQ_0 \genF_j^+ \genF_j^-,
		\eQ_0 
		\big)_{P_\parMat}
		\notag
		\\
		& =
		\beta^{6n}
		p^2
		(\delta_{j, 1} + \delta_{j,2} \det\parMat)
		\notag
		\ .
	\end{align}
	A completely analogous computation shows
	\begin{align}
		\big(
		\eQ_0 \genF_1^\gamma \genF_2^\varepsilon,
		\zeta_\circ
		\big)_{P_\parMat}
		& =
		\beta^{6n}
		p^2
		\big(
		\eQ_0 \genF_1^\gamma \genF_2^\varepsilon,
		\eQ_0 
		\big)_{P_\parMat}
		=
		\beta^{6n}
		p^2
		(\delta_{\gamma, +} a^\varepsilon - \delta_{\gamma, -} 
		b^\varepsilon)
		\notag
		\ .
	\end{align}
\end{proof}

\Cref{thm:lens-pullback-trace-SF} shows the result stated in \Cref{thm:lens-for-intermediate}.
In particular, 
 the invariants in the above theorem manage to distinguish
all $P_\parMat$, by evaluating on all $\eQ_0 \genF_1^\gamma \genF_2^\varepsilon$. 
Explicitly, the choices for $\alpha_{j,l}$ in \Cref{thm:lens-for-intermediate} are
\begin{align*}
\alpha_{1,1} = \eQ_0 \genF_1^+ \genF_2^- ~,~~
\alpha_{1,2} = \eQ_0 \genF_1^+ \genF_2^+ ~,~~
\alpha_{2,1} = -\eQ_0 \genF_1^- \genF_2^- ~,~~
\alpha_{2,2} = -\eQ_0 \genF_1^- \genF_2^+ \ .
\end{align*}

\appendix

\section{More on continued fractions}
\label{sec:continued_fractions}
Consider the $(n \times n)$-matrix $M_a$ with entries
\begin{align}
	(M_a)_{ij} =
	\begin{cases}
		a_i & i = j \ , \\
		1 & (i < n ~\land~ j = i + 1) ~\lor~ (i > 1 ~\land~ j = i - 1) \ .
	\end{cases}
	\notag
\end{align}
This is precisely the linking matrix of the surgery link of $\mathfrak{L}(p, q)$ as given in \eqref{eq:Lpq-surgery-link}, see e.g.~\cite[Sec.\,5.3]{Lyu-inv-MCG}.
By definition, the signature $\sigma(\mathfrak{L}(p, q))$ of the surgery link is the signature of the linking matrix, i.e.\ the number of positive minus negative eigenvalues.

\begin{proposition}
	\label{prop:continued_fractions_main_statement}
	Let $a_1, \ldots, a_n \in \field[Z]$ be non-zero such that there are coprime positive
	integers $p_i, q_i$ with $\tfrac{p_i}{q_i} = [a_i; a_{i+1}, \ldots, a_n]$
    for $i = 1,\dots,n$.	
	Then we have:
	\begin{enumerate}
		\item $\det M_a = \prod_{j=1}^n [a_j; a_{j+1}, \ldots, a_n]$ .
		\item $M_a$ is positive definite, and so its signature is $n$.
		\item $p_1 = \prod_{j=1}^n [a_j; a_{j+1}, \ldots, a_n]
		= \prod_{j=1}^n [a_{n-j+1}; a_{n-j}, \ldots, a_1]$ .
	\end{enumerate}    
\end{proposition}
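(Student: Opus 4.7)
The plan is to reduce everything to a single three-term recursion for the determinant. Let $D_k := \det M_{a_k, \ldots, a_n}$ with the convention $D_{n+1} = 1$ and $D_n = a_n$. Expanding $D_k$ along the first row, the diagonal entry contributes $a_k D_{k+1}$, while the $(1,2)$-cofactor reduces (after a further expansion along the first column of its minor, which has a single nonzero entry equal to $1$) to $D_{k+2}$. This yields
\begin{align*}
	D_k = a_k D_{k+1} - D_{k+2}, \qquad k = 1, \ldots, n.
\end{align*}

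For (1), I would show that $P_k := \prod_{j=k}^n [a_j; a_{j+1}, \ldots, a_n]$ satisfies the identical recursion with the same initial data. Setting $r_j := [a_j; a_{j+1}, \ldots, a_n]$, the defining relation $r_j = a_j - 1/r_{j+1}$ motivates introducing integers $A_k$ via $A_k = a_k A_{k+1} - A_{k+2}$, $A_{n+1} = 1$, $A_n = a_n$, so that $r_j = A_j/A_{j+1}$. The product $P_k$ then telescopes to $A_k/A_{n+1} = A_k$, and matching recursions forces $D_k = P_k$. Specialising $k=1$ yields (1).

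For (3), I would exploit the symmetry $\det M_a = \det(P M_a P)$ under conjugation by the reversal permutation matrix $P$, which effectively reverses the sequence $a_1, \ldots, a_n$; applying the formula from (1) to both orderings produces the two product expressions. The identification $p_1 = \det M_a$ will follow from $r_1 = A_1/A_2$ being in lowest terms, which in turn follows from $\gcd(A_j, A_{j+1}) = \gcd(A_{j+1}, A_{j+2})$ (directly from the recursion) and hence $\gcd(A_1, A_2) = \gcd(A_n, A_{n+1}) = \gcd(a_n, 1) = 1$.

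For (2), the $k$-th trailing principal minor of $M_a$ equals $\det M_{a_{n-k+1}, \ldots, a_n}$, which by (1) factors as $\prod_{j=n-k+1}^n [a_j; a_{j+1}, \ldots, a_n]$ and is positive by hypothesis. Since $M_a$ is symmetric, applying Sylvester's criterion to the conjugate $P M_a P$ -- whose leading principal minors coincide with the trailing minors of $M_a$, and which has the same spectrum as $M_a$ -- yields positive definiteness and hence signature $n$. The main subtleties are bookkeeping in the cofactor expansion that produces the recursion, and correctly formulating the trailing-minor variant of Sylvester's criterion via the reversal conjugation; neither looks like a real obstacle.
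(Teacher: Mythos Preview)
Your proposal is correct and follows essentially the same approach as the paper: the same Laplace-expansion recursion for the determinant, Sylvester's criterion applied to the trailing principal minors (equivalently, the leading minors of the reversed matrix), and the reversal symmetry together with a gcd-preservation argument to identify $p_1$ with the determinant. Your introduction of the auxiliary sequence $A_k$ and the telescoping product is a slightly cleaner packaging of the paper's inductive step, but the underlying ideas coincide.
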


\begin{proof}
	\textit{Part 1:}
	We proceed by induction on $n$.
	For $n = 1$ this is obvious, and to see what is happening note that for $n = 2$ we
	have
	\begin{align*}
		[a_1; a_2] \cdot a_2 
		= (a_1 - \tfrac{1}{a_2}) a_2
		= a_1 a_2 - 1
		= \det \begin{psmallmatrix} a_1 & 1 \\ 1 & a_2 \end{psmallmatrix}
		\ .
	\end{align*}
Assume now that the statement holds up to $n-1$ for a given $n \ge 2$.
	Setting $d(a_1, \ldots, a_n)$ $=$ $\prod_{j=1}^n [a_j; a_{j+1}, \ldots, a_n]$, we note that
	$d(a_i, \ldots, a_n)$ $=$ $[a_i; a_{i+1}, \ldots, a_n]$ $d(a_{i+1}, \ldots, a_n)$.
    Using the Laplace expansion for the first row, we obtain
	\begin{align*}
		\det M_a
		&= a_1 \cdot \det M_{a \setminus (a_1)} - 1 \cdot 
		\det
		\begin{psmallmatrix}
			1 & 1 & 0 & \cdots 0 \\
			0 &  &  & \\
			\vdots & & M_{a \setminus (a_1, a_2)} & \\
			0 &  &  & 
		\end{psmallmatrix}
		\\
		&= a_1 \cdot \det M_{a \setminus (a_1)} - \det M_{a \setminus (a_1, a_2)} 
		\ ,
	\end{align*}
	where by $M_{a \setminus b}$ we mean the matrix associated to the list obtained by
	removing entries of $b$ from $a$.
	By hypothesis, we know the values of the two determinants on the right hand side, and so we obtain
	\begin{align*}
		\det M_a
		&= a_1 \cdot d(a_2, \ldots, a_n) - d(a_3, \ldots, a_n)
		\\
		&= d(a_2, \ldots, a_n) \cdot 
		\left( a_1 - \frac{1}{[a_2; \ldots, a_n]} \right)
		= d(a_2, \ldots, a_n) \cdot [a_1; a_2, \ldots, a_n]
		\ ,
	\end{align*}
	which equals $d(a_1, \ldots, a_n)$, as claimed.
	
	\medskip
	
	\noindent\textit{Part 2:} Positive definiteness now follows from the Sylvester criterion. 
	Namely, consider the lower right submatrices $N_d$, $d=1,\dots,n$ of $M$.
	That is, $(N_d)_{st} = (M_a)_{s+n-d,t+n-d}$ and 
	$s,t = 1,\dots,d$. 
	
	Using the above notation, we see that $N_d = M_{(a_{n-d+1},a_{n-d+2},\dots,a_n)}$.
	Thus by part 1 we get 
	$\det N_d = \prod_{j=n-d+1}^n  [a_j; a_{j+1}, \ldots, a_n] 
	= \prod_{j=n-d+1}^n \tfrac{p_j}{q_j}$,
	and all factors are positive by assumption.
	
	\medskip
	
	\noindent\textit{Part 3:}
	By part 1, the second 
	equality is simply the statement that the similar matrices 
	$M_{(a_1, \ldots, a_n)}$ and $M_{(a_n, \ldots, a_1)}$ have the same determinant.
	
	To see the first equality, note first that
	\begin{align}
		\prod_{j=1}^n [a_{j}; a_{j+1}, \ldots, a_n]
		= \prod_{j=1}^n \frac{p_{j}}{q_{j}}
= \frac{p_1}{q_n} \prod_{j=1}^{n-1} \frac{p_{j+1}}{q_{j}}
        \ .
		\notag
	\end{align}
	Since clearly $q_n = 1$, this equals $p_1$ if $q_j = p_{j+1}$ for all $1 \leq j < n$.
	Let us show that this holds.
	For $j < n$, we have
	\begin{align*}
		\frac{p_j}{q_j}
		= a_j - \frac{q_{j+1}}{p_{j+1}}
		= \frac{a_j p_{j+1} - q_{j+1}}{p_{j+1}}
		\ .
	\end{align*}
	Now, for any $c, d, n \in \field[Z]$ with $d | n$, we have $\operatorname{gcd}(n \pm
	c, d) \leq \operatorname{gcd}(c, d)$.\footnote{%
		If $x$ divides $\operatorname{gcd}(d, n \pm c)$, then since $d$ divides $n$ also 
		$x$ divides $n$, and hence $x$ divides $c$.
	}
	Applying this to our situation, we find $\operatorname{gcd}(a_j p_{j+1} - q_{j+1},
	p_{j+1}) \leq \operatorname{gcd}(q_{j+1}, p_{j+1}) = 1$, and so the above is in fact an
	equality of reduced fractions.
	Thus we even obtain equality of  numerators and denominators up to sign.
	But the sign vanishes in light of our positivity assumption; in particular, we obtain 
	$q_j = p_{j+1}$.	
\end{proof}

The proposition above in particular implies \eqref{eq:Lpq-surgery-link-signature} for the signature of the surgery link. The same expression (with the stronger assumption that all $a_i$ are at least 2) was obtained e.g.\ in \cite[Prop.\,6.24]{Blanchet:2014ova}.


\newcommand{\arxiv}[2]
    {[\href{http://arXiv.org/abs/#1}{ \color{orange}#1 [#2]}]}
\newcommand{\doi}[2]{\href{http://dx.doi.org/#1}{#2}}
\newcommand{\sameauthors}[1][2]{\underline{\hspace*{#1cm}},~}


\begin{thebibliography}{DGGPR}

  \bibitem[Ab]{Abe:2005}
  T.~Abe,
  {\it A $\field[Z]_2$-orbifold model of the symplectic fermionic vertex operator superalgebra},
  \doi{10.1007/s00209-006-0048-5}{Mathematische Zeitschrift {\bf 255}  (2007) 755--792}
  \arxiv{math/0503472}{math.QA}.

  \bibitem[BBG]{BBGa}
  A.~Beliakova, C.~Blanchet, A.M.~Gainutdinov,
  {\it Modified trace is a symmetrised integral},
  \doi{10.1007/s00029-021-00626-5}{Select.\ Math.\ New Ser.\ {\bf 27} (2021)},
  \arxiv{1801.00321}{math.QA}.

  \bibitem[BC1]{BC1}
  D.~Bulacu, S.~Caenepeel,
  {\it Integrals for (dual) quasi-Hopf algebras. Applications},
  \doi{10.1016/S0021-8693(03)00175-3}{J.\ Algebra {\bf 266} (2003), 552--583},
  \arxiv{math/0110063}{math.QA}.

  \bibitem[BC2]{BC2}
  D.~Bulacu, S.~Caenepeel,
  {\it On integrals and cointegrals for quasi-Hopf algebras},
  \doi{10.1016/j.jalgebra.2011.11.006}{J.\ Algebra {\bf 351} (2012), 390--425},
  \arxiv{1103.2263}{math.QA}.

  \bibitem[BCGP]{Blanchet:2014ova}
  C.~Blanchet, F.~Costantino, N.~Geer, B.~Patureau-Mirand,
  \textit{Non semi-simple TQFTs, Reidemeister torsion and Kashaev's invariants},
  \doi{10.1016/j.aim.2016.06.003}{Adv. Math. \textbf{301} (2016) 1--78},
  \arxiv{1404.7289}{math.GT}.

  \bibitem[BCPO]{Bulacu_book}
  D.~Bulacu, S.~Caenepeel, F.~Panaite, F.~v.Ostaeyen,
  {\it Quasi-Hopf algebras: a categorical approach},
  \doi{10.1017/9781108582780}{Encyclopedia Math.\ Appl.\ (2019)}.

  \bibitem[BGR1]{BGR1}
  J.~Berger, A.M.~Gainutdinov, I.~Runkel,
  {\it Modified traces for quasi-Hopf algebras},
  \doi{10.1016/j.jalgebra.2019.12.006}{J.\ Algebra {\bf 548} (2020) 96--119},
  \arxiv{1812.10445}{math.QA}.

  \bibitem[BGR2]{BGR2}
  \sameauthors
  {\it Monadic cointegrals and applications to quasi-Hopf algebras},
  \doi{10.1016/j.jpaa.2021.106678}{J.\ Pure Appl.\ Algebra {\bf 225} (2021) 106678},
  \arxiv{2003.13307}{math.QA}

  \bibitem[BK]{BK}
  B.~Bakalov, A.~Kirilov~Jr.,
  {\it Lectures on tensor categories and modular functors},
  \doi{10.1090/ulect/021}{Univ.\ Lect.\ Ser.\ {\bf 21} (2001)}.

\bibitem[BKLT]{BKLT}
Y.~Bespalov, T.~Kerler, V.~Lyubashenko, V.~Turaev,
{\it Integrals for braided Hopf algebras},
\doi{10.1016/S0022-4049(98)00169-8}{J.\ Pure Appl.\ Algebra {\bf 148} (2000) 113--164},
\arxiv{q-alg/9709020}{math.QA}

  \bibitem[Br]{Br-Css}
  A.~Brugui\`eres, 
  {\it Tresses et structure enti\`ere sur la cat\'egorie des repr\'esentations de $sl_N$ quantique},
  \doi{10.1080/00927870008826941}{Commun.\ in Algebra {\bf 28} (2000) 1989--2028}.

\bibitem[BT]{BT} 
D.~Bulacu, 
B.~Torrecillas, 
\textit{Factorizable quasi-Hopf algebras -- applications},
\doi{10.1016/j.jpaa.2004.04.010}{J.\ Pure Appl.\ Alg.\ 194 (2004) 39--84} 
\arxiv{math/0312076}{[math.QA/0312076]}.


\bibitem[BV1]{BV-hopfmonads}
  A.~Brugui\`eres, A.~Virelizier,
  {\it Hopf monads},
  \doi{10.1016/j.aim.2007.04.011}{Adv.\ in Math.\ {\bf 215} (2007) 679--733},
  \arxiv{math/0604180}{math.QA}.

  \bibitem[BV2]{BV-centers}
  \sameauthors
  {\it On the center of fusion categories},
  \doi{10.2140/PJM.2013.264.1}{Pacific J.\ Math.\ {\bf 264} (2013)},
  \arxiv{1203.4180}{math.QA}

\bibitem[BW]{Barrett:1993zf}
J.W.~Barrett, B.W.~Westbury,
{\it Spherical categories},
\doi{10.1006/aima.1998.1800}{Adv.\ in Math.\ \textbf{143} (1999) 357--375},
\arxiv{hep-th/9310164}{hep-th}.

\bibitem[CDGS]{CDGS}
W.~Cheong, A.~Doser, M.~Gray, S.F.~Sawin,
{\it Relationship of the Hennings and Chern-Simons Invariants For Higher Rank Quantum Groups}, \arxiv{1701.01423}{math.GT}

  \bibitem[CG]{Creutzig:2016fms}
  T.~Creutzig, T.~Gannon, 
  {\it Logarithmic conformal field theory, log-modular tensor categories and modular forms},
  \doi{10.1088/1751-8121/aa8538}{J.\ Phys.\ A: Math.\ Theor.\ \textbf{50} (2017) 404004},
  \arxiv{1605.04630}{math.QA}.

  \bibitem[CGP1]{CGP}
  F.~Costantino, N.~Geer, B.~Patureau-Mirand,
  \textit{Quantum invariants of 3-manifolds via link surgery presentations and non-semi-simple categories},
  \doi{10.1112/jtopol/jtu006}{Journal of Topology \textbf{7} (2014)  1005--1053},
  \arxiv{1202.3553}{math.GT}.

\bibitem[CGP2]{CGP2}
F.~Costantino, N.~Geer, B.~Patureau-Mirand,
\textit{Relations between Witten-reshetikhin-turaev and non semi-simple $sl(2)$ 3-manifold invariants},
\doi{agt.2015.15.1363}{Algebr.\ Geom.\ Topol.\ {\bf 15} (2015) 1363--1386},
\arxiv{1310.2735}{math.GT}.

\bibitem[CGR]{Creutzig:2017khq}
T.~Creutzig, A.~M.~Gainutdinov and I.~Runkel,
\textit{A quasi-Hopf algebra for the triplet vertex operator algebra},
\doi{10.1142/S021919971950024X}{Commun.\ Contemp.\ Math.\ \textbf{22} (2020) 1950024},
\arxiv{1712.07260}{math.QA}.

    \bibitem[CKS]{CKS}
    Q. Chen, S. Kuppum, P. Srinivasan, 
    \textit{On the relation between the WRT invariant and the Hennings invariant},
    \doi{10.1017/S030500410800193X}{Math. Proc. of the Cambridge Phil. Soc. \textbf{146} (2009) 151--163}, \arxiv{0709.2318}{math.QA}

\bibitem[CLR]{Creutzig:2021cpu}
T.~Creutzig, S.~Lentner and M.~Rupert,
\textit{Characterizing braided tensor categories associated to logarithmic vertex operator algebras},
\arxiv{2104.13262}{math.QA}.

  \bibitem[CYZ]{CYZ12}
  Q.~Chen, C-C.~Yu, Y.~Zhang, {\it Three-manifold invariants associated with restricted quantum groups}, \doi{10.1007/s00209-011-0969-5}{Math.\ Z.\ {\bf 272} (2012) 987--999}.

  \bibitem[DGGPR1]{DGGPR}
  M.~De~Renzi, A.M.~Gainutdinov, N.~Geer, B.~Patureau-Mirand, I.~Runkel,
  {\it 3-dimensional TQFTs from non-semisimple modular categories},
  \doi{10.1007/s00029-021-00737-z}{Selecta Math. \textbf{28} (2022) 42},
  \arxiv{1912.02063}{math.GT}.

  \bibitem[DGGPR2]{DGGPR2}
  \sameauthors
  {\it Mapping Class Group Representations From Non-Semisimple TQFTs},
  \doi{10.1142/S0219199721500917}{Commun. Contemp. Math. \textbf{25} (2023) 2150091},
  \arxiv{2010.14852}{math.GT}.

  \bibitem[DGP]{DGP-renormalized-Hennings}
  M.~De~Renzi, N.~Geer, B.~Patureau-Mirand, 
  {\it Renormalized Hennings invariants and 2+1-TQFTs},
  \doi{10.1007/s00220-018-3187-8}{Commun.\ Math.\ Phys.\ {\bf 362} (2018), 855--907},
  \arxiv{1707.08044}{math.GT}.

  \bibitem[Dr]{Drinfeld}
  V.~Drinfeld,
  {\it Quasi-Hopf algebras},
  {Leningrad Math.\ J.\ {\bf 1} (1990), 1419--1457}.

  \bibitem[DR]{Davydov:2012xg} 
  A.~Davydov, I.~Runkel,
  {\it $\field[Z]/2\field[Z]$-extensions of Hopf algebra module categories by their base categories},
  \doi{10.1016/j.aim.2013.06.024}{Adv.\ Math.\ {\bf 247} (2013) 192--265},
  \arxiv{1207.3611}{math.QA}.

  \bibitem[EGNO]{EGNO}
  P.~Etingof, S.~Gelaki, D.~Nikshych, V.~Ostrik,
  {\it Tensor categories},
  \doi{10.1090/surv/205/04}{Math.\ Surveys Monogr.\ {\bf 205} (2015)}.

  \bibitem[EO]{EO}
  P.~Etingof, V.~Ostrik,
  {\it On Semisimplification of Tensor Categories},
  \doi{10.1007/978-3-030-82007-7_1}{in: V.~Baranovsky et al. (eds) {\it Representation Theory and Algebraic Geometry}, Trends in Mathematics (2022), Birkhäuser (2022) 3--35},
  \arxiv{1801.04409}{math.RT}.

  \bibitem[Et]{Etingof:2002}
  P.~Etingof,
  {\it On Vafa’s theorem for tensor categories},
  \doi{10.4310/MRL.2002.v9.n5.a8}{Math.\ Res.\ Let.\ {\bf 9} (2002) 651--657},
  \arxiv{math/0207007}{math.QA}.

  \bibitem[FGR1]{FGR1}
  V.~Farsad, A.M.~Gainutdinov, I.~Runkel,
  {\it $SL(2,\mathbb{Z})$-action for ribbon quasi-Hopf algebras},
  \doi{10.1016/j.jalgebra.2018.12.012}{J.\ Algebra {\bf 522} (2019), 243--308},
  \arxiv{1702.01086}{math.QA}.

  \bibitem[FGR2]{FGR2}
  \sameauthors
  {\it The symplectic fermion ribbon quasi-Hopf algebra and the $SL(2,\field[Z])$-action
  on its centre}, \doi{10.1016/j.aim.2022.108247}{Adv.\ Math. \textbf{400} (2022) 108247},
  \arxiv{1706.08164}{math.QA}.

\bibitem[FGST]{FGST}
  B.L.~Feigin, A.M.~Gainutdinov, A.M.~Semikhatov, I.Y.~Tipunin,
  \textit{Modular group representations and fusion in logarithmic conformal field theories and in the quantum group center},
  \doi{10.1007/s00220-006-1551-6}{Commun.\ Math.\ Phys.\  {\bf 265} (2006) 47--93},
  \arxiv{hep-th/0504093}{hep-th}.

\bibitem[FL]{Flandoli:2017xwu}
I.~Flandoli, S.~Lentner,
{\it Logarithmic conformal field theories of type $B_n,\ell=4$ and symplectic fermions},
\doi{10.1063/1.5010904}{J. Math. Phys. \textbf{59} (2018) 071701},
\arxiv{1706.07994}{math.RT}.

  \bibitem[FOG]{FOG}
  A.~Fontalvo Orozco, A.M.~Gainutdinov,
  {\it Module traces and Hopf group-coalgebras},
  \arxiv{1809.01122}{math.QA}.

  \bibitem[FS]{Fuchs:2010mw}
  J.~Fuchs, C.~Schweigert,
  {\it Hopf algebras and finite tensor categories in conformal field theory},
  \href{http://inmabb.criba.edu.ar/revuma/revuma.php?p=toc/vol51}{Rev.\ Union Mat.\ Argentina {\bf 51} (2010) 43--90},
  \arxiv{1004.3405}{hep-th]}.

\bibitem[GK]{Gaberdiel:1996np}
M.R.~Gaberdiel, H.G.~Kausch,
{\it A rational logarithmic conformal field theory},
\doi{10.1016/0370-2693(96)00949-5}{Phys.\ Lett.\  B {\bf 386} (1996) 131--137}
\arxiv{hep-th/9606050}{hep-th}.

  \bibitem[GKP1]{GKP_generalized-trace-and-mod-dim-rib-cat}
  N.~Geer, J.~Kujawa, B.~Patureau-Mirand,
  {\it Generalized trace and modified dimension functions on ribbon categories},
  \doi{10.1007/s00029-010-0046-7}{B.\ Sel.\ Math.\ New Ser.\ {\bf 17} (2011), 453--404},
  \arxiv{1001.0985}{math.RT}.

  \bibitem[GKP2]{GKP_ambi-objects-and-trace-functions-for-nonssi-cats}
  \sameauthors
  {\it Ambidextrous objects and trace functions for nonsemisimple categories},
  \doi{10.1090/S0002-9939-2013-11563-7}{Proc.\ Amer.\ Math.\ Soc.\ {\bf 141} (2013),
  2963--2978},
  \arxiv{1106.4477}{math.RT}.

  \bibitem[GKP3]{GKP_m-traces}
  \sameauthors
  {\it M-traces in (non-uni\-mod\-u\-lar) pivotal categories},
  \doi{10.1007/s10468-021-10044-y}{Algebr.\ Represent.\ Theory (2021)},
  \arxiv{1809.00499}{math.RT}.

\bibitem[GN]{GN}
T.~Gannon, C.~Negron
\textit{Quantum $SL(2)$ and logarithmic vertex operator algebras at $(p,1)$-central charge},
\arxiv{2104.12821}{math.QA}.
  
  \bibitem[GPT]{GPT}
  N.~Geer, B.~Patureau-Mirand, V.G.~Turaev,
  {\it Modified quantum dimensions and re-normalized link invariants},
  \doi{10.1112/S0010437X08003795}{Compositio Math.\ {\bf 145} (2009), 196--212},
  \arxiv{0711.4229}{math.QA}.

  \bibitem[GPV]{GPV_traces-on-ideals-in-pivotal-categories}
  N.~Geer, B.~Patureau-Mirand, A.~Virelizier
  {\it Traces on ideals in pivotal categories},
  \doi{10.4171/QT/36}{Quantum Topol.\ {\bf 4} (2013), 91--124},
  \arxiv{1103.1660}{math.QA}.

\bibitem[GR1]{Gainutdinov:2015lja}
A.M.~Gainutdinov, I.~Runkel,
{\it Symplectic fermions and a quasi-Hopf algebra structure on $\overline{U}_{\mathrm{i}} s\ell(2)$},
\doi{10.1016/j.jalgebra.2016.11.026}{J.\ Algebra {\bf 476} (2017) 415--458},
\arxiv{1503.07695}{math.QA}.

  \bibitem[GR2]{GR-nonssi-Verlinde}
  \sameauthors
  {\it The non-semisimple Verlinde formula and pseudo-trace functions},
  \doi{10.1016/j.jpaa.2018.04.014}{J.\ Pure Appl.\ Algebra {\bf 223} (2019), 660--690},
  \arxiv{1605.04448}{math.QA}.

  \bibitem[GR3]{GR-proj}
  \sameauthors
  {\it Projective objects and the modified trace in factorisable finite tensor
  categories},
  \doi{10.1112/S0010437X20007034}{Compositio Math.\ {\bf 156} (2020) 770--821},
  \arxiv{1703.00150}{math.QA}.
  
  \bibitem[He]{H96}
  M.~Hennings,
  \textit{Invariants of Links and 3-Manifolds Obtained from Hopf Algebras},
  \doi{10.1112/jlms/54.3.594}{J. London Math. Soc. (2) \textbf{54} (1996), No. 3, 594--624}.
  
  \bibitem[HN]{HN-integrals}
  F.~Hausser, F.~Nill,
  {\it Integral theory for quasi-Hopf algebras},
  \\
  \arxiv{math/9904164}{math.QA}.

  \bibitem[LW]{Laugwitz-Walton}
  R.~Laugwitz, C.~Walton,
  {\it Constructing non-semisimple modular categories with relative monoidal centers},
  \doi{10.1093/imrn/rnab097}{Int.\ Math.\ Res.\ Not.\ IMRN (2021)},
  \arxiv{2010.11872}{math.QA}.

  \bibitem[Lyu]{Lyu-inv-MCG}
  V.V.~Lyubashenko,
  {\it Invariants of 3-manifolds and projective representations of mapping class groups
  via quantum groups at roots of unity}
  \doi{10.1007/BF02101805}{Commun.\ Math.\ Phys.\ {\bf 172} (1995), 467--516},
  \arxiv{hep-th/9405167}{hep-th}.

\bibitem[Ka]{Kausch:1995py}
  H.G.~Kausch,
  {\it Curiosities at c = -2},
  \arxiv{hep-th/9510149}{hep-th}.

  \bibitem[Ke]{Kerler-connectivity}
  T.~Kerler,
  {\it On the connectivity of cobordisms and half-projective TQFTs},
  \doi{10.1007/s002200050487}{Commun.\ Math.\ Phys.\ {\bf 198} (1998) 535--590},
  \arxiv{q-alg/9603017}{math.QA}.

  \bibitem[KL]{KerlerLyubashenko}
  T.~Kerler, V.~V.~Lyubashenko,
  {\it Non-semisimple topological quantum field theories for 3-manifolds with corners},
  \doi{10.1007/b82618}{Lecture Notes in Math.\ {\bf 1765} (2001)}.

  \bibitem[ML]{catWorkMath}
  S.~Mac~Lane,
  {\it Categories for the working mathematician},
  \doi{10.1007/978-1-4757-4721-8}{Grad.\ Texts in Math.\ {\bf 5} (1998)}.

  \bibitem[MR]{McRae}
R.~McRae,
\textit{Deligne tensor products of categories of modules for vertex operator algebras},
\arxiv{2304.14023}{math.QA}.

\bibitem[Mu]{Mueger}
M.~M\"uger,
\textit{From subfactors to categories and topology II: The quantum double of tensor categories and subfactors},
\doi{10.1016/S0022-4049(02)00248-7}{J.\ Pure Appl.\ Algebra \textbf{180} (2003) 159--219},
\arxiv{math/0111205}{math.CT}.

  \bibitem[PS]{Prasolov-Sossinsky}
  V.V.~Prasolov, A.B.~Sossinsky,
  {\it Knots, links, braids and 3-manifolds},
  \doi{10.1090/mmono/154}{Transl.\ Math.\ Monogr.\ {\bf 154} (1997)}.

  \bibitem[RT]{RT90}
  N.~Reshetikhin, V.~Turaev,
  {\it Ribbon Graphs and Their Invariants Derived From Quantum Groups},
  \doi{10.1007/BF02096491}{Commun.\ Math.\ Phys.\ \textbf{127} (1990) 1--26}.

  \bibitem[Ru]{Runkel:2012cf}
  I.~Runkel,
  {\it A braided monoidal category for free super-bosons},
  \doi{10.1063/1.4868467}{J.\ Math.\ Phys.\ {\bf 55} 041702 (2014)}
  \arxiv{1209.5554}{math.QA}.

  \bibitem[Sch]{Schauenburg_freeness}
  P.~Schauenburg,
  {\it A quasi-Hopf algebra freeness theorem},
  {Proc.\ Amer.\ Math.\ Soc.\ {\bf 132} (2004) 965--972},
  \arxiv{math/0204141}{math.QA}.

  \bibitem[Sh1]{Shimizu:2014}
  K.~Shimizu,
  {\it On unimodular finite tensor categories},
  \doi{10.1093/imrn/rnv394}{Int.\ Math.\ Res.\ Notices \textbf{2017} (2017) 277--322},
  \arxiv{1402.3482}{math.QA}.

  \bibitem[Sh2]{Shimizu:2015}
  K.~Shimizu,
  {\it The monoidal center and the character algebra},
  \doi{10.1016/j.jpaa.2016.12.037}{J.\ Pure and Appl.\ Algebra \textbf{221} (2017) 2338--2371},
  \arxiv{1504.01178}{math.QA}.

\bibitem[Sh3]{Shimizu:2016}
K.~Shimizu,
{\it Non-degeneracy conditions for braided finite tensor categories},
\doi{10.1016/j.aim.2019.106778}{Adv.\ Math.\ \textbf{355} (2019) 106778}
\arxiv{1602.06534}{math.QA}.

  \bibitem[Sh4]{Sh-integrals}
  K.~Shimizu,
  {\it Integrals for finite tensor categories},
  \doi{10.1007/s10468-018-9777-5}{Algebr.\ Represent.\ Theory {\bf 22} (2019) 459--493},
  \arxiv{1702.02425}{math.CT}.

  \bibitem[Sh5]{Sh-ribbon}
  K.~Shimizu,
  {\it Ribbon structures of the Drinfeld center of a finite tensor category},
  \doi{10.2996/kmj46106}{Kodai Math.\ J.\ {\bf 46} (2023) 75--114},
  \arxiv{1707.09691}{math.QA}.

  \bibitem[So]{Sommerhaeuser}
  Y.~Sommerh\"auser,
  {\it On the notion of a ribbon quasi-Hopf algebra},
  {Rev.\ Uni\'on Mat.\ Argent.\ {\bf 51} (2010), 177--192},
  \arxiv{0910.1638}{math.RA}.

  \bibitem[Tu]{Turaev-book}
  V.G.~Turaev,
  {\it Quantum Invariants of Knots and 3-Manifolds},
  \doi{10.1515/9783110435221}{de Gruyter Stud.\ Math.\ {\bf 18} (2016)}.

  \bibitem[Vi]{Virelizier-Kirby}
  A.~Virelizier,
  {\it Kirby elements and quantum invariants},
  \doi{10.1112/S0024611506015905}{Proc.\ Lond.\ Math.\ Soc.\ {\bf 93} (2006) 474--514},
  \arxiv{math/0312337}{math.GT}

\end{thebibliography}
\end{document}